\newif{\ifarxiv}
\newcommand\eqdef{\mathrel{\buildrel \text{def}\over=}}
\newcommand\pow{\mathbb{P}}
\newcommand\Pfin{\pow_{\mathrm{fin}}}
\newcommand{\real}{\mathbb{R}}
\newcommand{\creal}{\overline{\mathbb{R}}_+}
\newcommand{\Rp}{\mathbb{R}_+}
\newcommand\diff{\smallsetminus}
\DeclareMathOperator{\upc}{\uparrow\!}
\DeclareMathOperator{\dc}{\downarrow\!}
\newcommand\nat{\mathbb{N}}
\newcommand\limp{\mathrel{\Rightarrow}}
\newcommand\Lform{\mathcal L}
\newcommand\uuarrow{\rlap{$\uparrow$}\raise.5ex\hbox{$\uparrow$}}%%
\newcommand\ddarrow{\rlap{$\downarrow$}\raise.5ex\hbox{$\downarrow$}}%%
\newcommand\identity[1]{\mathrm{id}_{#1}}
\newcommand\Pred{\mathbb{P}}
\newcommand\Angel{{\mathtt{A}}}
\newcommand\Demon{{\mathtt{D}}}
\newcommand\Nature{{\mathtt{P}}}
\newcommand\AN{{\Angel\Nature}}
\newcommand\DN{{\Demon\Nature}}
\newcommand\ADN{{\Angel\Demon\Nature}}
\newcommand\one{\mathbf 1}
\newcommand\invto[1]{{\buildrel {#1} \over\Longrightarrow}}
\newcommand\Smyth{{\mathcal Q}}
\newcommand\Hoare{{\mathcal H}}
\newcommand\Vt{\mathsf{V}}
\newcommand\HV{\Hoare_\Vt}
\newcommand\SV{\Smyth_{\Vt}}
\newcommand\quasi{\mathrm{q}} %{{\mathrm{qc}}}
\newcommand\QL{\Plotkin^\quasi}
\newcommand\QLV{\QL_\Vt}
\newcommand\QLc{\Plotkin^{\quasi, cvx}}
\newcommand\QLVc{\QLc_\Vt}
\newcommand\Plotkin{\mathcal P\ell} %{\mathop{\mathcal P\ell}}
\newcommand\TEMleq{\sqsubseteq^{\text{TEM}}}
\newcommand\Plotkinn{\Plotkin_{\mathcal V}}
\newcommand\PV\Plotkinn %{\mathcal P_{\mathcal V}}
\newcommand\Val{{\mathbf V}}
\newcommand\Open{{\mathcal O}}
\newcommand\Topcat{\mathbf{Top}}
\newcommand\Setcat{\mathbf{Set}}
\newcommand{\interior}[1]{\text{int} ({#1})} %{{\buildrel \circ \over {#1}}}
\newcommand\patch{{\text{\textsf{patch}}}}
\newcommand\dsup{\sup\nolimits^{\scriptstyle\uparrow}}
\newcommand\finf{\inf\nolimits^{\scriptstyle\downarrow}}
\newcommand\dcup{\bigcup\nolimits^{\scriptstyle\uparrow}}
\newcommand\fcap{\bigcap\nolimits^{\scriptstyle\downarrow}}
\newtheorem{theorem}{Theorem}[section]
\newtheorem{proposition}[theorem]{Proposition}
\newtheorem{corollary}[theorem]{Corollary}
\newtheorem{lemma}[theorem]{Lemma}
\newproof{proof}{Proof}
\newtheorem{fact}[theorem]{Fact}
\newtheorem{definition}[theorem]{Definition}
\newtheorem{remark}[theorem]{Remark}
\newcommand\ForAuthors[1]%          %  temporary remark for the
\journal{Topology and its Applications}
\begin{document}

\begin{frontmatter}

%% Title, authors and addresses

%% use the tnoteref command within \title for footnotes;
%% use the tnotetext command for the associated footnote;
%% use the fnref command within \author or \address for footnotes;
%% use the fntext command for the associated footnote;
%% use the corref command within \author for corresponding author footnotes;
%% use the cortext command for the associated footnote;
%% use the ead command for the email address,
%% and the form \ead[url] for the home page:
%%
%% \title{Title\tnoteref{label1}}
%% \tnotetext[label1]{}
%% \author{Name\corref{cor1}\fnref{label2}}
%% \ead{email address}
%% \ead[url]{home page}
%% \fntext[label2]{}
%% \cortext[cor1]{}
%% \address{Address\fnref{label3}}
%% \fntext[label3]{}

\title{Weak Distributive Laws between Monads of Continuous
  Valuations and of Non-Deterministic Choice}

%% use optional labels to link authors explicitly to addresses:
%% \author[label1,label2]{<author name>}
%% \address[label1]{<address>}
%% \address[label2]{<address>}

\author{Jean Goubault-Larrecq}

\address{Universit\'e Paris-Saclay, CNRS, ENS Paris-Saclay,
  Laboratoire M\'ethodes
  Formelles, 91190, Gif-sur-Yvette, France.\\
  \texttt{jgl@lmf.cnrs.fr}
  % , Phone: +33-1 47 40 22 60, Fax: +33 1 47 40 24 64
}

\begin{abstract}
  %% Text of abstract
  We show that there is weak distributive law of the Smyth hyperspace
  monad $\SV$ (resp., the Hoare hyperspace monad $\HV$, resp.\ the
  monad $\QLV$ of quasi-lenses, resp.\ the monad $\Plotkinn$ of
  lenses) over the continuous valuation monad $\Val$, as well as over
  the subprobability valuation monad $\Val_{\leq 1}$ and the
  probability valuation monad $\Val_1$, on the whole category
  $\Topcat$ of topological spaces (resp., on certain full
  subcategories such as the category of locally compact spaces or of
  stably compact spaces).  We show that the resulting weak composite
  monad is the author's monad of superlinear previsions (resp.,
  sublinear previsions, resp.\ forks), possibly subnormalized or
  normalized depending on whether we consider $\Val_{\leq 1}$ or
  $\Val_1$ instead of $\Val$.  As a special case, we obtain a weak
  distributive law of the monad $\QLV \cong \Plotkinn$ over the monad
  of (sub)probability Radon measures $\mathbf R_\bullet$ on the
  category of stably compact spaces, which specializes further to a
  weak distributive laws of the Vietoris monad over
  $\mathbf R_\bullet$.  The associated weak composite monad is the
  monad of (sub)normalized forks.
\end{abstract}

\begin{keyword}
  weak distributive law \sep monad \sep continuous valuation \sep Radon measure
  \sep hyperspace \sep Smyth hyperspace \sep Hoare powerspace
  \sep Plotkin hyperspace \sep quasi-lens \sep lens
%% keywords here, in the form: keyword \sep keyword
  % Quasi-metric \sep formal balls \sep Lipschitz continuous maps \sep
  % consonance \sep determined by (topology)
%% MSC codes here, in the form: \MSC code \sep code
%% or \MSC[2008] code \sep code (2000 is the default)
  \MSC[2020] 18C15
  \sep 54B20 \sep 28A33 \sep 46E27
  %\MSC[2020] 54F17 \sep 54D70 \sep 54D30 \sep 54D45
  %54E99, 54H30   % General topology - spaces with richer
                            % structures- None of the above, but in
                            % this section
  
                            % 18C15   	Monads (= standard construction, triple or triad),
                            % algebras for monads, homology and derived functors for monads [See
                            % also 18Gxx] {For functional programming,
                            % see also 68N18}
                            % 18C20   	Eilenberg-Moore and Kleisli constructions for monads
                            % 54B20 Hyperspaces in general topology
                            % 54Dxx	Fairly general properties of topological spaces
                            % 54D30   	Compactness
                            % 54D45   	Local compactness, $\sigma$-compactness
                            % 54D70   	Base properties of topological
                            % spaces
                            % 54D99   	None of the above, but in this
                            % section
                            % 54F17   	Inverse limits of set-valued functions
                            % 28A33   	Spaces of measures,
                            % convergence of measures
                            % 46E27   	Spaces of measures (functional analysis)
\end{keyword}

\end{frontmatter}

\noindent
\begin{minipage}{0.25\linewidth}
  \includegraphics[scale=0.2]{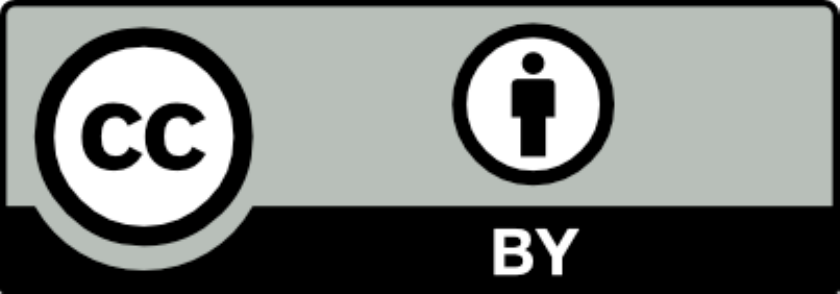}
\end{minipage}
\begin{minipage}{0.74\linewidth}
  \scriptsize
  For the purpose of Open Access, a CC-BY public copyright licence has
  been applied by the authors to the present document and will be
  applied to all subsequent versions up to the Author Accepted
  Manuscript arising from this submission.
\end{minipage}

\begin{flushright}
  \scriptsize
  Une montagne en mal d'enfant\\
  Jetait une clameur si haute\\
  Que chacun, au bruit accourant, \\
  Crut qu'elle accoucherait, sans faute, \\
  D'une cité plus grosse que Paris; \\
  Elle accoucha d'une souris. \\
  Quand je songe à cette fable, \\
  Dont le récit est menteur \\
  Et le sens est véritable, \\
  Je me figure un auteur \\
  Qui dit: Je chanterai la guerre \\
  Que firent les Titans au Maître du tonnerre. \guillemotright\\
  C'est promettre beaucoup: mais qu'en sort-il souvent? \\
  Du vent. \\
  \vskip0.5em
  \emph{La montagne qui accouche},
  Jean de la Fontaine, Fables, livre~V, fable~10, 1668.
\end{flushright}

\section{Introduction}
\label{sec:introduction}

Jon Beck introduced the notion of distributive laws
$\lambda \colon TS \to ST$ of a monad $S$ over a monad $T$, and showed
that any such distributive law allowed for the construction of a
composite monad $ST$ \cite{Beck:distr}.  The theory is much richer
than what the latter may suggest, and there is a one-to-one
correspondence between distributive laws as above, extensions of $T$
to the Kleisli category of $S$, and liftings of $S$ to the category of
$T$-algebras.

It was once dreamed that one could perhaps use this construction in
order to define new monads, combining various side-effects in the
semantics of programming languages.  But distributive laws are rare,
and, as a case in point, there is no distributive law of the monad of
non-deterministic choice over the monad of discrete probability
measure on $\Setcat$ \cite{VW:distr}.  The same argument shows that
there is no distributive law of any of the monads of non-deterministic
choice (the Smyth powerdomain monad $\Smyth$, the Hoare powerdomain
monad $\Hoare$, the Plotkin powerdomain monad $\Plotkin$) over the
monad $\Val$ of continuous valuations (or $\Val_1$ [probability
valuations], or $\Val_{\leq 1}$ [subprobability valuations],
implementing probabilistic choice).

Gabriella B\"ohm was the first to study weakenings of the notion of
distributive law between monads \cite{Bohm:weak:monads}, in a general
$2$-categorical context.  Given two monads $S$ and $T$, a distributive
law $\lambda \colon TS \to ST$ is a natural transformation satisfying
four equations expressing compatibility with the units and
multiplications of each of the two monads.  Weak distributive laws are
obtained by dropping one or the other of those equations.  Richard
Garner then observed that one of these kinds of weak distributive
laws, where compatibility with the unit of the $T$ monad is ignored,
could be applied to algebraic-effect type monads
\cite{Garner:weak:distr}.  This is the kind that we are interested in
here.

The theory of weak distributive law parallels Beck's theory of
distributive laws, and notably, weak distributive laws of $S$ over $T$
are in one-to-one correspondence with weak extensions of $T$ to the
Kleisli category of $S$, and weak liftings of $S$ to the category of
$T$-algebras.

It was suggested by Alexandre Goy \cite[Chapter~7]{Goy:PhD} that,
while there is no distributive law of the Vietoris monad (which sends
every space $X$ to its space of non-empty compact subsets with the
Vietoris topology) over the monad $\mathbf R_1$ of Radon probability
measures on the category $\mathbf{KHaus}$ of compact Hausdorff spaces,
there should be a weak distributive law relating the two.  We will
confirm that it exists, as a special case of a weak distributive law
between the Plotkin hyperspace monad $\Plotkinn$ over
$\Val_1 \cong \mathbf R_1$, in the final section of this paper
(Section~\ref{sec:radon-measures}).  (A toy version of this result was
established by Goy and Petri\c san \cite{goypetr-dp}, on the category
$\Setcat$, with the powerset monad instead of the Vietoris monad and
the finite distribution monad instead of $\mathbf R_1$.)

We will show that the resulting weak composite monad is the author's
monad of normalized forks, introduced thirteen years earlier
\cite{Gou-csl07} on the category of dcpos.  We take the opportunity to
make the relatively obvious generalization to topological spaces of
that monad here.

Before we reach that final section on the Vietoris and Radon monads,
we will explore essentially all possible combinations of monads of
non-deterministic choice on $\Topcat$ on the one hand, and of monads
of probabilistic choice on the other hand.  The latter include the
monads $\Val$ of continuous valuations, $\Val_{\leq 1}$ of
subprobability valuations, $\Val_1$ of probability valuations.
Continuous valuations are a close cousin to measures, and we will soon
explain how they relate to each other.  The former include the Smyth
hyperspace monad $\SV$ (demonic non-determinism) the Hoare hyperspace
monad $\HV$ (angelic non-determinism), and two variants of the
Plotkinn hyperspace monad (erratic non-determinism), the monad
$\Plotkinn$ of lenses and the monad $\QLV$ of quasi-lenses.  We show
that all of those have weak distributivity laws over $\Val_\bullet$,
where $\bullet$ is nothing, ``$\leq 1$'', or ``$1$''.  In the case of
$\SV$, this works on the whole category $\Topcat$ of topological
spaces.  For $\HV$, we have to restrict to certain full subcategories,
such as the subcategory of locally compact spaces.  For $\QLV$ and
$\Plotkinn$, we need to restrict even further, typically to stably
compact spaces---a category that still contains $\mathbf{KHaus}$ as a
proper subcategory.

In each case, we make the resulting weak composite monad explicit.
And, in each case, that is a well-known monad: the monad of
superlinear (resp., subnormalized or normalized, depending on
$\bullet$) previsions in the demonic case, the monad of sublinear
(resp., subnormalized or normalized) previsions in the angelic case,
and the monad of (resp., subnormalized or normalized) forks in the
erratic case.  Those monads first appeared in \cite{Gou-csl07} on the
category of dcpos, and we will rely on the (pretty obvious)
generalization to $\Topcat$ here.  ``Tout \c ca pour \c ca!'', as we
say in French; in a more elegant form---and written in La Fontaine's
admirable style---see the quote before this introduction.  At least
this study should confirm that Garner definitely hit upon a, perhaps
the, right way of combining monads.

\emph{Outline.}  Section~\ref{sec:preliminaries} sets some technical
preliminaries.  We will introduce other notions and results along the
way.  The rest of the paper is structured in three parts, for each
version of non-determinism (demonic, angelic, erratic) we consider.
Each part has exactly the same structure, except for the last one,
which has an extra, final Section~\ref{sec:radon-measures} on Radon
measures and the Vietoris monad.  Explicitly, that structure consists
of one section on the monad we will eventually discover as the weak
composite monad (superlinear previsions in
Section~\ref{sec:monad-superl-prev}, sublinear previsions in
Section~\ref{sec:monad-subl-prev}, forks in
Section~\ref{sec:monad-forks}), one on the weak distributive law
(Section~\ref{sec:weak-distr-law},
Section~\ref{sec:weak-distr-law:flat},
Section~\ref{sec:weak-distr-law:nat}), and one on the resulting weak
composite monad (Section~\ref{sec:associated-monad},
Section~\ref{sec:associated-monad:flat},
Section~\ref{sec:assoc-weak-comp}).

% We show that there is weak distributive law of the monad of non-empty
% compact saturated subsets $\SV$ over the monad of continuous
% valuations $\Val$ on the whole category $\Topcat$ of topological
% spaces, and similarly with subprobability and probability valuations.

\section{Preliminaries}
\label{sec:preliminaries}

For background on topology, we refer the reader to
\cite{JGL-topology}.  We write $\interior A$ for the interior of $A$,
$cl (A)$ (or $cl_X (A)$) for the closure of $A$ (in a space $X$), and
$\Open X$ for the lattice of open subsets of $X$.  The specialization
preordering $\leq$ of a topological space $X$ is defined on points
$x, y \in X$ by $x \leq y$ if and only if every open neighborhood of
$x$ contains $y$, if and only if $x$ lies in the closure of $\{y\}$.
We also write $x \in A \mapsto f (x)$ for the function that maps every
element $x$ of a set $A$ to the value $f (x)$; logicians and computer
scientists would use the notation $\lambda x \in A . f (x)$, but this
would interfere with the standard use of the letter $\lambda$ for
(weak) distributive laws.

A \emph{directed} family of elements in a poset is a non-empty family
of elements $D$ such that any two elements of $D$ have an upper bound
in $D$.  A \emph{filtered} family is defined similarly, with lower
bounds replacing upper bounds.  We use $\dsup$ for suprema of directed
families, $\dcup$ for unions of directed families of sets, $\finf$ for
filtered infima, $\fcap$ for intersections of filtered families of
sets.

An \emph{irreducible} closed subset $C$ of $X$ is a non-empty closed
subset such that, for any two closed subsets $C_1$ and $C_2$ of $X$
such that $C \subseteq C_1 \cup C_2$, $C$ is included in $C_1$ or in
$C_2$ already; equivalently, if $C$ intersects two open sets, it must
intersect their intersection.  A space $X$ is \emph{sober} if and only
if it is $T_0$ and every irreducible closed subset is of the form
$\dc x$ for some point $x \in X$.  Every Hausdorff space, for example,
is sober.  The notation $\dc x$ stands for the \emph{downward closure}
of $x$ in $X$, namely the set of points $y$ below $x$.  Symmetrically,
$\upc x$ stands for the \emph{upward closure} of $x$, namely the set
of points $y$ above $x$.  This notation extends to $\upc A$, for any
subset $A$, denoting $\bigcup_{x \in A} \upc x$.

A \emph{compact} subset $A$ of a space $X$ is one such that one can
extract a finite subcover from any of its open covers.  No separation
property is assumed.  A subset $A$ of $X$ \emph{saturated} if and only
if it is equal to the intersection of its open neighborhoods, or
equivalently if and only if it is upwards-closed in the specialization
preordering of $X$.

A space $X$ is \emph{locally compact} if and only if every point has a
base of compact neighborhoods, or equivalently of compact saturated
neighborhoods, since for any compact subset $K$ of $X$, the upward
closure $\upc K$ of $K$ with respect to the specialization preordering
of $X$ is compact saturated.  Please beware that, in non-Hausdorff
spaces, a compact space may fail to be locally compact.

A space is \emph{coherent} if and only if the intersection of any two
compact saturated subsets is compact (and necessarily saturated).
That, too, is a property that may fail in non-Hausdorff spaces.

A \emph{stably locally compact} space is a coherent, locally compact,
sober space, and a \emph{stably compact} is a compact, stably locally
compact space.  We note that every compact Hausdorff space is stably
compact, as well as any coherent continuous dcpo with a least element.
(We will not define dcpos; see \cite{JGL-topology}, \cite{AJ:domains},
or \cite{GHKLMS:contlatt}.)

Let $\creal$ be the set of extended non-negative real numbers
$\Rp \cup \{\infty\}$, with its usual ordering.  When needed, we will
consider it with its Scott topology, whose open sets are the intervals
$]t, \infty]$, $t \in \Rp$, plus $\emptyset$ and $\creal$ itself.  In
general, the \emph{Scott topology} on a poset has as open sets the
upward closed subsets $U$ (i.e., $U = \upc U$) such that every
directed family $D$ whose supremum exists and is in $U$ intersects
$U$.

A \emph{continuous valuation} on a space $X$ is a map
$\nu \colon \Open X \to \creal$ that is \emph{strict}
($\nu (\emptyset)=0$), % \emph{monotonic} ($U \limp V$ implies
% $\nu (U) \leq \nu (V)$) and 
\emph{modular} (for all $U, V \in \Open X$,
$\nu (U) + \nu (V) = \nu (U \cup V) + \nu (U \cap V)$) and
Scott-continuous.  % A \emph{continuous valuation} is a valuation $\nu$
% that preserves suprema of directed families: namely, given any
% directed family ${(U_i)}_{i \in I}$ of open subsets of $X$,
% $\nu (\dcup_{i \in I} U_i) = \dsup_{i \in I} \nu (U_i)$.
We say that $\nu$ is a \emph{probability} valuation if and only if
$\nu (X)=1$, and a \emph{subprobability} valuation if and only if
$\nu (X) \leq 1$.

Let $\Val X$ denote the space of continuous valuations on a space $X$,
with the following \emph{weak topology}.  That is defined by subbasic
open sets $[U > r] \eqdef \{\nu \in \Val X \mid \nu (U) > r\}$, where
$U \in \Open X$ and $r \in \Rp$.  We define its subspace $\Val_1 X$ of
probability valuations and $\Val_{\leq 1} X$ (subprobability)
similarly.  In general, we will write $\Val_\bullet X$, where
$\bullet$ is nothing, ``$\leq 1$'', or ``$1$''.

The specialization ordering of each is the \emph{stochastic ordering}
$\leq$ given by $\nu \leq \nu'$ if and only if $\nu (U) \leq \nu' (U)$
for every $U \in \Open X$; indeed, $\nu \leq \nu'$ if and only if for
every $U \in \Open X$, for every $r \in \Rp$, $\nu \in [U > r]$
implies $\nu' \in [U > r]$.

Continuous valuations are an alternative to measures that have become
popular in domain theory \cite{jones89,Jones:proba}.  The first
results that connected continuous valuations and measures are due to
Saheb-Djahromi \cite{saheb-djahromi:meas} and Lawson
\cite{Lawson:valuation}.  The current state of the art on similar
results is the following.  In one direction, every measure on the
Borel $\sigma$-algebra of $X$ induces a continuous valuation on $X$ by
restriction to the open sets, if $X$ is hereditarily Lindel\"of
(namely, if every directed family of open sets contains a cofinal
monotone sequence).  This is an easy observation, and one half of
Adamski's theorem \cite[Theorem~3.1]{Adamski:measures}, which states
that a space is hereditary Lindel\"of if and only if every measure on
its Borel $\sigma$-algebra restricts to a continuous valuation on its
open sets.  In the other direction, every continuous valuation on a
space $X$ extends to a measure on the Borel sets provided that $X$ is
an LCS-complete space \cite[Theorem~1]{dBGLJL:LCScomplete}.  An
\emph{LCS-complete} space is a space homeomorphic to a $G_\delta$
subset of a locally compact sober space---$G_\delta$ meaning a
countable intersection of open sets.

There is a monad $(\Val_\bullet, \eta, \mu)$ on $\Topcat$.  For every
continuous map $f \colon X \to Y$, $\Val_\bullet f$ sends every
$\nu \in \Val_\bullet X$ to the \emph{pushforward} continuous
valuation $f [\nu]$, defined by $f [\nu] (V) \eqdef \nu (f^{-1} (V))$
for every $V \in \Open Y$.  The unit $\eta$ is defined by
$\eta_X (x) \eqdef \delta_x$ for every $x \in X$, for every space $X$;
$\delta_x$ is the \emph{Dirac valuation} at $x$, and maps every open
subset $U$ of $X$ to $1$ if $x \in U$, to $0$ otherwise.  For every
continuous map $f \colon X \to \Val_\bullet Y$, its \emph{extension}
$f^\dagger \colon \Val_\bullet X \to \Val_\bullet Y$ is defined by
$f^\dagger (\nu) (V) \eqdef \int_{x \in X} f (x) (V) \,d\nu$ for every
$\nu \in \Val_\bullet X$ and for every $V \in \Open Y$, and then
$\mu_X \eqdef \identity {\Val_\bullet X}^\dagger$.  Integration is
best defined by a Choquet formula \cite[Section~4]{Tix:bewertung}.
Explicitly, let $\Lform X$ be the set of all lower semicontinuous
functions $h$ from $X$ to $\creal$, that is, the set of all continuous
maps from $X$ to $\creal$ equipped with its Scott topology.  For every
$h \in \Lform X$, $\int_{x \in X} h (x) \,d\nu$ is defined as the
indefinite Riemann integral
$\int_0^\infty \nu (h^{-1} (]t, \infty])) \,dt$.  The following
\emph{change of variable} formula follows immediately: for every
continuous map $f \colon X \to Y$, for every $h \in \Lform Y$, for
every $\nu \in \Val_\bullet X$,
$\int_{y \in Y} h (y) \,df[\nu] = \int_{x \in X} h (f (x)) \,d\nu$.
We also note that, for every $h \in \Lform X$, for every $\xi \in
\Val_\bullet {\Val_\bullet X}$,
\begin{align*}
  \int_{x \in X} h (x) \,d\mu_X (\xi)
  & = \int_{\nu \in \Val_\bullet X}
    \left(\int_{x \in X} h (x) \,d\nu\right)\,d\xi
\end{align*}
See \cite[Hilfssatz~6.1]{kirch93} for a proof.

% Indeed, for every open subset $V$ of $X$,
% $\mu_X (\xi) (V) = \identity {\Val_\bullet X}^\dagger (\xi) (V) =
% \int_{\nu \in \Val_\bullet X} \identity {\Val_\bullet X} (\nu) (V)
% \,d\xi = \int_{\nu \in \Val_\bullet X} \nu (V) \,d\xi$, so:
% \begin{align*}
%   \int_{x \in X} h (x) \,d\mu_X (\xi)
%   & = \int_0^\infty \mu_X (\xi) (h^{-1} (]t, \infty])) \,dt \\
%   & = \int_0^\infty \left(\int_{\nu \in \Val_\bullet X} \nu (h^{-1}
%     (]t, \infty]) \,d\xi\right) \,dt \\
%   & = \int_0^\infty \left(\int_0^\infty \right)\,dt
% \end{align*}
The Choquet formula makes sense when $\nu$ is not just a continuous
valuation, but any monotone map from $\Open X$ to $\creal$, and we
will use that observation later on.

It will be useful to note that $\eta_X^{-1} ([U > r]) = U$ if $r < 1$
and is empty otherwise, that
${(\Val_\bullet f)}^{-1} ([V > r]) = [f^{-1} (V) > r]$.  The formula
for ${(\mu_X)}^{-1} ([U > r])$ is more involved.  For every
$h \in \Lform X$, for every $r \in \Rp$, let
$[h > r] \eqdef \{\nu \in \Val_\bullet X \mid \int_{x \in X} h (x)
\,d\nu > r\}$.  This is also an open subset of $\Val_\bullet X$, and
such sets form an alternate base of the weak topology, when $h$ varies
over $\Lform X$ and $r$ over $\Rp$, see
\cite[Theorem~3.3]{Jung:scs:prob} where this was proved for spaces of
probability and subprobability valuations; the proof is similar for
arbitrary continuous valuations.  We may even restrict $r$ to be
non-zero, since $[h > 0] = \bigcup_{r > 0} [h > r]$, and in fact to be
equal to $1$ since $[h > r] = [h/r > 1]$ when $r > 0$.  The point is
that
${(\mu_X)}^{-1} ([U > r]) = [(\nu \in \Val_\bullet X \mapsto \nu (U))
> r]$, and more generally:
\begin{align}
  \label{eq:mu-1}
  {(\mu_X)}^{-1} ([h > r])
  & = \left[\left(\nu \in \Val_\bullet X \mapsto \int_{x \in X} h (x) \,d\nu\right) > r\right].
\end{align}
% mu (xi) (U) > r ssi \int_nu nu(U) dxi > r
We will also need to note that
${(\Val f)}^{-1} ([h > r]) = [h \circ f > r]$, a consequence of the
change of variable formula, and that
$\eta_X^{-1} ([h > r]) = h^{-1} (]r, \infty])$.

\part{A weak distributive law between $\Val_\bullet$ and $\SV$}

For every topological space $X$, let $\Smyth X$ be the set of
non-empty compact saturated subsets of $X$.  The \emph{upper Vietoris}
topology on that set has basic open subsets $\Box U$ consisting of
those non-empty compact saturated subsets of $X$ that are included in
$U$, where $U$ ranges over the open subsets of $X$.  We write $\SV X$
for the resulting topological space.  Its specialization ordering is
reverse inclusion $\supseteq$.

The $\SV$ constructions was studied by a number of people, starting
with Mike Smyth \cite{Smyth:power:pred}, and later by Andrea Schalk
\cite[Section~7]{schalk:diss} who studied not only this, but also the
variant with the Scott topology, and a localic counterpart.  See also
\cite[Sections~6.2.2, 6.2.3]{AJ:domains} or
\cite[Section~IV-8]{GHKLMS:contlatt}, where the accent is rather on
the Scott topology of $\supseteq$.  In this setting, $\SV$ is a model
of \emph{demonic} non-determinism.

There is a monad $(\SV, \eta^\Smyth, \mu^\Smyth)$ on $\Topcat$.  For
every continuous map $f \colon X \to Y$, $\SV f$ maps every
$Q \in \SV X$ to $\upc f [Q]$, where $f [Q]$ denotes
$\{f (x) \mid x \in Q\}$.  The unit $\eta^\Smyth$ is defined by
$\eta^\Smyth_X (x) \eqdef \upc x$ for every $x \in X$.  For every
continuous map $f \colon X \to \SV Y$, there is an extension
$f^\sharp \colon \SV X \to \SV Y$, defined by
$f^\sharp (Q) \eqdef \bigcup_{x \in Q} f (x)$.  The multiplication
$\mu^\Smyth$ is defined by
$\mu^\Smyth_X \eqdef \identity {\SV X}^\sharp$, namely
$\mu^\Smyth_X (\mathcal Q) \eqdef \bigcup \mathcal Q$
\cite[Proposition~7.21]{schalk:diss}.

We note the equalities ${\eta^\Smyth}^{-1} (\Box U) = U$,
${(\SV f)}^{-1} (\Box V) = \Box {f^{-1} (V)}$, and
${\mu^\Smyth}^{-1} (\Box U) = \Box {\Box U}$.

\section{The monad of superlinear previsions}
\label{sec:monad-superl-prev}

We will show that there is a weak distributive law of $\SV$ of
$\Val_\bullet$, and we will build the resulting weak composite monad.
That monad has been well-known for some time, and is the monad of
superlinear previsions of \cite{Gou-csl07}, except that this paper
studied it on the category of dcpos and Scott-continuous maps instead
of $\Topcat$; the study of superlinear previsions on $\Topcat$ was
initiated in \cite{JGL-mscs16}, but it was not stated that they formed
a monad there, although, as we will see below, this is pretty
elementary.

At any rate, we will need to learn quite a number of facts about
superlinear previsions in our endeavor to build the desired weak
distributive law and its resulting weak composite monad.

We recall that $\Lform X$ is the space of lower semicontinuous maps
from $X$ to $\creal$.  We equip it with the Scott topology of the
pointwise ordering.  A \emph{prevision} on $X$ is a Scott-continuous
functional $F \colon \Lform X \to \creal$ such that $F (ah)=aF(h)$ for
all $a \in \Rp$ and $h \in \Lform X$.  It is \emph{linear} (resp.,
\emph{superlinear}) if and only if $F (h+h')=F (h)+F (h')$ (resp.,
$\geq$) for all $h, h' \in \Lform X$.  It is \emph{subnormalized}
(resp., \emph{normalized}) iff $F (\one+h) \leq \one+F (h)$ (resp.,
$=$) for every $h \in \Lform X$, where $\one$ is the constant function
with value $1$.  We write $\Pred_\DN X$ for the space of superlinear
previsions on $X$, $\Pred_\DN^{\leq 1} X$ for its subspace of
subnormalized superlinear previsions, and $\Pred_\DN^1 X$ for its
subspace of normalized superlinear previsions; synthetically, we use
the notation $\Pred_\DN^\bullet X$.  Similarly, we denote by
$\Pred_\Nature^\bullet X$ the corresponding spaces of linear
previsions.  The topology on each of those spaces is generated by
subbasic open sets
$[h > r] \eqdef \{F \in \Pred_\DN^\bullet X \text{ (resp.,
  $\Pred_\Nature^\bullet X$)} \mid F (h) > r\}$, $h \in \Lform X$,
$r \in \Rp$.  Its specialization ordering is the pointwise ordering:
$F \leq F'$ if and only if $F (h) \leq F (h')$ for every
$h \in \Lform X$.

There is a $\Pred_\DN^\bullet$ endofunctor on $\Topcat$, and its
action on morphisms $f \colon X \to Y$ is given by
$\Pred_\DN^\bullet f (F) = (h \in \Lform Y \mapsto F (h \circ f))$.
The $\Pred_\Nature^\bullet$ endofunctor is defined similarly.  Those
are both functor parts of monads.  Those monads, as well as the
forthcoming monad of sublinear previsions, were made explicit in
\cite[Proposition~2]{Gou-csl07}, on the category of dcpos (resp.,
pointed dcpos) and Scott-continuous maps, except that spaces of
previsions were given the Scott topology of the pointwise ordering.
The formulae for unit and multiplication is the same below.  The proof
that those form monads is elementary, and the only change compared to
\cite{Gou-csl07} is how we show continuity.
\begin{proposition}
  \label{prop:DN:monad}
  Let $\bullet$ be nothing, ``$\leq 1$'', or ``$1$''.  There is a
  monad
  $(\Pred_\DN^\bullet, \allowbreak \eta^\DN, \allowbreak \mu^\DN)$
  (resp.,
  $(\Pred_\Nature^\bullet, \allowbreak \eta^\Nature, \allowbreak
  \mu^\Nature)$) on $\Topcat$, whose unit and multiplication are
  defined at every topological space $X$ by:
  \begin{itemize}
  \item for every $x \in X$, $\eta^\DN_X (x) \eqdef (h \in \Lform X
    \mapsto h (x))$,
  \item for every
    $\mathcal F \in \Pred_\DN^\bullet {\Pred_\DN^\bullet X}$ (resp.,
    in $\Pred_\Nature^\bullet {\Pred_\Nature^\bullet X}$),
    $\mu^\DN_X (\mathcal F) \eqdef (h \in \Lform X \mapsto \mathcal F
    (F \mapsto F (h)))$, where $F$ ranges over $\Pred_\DN^\bullet X$,
    resp.\ $\Pred_\Nature^\bullet X$.
  \end{itemize}
\end{proposition}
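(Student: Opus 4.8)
The plan is to isolate the one genuinely new ingredient --- continuity with respect to the weak topologies --- and to reduce everything else to the equational verifications of \cite[Proposition~2]{Gou-csl07}, which go through unchanged. The organizing device is the family of \emph{evaluation functionals}: for a prevision space $Z$ on a space $X$ (i.e.\ $Z$ is $\Pred_\DN^\bullet X$, $\Pred_\Nature^\bullet X$, or an iterate thereof) and for $h \in \Lform X$, write $\varepsilon_h$ for the map $F \in Z \mapsto F (h)$. Since $\varepsilon_h^{-1} (]r, \infty]) = \{F \in Z \mid F (h) > r\}$ is one of the subbasic open sets $[h > r]$ defining the topology of $Z$, each $\varepsilon_h$ is lower semicontinuous, so $\varepsilon_h \in \Lform Z$; and for the same reason the sets $[\varepsilon_h > r]$ are among the subbasic opens of $\Pred_\DN^\bullet Z$ (resp.\ $\Pred_\Nature^\bullet Z$). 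In particular, with $Z = \Pred_\DN^\bullet X$, the defining formula $\mu^\DN_X (\mathcal F) (h) = \mathcal F (\varepsilon_h)$ (and likewise $\mu^\Nature$) is meaningful.

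I would first check that $\eta$ and $\mu$ take values in the right spaces. The map $\eta^\DN_X (x) = (h \mapsto h (x))$ is homogeneous, additive and Scott-continuous because scaling, addition and directed suprema in $\Lform X$ are computed pointwise, and $(\one + h) (x) = 1 + h (x)$; hence it is a normalized linear prevision, a fortiori a superlinear and a subnormalized one, which settles all three values of $\bullet$ and both the $\DN$ and $\Nature$ variants simultaneously. For $\mu^\DN_X (\mathcal F) = (h \mapsto \mathcal F (\varepsilon_h))$, the point is that $h \mapsto \varepsilon_h$ is itself homogeneous ($\varepsilon_{a h} = a \varepsilon_h$, since previsions are homogeneous), Scott-continuous ($\varepsilon_{\dsup_i h_i} = \dsup_i \varepsilon_{h_i}$, since previsions are Scott-continuous, hence monotone, so $(\varepsilon_{h_i})_i$ is directed), and superadditive ($\varepsilon_{h + h'} \geq \varepsilon_h + \varepsilon_{h'}$ pointwise, since each $F$ is superlinear); composing with $\mathcal F$, which is homogeneous, monotone, Scott-continuous and superlinear, one gets that $\mu^\DN_X (\mathcal F)$ is a superlinear prevision. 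For the normalization conditions, observe that $\varepsilon_\one$ is the constant map $\one$ on $\Pred_\DN^1 X$ and satisfies $\varepsilon_\one \leq \one$ on $\Pred_\DN^{\leq 1} X$; thus $\varepsilon_{\one + h} = \one + \varepsilon_h$, resp.\ $\varepsilon_{\one + h} \leq \one + \varepsilon_h$, and applying $\mathcal F$ (using monotonicity in the subnormalized case) shows that $\mu^\DN_X (\mathcal F)$ is normalized, resp.\ subnormalized. The $\Nature$ case is word for word the same, with every inequality replaced by an equality.

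Continuity is then a one-line computation in every case: $(\Pred_\DN^\bullet f)^{-1} ([h > r]) = [h \circ f > r]$, with $h \circ f \in \Lform X$ whenever $h \in \Lform Y$; $(\eta^\DN_X)^{-1} ([h > r]) = h^{-1} (]r, \infty])$; and $(\mu^\DN_X)^{-1} ([h > r]) = [\varepsilon_h > r]$. The right-hand sides are open ($[h \circ f > r]$ and $[\varepsilon_h > r]$ by the first paragraph, $h^{-1} (]r, \infty])$ because $h$ is lower semicontinuous), and the sets $[h > r]$ generate the relevant topologies, so all three maps are continuous; the $\Nature$ case is identical.

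It remains to establish functoriality of $\Pred_\DN^\bullet$ and $\Pred_\Nature^\bullet$ (precomposition with a continuous $f$ preserves homogeneity, (super)additivity, Scott-continuity and the normalization conditions, and clearly respects identities and composition), naturality of $\eta^\DN$ and $\mu^\DN$, and the three monad laws. These are equational and are dispatched by unfolding the definitions exactly as in \cite{Gou-csl07}; the only identities actually needed are $\varepsilon_h \circ \eta^\DN_X = h$ (for the right unit law and for naturality of $\eta$), $\varepsilon_h \circ \Pred_\DN^\bullet f = \varepsilon_{h \circ f}$ (for naturality of $\mu$), and, for associativity, the observation that both composites $\Pred_\DN^\bullet{\Pred_\DN^\bullet{\Pred_\DN^\bullet X}} \to \Pred_\DN^\bullet X$ send $\mathcal G$ and $h$ to $\mathcal G (\varepsilon_{\varepsilon_h})$, the value of $\mathcal G$ at the twice-iterated evaluation functional. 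I do not expect a real obstacle: there is no deep point here, only the bookkeeping of which evaluation functional lies in which $\Lform$ of which prevision space --- precisely the interplay between the weak topology and the shape of $\mu$ that the statement quietly demands one get right --- and once the maps $\varepsilon_h$ are brought to the fore, each clause follows mechanically.
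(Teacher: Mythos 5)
Your proposal is correct and follows essentially the same route as the paper's proof: check that unit and multiplication land in the right prevision spaces, verify continuity by computing inverse images of the subbasic opens $[h > r]$, and dispatch naturality and the monad laws as the purely equational continuation-monad identities, which is exactly what the paper does (it cites the continuation monad rather than spelling out the evaluation-functional bookkeeping you make explicit). The only quibble is cosmetic: in the (sub)normalization step, $\varepsilon_{\one+h} = \one + \varepsilon_h$ (resp.\ $\leq$) is just the defining condition on each $F$ restated, so the detour through $\varepsilon_\one$ is unnecessary, and applying $\mathcal F$ also uses $\mathcal F$'s own (sub)normalization, as in the paper's computation.
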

\begin{proof}
  Unit.  The functional $(h \in \Lform X \mapsto h (x))$ is a linear
  prevision, hence also a superlinear prevision, as one checks easily.
  It is also normalized, hence subnormalized.  The map $\eta^\DN_X$ is
  continuous, since the inverse image of $[h > r]$ is
  $h^{-1} (]r, \infty])$.

  Multiplication.  For every $h \in \Lform X$, the map
  $F \mapsto F (h)$ is lower semicontinuous, since the inverse image
  of $]r, \infty]$ is $[h > r]$.  Hence $\mathcal F (F \mapsto F (h))$
  makes sense.  The map
  $(h \in \Lform X \mapsto \mathcal F (F \mapsto F (h)))$ is
  Scott-continuous, because $\mathcal F$ is.  Additionally, when
  $\mathcal F$ is superlinear (resp., linear), and $F$ ranges over
  $\Pred_\DN^\bullet X$, resp.\ $\Pred_\Nature^\bullet X$, it is clear
  that $(h \in \Lform X \mapsto \mathcal F (F \mapsto F (h)))$ is also
  superlinear (resp., linear).  Hence $\mu^\DN_X$ is a superlinear
  (resp., linear) prevision.  When $\bullet$ is ``$\leq 1$'', namely
  if $\mathcal F$ is subnormalized and $F$ ranges over
  $\Pred_\DN^{\leq 1} X$, then for every $h \in \Lform X$,
  $\mu^\DN_X (\one+h) = \mathcal F (F \mapsto F (\one+h)) \leq
  \mathcal F (F \mapsto 1 + F (h)) \leq 1 + \mathcal F (F \mapsto F
  (h)) = 1+\mu^\DN_X (h)$, so $\mu^\DN_X (\mathcal F)$ is
  subnormalized; similarly, if $\mathcal F$ is normalized, then
  $\mu^\DN_X (\mathcal F)$ is normalized.  (And yes, the dependency on
  $\bullet$ should be made explicit in the notation $\mu^\DN_X$, but
  we feel that the notation is heavy enough.)  Finally, $\mu^\DN_X$ is
  continuous since the inverse image of $[h > r]$ is
  $[(F \mapsto F (h)) > r]$.

  The monad laws are easily verified.  In fact, they are verified
  exactly as for the continuation monad, which is defined with the
  same formulae.  \qed
\end{proof}

There is a homeomorphism between $\Pred_\Nature^\bullet X$ and
$\Val_\bullet X$, for every space $X$: in one direction,
$\nu \in \Val_\bullet X$ is mapped to
$(h \in \Lform X \mapsto \int_{x \in X} h (x) \,d\nu)$, and in the
other direction any $G \in \Pred_\Nature^\bullet X$ is mapped to
$(U \in \Open X \mapsto G (\chi_U))$, where $\chi_U$ is the
characteristic map of $U$.  We note in particular that
$\int_{x \in X} \chi_U (x) \,d\nu = \nu (U)$.  This was first proved
by Kirch \cite[Satz~8.1]{kirch93}, under the additional assumption
that $X$ is core-compact; Tix later observed that this assumption was
unnecessary \cite[Satz~4.16]{Tix:bewertung}.

\section{The weak distributive law}
\label{sec:weak-distr-law}

We will show that there is a weak distributive law of $\SV$ over
$\Val_\bullet$ on the whole category of topological spaces.
This will be the collection of maps ${\lambda^\sharp_X}$ given as follows.
\begin{proposition}[Corollary~12.6 of \cite{JGL:projlim:prev}]
  \label{prop:Phi}
  For every topological space $X$, there is a continuous map
  ${\lambda^\sharp_X}$ from $\Val_\bullet {\SV X}$ to $\SV {\Val_\bullet X}$,
  which maps every $\mu \in \Val_\bullet {\SV X}$ to the collection of
  continuous valuations $\nu \in \Val_\bullet X$ such that
  $\nu (U) \geq \mu (\Box U)$ for every $U \in \Open X$.
\end{proposition}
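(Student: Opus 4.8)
The plan is to present $\lambda^\sharp_X(\mu)$ as the set of linear previsions dominating a suitable superlinear prevision, and to read off every assertion from the known theory of such previsions. Given $\mu\in\Val_\bullet {\SV X}$, for $h\in\Lform X$ the map $Q\in\SV X\mapsto\min_{x\in Q}h(x)$ is well defined (a lower semicontinuous function attains its infimum on a non-empty compact set), and it lies in $\Lform {\SV X}$, since $\{Q\mid\min_{x\in Q}h(x)>t\}=\Box(h^{-1}(]t,\infty]))$: a compact $Q$ contained in $h^{-1}(]t,\infty])=\dcup_{s>t}h^{-1}(]s,\infty])$ must lie in some $h^{-1}(]s,\infty])$ with $s>t$. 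So I would set $F_\mu(h)\eqdef\int_{Q\in\SV X}\bigl(\min_{x\in Q}h(x)\bigr)\,d\mu$, and first check that $F_\mu\in\Pred_\DN^\bullet X$: positive homogeneity and Scott-continuity of $F_\mu$ follow from the same properties of $h\mapsto\min_{x\in Q}h(x)$ (compactness of $Q$ being used for Scott-continuity) and of integration; superlinearity from $\min_{x\in Q}(h+h')(x)\ge\min_{x\in Q}h(x)+\min_{x\in Q}h'(x)$ together with additivity of $h\mapsto\int h\,d\mu$; and $F_\mu(\one+h)=\int_{Q}1\,d\mu+F_\mu(h)=\mu(\SV X)+F_\mu(h)$, which yields subnormalization (resp.\ normalization) exactly when $\bullet$ is ``$\le1$'' (resp.\ ``$1$'').

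Next I would identify $\lambda^\sharp_X(\mu)$, the set in the statement, with $\{\nu\in\Val_\bullet X\mid\int h\,d\nu\ge F_\mu(h)\text{ for all }h\in\Lform X\}$, i.e.\ --- through the homeomorphism $\Pred_\Nature^\bullet X\cong\Val_\bullet X$ --- the set of linear previsions dominating $F_\mu$. Write $q(U)\eqdef\mu(\Box U)$, a monotone map $\Open X\to\creal$ ($U\mapsto\Box U$ is even Scott-continuous, members of $\SV X$ being compact). The Choquet formula for $\mu$, applied to $Q\mapsto\min_{x\in Q}h(x)$, together with the set identity above, gives
\[
  F_\mu(h)=\int_0^\infty\mu\bigl(\Box(h^{-1}(]t,\infty]))\bigr)\,dt=\int_0^\infty q(h^{-1}(]t,\infty]))\,dt ,
\]
the last expression being a legitimate Riemann integral since $q$ is monotone, namely the Choquet integral of $h$ against $q$ in the sense of the remark that the formula extends to monotone maps $\Open X\to\creal$. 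Consequently $\nu\ge q$ pointwise on $\Open X$ forces $\int h\,d\nu=\int_0^\infty\nu(h^{-1}(]t,\infty]))\,dt\ge F_\mu(h)$ for every $h\in\Lform X$, while conversely taking $h=\chi_U$ recovers $\nu(U)\ge\mu(\Box U)$, because $F_\mu(\chi_U)=q(U)$. Saturatedness of $\lambda^\sharp_X(\mu)$ then comes for free: if $\nu\le\nu'$ in the stochastic ordering and $\nu\ge q$ on opens, so is $\nu'$.

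The remaining points --- non-emptiness and compactness of $\lambda^\sharp_X(\mu)$ in $\Val_\bullet X$ --- are the heart of the matter, and here I would appeal to the theory of superlinear previsions: every superlinear prevision $F$ on an arbitrary topological space is dominated by some linear prevision, the set of linear previsions dominating it is compact and saturated in $\Val_\bullet X$, and $F(h)=\min\{\int h\,d\nu\mid\nu\ge F\}$ with the minimum attained (the dcpo case is \cite{Gou-csl07}; over $\Topcat$ see \cite{JGL-mscs16} and \cite{JGL:projlim:prev}, of which the present statement is Corollary~12.6). Non-emptiness is a Hahn--Banach/sandwich statement --- produce a lower semicontinuous linear functional above the concave, positively homogeneous $F$ on the cone $\Lform X$, kept lower semicontinuous and, when $\bullet\ne{}$, suitably (sub)normalized --- and compactness of the dominating set comes from a finite-intersection argument powered by a minimax theorem. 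I expect this to be the only genuine obstacle; the rest is bookkeeping.

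Finally, continuity of $\lambda^\sharp_X\colon\Val_\bullet {\SV X}\to\SV {\Val_\bullet X}$ falls out once it is written as the composite of $\mu\mapsto F_\mu$, the homeomorphism of $\Pred_\DN^\bullet X$ onto the space of non-empty compact saturated subsets of $\Val_\bullet X$ just invoked, and the subspace inclusion into $\SV {\Val_\bullet X}$. The first map is continuous because ${(\mu\mapsto F_\mu)}^{-1}([h>r])=\{\mu\mid\int_{Q}(\min_{x\in Q}h(x))\,d\mu>r\}=[h'>r]$ with $h'\eqdef(Q\in\SV X\mapsto\min_{x\in Q}h(x))\in\Lform {\SV X}$, which is open in $\Val_\bullet {\SV X}$. (If one prefers to avoid the homeomorphism: for open $\mathcal W\subseteq\Val_\bullet X$, compactness of $\lambda^\sharp_X(\mu)$ reduces the condition $\lambda^\sharp_X(\mu)\subseteq\mathcal W$ to $\lambda^\sharp_X(\mu)\subseteq[h_1>r_1]\cup\dots\cup[h_n>r_n]$ for finitely many $h_i,r_i$, and Sion's minimax rewrites this as ``$F_\mu(\sum_i a_ih_i)>\sum_i a_ir_i$ for some $a_i\ge0$ with $\sum_i a_i=1$'', an open condition on $\mu$ by the displayed identity.)
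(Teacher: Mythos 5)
Your proposal is correct and takes essentially the same route as the paper: you build $\Phi (\mu) = F_\mu$, check that it is a (sub)normalized superlinear prevision, identify the set in the statement with $s_\DN^\bullet (F_\mu)$ via the Choquet formula (cf.\ Fact~\ref{fact:lambda} and Remark~\ref{rem:Phi}), and delegate non-emptiness, compactness, saturation and the continuity of $s_\DN^\bullet$ to the same results of \cite{JGL-mscs16,JGL:projlim:prev} that the paper itself invokes, with your parenthetical minimax argument matching the paper's Lemma~\ref{lemma:sDP:inv:func}. One minor slip: $s_\DN^\bullet$ is a homeomorphism onto the subspace of \emph{convex} non-empty compact saturated subsets of $\Val_\bullet X$, not onto all of $\SV {\Val_\bullet X}$, but only its continuity is needed for your argument.
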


We will also need to know how this is proved.

There is a map
$r_\DN \colon \SV (\Val_\bullet X) \to \Pred_{\DN}^\bullet X$, defined
by
$r_\DN (Q) (h) \eqdef \min_{\nu \in Q} \allowbreak \int_{x \in X} h
(x)\,d\nu$, and a map
$s_\DN^\bullet \colon \Pred_{\DN}^\bullet X \to \SV (\Val_\bullet X)$
defined by
$s_\DN^\bullet (F) \eqdef \{\nu \in \Val_\bullet X \mid \forall h \in
\Lform X, \int_{x \in X} h (x) \,d\nu \geq F (h)\}$.  Both are
continuous, and they form a retraction:
$r_\DN \circ s_\DN^\bullet = \identity {\Pred_{\DN}^\bullet X}$.  This
is the contents of Proposition~3.22 of \cite{JGL-mscs16}, only
replacing the space $\Pred_\Nature^\bullet X$ used there by
$\Val_\bullet X$, since the two are homeomorphic.  Additionally,
$r_\DN$ and $s_\DN^\bullet$ are natural in $X$ \cite[Lemma
11.2]{JGL:projlim:prev}.

This retraction even cuts down to a homeomorphism between
$\SV^{cvx} (\Val_\bullet X)$ and $\Pred_\DN^\bullet X$
\cite[Theorem~4.15]{JGL-mscs16}, where the former denotes the subspace
of $\SV (\Val_\bullet X)$ consisting of convex non-empty compact
saturated subsets of $\Val_\bullet X$.  (A subset $A$ of the latter is
\emph{convex} if and only if for all $G_1, G_2 \in A$, for every
$r \in [0, 1]$, $r G_1 + (1-r) G_2 \in A$.)

Then, there is a continuous map
$\Phi \colon \Val_\bullet {\SV X} \to \Pred_\DN^\bullet X$ defined by
$\Phi (\mu) (h) \eqdef \int_{Q \in \SV X} \min_{x \in Q} h (x) \,d\mu$
for every $h \in \Lform X$ \cite[Lemma 12.5]{JGL:projlim:prev}, and
the proof of Corollary~12.6 of \cite{JGL:projlim:prev} builds
${\lambda^\sharp_X}$ as $s_\DN^\bullet \circ \Phi$.
\begin{fact}
  \label{fact:lambda}
  ${\lambda^\sharp_X} = s_\DN^\bullet \circ \Phi$.
\end{fact}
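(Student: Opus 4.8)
The plan is to prove the set equality that lies behind the asserted identity, rather than merely quoting \cite[Corollary~12.6]{JGL:projlim:prev}: namely, that for every $\mu \in \Val_\bullet {\SV X}$ the collection
\[ A_\mu \eqdef \{\nu \in \Val_\bullet X \mid \nu (U) \geq \mu (\Box U) \text{ for every } U \in \Open X\} \]
described in Proposition~\ref{prop:Phi} coincides with $s_\DN^\bullet (\Phi (\mu))$. Since Proposition~\ref{prop:Phi} guarantees that $\lambda^\sharp_X$ lands in $\SV {\Val_\bullet X}$, and $s_\DN^\bullet \circ \Phi$ does so by construction of $s_\DN^\bullet$, it suffices to check this as an equality of subsets of $\Val_\bullet X$. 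Unfolding the definitions of $s_\DN^\bullet$ and of $\Phi$,
\[ s_\DN^\bullet (\Phi (\mu)) = \Bigl\{\nu \in \Val_\bullet X \;\Bigm|\; \int_{x \in X} h (x) \,d\nu \geq \int_{Q \in \SV X} \min_{x \in Q} h (x) \,d\mu \text{ for all } h \in \Lform X\Bigr\}, \]
so I must show that a continuous valuation $\nu$ satisfies $\nu (U) \geq \mu (\Box U)$ for all open $U$ if and only if it satisfies the displayed integral inequality for all $h \in \Lform X$.

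For the implication from integrals to open sets, I would specialise $h$ to the characteristic map $\chi_U$ of an open set $U$ (which indeed lies in $\Lform X$). Then $\int_{x \in X} \chi_U (x) \,d\nu = \nu (U)$, and for every $Q \in \SV X$ we have $\min_{x \in Q} \chi_U (x) = \chi_{\Box U} (Q)$, because $\min_{x \in Q} \chi_U (x) = 1$ precisely when $Q \subseteq U$; hence the right-hand integral is $\mu (\Box U)$ and we recover $\nu (U) \geq \mu (\Box U)$.

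For the converse, the key remark is that for $h \in \Lform X$ and $t \in \Rp$ the set $h^{-1} (]t, \infty])$ is open, and, since a lower semicontinuous function attains its infimum on any non-empty compact set, $\min_{x \in Q} h (x) > t$ holds exactly when $Q \subseteq h^{-1} (]t, \infty])$; therefore
\[ \Bigl\{Q \in \SV X \;\Bigm|\; \min_{x \in Q} h (x) > t\Bigr\} = \Box {h^{-1} (]t, \infty])}. \]
As a by-product this re-establishes the lower semicontinuity of $Q \mapsto \min_{x \in Q} h (x)$, hence the well-definedness of $\Phi (\mu)$. Applying the Choquet formula to both integrals and then the hypothesis $\nu \geq \mu \circ \Box$ at each level $t$,
\[ \int_{Q \in \SV X} \min_{x \in Q} h (x) \,d\mu = \int_0^\infty \mu (\Box {h^{-1} (]t, \infty])}) \,dt \leq \int_0^\infty \nu (h^{-1} (]t, \infty])) \,dt = \int_{x \in X} h (x) \,d\nu, \]
which closes the argument. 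The only step needing genuine attention is this last one, and even there the work reduces to the level-set identity above plus monotonicity of the indefinite Riemann integral, so I do not anticipate any real obstacle: the statement is essentially a repackaging of the construction of $\lambda^\sharp_X$ recalled just above.
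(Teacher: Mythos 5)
Your proof is correct, but it takes a different route from the paper. In the paper, Fact~\ref{fact:lambda} is true essentially by construction: the map $\lambda^\sharp_X$ of Proposition~\ref{prop:Phi} is imported from \cite[Corollary~12.6]{JGL:projlim:prev}, whose proof literally builds it as $s_\DN^\bullet \circ \Phi$, so the paper records the identity as a citation rather than re-deriving it. You instead take the description of $\lambda^\sharp_X (\mu)$ in Proposition~\ref{prop:Phi} (the set of $\nu$ with $\nu (U) \geq \mu (\Box U)$ for all $U \in \Open X$) as the definition and verify pointwise that it coincides with $s_\DN^\bullet (\Phi (\mu))$: one direction by specializing $h$ to $\chi_U$ and using $\min_{x \in Q} \chi_U (x) = \chi_{\Box U} (Q)$, the other via the level-set identity $\{Q \mid \min_{x \in Q} h (x) > t\} = \Box {h^{-1} (]t, \infty])}$ (which needs, and you correctly invoke, attainment of the infimum of a lower semicontinuous map on a non-empty compact set---this is Fact~\ref{fact:Phi:cont} in the paper) together with the Choquet formula and monotone comparison of the level integrals. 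All steps are sound, and in effect you have also proved the first assertion of Remark~\ref{rem:Phi}, which the paper states without proof. What the paper's route buys is brevity, since the external construction does the work; what yours buys is a self-contained argument that makes explicit the equivalence between the open-set characterization ($\nu \geq \mu \circ \Box$ on opens) and the integral characterization ($\int h \,d\nu \geq \Phi (\mu) (h)$ for all $h \in \Lform X$), at the mild cost of relying on Proposition~\ref{prop:Phi} for the fact that the resulting set is a non-empty compact saturated subset of $\Val_\bullet X$, which your argument does not reprove.
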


\begin{remark}
  \label{rem:Phi}
  In other words, for every $\mu \in \Val_\bullet {\SV X}$,
  $\lambda^\sharp_X (\mu)$ is also the set $\{\nu \in \Val_\bullet X
  \mid \forall h \in \Lform X, \int_{x \in X} h (x) \,d\nu \geq
  \int_{Q \in \SV X} \min_{x \in Q} h (x) \,d\mu\}$.

  In order to illustrate what ${\lambda^\sharp_X}$ does, let us look
  at the special case where
  $\mu \eqdef \sum_{i=1}^n a_i \delta_{Q_i}$, where $n \geq 1$,
  $Q_i \eqdef \upc E_i$, and where each set $E_i$ is non-empty and
  finite.  For every $h \in \Lform X$,
  $\Phi (\mu) (h) = \int_{Q \in \SV X} \min_{x \in Q} h (x) \,d\mu =
  \sum_{i=1}^n a_i \min_{x_i \in Q_i} h (x_i) = \sum_{i=1}^n a_i
  \min_{x_i \in E_i} h (x_i) = \min_{x_1 \in E_1, \cdots, x_n \in E_n}
  \sum_{i=1}^n a_i h (x_i) = \min_{x_1 \in E_1, \cdots, x_n \in E_n}
  \allowbreak \int_{x \in X} h (x) \,d\sum_{i=1}^n a_i \delta_{x_i}$,
  so $\Phi = r_\DN (\upc \mathcal E)$ where $\mathcal E$ is the finite
  set
  $\{\sum_{i=1}^n a_i \delta_{x_i} \mid x_1 \in E_1, \cdots, x_n \in
  E_n\}$.  By \cite[Proposition~4.20]{JGL-mscs16},
  $s_\DN^\bullet \circ r_\DN$ maps every compact saturated subset $Q$
  of $\Val_\bullet X$ to its compact saturated convex hull, namely to
  $\upc conv (Q)$, where $conv$ denotes convex hull.  Hence
  ${\lambda^\sharp_X} (\mu) = \upc conv (\{\sum_{i=1}^n a_i
  \delta_{x_i} \mid x_1 \in E_1, \cdots, x_n \in E_n\})$.
\end{remark}

All those maps are continuous.  For $\Phi$, we have the following.
\begin{fact}
  \label{fact:Phi:cont}
  For all $h \in \Lform X$ and $r \in \Rp$,
  $\Phi^{-1} ([h > r]) = [h^* > r]$ where $h^* (Q)$ is defined as
  $\min_{x \in Q} h (x)$ for every $Q \in \SV X$.  The map $h^*$ is in
  $\Lform {\SV X}$, since for every $t \in \Rp$,
  ${h^*}^{-1} (]t, \infty]) = \Box {h^{-1} (]t, \infty])}$.
\end{fact}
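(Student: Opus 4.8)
The heart of the statement is that the map $h^* \colon \SV X \to \creal$, $h^* (Q) \eqdef \min_{x \in Q} h (x)$, is lower semicontinuous; once that is established, both displayed claims reduce to unwinding definitions. The plan is therefore: (i) check that $h^*$ is well defined, (ii) compute ${h^*}^{-1} (]t, \infty])$, which yields $h^* \in \Lform {\SV X}$, and (iii) combine this with the defining formula for $\Phi$. For (i): since $h$ is lower semicontinuous, each set $\{x \in X \mid h (x) \leq a\}$ is closed, so, writing $m$ for $\inf_{x \in Q} h (x)$ and intersecting with $Q$, the family $(\{x \in Q \mid h (x) \leq a\})_{a > m}$ is a filtered family of non-empty closed subsets of the compact space $Q$; by compactness its intersection $\{x \in Q \mid h (x) = m\}$ is non-empty, so the infimum is attained and $\min_{x \in Q} h (x)$ makes sense. (This is a side remark only; one could equally read every $\min$ as $\inf$ throughout.)

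For step (ii), the only part requiring real work, I would prove ${h^*}^{-1} (]t, \infty]) = \Box {h^{-1} (]t, \infty])}$ for every $t \in \Rp$. The left-to-right inclusion is pure monotonicity: if $h^* (Q) > t$, then every $x \in Q$ satisfies $h (x) \geq h^* (Q) > t$, so $Q \subseteq h^{-1} (]t, \infty])$. For the reverse inclusion, the key observation is that $h^{-1} (]t, \infty]) = \dcup_{s > t} h^{-1} (]s, \infty])$ is a \emph{directed} union of open subsets of $X$; hence if $Q \subseteq h^{-1} (]t, \infty])$, then compactness of $Q$ gives some $s > t$ with $Q \subseteq h^{-1} (]s, \infty])$, whence $h (x) > s$ for all $x \in Q$ and therefore $h^* (Q) \geq s > t$. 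Since $h^{-1} (]t, \infty])$ is open in $X$, the set $\Box {h^{-1} (]t, \infty])}$ is a basic open of $\SV X$ by definition of the upper Vietoris topology; and since the sets $]t, \infty]$ (for $t \in \Rp$), together with $\emptyset$ and $\creal$, are precisely the open sets of the Scott topology of $\creal$, this shows $h^* \in \Lform {\SV X}$.

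Finally, for step (iii), I would combine the above with the defining formula $\Phi (\mu) (h) = \int_{Q \in \SV X} h^* (Q) \,d\mu$ for $\Phi$ (meaningful precisely because $h^* \in \Lform {\SV X}$): for $h \in \Lform X$ and $r \in \Rp$, letting $\mu$ range over $\Val_\bullet {\SV X}$, one has $\Phi^{-1} ([h > r]) = \{\mu \mid \Phi (\mu) (h) > r\} = \{\mu \mid \int_{Q \in \SV X} h^* (Q) \,d\mu > r\} = [h^* > r]$, where the first equality is the definition of the subbasic open $[h > r]$ of $\Pred_\DN^\bullet X$, the second is the formula for $\Phi$, and the last is the definition of the subbasic open $[h^* > r]$ of $\Val_\bullet {\SV X}$ recalled in Section~\ref{sec:preliminaries}. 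The main obstacle is the reverse inclusion in step (ii): it genuinely uses that the elements of $\SV X$ are compact (the implication fails for arbitrary subsets), and the directed-union trick is the way to exploit that; everything else is bookkeeping with the definitions of $\Box$, of $\Phi$, and of the sets $[\,\cdot > \cdot\,]$.
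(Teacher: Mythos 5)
Your proposal is correct and matches what the paper does: the paper records this as a Fact whose justification is exactly the preimage identity ${h^*}^{-1}(]t,\infty]) = \Box{h^{-1}(]t,\infty])}$ together with the defining formula $\Phi(\mu)(h) = \int_{Q \in \SV X} h^*(Q)\,d\mu$, and your steps (ii) and (iii) simply supply the routine details (compactness of $Q$ for the reverse inclusion, unwinding the subbasic opens $[h>r]$ and $[h^*>r]$). Nothing is missing; the well-definedness remark in (i) is a harmless extra.
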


\begin{fact}
  \label{fact:r:cont}
  For every $h \in \Lform X$, ${(r_\DN)}^{-1} ([h > 1]) = \Box {[h > 1]}$.
\end{fact}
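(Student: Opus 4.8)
Fact~\ref{fact:r:cont} is proved by unfolding both sides of the claimed equality. By definition of $r_\DN$, a non-empty compact saturated subset $Q$ of $\Val_\bullet X$ lies in ${(r_\DN)}^{-1} ([h > 1])$ exactly when $\min_{\nu \in Q} \int_{x \in X} h (x)\,d\nu > 1$. On the other hand, writing $[h > 1]$ for the open subset $\{\nu \in \Val_\bullet X \mid \int_{x \in X} h (x)\,d\nu > 1\}$ of $\Val_\bullet X$ (it is open, since the sets $[h > r]$, $h \in \Lform X$, $r \in \Rp$, form a base of the weak topology on $\Val_\bullet X$), the same $Q$ lies in $\Box {[h > 1]}$ exactly when $Q \subseteq [h > 1]$, i.e.\ when $\int_{x \in X} h (x)\,d\nu > 1$ for every $\nu \in Q$. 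So it suffices to show that $\min_{\nu \in Q} \int_{x \in X} h (x)\,d\nu > 1$ if and only if $\int_{x \in X} h (x)\,d\nu > 1$ for every $\nu \in Q$.

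The left-to-right implication is immediate, since for every $\nu \in Q$ we have $\int_{x \in X} h (x)\,d\nu \geq \min_{\nu' \in Q} \int_{x \in X} h (x)\,d\nu'$. For the converse, I would argue by compactness rather than rely on the (true, but here unnecessary) fact that the infimum defining $r_\DN (Q) (h)$ is attained. Assume $\int_{x \in X} h (x)\,d\nu > 1$ for every $\nu \in Q$. The family $\{[h > r] \mid r > 1\}$ is a directed family of open subsets of $\Val_\bullet X$ (directed because $[h > r] \supseteq [h > r']$ whenever $r \leq r'$) that covers $Q$: every $\nu \in Q$ has $\int_{x \in X} h (x)\,d\nu > 1$, hence $\int_{x \in X} h (x)\,d\nu > r$ for some $r > 1$. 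By compactness of $Q$, finitely many of these sets cover $Q$, hence by directedness a single one does, say $Q \subseteq [h > r_0]$ with $r_0 > 1$. Then $\int_{x \in X} h (x)\,d\nu > r_0$ for every $\nu \in Q$, so $r_\DN (Q) (h) = \min_{\nu \in Q} \int_{x \in X} h (x)\,d\nu \geq r_0 > 1$.

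I do not expect any real obstacle; the only subtle point is the treatment of the ``$\min$'' in the definition of $r_\DN$, which the directed-cover argument sidesteps entirely. (As an alternative, one could observe that $\nu \mapsto \int_{x \in X} h (x)\,d\nu$ is lower semicontinuous on $\Val_\bullet X$ and that a lower semicontinuous map into $\creal$ attains its infimum on a compact set---via the finite intersection property applied to its closed sublevel sets $\{\nu \mid \int_{x \in X} h (x)\,d\nu \leq t\}$---after which the converse is as trivial as the forward implication.)
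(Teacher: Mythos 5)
Your proposal is correct. The paper states this as a \emph{Fact} with no proof at all: it is regarded as immediate from the definitions, because $r_\DN(Q)(h)$ is by definition $\min_{\nu \in Q} \int_{x \in X} h(x)\,d\nu$, and the minimum is genuinely attained (this is built into the definition of $r_\DN$, coming from Proposition~3.22 of the cited paper, where compactness of $Q$ and lower semicontinuity of $\nu \mapsto \int_{x \in X} h(x)\,d\nu$ guarantee attainment), so that $\min_{\nu \in Q} \int_{x \in X} h(x)\,d\nu > 1$ holds if and only if every $\nu \in Q$ satisfies $\int_{x \in X} h(x)\,d\nu > 1$, i.e.\ $Q \subseteq [h>1]$. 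Your unfolding of both sides is exactly that intended argument; your directed-cover compactness argument for the converse direction is a correct, self-contained way of avoiding any appeal to attainment of the minimum (and even yields the slightly stronger conclusion $Q \subseteq [h>r_0]$ for some $r_0>1$), but it buys nothing essential here, since attainment is already part of what makes $r_\DN$ well defined. Either way, the statement and your justification are sound.
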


The proof that $s_\DN^\bullet$ is continuous is complex
\cite[Lemma~3.20]{JGL-mscs16}.  We produce a simpler proof in
Lemma~\ref{lemma:sDP:inv:func} below, which additionally describes the
shape of inverse images of (sub)basic open sets by that map.  We note
that a base of open subsets of $\SV (\Val_\bullet X)$ is given by sets
of the form
$\Box {\bigcup_{i \in I} \bigcap_{j=1}^{n_i} [U_{ij} > r_{ij}]}$.
Since $\Box$ commutes with directed unions and
$\bigcup_{i \in I} \mathcal U_i = \dsup_{J \in \Pfin (I)} \bigcup_{i
  \in J} \mathcal U_i$, since finite unions and finite intersections
distribute, and since $\Box$ commutes with finite intersections, a
simpler base is given by open subsets of the form
$\Box {\bigcup_{i=1}^n [U_i > r_i]}$, where each $U_i$ is open in $X$
and $r_i \in \Rp$.  More generally, we consider open subsets of the
form $\Box {\bigcup_{i=1}^n [h_i > r_i]}$ where each $h_i$ is in
$\Lform X$ and $r_i \in \Rp \diff \{0\}$.  We may even restrict them
to $n \geq 1$, since $\Box \emptyset$ is empty.  Since
$[h_i > r_i] = [h_i/r_i > 1]$, we also restrict $r_i$ to be equal to
$1$.  Let
% $\Delta_n \eqdef \{(t_0, t_1, \cdots, t_n) \in \Rp^n \mid \sum_{i=0}^n
% t_i = 1\}$ be the standard $n$-simplex, and let
$\Delta_n \eqdef \{(t_1, \cdots, t_n) \in \Rp^n \mid \sum_{i=1}^n
t_i=1\}$.  We write $\vec a$ for the elements of $\Delta_n$;
% or $\Delta_n^+$, 
and $a_i$ for component number $i$ of $\vec a$.
\begin{lemma}
  \label{lemma:sDP:inv:func}
  For every $n \geq 1$, for all $h_1, \cdots, h_n \in \Lform X$,
%  for all $r_1, \cdots, r_n \in \Rp \diff \{0\}$,
  \begin{align*}
    {(s_\DN^\bullet)}^{-1} (\Box {(\bigcup_{i=1}^n [h_i > 1])})
    & = \bigcup_{\vec a \in \Delta_n} \left[\sum_{i=1}^n a_i h_i >
      1\right].
    % & \text{(if $\bullet$ is nothing or ``$\leq 1$'')} \\
    % {(s_\DN^\ast)}^{-1} (\Box {(\bigcup_{i=1}^n [h_i > 1])})
    % & = \bigcup_{\vec a \in \Delta_n} \left[a_0 + \sum_{i=1}^n a_i h_i >
    %   1\right]
    % & \text{(if $\bullet$ is ``$1$'')}.
  \end{align*}
\end{lemma}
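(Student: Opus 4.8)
The plan is to turn the claimed identity into a statement about a single superlinear prevision and then to read it off from a minimax theorem. Fix $F\in\Pred_{\DN}^\bullet X$. By the definitions of $s_\DN^\bullet$ and of $\Box$, $F$ belongs to the left-hand set exactly when every $\nu\in s_\DN^\bullet(F)$ lies in some $[h_i>1]$, i.e.\ when $\max_{1\leq i\leq n}\int_{x\in X}h_i(x)\,d\nu>1$ for all $\nu\in s_\DN^\bullet(F)$. Each map $\nu\mapsto\int_{x\in X}h_i(x)\,d\nu$ is lower semicontinuous on $\Val_\bullet X$ (the sets $[h_i>r]$ being open), and $s_\DN^\bullet(F)$ is non-empty and compact, so $m_F\eqdef\min_{\nu\in s_\DN^\bullet(F)}\max_{1\leq i\leq n}\int_{x\in X}h_i(x)\,d\nu$ is attained; membership of $F$ in the left-hand set is then equivalent to $m_F>1$. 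On the other hand, $F$ belongs to the right-hand set precisely when $\sup_{\vec a\in\Delta_n}F\bigl(\sum_{i=1}^n a_ih_i\bigr)>1$. Hence the lemma reduces to the identity
\[
  m_F=\sup_{\vec a\in\Delta_n}F\Bigl(\sum_{i=1}^n a_ih_i\Bigr).
\]

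One inequality is elementary and already yields the inclusion $\supseteq$. For $\vec a\in\Delta_n$ and $\nu\in s_\DN^\bullet(F)$ we have $\int_{x\in X}\bigl(\sum_i a_ih_i\bigr)(x)\,d\nu\geq F\bigl(\sum_i a_ih_i\bigr)$ by the very definition of $s_\DN^\bullet$, while $\int_{x\in X}\bigl(\sum_i a_ih_i\bigr)(x)\,d\nu=\sum_i a_i\int_{x\in X}h_i(x)\,d\nu\leq\max_{1\leq i\leq n}\int_{x\in X}h_i(x)\,d\nu$ since $\sum_i a_i=1$; minimising over $\nu$ and taking the supremum over $\vec a$ gives $\sup_{\vec a}F\bigl(\sum_i a_ih_i\bigr)\leq m_F$.

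For the reverse inequality I use the retraction $r_\DN\circ s_\DN^\bullet=\identity{\Pred_{\DN}^\bullet X}$ recalled above, which unwinds to $F(g)=\min_{\nu\in s_\DN^\bullet(F)}\int_{x\in X}g(x)\,d\nu$ for every $g\in\Lform X$. Writing $Q\eqdef s_\DN^\bullet(F)$ and $f(\nu,\vec a)\eqdef\int_{x\in X}\bigl(\sum_i a_ih_i\bigr)(x)\,d\nu$, this says $\sup_{\vec a}F\bigl(\sum_i a_ih_i\bigr)=\sup_{\vec a\in\Delta_n}\min_{\nu\in Q}f(\nu,\vec a)$, whereas $m_F=\min_{\nu\in Q}\sup_{\vec a\in\Delta_n}f(\nu,\vec a)$ because $\sup_{\vec a\in\Delta_n}\sum_i a_it_i=\max_i t_i$ for $t_1,\dots,t_n\in\creal$. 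Now $Q$ is compact; it is convex, in that $r\nu+(1-r)\nu'\in Q$ for $\nu,\nu'\in Q$ and $r\in[0,1]$ (the defining inequalities of $s_\DN^\bullet(F)$ pass to mixtures, and $r\nu+(1-r)\nu'$ is still in $\Val_\bullet X$); for each $\vec a$, $f(\cdot,\vec a)$ is lower semicontinuous and affine for these mixtures; and for each $\nu$, $f(\nu,\cdot)$ is continuous and affine on the simplex $\Delta_n$. A minimax theorem applicable to this situation---one that asks only for compactness of $Q$ and the two affine structures, the role usually played by a linear topology being taken over by the genuine mixtures of valuations and the barycentres of $\Delta_n$ (a Ky Fan type statement, or an application of the KKM principle)---gives $\min_{\nu\in Q}\sup_{\vec a}f=\sup_{\vec a}\min_{\nu\in Q}f$, that is $m_F=\sup_{\vec a}F\bigl(\sum_i a_ih_i\bigr)$, and the lemma follows.

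I expect the minimax step to be the only real difficulty, and it calls for two precautions. First, $\Val_\bullet X$ carries no vector-space topology and is in general not even Hausdorff, so one should either invoke a purely topological minimax theorem, or first push everything forward along $\nu\mapsto\bigl(\int h_1\,d\nu,\dots,\int h_n\,d\nu\bigr)$ and work with the resulting compact, convex, saturated subset of $\creal^n$. Second, the functionals take values in $\creal$; the case $m_F=\infty$ is immediate, and otherwise one reduces to the real-valued case by replacing each $h_i$ with $\min(h_i,N)$, applying the minimax theorem for each $N$, and letting $N\to\infty$, monotone convergence of the integrals together with a Dini-type argument on the compact set $Q$ ensuring that the two sides converge to $m_F$ and to $\sup_{\vec a}F\bigl(\sum_i a_ih_i\bigr)$ respectively.
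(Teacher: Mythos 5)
Your proposal is correct and follows essentially the same route as the paper: the easy inclusion comes from the defining inequalities of $s_\DN^\bullet$ together with the convex-combination bound, and the hard inclusion from swapping $\min_{\nu}$ and $\sup_{\vec a}$ for $f(\nu,\vec a)=\int_{x\in X}\sum_{i=1}^n a_ih_i(x)\,d\nu$ on the compact convex set $s_\DN^\bullet(F)$, using $F=r_\DN(s_\DN^\bullet(F))$. The only difference is that the paper's minimax step is covered directly by the non-Hausdorff Minimax Theorem~3.3 (with Remark~3.4) of \cite{JGL-minimax17}, which applies to lower semicontinuous, $\creal$-valued payoffs on compact (not necessarily Hausdorff) spaces whose payoff preserves pairwise linear combinations in each argument, so the precautions you list (pushing forward to $\creal^n$, truncating the $h_i$) are unnecessary.
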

\begin{proof}
  % We start with the case where $\bullet$ is nothing or ``$\leq 1$''.
  %
  Let
  $F \in \bigcup_{\vec a \in \Delta_n} \left[\sum_{i=1}^n a_i h_i >
    1\right]$.  Then $F (\sum_{i=1}^n a_i h_i) > 1$ for some
  $\vec a \in \Delta_n$.  For every $\nu \in s_\DN^\bullet (F)$, we
  then have
  $\int_{x \in X} \sum_{i=1}^n a_i h_i (x) \,d\nu \geq F (\sum_{i=1}^n
  a_i h_i) > 1$.  It follows that $\int_{i=1}^n h_i (x) \,d\nu > 1$
  for some $i \in \{1, \cdots, n\}$: otherwise
  $\int_{i=1}^n h_i (x) \,d\nu \leq 1$ for every $i$, so
  $\int_{x \in X} \sum_{i=1}^n a_i h_i (x) \,d\nu \leq 1$.  Since
  $\nu$ is arbitrary in $s_\DN^\bullet (F)$,
  $s_\DN^\bullet (F) \subseteq \bigcup_{i=1}^n [h_i > 1]$, so
  $F \in {(s_\DN^\bullet)}^{-1} (\Box {(\bigcup_{i=1}^n [h_i > 1])})$.

  Conversely, let
  $F \in {(s_\DN^\bullet)}^{-1} (\Box {(\bigcup_{i=1}^n [h_i > r_i])})$, and let
  us define $f \colon s_\DN^\bullet (F) \times \Delta_n \to \creal$ by
  $f (\nu, \vec a) \eqdef \int_{x \in X} \sum_{i=1}^n a_i h_i (x)
  \,d\nu$.  The space $s_\DN^\bullet (F)$, seen as a subspace of
  $\Val_\bullet X$, is non-empty and compact.  The map
  $f (\_, \vec a)$ is lower semicontinuous for every
  $\vec a \in \Delta_n$, since
  $f (\_, \vec a)^{-1} (]t, \infty]) = s_\DN^\bullet (F) \cap
  [\sum_{i=1}^n a_i h_i > t]$ for every $t \in \Rp$.  The Minimax
  Theorem~3.3 of \cite{JGL-minimax17} then says that: $(*)$
  $\sup_{\vec a \in \Delta_n} \inf_{\nu \in s_\DN^\bullet (F)} f (\nu,
  \vec a) = \min_{\nu \in s_\DN^\bullet (F)} \sup_{\vec a \in
    \Delta_n} f (\nu, \vec a)$ (and the $\min$ on the right is
  attained), provided we can show that $f$ is closely convex in its
  first argument and closely concave in its second argument.  Whatever
  the latter mean, Remark~3.4 of \cite{JGL-minimax17} tells us that
  maps that preserve pairwise linear combinations of their first
  arguments (with coefficients $a$ and $1-a$, $a \in [0, 1]$) are
  convex hence closely convex in their first argument, and similarly
  with ``concave'' instead of ``convex''; and it is clear that $f$ is
  linear in both its arguments.
  
  We recall that $F = r_\DN (s_\DN^\bullet (F))$, so
  $F (h) = \min_{\nu \in s_\DN^\bullet (F)} \int_{x \in X} h (x)
  \,d\nu$ for every $h \in \Lform X$.  Since
  $F \in {(s_\DN^\bullet)}^{-1} (\Box {(\bigcup_{i=1}^n [h_i > r_i])})$, for every
  $\nu \in s_\DN^\bullet (F)$, there is an $i \in \{1, \cdots, n\}$
  such that $\int_{x \in X} h_i (x) \,d\nu > 1$.  In particular, there
  is an $\vec a \in \Delta_n$ such that
  $\int_{x \in X} \sum_{i=1}^n a_i h_i (x) \,d\nu > 1$.  Therefore
  $\min_{\nu \in s_\DN^\bullet (F)} \sup_{\vec a \in \Delta_n} f (\nu,
  \vec a) > 1$.  By $(*)$,
  $\sup_{\vec a \in \Delta_n} \inf_{\nu \in s_\DN^\bullet (F)} f (\nu,
  \vec a) > 1$, so there is an $\vec a \in \Delta_n$ such that
  $\inf_{\nu \in s_\DN^\bullet (F)} \int_{x \in X} \sum_{i=1}^n a_i
  h_i (x) \,d\nu > 1$, namely such that
  $r_\DN (s_\DN^\bullet (F)) (\sum_{i=1}^n a_i h_i) > 1$, or
  equivalently such that $F (\sum_{i=1}^n a_i h_i) > 1$.  We conclude
  that
  $F \in \bigcup_{\vec a \in \Delta_n} \left[\sum_{i=1}^n a_i h_i >
    1\right]$.  \qed
\end{proof}

\begin{lemma}
  \label{lemma:lambda:inv:func}
  For every $n \geq 1$, for all $h_1, \cdots, h_n \in \Lform X$,
%  for all $r_1, \cdots, r_n \in \Rp \diff \{0\}$,
  \begin{align*}
    {\lambda^\sharp_X}^{-1} (\Box {(\bigcup_{i=1}^n [h_i > 1])})
    & = \bigcup_{\vec a \in \Delta_n} \left[(\sum_{i=1}^n a_i h_i)^* >
      1\right].
    % & \text{(if $\bullet$ is nothing or ``$\leq 1$'')} \\
    % {\lambda^\sharp_X}^{-1} (\Box {(\bigcup_{i=1}^n [h_i > 1])})
    % & = \bigcup_{\vec a \in \Delta_n} \left[(a_0 + \sum_{i=1}^n a_i h_i)^* >
    %   1\right]
    % & \text{(if $\bullet$ is ``$1$'')}.
  \end{align*}
\end{lemma}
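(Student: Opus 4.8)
The plan is to decompose $\lambda^\sharp_X$ exactly as in Fact~\ref{fact:lambda}, writing $\lambda^\sharp_X = s_\DN^\bullet \circ \Phi$, so that ${\lambda^\sharp_X}^{-1}(W) = \Phi^{-1}\big((s_\DN^\bullet)^{-1}(W)\big)$ for every open subset $W$ of $\SV(\Val_\bullet X)$. Applying this with $W \eqdef \Box{(\bigcup_{i=1}^n [h_i > 1])}$, the first step is to invoke Lemma~\ref{lemma:sDP:inv:func}, which gives $(s_\DN^\bullet)^{-1}(W) = \bigcup_{\vec a \in \Delta_n} [\sum_{i=1}^n a_i h_i > 1]$, an open subset of $\Pred_\DN^\bullet X$.

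The second step is to push this through $\Phi^{-1}$. Since preimages commute with arbitrary unions, ${\lambda^\sharp_X}^{-1}(W) = \bigcup_{\vec a \in \Delta_n} \Phi^{-1}\big([\sum_{i=1}^n a_i h_i > 1]\big)$. For each fixed $\vec a \in \Delta_n$, the function $\sum_{i=1}^n a_i h_i$ is a non-negative linear combination of maps in $\Lform X$, hence is itself lower semicontinuous, i.e.\ lies in $\Lform X$; therefore Fact~\ref{fact:Phi:cont} applies with $\sum_{i=1}^n a_i h_i$ in place of $h$ and $r = 1$, yielding $\Phi^{-1}\big([\sum_{i=1}^n a_i h_i > 1]\big) = [(\sum_{i=1}^n a_i h_i)^* > 1]$, where $(\sum_{i=1}^n a_i h_i)^*(Q) = \min_{x \in Q} \sum_{i=1}^n a_i h_i(x)$ is the same $(-)^*$ operation used in the statement.

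Combining the two steps gives precisely the asserted identity. I do not expect any real obstacle: the whole argument is the composition of two results already in hand, and the only points worth spelling out are that each $\sum_{i=1}^n a_i h_i$ belongs to $\Lform X$ (so that Fact~\ref{fact:Phi:cont} is applicable to it) and that taking preimages distributes over the union indexed by $\Delta_n$.
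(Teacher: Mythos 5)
Your proposal is correct and is exactly the argument the paper intends (and leaves implicit): decompose $\lambda^\sharp_X = s_\DN^\bullet \circ \Phi$ via Fact~\ref{fact:lambda}, apply Lemma~\ref{lemma:sDP:inv:func} to compute ${(s_\DN^\bullet)}^{-1}$ of the basic open set, and then apply Fact~\ref{fact:Phi:cont} to each $\sum_{i=1}^n a_i h_i \in \Lform X$, using that preimages commute with the union over $\Delta_n$. No gaps.
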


When $n=1$, there is just one element in $\Delta_n$, and that is
$(1)$.  We obtain the following as a special case.
\begin{corollary}
  \label{corl:lambda:inv:func:spec}
  For every $h \in \Lform X$,
  \begin{align*}
    {(s_\DN^\bullet)}^{-1} (\Box {[h > 1]}) & = [h > 1] \\
    {\lambda^\sharp_X}^{-1} (\Box {[h > 1]}) & = [h^* > 1].
  \end{align*}
\end{corollary}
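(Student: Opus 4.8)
The plan is to obtain both lines by specializing Lemma~\ref{lemma:sDP:inv:func} and Lemma~\ref{lemma:lambda:inv:func} to $n=1$. First I would note that $\Delta_1 = \{(t_1) \in \Rp \mid t_1 = 1\}$ is the singleton $\{(1)\}$, so for $n=1$ and $h_1 = h$ the only coefficient vector is $\vec a = (1)$; consequently $\sum_{i=1}^n a_i h_i = h$ and $(\sum_{i=1}^n a_i h_i)^* = h^*$, and the indexed unions on the right-hand sides of the two lemmas each collapse to a single basic open set. Lemma~\ref{lemma:sDP:inv:func} then yields ${(s_\DN^\bullet)}^{-1}(\Box{[h>1]}) = [h>1]$, and Lemma~\ref{lemma:lambda:inv:func} yields ${\lambda^\sharp_X}^{-1}(\Box{[h>1]}) = [h^*>1]$, which are exactly the two assertions.

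I would also record a second, self-contained route to the second line that bypasses Lemma~\ref{lemma:lambda:inv:func}: by Fact~\ref{fact:lambda}, ${\lambda^\sharp_X} = s_\DN^\bullet \circ \Phi$, hence ${\lambda^\sharp_X}^{-1}(\Box{[h>1]}) = \Phi^{-1}\bigl({(s_\DN^\bullet)}^{-1}(\Box{[h>1]})\bigr)$; using the first identity just established, this is $\Phi^{-1}([h>1])$, and by Fact~\ref{fact:Phi:cont} with $r=1$ this equals $[h^*>1]$. This is worth including because it shows the $n=1$ case rests only on the easy retraction facts and on Fact~\ref{fact:Phi:cont}, and does not need the Minimax argument that drives the general Lemma~\ref{lemma:lambda:inv:func}.

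I do not anticipate a real obstacle: the substantive work lives in Lemma~\ref{lemma:sDP:inv:func} (already proved above via the Minimax Theorem) and in Lemma~\ref{lemma:lambda:inv:func}, and the corollary is a pure specialization. The one place to be slightly careful is the bookkeeping around $\Delta_1$ --- one must observe that the affine constraint $\sum_i t_i = 1$ forces $t_1 = 1$ when $n=1$, so no nontrivial convex combination survives and the union degenerates. After that observation both lines are immediate.
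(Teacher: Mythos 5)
Your proof is correct and follows exactly the paper's route: the corollary is obtained by specializing Lemma~\ref{lemma:sDP:inv:func} and Lemma~\ref{lemma:lambda:inv:func} to $n=1$, where $\Delta_1=\{(1)\}$ collapses the unions to a single set. The alternative derivation of the second line via Fact~\ref{fact:lambda} and Fact~\ref{fact:Phi:cont} is a harmless bonus but not needed.
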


% \begin{lemma}
%   \label{lemma:int:lambda}
%   For every $\mu \in \Val_\bullet {\SV X}$, for every $H \in \Lform
%   {\SV X}$,
%   \begin{align*}
%     \int_{Q \in \SV X} H (Q) \,d{\lambda^\sharp_X} (\mu)
%     & = 
%   \end{align*}
% \end{lemma}
% \begin{proof}
%   \begin{align*}
%     \int_{Q \in \SV X} H (Q) \,d{\lambda^\sharp_X} (\mu)
%     & = \int_0^\infty {\lambda^\sharp_X} (\mu) (H^{-1} (]t, \infty]) \,dt \\
%   \end{align*}
% \end{proof}

\begin{lemma}
  \label{lemma:lambda:nat}
  $\lambda^\sharp$ is a natural transformation.
\end{lemma}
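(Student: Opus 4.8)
The plan is to reduce everything to the naturality of $\Phi$, using the factorization $\lambda^\sharp_X = s_\DN^\bullet \circ \Phi$ of Fact~\ref{fact:lambda} together with the fact, recalled just before Fact~\ref{fact:lambda}, that $s_\DN^\bullet$ is natural in $X$. Fix a continuous map $f \colon X \to Y$; we must show $\SV(\Val_\bullet f) \circ \lambda^\sharp_X = \lambda^\sharp_Y \circ \Val_\bullet(\SV f)$. Naturality of $s_\DN^\bullet$ gives $\SV(\Val_\bullet f) \circ s_\DN^\bullet = s_\DN^\bullet \circ \Pred_\DN^\bullet f$, so composing on the right with $\Phi_X$ yields $\SV(\Val_\bullet f) \circ \lambda^\sharp_X = s_\DN^\bullet \circ \Pred_\DN^\bullet f \circ \Phi_X$; on the other hand $\lambda^\sharp_Y \circ \Val_\bullet(\SV f) = s_\DN^\bullet \circ \Phi_Y \circ \Val_\bullet(\SV f)$. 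Hence it suffices to establish the single equation
\[
  \Pred_\DN^\bullet f \circ \Phi_X \;=\; \Phi_Y \circ \Val_\bullet(\SV f),
\]
i.e.\ the naturality of $\Phi$ on its own.

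To check this I would evaluate both sides on an arbitrary $\mu \in \Val_\bullet(\SV X)$ and test against an arbitrary $h \in \Lform Y$. By the definition of the action of the functor $\Pred_\DN^\bullet$ on morphisms and of $\Phi$, the left-hand side is
\[
  (\Pred_\DN^\bullet f)(\Phi_X(\mu))(h) = \Phi_X(\mu)(h \circ f) = \int_{Q \in \SV X} \min_{x \in Q} h(f(x)) \, d\mu .
\]
For the right-hand side, $\Val_\bullet(\SV f)(\mu)$ is the pushforward valuation $(\SV f)[\mu]$, so
\[
  \Phi_Y((\SV f)[\mu])(h) = \int_{R \in \SV Y} h^*(R) \, d((\SV f)[\mu]) ,
\]
where $h^* \in \Lform {\SV Y}$ sends $R$ to $\min_{y \in R} h(y)$ and is lower semicontinuous by Fact~\ref{fact:Phi:cont}. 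The change of variable formula, applied to the continuous map $\SV f \colon \SV X \to \SV Y$ and the valuation $\mu$, rewrites the last integral as $\int_{Q \in \SV X} h^*(\SV f(Q)) \, d\mu = \int_{Q \in \SV X} \min_{y \in \upc f[Q]} h(y) \, d\mu$, using $\SV f(Q) = \upc f[Q]$.

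It therefore remains to observe the pointwise identity $\min_{y \in \upc f[Q]} h(y) = \min_{x \in Q} h(f(x))$ for every $Q \in \SV X$, which is where the only genuine (and small) argument lies. Since $h$ is lower semicontinuous from $Y$ to $\creal$ with its Scott topology, $h$ is monotone for the specialization preordering, so the infimum of $h$ over $\upc f[Q]$ equals its infimum over $f[Q]$, which equals $\inf_{x \in Q} h(f(x))$; and these infima are attained because $Q$, hence $f[Q]$ and $\upc f[Q]$, is compact while $h$ is lower semicontinuous, so they are genuine minima. With this identity the two integrands agree, the two sides of the displayed equation coincide, and the naturality of $\Phi$ — hence of $\lambda^\sharp = s_\DN^\bullet \circ \Phi$ — is proved. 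I expect that last identity to carry essentially all the content of the lemma; everything else is bookkeeping with the change of variable formula and the already-available naturality of $s_\DN^\bullet$.
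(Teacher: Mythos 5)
Your proof is correct and follows essentially the same route as the paper: reduce to naturality of $\Phi$ via the factorization $\lambda^\sharp = s_\DN^\bullet \circ \Phi$ and the naturality of $s_\DN^\bullet$, then compare both sides on $h \in \Lform Y$ using the change of variable formula and the identity $\min_{y \in \upc f[Q]} h(y) = \min_{x \in Q} h(f(x))$, which holds because the lower semicontinuous $h$ is monotone with respect to the specialization preordering. Your extra remark that the infima are attained (by compactness of $Q$ and lower semicontinuity of $h$) is a harmless addition the paper leaves implicit.
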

\begin{proof}
  Since $s_\DN^\bullet$ is natural, it suffices to show that $\Phi$ is
  natural.  Let $f \colon X \to Y$ be any continuous map.  We need to
  show that
  $\Phi \circ \Val_\bullet {\SV f} = \Pred_\DN^\bullet f \circ \Phi$.
  Let $\mu \in \Val_\bullet {\SV X}$ and $h \in \Lform Y$.  Then
  $(\Phi \circ \Val_\bullet {\SV f}) (\mu) (h) = \Phi (\SV f [\mu])
  (h) = \int_{Q \in \SV Y} \min_{y \in Q} h (y) \,d\SV f [\mu]$.  By
  the change of variable formula, this is equal to
  $\int_{Q \in \SV X} \min_{y \in \SV f (Q)} h (y) \,d\mu$.  Now
  $\min_{y \in \SV f (Q)} h (y) = \min_{y \in \upc f [Q]} h (y) =
  \min_{y \in f [Q]} h (y)$ (because $h$, being lower semicontinuous,
  is monotonic) $= \min_{x \in Q} h (f (x))$.  We therefore obtain
  that
  $(\Phi \circ \Val_\bullet {\SV f}) (\mu) (h) = \int_{Q \in \SV X}
  \min_{x \in Q} h (f (x)) \,d\mu$.  But
  $(\Pred_\DN^\bullet f \circ \Phi) (\mu) (h) = \Pred_\DN^\bullet f
  (\Phi (\mu)) (h) = \Phi (\mu) (h \circ f) = \int_{Q \in \SV X}
  \min_{x \in Q} h (f (x)) \,d\mu$.  \qed
\end{proof}

\begin{lemma}
  \label{lemma:lambda:etaS}
  ${\lambda^\sharp_X} \circ \Val_\bullet {\eta^\Smyth_X} =
  \eta^\Smyth_{\Val_\bullet X}$.
\end{lemma}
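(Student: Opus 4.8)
The plan is to prove the equality of the two maps pointwise: I would fix an arbitrary continuous valuation $\nu \in \Val_\bullet X$ and check that ${\lambda^\sharp_X}(\Val_\bullet{\eta^\Smyth_X}(\nu))$ and $\eta^\Smyth_{\Val_\bullet X}(\nu)$ are one and the same compact saturated subset of $\Val_\bullet X$.

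First I would unwind the left-hand side. The valuation $\Val_\bullet{\eta^\Smyth_X}(\nu)$ is the pushforward $\eta^\Smyth_X[\nu]$ on $\SV X$, so on a basic open set $\Box U$ (with $U \in \Open X$) it takes the value $\nu((\eta^\Smyth_X)^{-1}(\Box U)) = \nu(U)$, using the recorded equality ${\eta^\Smyth}^{-1}(\Box U) = U$. By Proposition~\ref{prop:Phi}, ${\lambda^\sharp_X}$ sends this valuation to the set of all $\nu' \in \Val_\bullet X$ such that $\nu'(U) \geq \eta^\Smyth_X[\nu](\Box U) = \nu(U)$ for every $U \in \Open X$, that is, to $\{\nu' \in \Val_\bullet X \mid \nu' \geq \nu\}$, where $\leq$ denotes the stochastic ordering. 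Since the stochastic ordering is the specialization ordering of $\Val_\bullet X$, this set is precisely the upward closure $\upc\nu$ of the point $\nu$. On the other side, by definition of the unit of the Smyth monad, $\eta^\Smyth_{\Val_\bullet X}(\nu) = \upc\nu$ as well. Hence the two sides agree at $\nu$, and as $\nu$ was arbitrary, the two maps coincide.

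I do not anticipate any genuine obstacle here; the argument is a direct computation. The only two points to keep straight are that Proposition~\ref{prop:Phi} characterises ${\lambda^\sharp_X}(\mu)$ purely in terms of the values $\mu(\Box U)$ on basic opens (so that knowing $\eta^\Smyth_X[\nu]$ on the sets $\Box U$ suffices), and that the set $\{\nu' \mid \forall U \in \Open X,\ \nu'(U) \geq \nu(U)\}$ is literally the upward closure of $\nu$ in the specialization order — which is exactly the identification of the specialization order of $\Val_\bullet X$ with the stochastic order recalled in the preliminaries.
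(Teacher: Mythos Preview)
Your proposal is correct and follows essentially the same approach as the paper: fix $\nu \in \Val_\bullet X$, compute $\Val_\bullet{\eta^\Smyth_X}(\nu)(\Box U) = \nu(U)$ via the pushforward formula, and then use the description of $\lambda^\sharp_X$ from Proposition~\ref{prop:Phi} to identify the left-hand side with $\{\nu' \mid \nu' \geq \nu\} = \upc\nu = \eta^\Smyth_{\Val_\bullet X}(\nu)$. The paper's proof is line-for-line the same computation.
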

\begin{proof}
  For every $\nu \in \Val_\bullet X$, the elements of
  $({\lambda^\sharp_X} \circ \Val_\bullet {\eta^\Smyth_X}) (\nu)$ are those
  $\nu' \in \Val_\bullet X$ such that
  $\nu' (U) \geq \Val_\bullet {\eta^\Smyth_X} (\nu) (\Box U)$ for
  every $U \in \Open X$.  Now
  $\Val_\bullet {\eta^\Smyth_X} (\nu) (\Box U) = \nu
  ({(\eta^\Smyth_X)}^{-1} (\Box U)) = \nu (U)$, so
  $({\lambda^\sharp_X} \circ \Val_\bullet {\eta^\Smyth_X}) (\nu)$ is simply the
  collection of $\nu' \in \Val_\bullet X$ such that $\nu' \geq \nu$,
  namely $\eta^\Smyth_{\Val_\bullet X} (\nu)$.  \qed
\end{proof}

Lemma~\ref{lemma:lambda:etaS} is the first of the weak distributivity
laws.  The proofs of the others are more complex.  We will use the
following several times:

\vskip0.5em
\textbf{Trick A.} \emph{In order to show that two continuous maps
  $f, g \colon Y \to Z$ are equal, where $Z$ is a $T_0$ space, check
  that $f^{-1} (V) = g^{-1} (V)$ for every element $V$ of a subbase of
  the topology of $Z$.}

\vskip0.5em
Indeed, if the condition holds, then 
$f^{-1} (V) = g^{-1} (V)$ for every open subset $V$ of $Z$.
Hence, for every point $y \in Y$, $f (y)$ and $g (y)$ will have the
same open neighborhoods and therefore be equal, since $Z$ is $T_0$.

We also note that any space of the form $\Val_\bullet Y$,
$\Pred_\DN^\bullet Y$, or $\SV Y$ is $T_0$, since their specialization
preorderings are $\leq$, $\leq$, and $\subseteq$ respectively, and
they are antisymmetric.

In general, the maps $f$ and $g$ that are to be shown equal are
compositions $f_1 \circ \cdots \circ f_m$ and
$g_1 \circ \cdots \circ g_n$, and we will use Trick~A by writing lists
of the following kind:
$V \invto {f_1} V_1 \invto {f_2} \cdots \invto{f_m} V_m$ and
$V \invto {g_1} V'_1 \invto {g_2} \cdots \invto {g_n} V'_n$, meaning
that $V_1 = f_1^{-1} (V)$, etc., and finally checking that $V_m=V'_n$.

For example, an alternate proof of Lemma~\ref{lemma:lambda:etaS} along
these lines is:
\begin{align*}
  \Box {(\bigcup_{i=1}^n [h_i > 1])}
  & \invto{{\lambda^\sharp_X}}
    \bigcup_{\vec a \in \Delta_n} [(\sum_{i=1}^n a_i h_i)^* > 1]
  & \text{by Lemma~\ref{lemma:lambda:inv:func}} \\
  & \invto{\Val_\bullet {\eta^\Smyth_X}}
    \bigcup_{\vec a \in \Delta_n} [(\sum_{i=1}^n a_i h_i)^* \circ
    \eta^\Smyth_X > 1] \\
  & = \bigcup_{\vec a \in \Delta_n} [\sum_{i=1}^n a_i h_i > 1],
\end{align*}
since for every $h \in \Lform X$, $h^* \circ \eta^\Smyth_X = h$, and:
\begin{align*}
  \Box {(\bigcup_{i=1}^n [h_i > 1])}
  & \invto{\eta^\Smyth_{\Val_\bullet X}}
    \bigcup_{i=1}^n [h_i > 1].
\end{align*}
Now
$\bigcup_{\vec a \in \Delta_n} [\sum_{i=1}^n a_i h_i > 1] =
\bigcup_{i=1}^n [h_i > 1]$, as open subsets of $\Val_\bullet X$: for
every $\nu \in \Val_\bullet X$, if $\int_{x \in X} h_i (x) \,d\nu > 1$
for some $i$, then
$\int_{x \in X} \sum_{i=1}^n a_i h_i (x) \,d\nu > 1$ for the tuple
$\vec a$ whose $i$th component is $1$ and whose other components are
$0$.  Conversely, if
$\int_{x \in X} \sum_{i=1}^n a_i h_i (x) \,d\nu > 1$, then
$\int_{x \in X} h_i (x) \,d\nu > 1$ for some $i$, otherwise
$\int_{x \in X} h_i (x) \,d\nu \leq 1$ for every $i$, so
$\int_{x \in X} \sum_{i=1}^n a_i h_i (x) \,d\nu = \sum_{i=1}^n a_i
\int_{x \in X} h_i (x) \,d\nu \leq 1$.

We will also use the following.  \vskip0.5em \textbf{Trick~B.}
\emph{In order to show that $f=g$, where $f$ and $g$ are continuous
  maps from a space $Y$ to a space of the form $\SV {\Val_\bullet Z}$,
  show that $f (y)$ and $g (y)$ are convex for every $y \in Y$, and
  then show that $r_\DN \circ f = r_\DN \circ g$.}

\vskip0.5em
Indeed, we recall that $r_\DN$ is a homeomorphism of $\SV^{cvx}
(\Val_\bullet Z)$ onto $\Pred_\DN^\bullet Z$.

\begin{lemma}
  \label{lemma:lambda:muS}
  ${\lambda^\sharp_X} \circ \Val_\bullet {\mu^\Smyth_X} =
  \mu^\Smyth_{\Val_\bullet X} \circ \SV {\lambda^\sharp_X} \circ \lambda^\sharp_{\SV
    X}$.
\end{lemma}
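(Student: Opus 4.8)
The goal is to prove the remaining weak‑distributivity law
\[
  \lambda^\sharp_X \circ \Val_\bullet {\mu^\Smyth_X}
  = \mu^\Smyth_{\Val_\bullet X} \circ \SV {\lambda^\sharp_X} \circ \lambda^\sharp_{\SV X},
\]
an equality of continuous maps from $\Val_\bullet {\SV {\SV X}}$ to $\SV {\Val_\bullet X}$. Both sides land in $\SV {\Val_\bullet X}$, which is $T_0$, so \textbf{Trick~A} is available: it suffices to compare inverse images of the subbasic opens $\Box {(\bigcup_{i=1}^n [h_i > 1])}$, using Lemma~\ref{lemma:lambda:inv:func} (and its corollary) together with the already‑recorded inverse‑image formulas ${(\SV f)}^{-1}(\Box V)=\Box{f^{-1}(V)}$, ${\mu^\Smyth}^{-1}(\Box U)=\Box{\Box U}$, and ${(\Val_\bullet f)}^{-1}([h>r])=[h\circ f>r]$. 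However, because $\lambda^\sharp$ is built as $s_\DN^\bullet\circ\Phi$ and $r_\DN\circ s_\DN^\bullet = \mathrm{id}$, it is probably cleaner to combine Trick~A with \textbf{Trick~B}: first check that the values on both sides are convex compact saturated subsets of $\Val_\bullet X$ (convexity of $\lambda^\sharp_X(\mu)$ follows from Remark~\ref{rem:Phi}/Fact~\ref{fact:lambda}, since it is $s_\DN^\bullet$ of something, and $s_\DN^\bullet$ lands in $\SV^{cvx}$; convexity is preserved by $\SV\lambda^\sharp$ and by $\mu^\Smyth$ because a union over a compact family of convex sets each obtained as $\lambda^\sharp$‑images remains in the image of $s_\DN^\bullet\circ r_\DN$), and then reduce to proving $r_\DN\circ(\text{LHS})=r_\DN\circ(\text{RHS})$, an equality of maps into $\Pred_\DN^\bullet X$, which is $T_0$ with subbase $[h>1]$.

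**The computation on both sides.**

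On the right, I would push a subbasic open $[h>1]\subseteq\Pred_\DN^\bullet X$ backwards. Using $r_\DN\circ\mu^\Smyth_{\Val_\bullet X}\circ\SV\lambda^\sharp_X = ?$: note $r_\DN(\bigcup\mathcal Q)(h)=\min_{Q\in\mathcal Q}\min_{\nu\in Q}\int h\,d\nu$, and $\SV\lambda^\sharp_X$ replaces each member $\mu$ of a compact family by $\lambda^\sharp_X(\mu)=s_\DN^\bullet(\Phi(\mu))$, whose $r_\DN$ is exactly $\Phi(\mu)$ since $r_\DN\circ s_\DN^\bullet=\mathrm{id}$. So $r_\DN\circ\mu^\Smyth_{\Val_\bullet X}\circ\SV\lambda^\sharp_X$ sends $\mathcal M\in\SV{\Val_\bullet{\SV X}}$ to $\bigl(h\mapsto\min_{\mu\in\mathcal M}\Phi(\mu)(h)\bigr)=\min_{\mu\in\mathcal M}\int_{Q\in\SV X}\min_{x\in Q}h(x)\,d\mu$, i.e.\ $r_\DN(\SV(\Phi^\bullet)(\mathcal M))$ where $\Phi^\bullet$ is the functional form. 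Composing with $\lambda^\sharp_{\SV X}=s_\DN^\bullet\circ\Phi_{\SV X}$, and again using $r_\DN\circ s_\DN^\bullet=\mathrm{id}$, the right‑hand side at $\xi\in\Val_\bullet{\SV{\SV X}}$ becomes, for $h\in\Lform X$,
\[
  \min_{\mu\in\lambda^\sharp_{\SV X}(\xi)} \int_{Q\in\SV X}\min_{x\in Q}h(x)\,d\mu
  \;=\; \Phi_{\SV X}(\xi)\bigl(h^*\bigr)
  \;=\; \int_{\mathcal Q\in\SV{\SV X}}\min_{Q\in\mathcal Q}\min_{x\in Q}h(x)\,d\xi,
\]
where I use $r_\DN\circ s_\DN^\bullet\circ\Phi_{\SV X}=\Phi_{\SV X}$ applied to the lsc map $h^*\in\Lform{\SV X}$ (Fact~\ref{fact:Phi:cont}), and $\min_{Q\in\mathcal Q}\min_{x\in Q}h(x) = \min_{x\in\bigcup\mathcal Q}h(x)$. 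On the left, $r_\DN\circ\lambda^\sharp_X\circ\Val_\bullet{\mu^\Smyth_X}=\Phi_X\circ\Val_\bullet{\mu^\Smyth_X}$, which at $\xi$ and $h$ gives $\int_{Q\in\SV X}\min_{x\in Q}h(x)\,d\,\mu^\Smyth_X[\xi] = \int_{\mathcal Q\in\SV{\SV X}}\min_{x\in\mu^\Smyth_X(\mathcal Q)}h(x)\,d\xi$ by the change‑of‑variable formula, and $\mu^\Smyth_X(\mathcal Q)=\bigcup\mathcal Q$, so $\min_{x\in\mu^\Smyth_X(\mathcal Q)}h(x)=\min_{Q\in\mathcal Q}\min_{x\in Q}h(x)$. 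The two integrands agree, so $r_\DN\circ(\text{LHS})=r_\DN\circ(\text{RHS})$.

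**Filling in convexity and the main obstacle.**

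To legitimately invoke Trick~B I must verify convexity of the values of all four composite maps. For $\lambda^\sharp_X\circ\Val_\bullet{\mu^\Smyth_X}$ and for $\lambda^\sharp_{\SV X}$ this is immediate: they are $s_\DN^\bullet$ of something, and $s_\DN^\bullet$ takes values in $\SV^{cvx}$ by construction (Proposition~3.22/Theorem~4.15 of \cite{JGL-mscs16}, cited above). For $\SV\lambda^\sharp_X(\lambda^\sharp_{\SV X}(\xi))$ and then $\mu^\Smyth_{\Val_\bullet X}$ of it, I would note that each member of that set is $\bigcup$ of a family of sets each lying in the image of $s_\DN^\bullet$, and that $s_\DN^\bullet\circ r_\DN$ is a closure operator onto convex compact saturated sets (Proposition~4.20 of \cite{JGL-mscs16}, used in Remark~\ref{rem:Phi}); since $r_\DN\circ(\text{RHS})=\Phi_{\SV X}(\xi)\circ(\;\cdot\;)^*$ lands in $\Pred_\DN^\bullet$, applying $s_\DN^\bullet$ to it recovers exactly the (necessarily convex) value $\mathrm{RHS}(\xi)$, so $\mathrm{RHS}(\xi)=s_\DN^\bullet(r_\DN(\mathrm{RHS}(\xi)))$ is convex. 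This last point — that $\mu^\Smyth_{\Val_\bullet X}\circ\SV\lambda^\sharp_X\circ\lambda^\sharp_{\SV X}$ actually factors as $s_\DN^\bullet\circ(\text{a prevision})$ rather than merely producing \emph{some} compact saturated set whose $r_\DN$ is that prevision — is where I expect the real work to be, since it requires knowing that unions and compact‑family‑minima of $s_\DN^\bullet$‑images stay inside $\SV^{cvx}$; alternatively, one sidesteps it entirely by doing the whole argument with Trick~A and Lemma~\ref{lemma:lambda:inv:func} directly, at the cost of a longer but purely mechanical inverse‑image chase through $\Box$, $\SV$, $\Val_\bullet$, and $\mu^\Smyth$. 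The integral identities above are the conceptual core in either presentation; everything else is bookkeeping with the Choquet formula and the naturality/inverse‑image lemmas already proved.
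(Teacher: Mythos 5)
Your computational core is correct and is essentially the paper's own argument: after reducing via Trick~B to the equality $r_\DN\circ(\text{LHS})=r_\DN\circ(\text{RHS})$, the identity you obtain, namely that both sides send $\xi$ and $h$ to $\int_{\mathcal Q\in\SV{\SV X}}\min_{Q\in\mathcal Q}\min_{x\in Q}h(x)\,d\xi$, is exactly what the paper establishes (it phrases the computation as an inverse-image chase of $[h>1]$ using Fact~\ref{fact:Phi:cont}, Fact~\ref{fact:r:cont} and Corollary~\ref{corl:lambda:inv:func:spec}, plus $h^*\circ\mu^\Smyth_X=h^{**}$, whereas you evaluate the composites pointwise; this is only a stylistic difference).

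The genuine gap is the hypothesis of Trick~B itself: convexity of the values of the right-hand side. Your argument for it is circular. You claim $\mathrm{RHS}(\xi)=s_\DN^\bullet(r_\DN(\mathrm{RHS}(\xi)))$ ``because $r_\DN\circ(\text{RHS})$ lands in $\Pred_\DN^\bullet X$''; but $s_\DN^\bullet\circ r_\DN$ sends a compact saturated set to its compact saturated \emph{convex hull}, so the asserted equality holds if and only if $\mathrm{RHS}(\xi)$ is convex, which is precisely what is to be proved (a non-convex $Q$ has the same image under $r_\DN$ as its convex hull). Likewise, your earlier parenthetical that a union of convex sets ``remains in the image of $s_\DN^\bullet\circ r_\DN$'' is unsupported: unions of convex sets are not convex in general. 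The paper closes this gap with a short direct argument that you could have given: the elements of $\mathrm{RHS}(\xi)$ are exactly the $\nu$ lying in $\lambda^\sharp_X(\mu)$ for some $\mu\in\lambda^\sharp_{\SV X}(\xi)$; since $\lambda^\sharp_{\SV X}(\xi)$ is convex and $\lambda^\sharp_X(\mu)$ is characterized by the linear inequalities $\nu(U)\geq\mu(\Box U)$, one gets $a\nu_1+(1-a)\nu_2\in\lambda^\sharp_X(a\mu_1+(1-a)\mu_2)$, whence convexity. Your fallback of running Trick~A on the full composites would indeed avoid convexity, but it is not quite the purely mechanical chase you suggest: the inverse image of $\Box{(\bigcup_{i=1}^n[h_i>1])}$ under $\SV{\lambda^\sharp_X}$ is a $\Box$ of an \emph{infinite} union over $\Delta_n$, so Lemma~\ref{lemma:lambda:inv:func} does not apply directly, and after writing it as a directed union of finite unions one still has to reconcile $\bigl(\sum_j b_j(\sum_i a_{j,i}h_i)^*\bigr)$ with $(\sum_i c_ih_i)^*$ using the superadditivity of $(\cdot)^*$; since you neither carried this out nor proved convexity, the proof as proposed is incomplete.
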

\begin{proof}
  Since ${\lambda^\sharp_X} = s_\DN^\bullet \circ \Phi$ and the image
  of $s_\DN^\bullet$ consists of convex subsets, the left-hand side
  takes its values in convex sets.  For the right-hand side, for every
  $\xi \in \Val_\bullet {\SV {\SV X}}$, let
  $Q' \eqdef (\mu^\Smyth_{\Val_\bullet X} \circ \SV {\lambda^\sharp_X}
  \circ \lambda^\sharp_{\SV X}) (\xi)$.  We want to show that $Q'$ is
  convex.  The elements $\nu$ of $Q'$ are those of
  $\bigcup (\SV {\lambda^\sharp_X} \circ \lambda^\sharp_{\SV X})
  (\xi)$, namely those that are in some element $\mathcal Q$ of
  $(\SV {\lambda^\sharp_X} \circ \lambda^\sharp_{\SV X}) (\xi) = \upc
  {\lambda^\sharp_X} [\lambda^\sharp_{\SV X} (\xi)] = \{\mathcal Q \in \SV
  {\Val_\bullet X} \mid \exists \mu \in \lambda^\sharp_{\SV X} (\xi),
  \mathcal Q \subseteq {\lambda^\sharp_X} (\mu)\}$.  (Remember that
  the specialization ordering on $\SV {\Val_\bullet X}$ is reverse
  inclusion $\supseteq$.)  Simplifying this, and since $\upc \mu$ is
  always non-empty, compact and saturated, hence can take the place of
  $\mathcal Q$, the elements $\nu$ of $Q'$ are those such that there
  is a $\mu \in \lambda^\sharp_{\SV X} (\xi)$ such that
  $\nu \in {\lambda^\sharp_X} (\mu)$.

  Let $a \in [0, 1]$, and $\nu_1, \nu_2 \in Q'$.  Hence there is a
  $\mu_1 \in \lambda^\sharp_{\SV X} (\xi)$ such that
  $\nu_1 \in {\lambda^\sharp_X} (\mu_1)$ and there is a
  $\mu_2 \in \lambda^\sharp_{\SV X} (\xi)$ such that
  $\nu_2 \in {\lambda^\sharp_X} (\mu_2)$.  We note that
  $\lambda^\sharp_{\SV X} (\xi) = (s_\DN^\bullet \circ \Phi) (\xi)$ is
  convex, so $a\mu_1 + (1-a)\mu_2 \in \lambda^\sharp_{\SV X} (\xi)$.
  Additionally,
  $a\nu_1 + (1-a)\nu_2 \in {\lambda^\sharp_X} (a\mu_1 + (1-a)\mu_2)$: for every
  $U \in \Open X$, $\nu_1 (U) \geq \mu_1 (\Box U)$ and
  $\nu_2 (U) \geq \mu_2 (\Box U)$ by definition of ${\lambda^\sharp_X}$, so
  $(a\nu_1 + (1-a)\nu_2) (U) \geq (a\mu_1 + (1-a) \mu_2) (\Box U)$.
  Therefore there is a $\mu \in \lambda^\sharp_{\SV X} (\xi)$, namely
  $\mu \eqdef a\mu_1 + (1-a) \mu_2$, such that
  $a \nu_1 + (1-a) \nu_2 \in {\lambda^\sharp_X} (\mu)$.  Equivalently,
  $a \nu_1 + (1-a) \nu_2$ is in $Q'$.  Hence $Q'$ is convex, as
  promised.

  It follows that we can use Trick~B, and so we prove the following
  instead:
  \begin{align*}
    \Phi \circ \Val_\bullet {\mu^\Smyth_X}
    & = r_\DN \circ  \mu^\Smyth_{\Val_\bullet X} \circ \SV {\lambda^\sharp_X} \circ \lambda^\sharp_{\SV X}.
  \end{align*}
  Indeed,
  $r_\DN \circ {\lambda^\sharp_X} = r_\DN \circ s_\DN^\bullet \circ \Phi =
  \Phi$, which allows us to simplify the left-hand side.

  We now use Trick~A:
  \begin{align*}
    [h > 1]
    & \invto{\Phi} [h^* > 1] \\
    & \invto{\Val_\bullet {\mu^\Smyth_X}}
      [h^* \circ \mu^\Smyth_X > 1] = [h^{**} > 1],
  \end{align*}
  where for every $\mathcal Q \in \SV {\SV X}$,
  $h^{**} (\mathcal Q) = \min_{Q \in \mathcal Q} h^* (Q) = \min_{Q \in
    \mathcal Q} \min_{x \in Q} h (x)$; and indeed, the latter is equal
  to
  $\min_{x \in \bigcup \mathcal Q} h (x) = h^* (\mu^\Smyth_X (\mathcal
  Q))$.
  \begin{align*}
    [h > 1]
    & \invto{r_\DN}
      \Box {[h > 1]}
    & \text{by Fact~\ref{fact:r:cont}} \\
    & \invto{\mu^\Smyth_{\Val_\bullet X}}
      \Box {\Box {[h > 1]}} \\
    & \invto{\SV {{\lambda^\sharp_X}}}
      \Box {{\lambda^\sharp_X}^{-1} (\Box {[h > 1]})}
      = \Box {[h^* > 1]}
    & \text{by Corollary~\ref{corl:lambda:inv:func:spec}}\\
    & \invto{\lambda^\sharp_{\SV X}}
      [h^{**} > 1]
    & \text{by Corollary~\ref{corl:lambda:inv:func:spec}}.
  \end{align*}
  \qed
\end{proof}

\begin{lemma}
  \label{lemma:lambda:muT}
  ${\lambda^\sharp_X} \circ \mu_{\SV X} = \SV {\mu_X} \circ
  \lambda^\sharp_{\Val_\bullet X} \circ \Val_\bullet {\lambda^\sharp_X}$.
\end{lemma}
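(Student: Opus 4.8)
The plan is to follow the pattern of the proof of Lemma~\ref{lemma:lambda:muS} almost verbatim: first check that both sides take their values in convex compact saturated subsets of $\Val_\bullet X$, then apply Trick~B to reduce the claimed equality to one between continuous maps into $\Pred_\DN^\bullet X$, and finally prove that reduced equality by Trick~A, computing the preimages of the subbasic open sets $[h > 1]$, $h \in \Lform X$, along the two chains.

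For the convexity step: $\lambda^\sharp_X = s_\DN^\bullet \circ \Phi$ and, at the space $\Val_\bullet X$, $\lambda^\sharp_{\Val_\bullet X} = s_\DN^\bullet \circ \Phi$ by Fact~\ref{fact:lambda}, and the image of $s_\DN^\bullet$ consists of convex subsets; this handles the left-hand side $\lambda^\sharp_X \circ \mu_{\SV X}$ at once, and reduces the right-hand side to the claim that $\SV {\mu_X}$ preserves convexity. This is the one small ingredient not yet in place: $\mu_X$ is affine in its argument (for a fixed $U \in \Open X$, $\mu_X (\Xi) (U) = \int_{\nu \in \Val_\bullet X} \nu (U) \,d\Xi$ is linear in $\Xi$), hence carries convex sets to convex sets, and upward closure in the stochastic order preserves convexity (from $\nu_i \geq \nu_i'$ with $\nu_1', \nu_2'$ in a convex set $C$ one gets $a\nu_1 + (1-a)\nu_2 \geq a\nu_1' + (1-a)\nu_2' \in C$ for $a \in [0,1]$); hence $\SV {\mu_X} (\mathcal Q) = \upc \mu_X [\mathcal Q]$ is convex whenever $\mathcal Q$ is.

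By Trick~B, and using $r_\DN \circ \lambda^\sharp_X = r_\DN \circ s_\DN^\bullet \circ \Phi = \Phi$, it then suffices to prove
\begin{align*}
  \Phi \circ \mu_{\SV X}
  & = r_\DN \circ \SV {\mu_X} \circ \lambda^\sharp_{\Val_\bullet X} \circ \Val_\bullet {\lambda^\sharp_X}.
\end{align*}
For this I would use Trick~A. Write $\overline{g} \eqdef (\nu \in \Val_\bullet Y \mapsto \int_{y \in Y} g (y) \,d\nu)$, an element of $\Lform {\Val_\bullet Y}$, for $g \in \Lform Y$, so that \eqref{eq:mu-1} reads $\mu_Y^{-1} ([g > r]) = [\overline{g} > r]$. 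The left-hand chain is $[h > 1] \invto{\Phi} [h^* > 1] \invto{\mu_{\SV X}} [\overline{h^*} > 1]$ (by Fact~\ref{fact:Phi:cont}, then \eqref{eq:mu-1}), and the right-hand chain is $[h > 1] \invto{r_\DN} \Box {[h > 1]} \invto{\SV {\mu_X}} \Box {[\overline{h} > 1]} \invto{\lambda^\sharp_{\Val_\bullet X}} [(\overline{h})^* > 1] \invto{\Val_\bullet {\lambda^\sharp_X}} [(\overline{h})^* \circ \lambda^\sharp_X > 1]$ (by Fact~\ref{fact:r:cont}; then ${(\SV f)}^{-1} (\Box V) = \Box {f^{-1} (V)}$ together with \eqref{eq:mu-1}; then Corollary~\ref{corl:lambda:inv:func:spec} applied at the space $\Val_\bullet X$ to the function $\overline{h}$; then ${(\Val_\bullet f)}^{-1} ([g > r]) = [g \circ f > r]$).

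The two chains land on the same open set because $(\overline{h})^* \circ \lambda^\sharp_X = \overline{h^*}$ on $\Val_\bullet {\SV X}$, which I would check pointwise: for $\mu \in \Val_\bullet {\SV X}$, $((\overline{h})^* \circ \lambda^\sharp_X) (\mu) = \min_{\nu \in \lambda^\sharp_X (\mu)} \overline{h} (\nu) = \min_{\nu \in \lambda^\sharp_X (\mu)} \int_{x \in X} h (x) \,d\nu = r_\DN (\lambda^\sharp_X (\mu)) (h) = \Phi (\mu) (h) = \int_{Q \in \SV X} \min_{x \in Q} h (x) \,d\mu = \overline{h^*} (\mu)$, once more using $r_\DN \circ \lambda^\sharp_X = \Phi$. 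Trick~A then finishes the argument. I expect the points requiring care to be the convexity step on the right (the small lemma on $\SV {\mu_X}$) and the bookkeeping, along the four-step right-hand chain, of the space over which each intermediate open set lives; I do not anticipate a genuine obstacle.
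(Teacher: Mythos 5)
Your proposal is correct and follows essentially the same route as the paper: convexity of both sides, then Trick~B to reduce to $\Phi \circ \mu_{\SV X} = r_\DN \circ \SV {\mu_X} \circ \lambda^\sharp_{\Val_\bullet X} \circ \Val_\bullet {\lambda^\sharp_X}$, then Trick~A with the same two preimage chains and the same final pointwise identification $(\overline h)^* \circ \lambda^\sharp_X = \overline{h^*}$ via $r_\DN \circ \lambda^\sharp_X = \Phi$. The only cosmetic difference is the convexity step for the right-hand side: the paper verifies it elementwise on $A$, whereas you invoke the general fact that $\SV$ of a linear map (here $\mu_X$) preserves convexity together with convexity of the values of $\lambda^\sharp_{\Val_\bullet X}$ --- an argument the paper itself uses in the proof of Lemma~\ref{lemma:idemp:split}, so both are fine.
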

\begin{proof}
  The left-hand side takes its values in convex sets.  We claim that
  the right-hand side does, too.  For every
  $\xi \in \Val_\bullet {\Val_\bullet {\SV X}}$, the elements of
  $A \eqdef (\SV {\mu_X} \circ \lambda^\sharp_{\Val_\bullet X} \circ
  \Val_\bullet {\lambda^\sharp_X}) (\xi)$ are the continuous
  valuations $\nu \in \Val_\bullet X$ such that $\nu \geq \mu_X (\mu)$
  for some
  $\mu \in \lambda^\sharp_{\Val_\bullet X} ({\lambda^\sharp_X}
  [\xi]))$.  For all $a \in [0, 1]$, $\nu_1, \nu_2 \in A$, there are
  $\mu_1, \mu_2 \in \lambda^\sharp_{\Val_\bullet X}
  ({\lambda^\sharp_X} [\xi])$ such that $\nu_1 \geq \mu_X (\mu_1)$ and
  $\nu_2 \geq \mu_X (\mu_2)$.  Then $\mu \eqdef a \mu_1 + (1-a) \mu_2$
  is in $\lambda^\sharp_{\Val_\bullet X} ({\lambda^\sharp_X} [\xi])$,
  since $\lambda^\sharp_{\Val_\bullet X}$ takes its values in convex
  sets.  In turn,
  $a \nu_1 + (1-a) \nu_2 \geq a \mu_X (\mu_1) + (1-a) \mu_X (\mu_2) =
  a (\mathcal U \in \Open {\Val_\bullet X} \mapsto \int_{\nu \in
    \Val_\bullet X} \nu (\mathcal U) \,d\mu_1) + (1-a) (\mathcal U \in
  \Open {\Val_\bullet X} \mapsto \int_{\nu \in \Val_\bullet X} \nu
  (\mathcal U) \,d\mu_2) = \mu_X (\mu)$, so
  $a \nu_1 + (1-a) \nu_2 \in A$, showing that $A$ is convex.

  We can therefore use Trick~B, and since $r_\DN \circ {\lambda^\sharp_X} =
  r_\DN \circ s_\DN^\bullet \circ \Phi = \Phi$, it remains to show:
  \begin{align*}
    \Phi \circ \mu_{\SV X}
    & = r_\DN \circ \SV {\mu_X} \circ \lambda^\sharp_{\Val_\bullet X} \circ \Val_\bullet {\lambda^\sharp_X}.
  \end{align*}
  To this end, we use Trick~A:
  \begin{align*}
    [h > 1]
    & \invto{\Phi}
      [h^* > 1]
    & \text{by Fact~\ref{fact:Phi:cont}} \\
    & \invto{\mu_{\SV X}}
      \left[\left(\mu \in \Val_\bullet {\SV X} \mapsto \int_{Q \in \SV X} h^*
      (Q) \,d\mu\right) > 1\right]
    & \text{by (\ref{eq:mu-1})}
  \end{align*}
  while:
  \begin{align*}
    [h > 1]
    & \invto{r_\DN}
      \Box {[h > 1]}
    & \text{by Fact~\ref{fact:r:cont}} \\
    & \invto{\SV {\mu_X}}
      \Box {\mu_X^{-1} ([h > 1])}
      = \Box {\left[\left(\nu \in \Val_\bullet X \mapsto \int_{x \in X} h (x)
      \,d\nu\right) > 1\right]}
      \mskip-80mu
    & \text{by (\ref{eq:mu-1})} \\
    & \invto{\lambda^\sharp_{\Val_\bullet X}}
      \left[\left(\nu \in \Val_\bullet X \mapsto \int_{x \in X} h (x)
      \,d\nu\right)^* > 1\right]
    & \text{by Corollary~\ref{corl:lambda:inv:func:spec}} \\
    & \invto{\Val_\bullet {\lambda^\sharp_X}}
      \left[\left(\nu \in \Val_\bullet X \mapsto \int_{x \in X} h (x)
      \,d\nu\right)^* \circ {\lambda^\sharp_X} > 1\right].
  \end{align*}
  In order to conclude, it suffices to show that:
  \begin{align*}
    \left(\nu \in \Val_\bullet X \mapsto \int_{x \in X} h (x)
    \,d\nu\right)^* \circ {\lambda^\sharp_X}
    & = (\mu \in \Val_\bullet {\SV X} \mapsto \int_{Q \in \SV X} h^*
      (Q) \,d\mu).
  \end{align*}
  The left-hand side maps every $\mu \in \Val_\bullet {\SV X}$ to
  \begin{align*}
    \min_{\nu \in s_\DN^\bullet (\Phi (\mu))} \int_{x \in X} h (x)
    \,d\nu
    & = r_\DN (s_\DN^\bullet (\Phi (\mu))) (h) \\
    & = \Phi (\mu) (h) = \int_{Q \in \SV X} h^*
      (Q),
  \end{align*}
  and this completes the proof.  \qed
\end{proof}

Putting Lemma~\ref{lemma:lambda:nat}, Lemma~\ref{lemma:lambda:etaS},
Lemma~\ref{lemma:lambda:muS} and Lemma~\ref{lemma:lambda:muT}
together, we obtain the following.
\begin{theorem}
  \label{thm:lambda}
  Let $\bullet$ be nothing, ``$\leq 1$'', or ``$1$''.  Then
  $\lambda^\sharp$ is a weak distributive law of $\SV$ over
  $\Val_\bullet$ on the category $\Topcat$ of topological spaces.
\end{theorem}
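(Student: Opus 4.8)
The plan is to read Theorem~\ref{thm:lambda} purely as an assembly of the four lemmas just proved. Recall the notion of weak distributive law in Garner's sense~\cite{Garner:weak:distr}, the one discussed in the introduction: a weak distributive law of a monad $(S,\eta^S,\mu^S)$ over a monad $(T,\eta^T,\mu^T)$ is a natural transformation $\lambda\colon TS\to ST$ satisfying three of Beck's four coherence squares, namely $\lambda\circ T\eta^S=\eta^S_T$, $\lambda\circ T\mu^S=\mu^S_T\circ S\lambda\circ\lambda_S$, and $\lambda\circ\mu^T_S=S\mu^T\circ\lambda_T\circ T\lambda$; the remaining square $\lambda\circ\eta^T_S=S\eta^T$, the one involving the unit of the ``outer'' monad $T$, is deliberately dropped. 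Here I take $S\eqdef\SV$, $T\eqdef\Val_\bullet$, and $\lambda\eqdef\lambda^\sharp$, whose components $\lambda^\sharp_X\colon\Val_\bullet {\SV X}\to\SV {\Val_\bullet X}$ are continuous and well typed by Proposition~\ref{prop:Phi}. The three required squares, instantiated at a space $X$, are then word for word the statements of Lemma~\ref{lemma:lambda:etaS} (the $\SV$-unit square), Lemma~\ref{lemma:lambda:muS} (the $\SV$-multiplication square), and Lemma~\ref{lemma:lambda:muT} (the $\Val_\bullet$-multiplication square), while Lemma~\ref{lemma:lambda:nat} is exactly naturality; and the target spaces $\SV {\Val_\bullet X}$ are $T_0$ (noted after Trick~A), so the componentwise equalities proved in those lemmas are genuine equalities of continuous maps. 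Thus the proof is a single sentence citing the four lemmas.

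It is worth recording alongside this assembly \emph{why} one gets only a weak distributive law, i.e.\ why the fourth Beck square $\lambda^\sharp_X\circ\eta_{\SV X}=\SV {\eta_X}$, with $\eta$ the unit of $\Val_\bullet$ (so $\eta_{\SV X}(Q)=\delta_Q$ and $\SV {\eta_X}(Q)=\upc\{\delta_x\mid x\in Q\}$), cannot hold. Applied to $Q\in\SV X$, the left-hand side is $\lambda^\sharp_X(\delta_Q)$; using Fact~\ref{fact:lambda}, the identity $\Phi(\delta_Q)(h)=\min_{x\in Q}h(x)=r_\DN(\upc\{\delta_x\mid x\in Q\})(h)$, and the fact that $s_\DN^\bullet\circ r_\DN$ sends every compact saturated set to its compact saturated convex hull~\cite[Proposition~4.20]{JGL-mscs16}, this equals $\upc conv(\{\delta_x\mid x\in Q\})$ (compare Remark~\ref{rem:Phi}), whereas the right-hand side is $\upc\{\delta_x\mid x\in Q\}$ on the nose. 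The two coincide only when $\{\delta_x\mid x\in Q\}$ is already convex, and differ as soon as $Q$ carries two incomparable points with a nontrivial convex combination of their Dirac valuations lying above no $\delta_z$; producing such a concrete counterexample is routine and I would not belabour it, since it is not needed here. This confirms that ``weak'' is the best one can hope for, in agreement with the non-existence of a strict distributive law of $\Smyth$ over $\Val$ recalled in the introduction. It is precisely the passage to convex hulls here that will make the weak composite land in the superlinear previsions of Section~\ref{sec:monad-superl-prev}.

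I do not anticipate any obstacle in the assembly step itself: all the substance lives in the four cited lemmas, and in particular in Lemma~\ref{lemma:lambda:muS} and Lemma~\ref{lemma:lambda:muT}, each proved via Trick~B --- using the homeomorphism $r_\DN\colon\SV^{cvx}(\Val_\bullet X)\to\Pred_\DN^\bullet X$ to reduce the corresponding multiplication square to an identity of maps into $\Pred_\DN^\bullet X$, which is then checked on the subbasic opens $[h>1]$ by means of the inverse-image formulas of Corollary~\ref{corl:lambda:inv:func:spec} and equation~(\ref{eq:mu-1}). Those formulas in turn rest on Lemma~\ref{lemma:sDP:inv:func}, hence ultimately on the Minimax Theorem of~\cite{JGL-minimax17}: that is where the work was really done, and the present theorem merely harvests its consequences.
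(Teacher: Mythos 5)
Your proposal is correct and follows exactly the paper's own proof: Theorem~\ref{thm:lambda} is obtained there, word for word, by putting Lemma~\ref{lemma:lambda:nat}, Lemma~\ref{lemma:lambda:etaS}, Lemma~\ref{lemma:lambda:muS} and Lemma~\ref{lemma:lambda:muT} together, which is your assembly step. Your additional observation on why the fourth Beck square $\lambda^\sharp_X \circ \eta_{\SV X} = \SV{\eta_X}$ fails (convex hulls of sets of Dirac valuations) is a correct but optional aside, consistent with Remark~\ref{rem:Phi}, and not part of what the theorem claims.
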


\section{The associated weak composite monad}
\label{sec:associated-monad}

We will show that the associated weak composite monad (see
\cite[Section~3.2]{Garner:weak:distr}, or \cite[page~46]{Goy:PhD}) is
isomorphic to the monad $\Pred_\DN^\bullet$ of superlinear (resp., and
subnormalized, resp., and normalized) previsions, or equivalently to
the isomorphic monad $\SV^{cvx} \Val_\bullet$.

There is an idempotent:
\begin{equation}
  \label{eq:idemp}
  \xymatrix{
    \SV {\Val_\bullet X} \ar[r]^{\eta_{\SV {\Val_\bullet X}}}
    & \Val_\bullet {\SV {\Val_\bullet X}}
    \ar[r]^{\lambda^\sharp_{\Val_\bullet X}}
    & \SV {\Val_\bullet {\Val_\bullet X}}
    \ar[r]^{\SV {\mu_X}}
    & \SV {\Val_\bullet X}
  }
\end{equation}
and splitting it defines the action of the (functor part of the)
associated weak composite monad on each object $X$.  \emph{Splitting}
an idempotent $e$ means writing it as $\iota \circ \pi$, for a pair of
morphisms such that $\pi \circ \iota = \identity \relax$.

\begin{lemma}
  \label{lemma:idemp:split}
  A splitting of (\ref{eq:idemp}) is
  $\xymatrix{\SV {\Val_\bullet X} \ar[r]^{r_\DN} & \Pred_\DN^\bullet X
    \ar[r]^{s_\DN^\bullet} & \SV {\Val_\bullet X}}$.
\end{lemma}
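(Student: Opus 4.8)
The plan is to verify the two defining equations of a splitting: first that $s_\DN^\bullet \circ r_\DN$ equals the idempotent $e$ displayed in~(\ref{eq:idemp}), and second that $r_\DN \circ s_\DN^\bullet = \identity{\Pred_\DN^\bullet X}$. The latter is already recorded in the excerpt (it is the retraction equation from Proposition~3.22 of \cite{JGL-mscs16}), so the real work is the first identity. Both sides are continuous maps into the $T_0$ space $\SV{\Val_\bullet X}$, so I would invoke Trick~A and compare inverse images of subbasic open sets of the form $\Box{(\bigcup_{i=1}^n [h_i > 1])}$ (or, using the simplifications noted before Lemma~\ref{lemma:sDP:inv:func}, it suffices to treat these generating sets).

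For the left-hand side $s_\DN^\bullet \circ r_\DN$, Lemma~\ref{lemma:sDP:inv:func} gives
$(s_\DN^\bullet)^{-1}(\Box{(\bigcup_{i=1}^n [h_i > 1])}) = \bigcup_{\vec a \in \Delta_n} [\sum_{i=1}^n a_i h_i > 1]$,
and then Fact~\ref{fact:r:cont} pulls each $[h > 1]$ back along $r_\DN$ to $\Box{[h > 1]}$, so after distributing we get
$r_\DN^{-1}\bigl(\bigcup_{\vec a} [\sum_i a_i h_i > 1]\bigr) = \bigcup_{\vec a \in \Delta_n} \Box{[\sum_{i=1}^n a_i h_i > 1]}$.
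For the right-hand side, I chase $\Box{(\bigcup_{i=1}^n [h_i > 1])}$ backwards through the three maps of~(\ref{eq:idemp}): along $\SV{\mu_X}$ it becomes $\Box{(\bigcup_i \mu_X^{-1}([h_i>1]))} = \Box{(\bigcup_i [(\nu \mapsto \int h_i\,d\nu) > 1])}$ using~(\ref{eq:mu-1}); along $\lambda^\sharp_{\Val_\bullet X}$ I apply Lemma~\ref{lemma:lambda:inv:func}, obtaining $\bigcup_{\vec a \in \Delta_n}[(\sum_i a_i\,(\nu\mapsto\int h_i\,d\nu))^* > 1]$, where $(\_)^*$ is minimization over a compact saturated subset of $\Val_\bullet X$; and finally along $\eta_{\SV{\Val_\bullet X}}$ I use $\eta_X^{-1}([g>1]) = g^{-1}(]1,\infty])$, which here evaluates the valuation-indexed functional at a Dirac-like point, i.e.\ sends $(g)^*$ back to pointwise evaluation.

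The step I expect to be the main obstacle is reconciling the two resulting descriptions, i.e.\ showing
$\bigcup_{\vec a \in \Delta_n} \Box{[\sum_{i=1}^n a_i h_i > 1]}$
equals the set obtained on the right-hand side after the $\eta$-pullback. Unwinding the right side: for $Q \in \SV{\Val_\bullet X}$, the condition is that there is some $\vec a \in \Delta_n$ with $\min_{\nu \in Q} \sum_i a_i \int_{x} h_i(x)\,d\nu > 1$, which is exactly $Q \in \bigcup_{\vec a} \Box{[\sum_i a_i h_i > 1]}$ once one notes $[\sum_i a_i h_i > 1]$ in $\Val_\bullet X$ means $\int \sum_i a_i h_i\,d\nu > 1$ and $\Box$ of it means all of $Q$ lies inside. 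So the two sides match syntactically, and the only genuine care needed is in handling the $(\_)^*$ operator through the $\eta$-pullback correctly — concretely, that $\eta_{\SV{\Val_\bullet X}}^{-1}([g^* > 1]) = [g > 1]$ for a lower semicontinuous $g$ on $\Val_\bullet X$, which is the specialization of Corollary~\ref{corl:lambda:inv:func:spec}-type reasoning to the unit, or can be seen directly since $(\upc\nu)$ ranges over a single point up to specialization and $g$ is monotone. Once that bookkeeping is in place, Trick~A concludes $s_\DN^\bullet \circ r_\DN = e$, and combined with $r_\DN \circ s_\DN^\bullet = \identity{}$ this exhibits the claimed splitting.
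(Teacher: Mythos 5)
Your proposal is correct, but it takes a different route from the paper. You verify the splitting equations directly: $r_\DN \circ s_\DN^\bullet = \identity{}$ is the known retraction, and $s_\DN^\bullet \circ r_\DN = e$ (where $e$ is the idempotent (\ref{eq:idemp})) is checked by Trick~A on the base $\Box {(\bigcup_{i=1}^n [h_i > 1])}$ of $\SV {\Val_\bullet X}$, which forces you to invoke the full $\Delta_n$-indexed inverse-image formulas of Lemma~\ref{lemma:sDP:inv:func} and Lemma~\ref{lemma:lambda:inv:func} (hence, indirectly, the minimax machinery behind them) and to match two infinite unions $\bigcup_{\vec a \in \Delta_n} \Box {[\sum_{i=1}^n a_i h_i > 1]}$. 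The paper instead first observes that $e$ takes its values in convex sets (because $\lambda^\sharp_{\Val_\bullet X} = s_\DN^\bullet \circ \Phi$ has convex values and $\SV {\mu_X}$ preserves convexity, $\mu_X$ being linear), which licenses Trick~B: it then suffices to prove $r_\DN \circ e = r_\DN$, and that is done by Trick~A on the much simpler subbasic sets $[h>1]$ of $\Pred_\DN^\bullet X$, using only Fact~\ref{fact:r:cont}, (\ref{eq:mu-1}) and the $n=1$ case Corollary~\ref{corl:lambda:inv:func:spec}. So your route trades the convexity argument for heavier inverse-image computations; both are available in the paper, and your matching of the two sides is sound, since for $Q$ compact and $g \eqdef (\nu \mapsto \int_{x\in X} \sum_i a_i h_i(x)\,d\nu)$ one has $(g^*)^{-1}(]1,\infty]) = \Box {g^{-1}(]1,\infty])}$, exactly Fact~\ref{fact:Phi:cont} applied at $\Val_\bullet X$. (Only a notational slip: the $\eta$-pullback does not give ``$[g>1]$'' but $(g^*)^{-1}(]1,\infty]) = \Box {[\sum_i a_i h_i > 1]}$, which is what your unwinding in fact uses.) One extra point your plan correctly relies on, and should state, is that the sets $\Box {(\bigcup_{i=1}^n [h_i>1])}$ do generate the upper Vietoris topology on $\SV {\Val_\bullet X}$, as established in the discussion preceding Lemma~\ref{lemma:sDP:inv:func}; with that, Trick~A applies and your argument goes through.
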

\begin{proof}
  Let $f$ denote the idempotent (\ref{eq:idemp}).  We recall that
  $\lambda^\sharp_{\Val_\bullet X} = s_\DN^\bullet \circ \Phi$.  In
  particular, any element in its range is convex.  The image of any
  convex element $Q$ of a space $\SV {\Val_\bullet}$ under $\SV f$,
  where $f$ is any linear continuous map from
  $\Val_\bullet Y \to \Val_\bullet Z$, is convex; and $\mu_X$ is
  linear.  Therefore $f$ takes its values in
  $\SV^{cvx} {\Val_\bullet X}$.

  Using Trick~B, and simplifying using the fact that $r_\DN \circ
  s_\DN^\bullet$ is the identity, it remains to show that $r_\DN$ is
  equal to $r_\DN \circ f$, namely to:
  \[
    \xymatrix{
      \SV {\Val_\bullet X} \ar[r]^{\eta_{\SV {\Val_\bullet X}}}
      & \Val_\bullet {\SV {\Val_\bullet X}}
      \ar[r]^{\lambda^\sharp_{\Val_\bullet X}}
      & \SV {\Val_\bullet {\Val_\bullet X}}
      \ar[r]^{\SV {\mu_X}}
      & \SV {\Val_\bullet X}
      \ar[r]^{r_\DN}
      & \Pred_\DN^\bullet X
    }
  \]
  Using Trick~A,
  \begin{align*}
    [h > 1]
    & \invto{r_\DN}
      \Box {[h > 1]}
    & \text{by Fact~\ref{fact:r:cont}} \\
    & \invto{\SV {\mu_X}}
      \Box {\mu_X^{-1} ([h > 1])} \\
    & \qquad= \Box {\left[\left(\nu \in \Val_\bullet X \mapsto \int_{x \in
      X} h (x) \,d\nu\right) > 1\right]}
    & \text{by (\ref{eq:mu-1})} \\
    & \invto{\lambda^\sharp_{\Val_\bullet X}}
      \left[\left(\nu \in \Val_\bullet X \mapsto \int_{x \in
      X} h (x) \,d\nu\right)^* > 1\right]
    & \text{by Corollary~\ref{corl:lambda:inv:func:spec}} \\
    & \invto{\eta_{\SV {\Val_\bullet X}}}
      \left\{Q \in \SV {\Val_\bullet X} \mid \left(\nu \in \Val_\bullet X \mapsto \int_{x \in
      X} h (x) \,d\nu\right)^* (Q) > 1\right\}
    \mskip-360mu \\
    & \qquad= \left\{Q \in \SV {\Val_\bullet X} \mid r_\DN (Q) (h) > 1 \right\}
      = r_\DN^{-1} ([h > 1]).
      \mskip-250mu
  \end{align*}
  \qed
\end{proof}
We can always split an idempotent $e \colon Z \to Z$ in $\Topcat$ as
the composition of its corestriction to $e [Z]$ with the inclusion map
$e [Z] \to Z$.  In Lemma~\ref{lemma:idemp:split}, we might have chosen
the homeomorphic space $\SV^{cvx} {\Val_\bullet X}$ instead of
$\Pred_\DN^\bullet X$.

Generalizing (\ref{eq:idemp}), there is an idempotent:
\begin{equation}
  \label{eq:idemp:alpha}
  \xymatrix{
    \SV X \ar[r]^{\eta_{\SV X}}
    & \Val_\bullet {\SV X}
    \ar[r]^{\lambda^\sharp_X}
    & \SV {\Val_\bullet X}
    \ar[r]^{\SV \alpha}
    & \SV X
  }
\end{equation}
for every $\Val_\bullet$-algebra $\alpha$.  Finding out what its
splitting is is harder in general, especially because we have no
complete characterization of $\Val_\bullet$-algebras in $\Topcat$.  We
will only need to compute this splitting for one additional
$\Val_\bullet$-algebra.
\begin{lemma}
  \label{lemma:idemp:P}
  For every topological space $X$, there is a $\Val_\bullet$-algebra
  $\alpha_\DN^\bullet \colon \allowbreak \Val_\bullet
  {\Pred_\DN^\bullet X} \to \Pred_\DN^\bullet X$ defined by
  $\alpha_\DN^\bullet (\xi) (h) \eqdef \int_{F \in \Pred_\DN^\bullet
    X} F (h) \,d\xi$ for all
  $\xi \in \Val_\bullet {\Pred_\DN^\bullet X}$ and $h \in \Lform X$.

  The idempotent (\ref{eq:idemp:alpha}) with
  $\alpha \eqdef \alpha_\DN^\bullet$ splits as:
  \begin{align*}
    \xymatrix{\SV {\Pred_\DN^\bullet X} \ar[r]
    & \SV^{cvx} {\Pred_\DN^\bullet X} \ar[r] & \SV {\Pred_\DN^\bullet X}},
  \end{align*}
  where the arrow on the right is the inclusion map, and the arrow on
  the left is the corestriction of (\ref{eq:idemp:alpha}), and maps
  every $Q \in \SV^{cvx} {\Pred_\DN^\bullet X}$ to itself.
\end{lemma}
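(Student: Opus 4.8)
The statement splits into two parts, which I would prove in turn: first that $\alpha_\DN^\bullet$ is a $\Val_\bullet$-algebra, then that the idempotent (\ref{eq:idemp:alpha}) with $\alpha \eqdef \alpha_\DN^\bullet$ splits as claimed. For the first part I would follow the template of Proposition~\ref{prop:DN:monad}. The integrand $F \in \Pred_\DN^\bullet X \mapsto F(h)$ lies in $\Lform {\Pred_\DN^\bullet X}$, since its preimage of $]t,\infty]$ is the subbasic open set $[h > t]$; hence $\alpha_\DN^\bullet(\xi)(h) = \int_{F \in \Pred_\DN^\bullet X} F(h)\,d\xi$ is well defined, and the Choquet integral is positively homogeneous, additive, Scott-continuous and monotone in the integrand, and linear and monotone in $\xi$. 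It follows at once that $\alpha_\DN^\bullet(\xi)$ is positively homogeneous and Scott-continuous in $h$; superlinearity follows from $F(h+h') \geq F(h)+F(h')$ together with monotonicity and additivity of the integral; and, when $\bullet$ is ``$\leq 1$'' (resp.\ ``$1$''), subnormalization (resp.\ normalization) of $\alpha_\DN^\bullet(\xi)$ follows from the corresponding property of every $F$ together with $\int_{F'} \one\,d\xi = \xi(\Pred_\DN^\bullet X) \leq 1$ (resp.\ $=1$); so $\alpha_\DN^\bullet(\xi) \in \Pred_\DN^\bullet X$. Continuity of $\alpha_\DN^\bullet$ is immediate: $(\alpha_\DN^\bullet)^{-1}([h > r]) = [(F \mapsto F(h)) > r]$, which is open in $\Val_\bullet {\Pred_\DN^\bullet X}$ by the alternate description of the weak topology recalled in Section~\ref{sec:preliminaries}. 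The unit law reads $\alpha_\DN^\bullet(\delta_F)(h) = \int_{F'} F'(h)\,d\delta_F = F(h)$, i.e.\ $\alpha_\DN^\bullet(\delta_F) = F$; and associativity follows by evaluating both sides at $h \in \Lform X$: for $\Xi \in \Val_\bullet {\Val_\bullet {\Pred_\DN^\bullet X}}$, both $\alpha_\DN^\bullet(\mu_{\Pred_\DN^\bullet X}(\Xi))(h)$ and $\alpha_\DN^\bullet(\Val_\bullet {\alpha_\DN^\bullet}(\Xi))(h)$ reduce to $\int_{\xi \in \Val_\bullet {\Pred_\DN^\bullet X}} \alpha_\DN^\bullet(\xi)(h)\,d\Xi$, the first by the iterated-integral formula of Section~\ref{sec:preliminaries} and the second by the change-of-variable formula.

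For the splitting, the heart of the argument is to compute the idempotent $e$ given by (\ref{eq:idemp:alpha}) with $\alpha \eqdef \alpha_\DN^\bullet$ and base space $\Pred_\DN^\bullet X$, evaluated at an arbitrary $Q \in \SV {\Pred_\DN^\bullet X}$, and to show that $e(Q) = \upc conv (Q)$. First, $\eta_{\SV {\Pred_\DN^\bullet X}}(Q) = \delta_Q$. By Fact~\ref{fact:lambda} and the definition of $\Phi$, $\lambda^\sharp_{\Pred_\DN^\bullet X}(\delta_Q) = s_\DN^\bullet(\Phi(\delta_Q))$ with $\Phi(\delta_Q)$ the superlinear prevision $g \in \Lform {\Pred_\DN^\bullet X} \mapsto \int_{Q' \in \SV {\Pred_\DN^\bullet X}} \min_{F \in Q'} g(F)\,d\delta_Q = \min_{F \in Q} g(F)$. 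The set $K \eqdef \upc \{\delta_F \mid F \in Q\}$ is compact saturated in $\Val_\bullet {\Pred_\DN^\bullet X}$ --- the continuous image $\{\delta_F \mid F \in Q\}$ of the compact set $Q$ is compact, and the upward closure of a compact set is compact (Section~\ref{sec:preliminaries}) --- and, since lower semicontinuous functions are monotone and attain their minimum over a compact set, $r_\DN(K)(g) = \min_{\nu \in K} \int_{F'} g(F')\,d\nu = \min_{F \in Q} g(F)$. Hence $\Phi(\delta_Q) = r_\DN(K)$, so by the identity recalled in Remark~\ref{rem:Phi} (\cite[Proposition~4.20]{JGL-mscs16}) that $s_\DN^\bullet \circ r_\DN$ sends a compact saturated subset of $\Val_\bullet {\Pred_\DN^\bullet X}$ to its compact saturated convex hull, $\lambda^\sharp_{\Pred_\DN^\bullet X}(\delta_Q) = \upc conv (K) = \upc conv (\{\delta_F \mid F \in Q\})$. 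Finally, applying $\SV {\alpha_\DN^\bullet}$, i.e.\ $\mathcal Q \mapsto \upc \alpha_\DN^\bullet[\mathcal Q]$: since $\alpha_\DN^\bullet$ is linear and monotone, $\upc \alpha_\DN^\bullet[\upc conv (A)] = \upc conv (\alpha_\DN^\bullet[A])$ for every subset $A$, and $\alpha_\DN^\bullet(\delta_F) = F$ by the unit law, so $e(Q) = \upc conv (\{F \mid F \in Q\}) = \upc conv (Q)$.

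It then remains to read off the splitting. Since $e = \SV {\alpha_\DN^\bullet} \circ \lambda^\sharp_{\Pred_\DN^\bullet X} \circ \eta_{\SV {\Pred_\DN^\bullet X}}$ is a composite of continuous maps landing in $\SV {\Pred_\DN^\bullet X}$, each $e(Q)$ is a non-empty compact saturated subset of $\Pred_\DN^\bullet X$, and $e(Q) = \upc conv (Q)$ is convex; thus $e$ corestricts to a continuous map $\pi \colon \SV {\Pred_\DN^\bullet X} \to \SV^{cvx} {\Pred_\DN^\bullet X}$, and, writing $\iota$ for the inclusion, $\iota \circ \pi = e$. For $Q \in \SV^{cvx} {\Pred_\DN^\bullet X}$, $Q$ is convex and saturated, i.e.\ upward closed in the pointwise ordering of $\Pred_\DN^\bullet X$, so $\upc conv (Q) = Q$; hence $\pi \circ \iota$ is the identity on $\SV^{cvx} {\Pred_\DN^\bullet X}$, and in particular $\pi$ maps every $Q \in \SV^{cvx} {\Pred_\DN^\bullet X}$ to itself. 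This is exactly the asserted splitting, and we need not re-check idempotency of $e$, which the excerpt records for every $\Val_\bullet$-algebra (and which is manifest anyway from $\upc conv (\upc conv (Q)) = \upc conv (Q)$).

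The main obstacle is the middle computation, namely threading $\delta_Q$ correctly through $\lambda^\sharp = s_\DN^\bullet \circ \Phi$. The decisive observation is that $\Phi(\delta_Q)$ coincides with $r_\DN$ applied to the compact saturated set $\upc \{\delta_F \mid F \in Q\}$; this lets us invoke the already-established identity $s_\DN^\bullet \circ r_\DN = \upc conv (\cdot)$ instead of re-deriving the convex hull from scratch, which would otherwise require the Minimax Theorem again, as in Lemma~\ref{lemma:sDP:inv:func}. The remaining bookkeeping --- that $\upc \alpha_\DN^\bullet[\upc conv (A)] = \upc conv (\alpha_\DN^\bullet[A])$, from linearity and monotonicity of $\alpha_\DN^\bullet$, and that $\alpha_\DN^\bullet(\delta_F) = F$ --- is routine, as is the first part, which runs entirely parallel to Proposition~\ref{prop:DN:monad}.
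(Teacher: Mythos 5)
Your proposal is correct. The first half (that $\alpha_\DN^\bullet$ is a well-defined, continuous $\Val_\bullet$-algebra) is essentially the paper's argument, with associativity checked by direct evaluation via the iterated-integral and change-of-variable formulas instead of Trick~A --- the same computation in different clothes, and your explicit treatment of the (sub)normalized cases is a harmless addition. The second half, however, takes a genuinely different route. The paper shows that the image of the idempotent consists of convex sets (exactly as you do, via convexity of $\lambda^\sharp_{\Pred_\DN^\bullet X}(\delta_Q)$ and linearity of $\alpha_\DN^\bullet$), and then proves that the idempotent restricts to the identity on $\SV^{cvx}{\Pred_\DN^\bullet X}$ by a Trick~A computation of the inverse image of $\Box{\bigcup_{i=1}^n [h_i>1]}$ through the composite (using Lemma~\ref{lemma:lambda:inv:func}) followed by a second application of the Minimax Theorem to show that, for convex $Q$, membership in that inverse image is equivalent to membership in $\Box{\bigcup_{i=1}^n [h_i>1]}$. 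You instead evaluate the idempotent pointwise: since $\eta$ produces Dirac valuations, $\Phi(\delta_Q)(g)=\min_{F\in Q}g(F)=r_\DN(\upc\{\delta_F\mid F\in Q\})(g)$, so Fact~\ref{fact:lambda} together with the hull formula $s_\DN^\bullet\circ r_\DN=\upc conv(\cdot)$ (Proposition~4.20 of \cite{JGL-mscs16}, which the paper invokes only in the illustrative Remark~\ref{rem:Phi}) gives $\lambda^\sharp_{\Pred_\DN^\bullet X}(\delta_Q)=\upc conv(\{\delta_F\mid F\in Q\})$, and then linearity, monotonicity and the unit law for $\alpha_\DN^\bullet$ yield the explicit description $e(Q)=\upc conv(Q)$ of the idempotent, from which both the convexity of its image and its being the identity on $\SV^{cvx}{\Pred_\DN^\bullet X}$ fall out immediately. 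Your route is shorter, avoids a second minimax argument, and produces a sharper statement (an explicit formula for the idempotent, not just its splitting); the price is reliance on the external Proposition~4.20, whereas the paper's inverse-image method stays within the Trick~A machinery it reuses verbatim in the parallel Lemmas~\ref{lemma:idemp:P:flat} and~\ref{lemma:idemp:P:nat}. The small steps you leave implicit --- $\upc conv(\upc A)=\upc conv(A)$, $\upc\alpha_\DN^\bullet[\upc conv(A)]=\upc conv(\alpha_\DN^\bullet[A])$, and attainment of $\min_{F\in Q}g(F)$ for lower semicontinuous $g$ on compact $Q$ --- are all routine and true, so I see no gap.
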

\begin{proof}
  The map $F \mapsto F (h)$ is lower semicontinuous, since the inverse
  image of $]t, \infty]$ is $[h > t]$, for all $h \in \Lform X$ and
  $t \in \Rp$.  Hence the integral
  $\int_{F \in \Pred_\DN^\bullet X} F (h) \,d\xi$ makes sense.  The
  map ${\alpha_\DN^\bullet} (\xi)$ is Scott-continuous because each
  $F \in \Pred_\DN^\bullet X$ is and because integration with respect
  to $\nu$ is Scott-continuous; it is superlinear because $F$ is and
  integration with respect to $\xi$ is linear.  Additionally,
  ${(\alpha_\DN^\bullet)}^{-1} ([h > r]) = [(F \mapsto F (h)) > r]$,
  so ${\alpha_\DN^\bullet}$ is continuous.

  We claim that the map $\alpha_\DN^\bullet$ is a
  $\Val_\bullet$-algebra.  We have
  $\alpha_\DN^\bullet (\eta_{\Pred_\DN^\bullet X} (F)) (h) = \int_{F'
    \in \Pred_\DN^\bullet X} F' (h)\,d\delta_F = F (h)$, so
  $\alpha_\DN^\bullet \circ \eta_{\Pred_\DN^\bullet X}$ is the
  identity map.  In order to show that
  $\alpha_\DN^\bullet \circ \mu_{\Pred_\DN^\bullet X} =
  \alpha_\DN^\bullet \circ \Val_\bullet {\alpha_\DN^\bullet}$, we use
  Trick~A:
  \begin{align*}
    [h > 1]
    & \invto{\alpha_\DN^\bullet}
      [(F \in \Pred_\DN^\bullet X \mapsto F (h)) > 1] \\
    & \invto{\mu_{\Pred_\DN^\bullet X}}
      [(\xi \in \Val_\bullet {\Pred_\DN^\bullet X} \mapsto
      \int_{F \in \Pred_\DN^\bullet X} F (h) \,d\xi) > 1]
    & \text{by (\ref{eq:mu-1})} \\
    [h > 1]
    & \invto{\alpha_\DN^\bullet}
      [(F \in \Pred_\DN^\bullet X \mapsto F (h)) > 1] \\
    & \invto{\Val_\bullet {\alpha_\DN^\bullet}}
      [(F \in \Pred_\DN^\bullet X \mapsto F (h)) \circ
      \alpha_\DN^\bullet > 1] \\
    & = [(\xi \in \Val_\bullet {\Pred_\DN^\bullet X} \mapsto
      \alpha_\DN^\bullet (\xi) (h)) > 1] \\
    & = [(\xi \in \Val_\bullet {\Pred_\DN^\bullet X} \mapsto
      \int_{F \in \Pred_\DN^\bullet X} F (h) \,d\xi) > 1].
  \end{align*}
  In order to justify the announced splitting of
  (\ref{eq:idemp:alpha}), we need to show that the image of
  (\ref{eq:idemp:alpha}) is exactly $\SV^{cvx} {\Pred_\DN^\bullet X}$.
  For every $Q \in \SV X$, the image of $Q$ by (\ref{eq:idemp:alpha})
  is $\upc \alpha_\DN^\bullet [Q']$ where
  $Q' \eqdef \lambda^\sharp_{\Pred_\DN^\bullet X} (\delta_Q)$ is
  convex, and $\alpha_\DN^\bullet$ is linear (in its $\xi$ argument),
  so that image is convex.

  In order to see that every element $Q$ of
  $\SV^{cvx} {\Pred_\DN^\bullet X}$ is reached as the image of some
  element by (\ref{eq:idemp:alpha}), we simply show that $Q$ is its
  own image by (\ref{eq:idemp:alpha}), or equivalently, that
  (\ref{eq:idemp:alpha}) restricts to the identity on
  $\SV^{cvx} {\Pred_\DN^\bullet X}$.  We use Trick~A:
  \begin{align*}
    \Box {\bigcup_{i=1}^n [h_i > 1]}
    & \invto{\SV {\alpha_\DN^\bullet}}
      \Box {\bigcup_{i=1}^n {(\alpha_\DN^\bullet)}^{-1} ([h_i > 1])}
    \\
    & = \Box {\bigcup_{i=1}^n [(F \mapsto F (h_i)) > 1]} \\
    & \invto{\lambda^\sharp_{\Pred_\DN^\bullet X}}
      \bigcup_{\vec a \in \Delta_n} \left[\left(F \mapsto \sum_{i=1}^n
      a_i F (h_i)\right)^* > 1\right]
    & \text{by Lemma~\ref{lemma:lambda:inv:func}} \\
    & \invto{\eta_{\SV {\Pred_\DN^\bullet X}}}
      \bigcup_{\vec a \in \Delta_n} \left(\left(F \mapsto \sum_{i=1}^n
      a_i F (h_i)\right)^*\right)^{-1} (]1, \infty]),
  \end{align*}
  and we claim that the convex elements $Q$ of $\SV {\Pred_\DN^\bullet
    X}$ that are in the latter open set are exactly those of $\Box
  {\bigcup_{i=1}^n [h_i > 1]}$, which will show the claim.

  We have
  $Q \in \bigcup_{\vec a \in \Delta_n} \left(\left(F \mapsto
      \sum_{i=1}^n a_i F (h_i)\right)^*\right)^{-1} (]1, \infty])$ if
  and only there is an $\vec a \in \Delta_n$ such that
  $\min_{F \in Q} \sum_{i=1}^n a_i F (h_i) > 1$.  For every
  $Q \in \SV^{cvx} {\Pred_\DN^\bullet X}$ and for every
  $i \in \{1, \cdots, n\}$, let
  $f (F, \vec a) \eqdef \sum_{i=1}^n a_i F (h_i)$.  This defines a
  function from $Q \times \Delta_n$ to $\creal$ that is lower
  semicontinuous in $Q$ (since $f$ is the composition of various sums
  and products with the functions $F \mapsto F (h_i)$, which are lower
  semicontinuous), is such that
  $f (a F_1 + (1-a) F_2, i) = a f (F_1, i) + (1-a) f (F_2, i)$ for all
  $a \in [0, 1]$ and $F_1, F_2 \in Q$ (we need $Q$ to be convex simply
  to make sure that $a F_1 + (1-a) F_2$ still lies in $Q$) hence is
  convex (the same property, with $\geq$ instead of $=$) hence closely
  convex in its first argument (see Remark~3.4 of
  \cite{JGL-minimax17}).  It is also concave hence closely concave in
  its second argument. By the Minimax Theorem~3.3 of
  \cite{JGL-minimax17}, therefore,
  $\sup_{\vec a \in \Delta_n} \min_{F \in Q} f (F, \vec a) = \min_{F
    \in Q} \sup_{\vec a \in \Delta_n} f (F, \vec a)$.  For every
  $Q \in \bigcup_{\vec a \in \Delta_n} \left(\left(F \mapsto
      \sum_{i=1}^n a_i F (h_i)\right)^*\right)^{-1} \allowbreak (]1,
  \infty])$, the left-hand side is strictly larger than $1$, so the
  right-hand side is, too.  Hence, for every $F \in Q$, there is an
  $\vec a \in \Delta_n$ such that $\sum_{i=1}^n a_i F (h_i) > 1$.  It
  follows easily that $F (h_i) > 1$ for some $i \in \{1, \cdots, n\}$,
  so $Q \in \Box {\bigcup_{i=1}^n [h_i > 1]}$.  Conversely, if
  $Q \in \Box {\bigcup_{i=1}^n [h_i > 1]}$, then for every $F \in Q$,
  there is an $i \in \{1, \cdots, n\}$ such that $F (h_i) > 1$, so
  there is an $\vec a \in \Delta_n$ such that
  $\sum_{i=1}^n a_i F (h_i) > 1$, and therefore
  $\sup_{\vec a \in \Delta_n} \min_{F \in Q} f (F, \vec a) > 1$,
  showing that $Q$ is in
  $\bigcup_{\vec a \in \Delta_n} \left(\left(F \mapsto \sum_{i=1}^n
      a_i F (h_i)\right)^*\right)^{-1} \allowbreak (]1, \infty])$.
%
  % Finally, it remains to show that (\ref{eq:idemp:alpha}) maps every
  % $Q \in \SV {\Pred_\DN^\bullet X}$ (convex or not) to the upward
  % closure of its convex closure, in other words to the smallest
  % upwards-closed, convex sets containing $Q$.  Let $Q'$ be the image
  % of $Q$ by (\ref{eq:idemp:alpha}), and $Q''$ be the smallest
  % upwards-closed, convex set containing $Q$.
  \qed
\end{proof}

Following B\"ohm, Garner showed that, for all monads $S$ and $T$ on a
category where idempotent splits, every weak distributive law
$\lambda \colon TS \to ST$ yields a weak lifting $\overline S$ of $S$
to the category of $T$-algebras, built as follows.

A $T$-semialgebra is a morphism $\alpha \colon TX \to X$ such that
$\alpha \circ \mu^T_X = \alpha \circ T \alpha$.  Given a weak
distributive law $\lambda$,
$S \alpha \circ \lambda_X \circ \eta^T_{SX} \colon SX \to SX$ is an
idempotent, and also a morphism of $T$-semialgebras.  We can split it
in $\Topcat$, as:
\[
  \xymatrix{
    SX \ar[r]_{\pi_\alpha}
    & \overline S X \ar[r]_{\iota_\alpha}
    & SX
  }
\]
since idempotents split in $\Topcat$.  We recall that splitting means
that the idempotent is equal to $\iota_\alpha \circ \pi_\alpha$, and
$\pi_\alpha \circ \iota_\alpha$ is the identity.  This actually splits
the idempotent in the category of $T$-semialgebras, as follows:
\[
  \xymatrix{
    TSX
    \ar[d]_{\lambda_X}
    \ar[r]^{T \pi_\alpha}
    & T\overline S X
    \ar[r]^{T \iota_\alpha}
    \ar[d]|{\lambda_X \circ T \iota_\alpha}
    & TSX
    \ar[d]_{\lambda_X}
    \\
    STX
    \ar[d]_{S \alpha}
    &
    STX
    \ar[d]|{\pi_\alpha \circ S\alpha}
    &
    STX
    \ar[d]_{S \alpha}
    \\
    SX \ar[r]_{\pi_\alpha}
    & \overline S X \ar[r]_{\iota_\alpha}
    & SX
  }
\]
Additionally, the middle vertical arrow $\overline S \alpha \eqdef
\pi_\alpha \circ S\alpha \circ \lambda_X \circ T\iota_\alpha$ is not
just a $T$-semialgebra, but a $T$-algebra.

This defines the object part of a endofunctor $\overline S$ on the
category of $T$-algebras.
% and there are unique morphisms
% $\pi_\alpha \colon \alpha \to \overline S \alpha$ and
% $\iota_\alpha \colon \overline S \alpha \to \alpha$ (of
% $T$-semialgebras) such that the idempotent is equal to
% $\iota_\alpha \circ \pi_\alpha$, and $\pi_\alpha \circ \iota_\alpha$
% is the identity.  Taking $T \eqdef \Val_\bullet$, $S \eqdef \SV$, and
% $\alpha \eqdef \mu^T_X$, we have split this idempotent in
% Lemma~\ref{lemma:idemp:split}.
On morphisms, the action of $\overline S$ is given by: for all
$T$-algebras $\alpha \colon TX \to X$ and $\beta \colon TY \to Y$, for
every $T$-algebra morphism $f \colon \alpha \to \beta$,
$\overline S f \eqdef \pi_\beta \circ S f \circ \iota_\alpha$.

$\overline S$ is part of a monad, whose unit
$\eta^{\overline S}_\alpha$ is $\pi_\alpha \circ \eta^S_X$, and whose
multiplication is
$\mu^{\overline S}_\alpha \eqdef \pi_\alpha \circ \mu^S_X \circ S
\iota_\alpha \circ \iota_{\overline S \alpha}$.  All of that was
proved by B\"ohm and Garner.

\begin{lemma}
  \label{lemma:Sbar}
  For $S \eqdef \SV$, $T \eqdef \Val_\bullet$,
  $\lambda \eqdef \lambda^\sharp$, the monad $\overline S$ on the
  category of $\Val_\bullet$-algebras has the following properties:
  \begin{enumerate}
  \item For every free $\Val_\bullet$-algebra
    $\mu_X \colon \Val_\bullet {\Val_\bullet X} \to \Val_\bullet X$,
    $\overline S \mu_X \colon \Val_\bullet {\Pred_\DN^\bullet X} \to
    \Pred_\DN^\bullet X$ is $\alpha_\DN^\bullet$ (see
    Lemma~\ref{lemma:idemp:P}).
    % defined by
    % $\overline S \mu_X (\xi) (h) \eqdef \int_{F \in \Pred_\DN^\bullet
    %   X} F (h) \,d\xi$ for every
    % $\xi \in \Val_\bullet {\Pred_\DN^\bullet X}$, and for every
    % $h \in \Lform X$.
  \item The unit $\eta^{\overline S}_{\mu_X}$ evaluated at the free
    $\Val_\bullet$-algebra
    $\mu_X \colon \Val_\bullet {\Val_\bullet X} \to \Val_\bullet X$ is
    $r_\DN \circ \eta^\Smyth_{\Val_\bullet X}$.
  \item The multiplication $\mu^{\overline S}_{\mu_X}$ evaluated at
    the free $\Val_\bullet$-algebra
    $\mu_X \colon \Val_\bullet {\Val_\bullet X} \to \Val_\bullet X$ is
    the function that maps every $Q \in \SV^{cvx} {\Pred_\DN^\bullet
      X}$ to $(h \in \Lform X \mapsto \min_{F \in Q} F (h)) \in
    \Pred_\DN^\bullet X$; the inverse image of $[h > 1]$ by that
    function is $\Box {[h > 1]} \cap \SV^{cvx} {\Pred_\DN^\bullet X}$,
    for every $h \in \Lform X$.
  \item The counit $\epsilon^T$ of the adjunction $F^T \dashv U^T$
    between $\Topcat$ and the category of $T$-algebras
    ($\Val_\bullet$-algebras) is given at each algebra $\beta \colon
    \Val_\bullet X \to X$ as $\beta$ itself.
    % epsilon : FU -> id
  \end{enumerate}
\end{lemma}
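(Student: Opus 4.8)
The plan is to read all four items off the general construction of the weak lifting $\overline S$ recalled just above, feeding it the concrete splittings we already have. For the free algebra $\mu_X\colon\Val_\bullet{\Val_\bullet X}\to\Val_\bullet X$, the idempotent $\SV\alpha\circ\lambda^\sharp_X\circ\eta_{\SV X}$ whose splitting defines $\overline S$ is literally the idempotent~(\ref{eq:idemp}), so Lemma~\ref{lemma:idemp:split} supplies a splitting with $\pi_{\mu_X}=r_\DN$ and $\iota_{\mu_X}=s_\DN^\bullet$, and with splitting object $\Pred_\DN^\bullet X$ (we use this in place of the homeomorphic $\SV^{cvx}{\Val_\bullet X}$). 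Fixing this choice once and for all, each item reduces to a substitution into the formulas for $\overline S\alpha$, $\eta^{\overline S}_\alpha$, $\mu^{\overline S}_\alpha$ followed by a short check.

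For item~(1), substitution gives $\overline S\mu_X=r_\DN\circ\SV{\mu_X}\circ\lambda^\sharp_{\Val_\bullet X}\circ\Val_\bullet{s_\DN^\bullet}\colon\Val_\bullet{\Pred_\DN^\bullet X}\to\Pred_\DN^\bullet X$. I would invoke Trick~A and chase a subbasic open $[h>1]$ of $\Pred_\DN^\bullet X$ backwards: through $r_\DN$ it becomes $\Box{[h>1]}$ by Fact~\ref{fact:r:cont}; through $\SV{\mu_X}$ it becomes $\Box{[(\nu\in\Val_\bullet X\mapsto\int_{x\in X}h(x)\,d\nu)>1]}$, using $(\SV f)^{-1}(\Box V)=\Box{f^{-1}(V)}$ and~(\ref{eq:mu-1}); through $\lambda^\sharp_{\Val_\bullet X}$ it becomes $[(\nu\mapsto\int_{x\in X}h(x)\,d\nu)^*>1]$ by Corollary~\ref{corl:lambda:inv:func:spec}; and through $\Val_\bullet{s_\DN^\bullet}$ it becomes $[(\nu\mapsto\int_{x\in X}h(x)\,d\nu)^*\circ s_\DN^\bullet>1]$. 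The one observation that closes the argument is that $(\nu\mapsto\int_{x\in X}h(x)\,d\nu)^*\circ s_\DN^\bullet$ sends $F$ to $\min_{\nu\in s_\DN^\bullet(F)}\int_{x\in X}h(x)\,d\nu=r_\DN(s_\DN^\bullet(F))(h)=F(h)$, since $r_\DN\circ s_\DN^\bullet=\identity{\Pred_\DN^\bullet X}$. Hence this inverse image is $[(F\mapsto F(h))>1]$, which is exactly ${(\alpha_\DN^\bullet)}^{-1}([h>1])$ as computed inside the proof of Lemma~\ref{lemma:idemp:P}; since both maps are continuous and $\Pred_\DN^\bullet X$ is $T_0$, Trick~A gives $\overline S\mu_X=\alpha_\DN^\bullet$.

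Items~(2) and~(3) then follow. Item~(2) is immediate from $\eta^{\overline S}_\alpha=\pi_\alpha\circ\eta^S_X$: for $\alpha=\mu_X$ this reads $\eta^{\overline S}_{\mu_X}=r_\DN\circ\eta^\Smyth_{\Val_\bullet X}$ (and, as a sanity check, $r_\DN(\upc\nu)(h)=\min_{\nu'\geq\nu}\int_{x\in X}h(x)\,d\nu'=\int_{x\in X}h(x)\,d\nu$ by monotonicity of integration, so this is just $\nu$ viewed as a linear prevision). For item~(3), since $\overline S\mu_X=\alpha_\DN^\bullet$ by item~(1), Lemma~\ref{lemma:idemp:P} identifies $\iota_{\overline S\mu_X}$ with the inclusion $\SV^{cvx}{\Pred_\DN^\bullet X}\hookrightarrow\SV{\Pred_\DN^\bullet X}$, so $\mu^{\overline S}_{\mu_X}=r_\DN\circ\mu^\Smyth_{\Val_\bullet X}\circ\SV{s_\DN^\bullet}$ restricted to $\SV^{cvx}{\Pred_\DN^\bullet X}$. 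On $Q\in\SV^{cvx}{\Pred_\DN^\bullet X}$ one has $\SV{s_\DN^\bullet}(Q)=\upc s_\DN^\bullet[Q]$, then $\mu^\Smyth_{\Val_\bullet X}$ sends this to $\bigcup_{F\in Q}s_\DN^\bullet(F)$, and $r_\DN$ of the latter is $h\mapsto\min_{F\in Q}\min_{\nu\in s_\DN^\bullet(F)}\int_{x\in X}h(x)\,d\nu=\min_{F\in Q}F(h)$, where splitting the minimum across the union and attaining it uses that $\bigcup_{F\in Q}s_\DN^\bullet(F)$ is compact and $\nu\mapsto\int_{x\in X}h(x)\,d\nu$ lower semicontinuous, together with $r_\DN\circ s_\DN^\bullet=\identity{\Pred_\DN^\bullet X}$. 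Thus $\mu^{\overline S}_{\mu_X}(Q)=(h\mapsto\min_{F\in Q}F(h))$, and its inverse image of $[h>1]$ is $\{Q\mid\min_{F\in Q}F(h)>1\}$, which (the minimum being attained, $Q$ compact and $F\mapsto F(h)$ lower semicontinuous) equals $\{Q\mid Q\subseteq[h>1]\}=\Box{[h>1]}\cap\SV^{cvx}{\Pred_\DN^\bullet X}$. Finally, item~(4) is the standard fact that the counit $\epsilon^T$ of the Eilenberg--Moore adjunction $F^T\dashv U^T$ at a $T$-algebra $\beta\colon\Val_\bullet X\to X$ has, as underlying morphism of $\Topcat$, the structure map $\beta$ itself.

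I expect the only real care to be needed in item~(1) — keeping track of which copy of $[h>1]$ or $\Box{\cdot}$ lives in which iterated functor space while chasing inverse images — and in the small point in item~(3) that the minima can be split across the (infinite) union and are attained; neither is deep, but both reward attention.
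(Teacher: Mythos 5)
Your proposal is correct and follows essentially the same route as the paper: the same splitting data $\pi_{\mu_X}=r_\DN$, $\iota_{\mu_X}=s_\DN^\bullet$, $\iota_{\overline S\mu_X}=$ the inclusion of $\SV^{cvx}{\Pred_\DN^\bullet X}$ from Lemmas~\ref{lemma:idemp:split} and~\ref{lemma:idemp:P}, the same Trick~A chase for item~(1) ending in $[(F\mapsto F(h))>1]={(\alpha_\DN^\bullet)}^{-1}([h>1])$, and the same readings of the general formulas for items~(2) and~(4). The only (harmless) variation is in item~(3), where you compute $r_\DN\circ\mu^\Smyth_{\Val_\bullet X}\circ\SV{s_\DN^\bullet}$ pointwise on a convex $Q$ (with the attainment of the minima justified by compactness and lower semicontinuity) instead of the paper's inverse-image computation identifying it with the map $Q\mapsto(h\mapsto\min_{F\in Q}F(h))$ via Trick~A; both arguments need and use the same facts.
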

\begin{proof}
  1. We know that $\overline S \mu_X$ must be a $\Val_\bullet$-algebra
  on $\overline S {\Val_\bullet X}$, and the latter can be taken as
  $\Pred_\DN^\bullet X$, by Lemma~\ref{lemma:idemp:split}.  (All
  splittings are up to isomorphism.)  We know that
  $\overline S \mu_X = \pi_{\mu_X} \circ S \mu_X \circ \lambda_X \circ
  T\iota_{\mu_X} = r_\DN \circ \SV \mu_X \circ {\lambda^\sharp_X}
  \circ \Val_\bullet {s_\DN^\bullet}$, which we elucidate using
  Trick~A:
  \begin{align*}
    [h > 1]
    & \invto{r_\DN}
      \Box {[h > 1]}
    & \text{by Fact~\ref{fact:r:cont}} \\
    & \invto{\SV {\mu_X}}
      \Box {\mu_X^{-1} ([h > 1])}
      = \Box {\left[\left(\nu \in \Val_\bullet X \mapsto \int_{x \in X} h (x)
      \,d\nu\right) > 1\right]}
      \mskip-80mu
    & \text{by (\ref{eq:mu-1})} \\
    & \invto{\lambda^\sharp_{\Val_\bullet X}}
      \left[\left(\nu \in \Val_\bullet X \mapsto \int_{x \in X} h (x)
      \,d\nu\right)^* > 1\right]
    & \text{by Corollary~\ref{corl:lambda:inv:func:spec}} \\
    & \invto{\Val_\bullet {s_\DN^\bullet}}
      \left[\left(\nu \in \Val_\bullet X \mapsto \int_{x \in X} h (x)
      \,d\nu\right)^* \circ s_\DN^\bullet > 1\right] \\
    & = [(F \in \Pred_\DN^\bullet X \mapsto F (h)) > 1].
  \end{align*}
  The last equality is justified as follows.  For every
  $F \in \Pred_\DN^\bullet X$,
  \begin{align*}
    \left(\left(\nu \in \Val_\bullet X \mapsto \int_{x \in X} h (x)
    \,d\nu\right)^* \circ s_\DN^\bullet\right) (F)
    & = \min_{\nu \in s_\DN^\bullet (F)} \int_{x \in X} h (x)
      \,d\nu \\
    & = r_\DN (s_\DN^\bullet (F)) (h) = F (h).
  \end{align*}
  Finally, $[(F \in \Pred_\DN^\bullet X \mapsto F (h)) > 1]$ is the
  collection of continuous valuations $\xi \in \Val_\bullet
  {\Pred_\DN^\bullet X}$ such that $\int_{F \in \Pred_\DN^\bullet X} F
  (h) \,d\xi > 1$, namely ${(\alpha_\DN^\bullet)}^{-1} ([h > 1])$.

  2.
  $\eta^{\overline S}_{\mu_X} = \pi_{\mu_X} \circ \eta^S_{\Val_\bullet
    X} = r_\DN \circ \eta^\Smyth_{\Val_\bullet X}$.

  3. In general, $\mu^{\overline S}_\alpha$ is a morphism from
  $\overline S {\overline S \alpha}$ to $\overline S \alpha$ in the
  category of $\Val_\bullet$-algebras.  When
  $\alpha = \mu_X \colon \Val_\bullet {\Val_\bullet X} \to
  \Val_\bullet X$, it is therefore one from
  $\Pred_\DN^\bullet {\Pred_\DN^\bullet X}$ to $\Pred_\DN^\bullet X$.
  We have
  $\mu^{\overline S}_{\mu_X} \eqdef \pi_{\mu_X} \circ
  \mu^S_{\Val_\bullet X} \circ S \iota_{\mu_X} \circ \iota_{\overline
    S \mu_X}$.  We may take
  $\iota_{\overline S \mu_X} = \iota_{\alpha_\DN^\bullet}$ to be the
  right arrow, from $\SV^{cvx} {\Pred_\DN^\bullet X}$ to
  $\SV {\Pred_\DN^\bullet X}$ in Lemma~\ref{lemma:idemp:P}---just
  subspace inclusion.  Then
  $S \iota_{\mu_X} = \SV {s_\DN^\bullet} \colon \SV {\Pred_\DN^\bullet
    X} \to \SV {\SV {\Val_\bullet X}}$, by
  Lemma~\ref{lemma:idemp:split}.  We compose that with
  $\mu^\Smyth_{\Val_\bullet X} \colon \SV {\SV {\Val_\bullet X}} \to
  \SV {\Val_\bullet X}$, then with
  $r_\DN \colon \SV {\Val_\bullet X} \to \Pred_\DN^\bullet X$.
  We elucidate what this composition is by relying on Trick~A:
  \begin{align*}
    [h > 1]
    & \invto{r_\DN}
      \Box {[h > 1]}
    & \text{by Fact~\ref{fact:r:cont}} \\
    & \invto{\mu^\Smyth_{\Val_\bullet X}}
      \Box {\Box {[h > 1]}} \\
    & \invto{\SV {s_\DN^\bullet}}
      \Box {{(s_\DN^\bullet)}^{-1} (\Box {[h > 1]})} \\
    & = \Box {[h > 1]}
    & \text{by Corollary~\ref{corl:lambda:inv:func:spec}} \\
    & \invto{\iota_{\alpha_\DN^\bullet}}
      \Box {[h > 1]} \cap \SV^{cvx} {\Pred_\DN^\bullet X}.
  \end{align*}
  Now this looks just like $r_\DN$, and we imitate it by letting
  $r \colon \SV^{cvx} {\Pred_\DN^\bullet X} \to \Pred_\DN^\bullet X$
  be defined by $r (Q) (h) \eqdef \min_{F \in Q} F (h)$.  This is
  well-defined and lower semicontinuous, and
  $r^{-1} ([h > 1]) = \Box {[h > 1]} \cap \SV^{cvx} {\Pred_\DN^\bullet
    X}$, so $\mu^{\overline S}_{\mu_X} = r$.

  4. Standard category theory, see \cite[Chapter~VI, Section~2,
  Theorem~1]{McLane:cat:math} for example.  \qed
\end{proof}

Finally, the weak composite monad of $S$ with $T$ is
$U^T \overline S F^T$, with unit
$U^T \eta^{\overline S} F^T \circ \eta^T$ and multiplication
$U^T \mu^{\overline S} F^T \circ U^T \overline S \epsilon^T \overline
S F^T$, where $F^T \dashv U^T$ is the adjunction between the base
category ($\Topcat$ in our case) and the category of $T$-algebras, and
$\epsilon^T$ is its counit.

\begin{theorem}
  \label{thm:weaklift}
  The weak composite monad associated with
  $\lambda^\sharp \colon \Val_\bullet {\SV} \to \SV {\Val_\bullet}$ on
  $\Topcat$ is $(\Pred_\DN^\bullet, \eta^\DN, \mu^\DN)$.
\end{theorem}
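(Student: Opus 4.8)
The plan is to unfold Garner's recipe for the weak composite monad $U^T\overline S F^T$ of $S\eqdef\SV$ over $T\eqdef\Val_\bullet$ along $\lambda\eqdef\lambda^\sharp$, and to match its functor part, unit and multiplication against $\Pred_\DN^\bullet$, $\eta^\DN$ and $\mu^\DN$ slot by slot; almost everything needed has already been computed in Lemma~\ref{lemma:Sbar}, together with Lemma~\ref{lemma:idemp:split} and Lemma~\ref{lemma:idemp:P}, so the task is mostly one of assembly. For the functor part: on objects, $U^T\overline S F^T X=U^T(\overline S\mu_X)=U^T(\alpha_\DN^\bullet)=\Pred_\DN^\bullet X$ by Lemma~\ref{lemma:Sbar}(1) (the carrier of $\overline S\mu_X$ being $\Pred_\DN^\bullet X$ by the splitting chosen in Lemma~\ref{lemma:idemp:split}); on a continuous map $f\colon X\to Y$, $U^T\overline S F^T f=\overline S(\Val_\bullet f)$, and since $\Val_\bullet f$ is the free-algebra morphism $\mu_X\to\mu_Y$, the formula $\overline S g=\pi_\beta\circ Sg\circ\iota_\alpha$ together with the splittings of Lemma~\ref{lemma:idemp:split} at $X$ and $Y$ gives $\overline S(\Val_\bullet f)=r_\DN\circ\SV(\Val_\bullet f)\circ s_\DN^\bullet$. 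By naturality of $s_\DN^\bullet$ this equals $r_\DN\circ s_\DN^\bullet\circ\Pred_\DN^\bullet f=\Pred_\DN^\bullet f$, since $r_\DN\circ s_\DN^\bullet$ is the identity.

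For the unit, Lemma~\ref{lemma:Sbar}(2) says that the unit of $U^T\overline S F^T$ at $X$ is $r_\DN\circ\eta^\Smyth_{\Val_\bullet X}\circ\eta_X$; for $x\in X$ this sends $x$ to $r_\DN(\upc\delta_x)$, which is the superlinear prevision $h\mapsto\min_{\nu\ge\delta_x}\int_{y\in X}h(y)\,d\nu=\int_{y\in X}h(y)\,d\delta_x=h(x)$, i.e.\ $\eta^\DN_X(x)$.

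The multiplication is the only mildly computational point. It is $U^T(\mu^{\overline S}_{\mu_X})\circ U^T(\overline S\epsilon^T_{\overline S F^T X})$ at $X$. By Lemma~\ref{lemma:Sbar}(4), $\epsilon^T_{\overline S F^T X}=\epsilon^T_{\alpha_\DN^\bullet}$ is $\alpha_\DN^\bullet$ itself, regarded as the algebra morphism $\mu_{\Pred_\DN^\bullet X}\to\alpha_\DN^\bullet$; applying $\overline S=\pi_\beta\circ S(-)\circ\iota_\alpha$ with the splittings of Lemma~\ref{lemma:idemp:split} (at $\Pred_\DN^\bullet X$) and Lemma~\ref{lemma:idemp:P}, its underlying map is $\pi_{\alpha_\DN^\bullet}\circ\SV(\alpha_\DN^\bullet)\circ s_\DN^\bullet\colon\Pred_\DN^\bullet\Pred_\DN^\bullet X\to\SV^{cvx}\Pred_\DN^\bullet X$. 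Composing with $U^T(\mu^{\overline S}_{\mu_X})=r$, where $r(Q)(h)=\min_{F\in Q}F(h)$ by Lemma~\ref{lemma:Sbar}(3), and unfolding (the $\upc$ hidden in $\SV(\alpha_\DN^\bullet)$ and the corestriction $\pi_{\alpha_\DN^\bullet}$ being harmless, the former because $F\mapsto F(h)$ is monotone, the latter because $\alpha_\DN^\bullet$ is linear so images of convex compact saturated sets stay convex), the multiplication sends $\mathcal F\in\Pred_\DN^\bullet\Pred_\DN^\bullet X$ to $h\mapsto\min_{\xi\in s_\DN^\bullet(\mathcal F)}\int_{F\in\Pred_\DN^\bullet X}F(h)\,d\xi$. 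Since $F\mapsto F(h)$ lies in $\Lform{\Pred_\DN^\bullet X}$, the retraction identity $r_\DN\circ s_\DN^\bullet=\identity{\Pred_\DN^\bullet\Pred_\DN^\bullet X}$ gives $\min_{\xi\in s_\DN^\bullet(\mathcal F)}\int F(h)\,d\xi=r_\DN(s_\DN^\bullet(\mathcal F))(F\mapsto F(h))=\mathcal F(F\mapsto F(h))=\mu^\DN_X(\mathcal F)(h)$.

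Putting the three identifications together, the weak composite monad coincides with $(\Pred_\DN^\bullet,\eta^\DN,\mu^\DN)$, up to the canonical splittings fixed in Lemma~\ref{lemma:idemp:split} and Lemma~\ref{lemma:idemp:P} (for other choices one gets the evident isomorphism of monads). I do not expect any single hard estimate here: the main obstacle is purely organizational, namely tracking which concrete map---among $r_\DN$, $s_\DN^\bullet$, $\alpha_\DN^\bullet$, $\eta^\Smyth$, $\mu^\Smyth$, and the various $\pi$'s and $\iota$'s---realizes each factor of the Garner/B\"ohm formulas for $U^T\overline S F^T$ and its structure maps, and the lone genuine computation, in the multiplication, is dispatched by $r_\DN\circ s_\DN^\bullet=\mathrm{id}$.
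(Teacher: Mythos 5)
Your proposal is correct and follows essentially the same route as the paper: identify the functor part via the splitting of Lemma~\ref{lemma:idemp:split}, read off the unit from Lemma~\ref{lemma:Sbar}~(2), and unfold the multiplication through $s_\DN^\bullet$, $\SV{\alpha_\DN^\bullet}$, the corestriction of Lemma~\ref{lemma:idemp:P} and $\mu^{\overline S}_{\mu_X}$, finishing with $r_\DN \circ s_\DN^\bullet = \mathrm{id}$. The only (welcome) local shortcut is your treatment of the action on morphisms, where naturality of $s_\DN^\bullet$ together with $r_\DN \circ s_\DN^\bullet = \mathrm{id}$ replaces the paper's Trick~A computation on subbasic opens.
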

\begin{proof}
  We take $T \eqdef \Val_\bullet$, $S \eqdef \SV$,
  $\lambda \eqdef \lambda^\sharp$.  The functor $U^T$ takes every
  $\Val_\bullet$-algebra $\alpha \colon \Val_\bullet X \to X$ to $X$
  and every $\Val_\bullet$-algebra morphism
  $f \colon (\alpha \colon \Val_\bullet X \to X) \to (\beta \colon
  \Val_\bullet Y \to Y)$ to the underlying continuous map
  $f \colon X \to Y$.  The functor $F^T$ takes every space $X$ to
  $\mu_X \colon \Val_\bullet {\Val_\bullet X} \to \Val_\bullet X$, and
  every continuous map $f \colon X \to Y$ to $\Val_\bullet f$, seen as
  a morphism of $\Val_\bullet$-algebras.

  Then $U^T \overline S F^T$ maps every space $X$ to the splitting
  $\Pred_\DN^\bullet X$ of
  $\SV {\mu_X} \circ {\lambda^\sharp_X} \circ \eta_{\SV X}$ we have obtained in
  Lemma~\ref{lemma:idemp:split}.  We also obtain
  $\pi_{\mu_X} \eqdef r_\DN$, $\iota_{\mu_X} \eqdef s_\DN^\bullet$.

  Given any continuous map $f \colon X \to Y$, $U^T \overline S F^T
  (f)$ is equal to (the map underlying the $\Val_\bullet$-algebra)
  $\overline S (\Val_\bullet f)$, namely $\pi_{\mu_Y} \circ \SV
  {\Val_\bullet f} \circ \iota_{\mu_X} = r_\DN \circ \SV
  {\Val_\bullet f} \circ s_\DN^\bullet$.  We claim that this is equal
  to $\Pred_\DN^\bullet (f)$, namely that it maps every $F \in
  \Pred_\DN^\bullet X$ to $(h \in \Lform X \mapsto F (f \circ h))$.
  We use Trick~A:
  \begin{align*}
    [h > 1]
    & \invto{r_\DN}
      \Box {[h > 1]}
    & \text{by Fact~\ref{fact:r:cont}} \\
    & \invto{\SV {\Val_\bullet f}}
      \Box {(\Val_\bullet f)^{-1} ([h > 1])}
      = \Box {[h \circ f > 1]} \\
    & \invto{s_\DN^\bullet}
      [h \circ f > 1]
    & \text{by Corollary~\ref{corl:lambda:inv:func:spec}} \\
    & = {(\Pred_\DN^\bullet (f))}^{-1} ([h > 1]).
  \end{align*}
  From Lemma~\ref{lemma:Sbar}, item~2, the unit of $\overline S$ at
  the algebra $\mu_X$ is
  $\eta^{\overline S}_{\mu_X} = r_\DN \circ \eta^\Smyth_{\Val_\bullet
    X}$, which is the map
  $\nu \in \Val_\bullet X \mapsto r_\DN (\upc \nu)$, namely
  $\nu \in \Val_\bullet X \mapsto (h \in \Lform X \mapsto \int_{x \in
    X} h (x) \,d\nu)$ from $\Val_\bullet X$ to $\Pred_\DN^\bullet X$.

  The unit of the weak composite monad is
  $U^T \eta^{\overline S} F^T \circ \eta^T$.  At object $X$, this maps
  every $x \in X$ to
  $(h \in \Lform X \mapsto \int_{x' \in X} h (x') \,d\delta_x)$,
  namely to $(h \in \Lform X \mapsto h (x))$, and that is exactly the
  unit of the $\Pred_\DN^\bullet$ monad.

  The multiplication is
  $U^T \mu^{\overline S} F^T \circ U^T \overline S \epsilon^T
  \overline S F^T$, and here is what this means, explicitly.  At any
  object $X$, $\overline S F^T X$ is the $T$-algebra
  $\overline S\mu_X \colon T \overline S T X \to \overline S T X$.
  Then $\epsilon^T$ evaluated at this $T$-algebra is the bottom arrow
  in the following diagram:
  \[
    \xymatrix@C+10pt{
      TT\overline S T X
      \ar[d]_{\mu_{\overline S T X}}
      \ar[r]^{T \epsilon^T_{\overline S F^T X}}
      & T \overline S T X
      \ar[d]^{\overline S \mu_X}
      \\
      T \overline S T X
      \ar[r]_{\epsilon^T_{\overline S F^T X}}
      & \overline S T X
    }
  \]
  from the $T$-algebra $\alpha \eqdef \mu_{\overline S T X}$ that is
  the leftmost vertical arrow to the $T$-algebra
  $\beta \eqdef \overline S \mu_X$ on the right.  We apply $\overline
  S$ to the morphism $\epsilon^T_{\overline S F^T X}$ (i.e., we now
  apply $\overline S$ to a morphism of $T$-algebras, not to a
  $T$-algebra, as we did before), and we obtain the composition:
  \begin{equation}
    \label{eq:mu:part}
    \xymatrix{
      \overline S T \overline S T X
      \ar[r]^{\iota_\alpha}
      &
      S T \overline S T X
      \ar[r]^{S \epsilon^T_{\overline S F^T X}}
      &
      S \overline S T X
      \ar[r]^{\pi_\beta}
      &
      \overline S \overline S T X.
    }
  \end{equation}
  Applying $U^T$ to the latter, we obtain the same composition, this
  time seen as a morphism in the base category, instead of as a
  morphism in the category of $T$-algebras.  We finally compose it
  with $U^T \mu^{\overline S}_{F^T X}$, which is
  $\mu^{\overline S}_{\mu_X} \colon \overline S \overline S T X \to
  \overline S T X$.

  By Lemma~\ref{lemma:Sbar}, item~4,
  $\epsilon^T_{\overline S F^T X} \colon \alpha \to \beta$ is $\beta$
  itself, where $\beta = \overline S \mu_X = \alpha_\DN^\bullet$
  (Lemma~\ref{lemma:Sbar}, item~1) and
  $\alpha = \mu_{\overline S T X} = \mu_{\Pred_\DN^\bullet X}$: we
  have $\overline S T X = \Pred_\DN^\bullet X$, using
  Lemma~\ref{lemma:idemp:split}, and $\iota_\alpha = s_\DN^\bullet$.
  Using Lemma~\ref{lemma:idemp:P},
  $\pi_\beta \colon \SV {\Pred_\DN^\bullet X} \to \SV^{cvx}
  {\Pred_\DN^\bullet X}$ is the corestriction of
  (\ref{eq:idemp:alpha}) (call it $f$), which maps every convex
  element of $\SV {\Pred_\DN^\bullet X}$ to itself.

  Hence the multiplication of the weak composite monad, evaluated at
  $X$, is the composition of (\ref{eq:mu:part}) with $\mu^{\overline
    S}_{\mu_X}$ (given in Lemma~\ref{lemma:Sbar}, item~3), namely:
  \begin{equation}
    \label{eq:Sbarmu}
    \xymatrix{ 
      \Pred_\DN^\bullet {\Pred_\DN^\bullet X}
      \ar[r]^{s_\DN^\bullet}
      & \SV {\Val_\bullet {\Pred_\DN^\bullet X}}
      \ar[r]^{\SV {\alpha_\DN^\bullet}}
      & \SV {\Pred_\DN^\bullet X}
      \ar[r]^f
      & \SV^{cvx} {\Pred_\DN^\bullet X}
      \ar[r]^{\mu^{\overline S}_{\mu_X}}
      & \Pred_\DN^\bullet X
      }
  \end{equation}
  For every $\mathcal F \in \Pred_\DN^\bullet {\Pred_\DN^\bullet X}$,
  $\SV {\alpha_\DN^\bullet} (s_\DN^\bullet (\mathcal F)) = \upc
  \{\alpha_\DN^\bullet (\xi) \mid \xi \in s_\DN^\bullet (\mathcal F)\}
  = \upc \{(h \in \Lform X \mapsto \int_{F \in \Pred_\DN^\bullet X} F
  (h) \,d\xi) \mid \xi \in s_\DN^\bullet (\mathcal F)\}$.  It is easy
  to see that this is a convex set, because
  $s_\DN^\bullet (\mathcal F)$ is convex and integration is linear in
  the continuous valuation.  Therefore $f$ maps it to itself, and,
  using Lemma~\ref{lemma:Sbar}, item~3, $\mu^{\overline S}_{\mu_X}$
  maps it to:
  \begin{align*}
    (h \in \Lform X \mapsto \min_{F \in \SV {\alpha_\DN^\bullet}
    (s_\DN^\bullet (\mathcal F))} F (h))
    & = (h \in \Lform X \mapsto \min_{\xi \in s_\DN^\bullet (\mathcal
      F), F \geq \alpha_\DN^\bullet (\xi)} F (h)) \\
    & = (h \in \Lform X \mapsto \min_{\xi \in s_\DN^\bullet (\mathcal
      F)} \alpha_\DN^\bullet (\xi) (h)) \\
    & = (h \in \Lform X \mapsto \min_{\xi \in s_\DN^\bullet (\mathcal
      F)} \int_{F \in \Pred_\DN^\bullet X} F (h) \,d\xi) \\
    & = (h \in \Lform X \mapsto  r_\DN (s_\DN^\bullet (\mathcal F)) (F \in \Pred_\DN^\bullet X
      \mapsto F (h))) \\
    & = (h \in \Lform X \mapsto  \mathcal F (F \in \Pred_\DN^\bullet X
      \mapsto F (h)),
  \end{align*}
  and this is exactly what the multiplication of the
  $\Pred_\DN^\bullet$ monad, evaluated at $X$ and applied to $\mathcal
  F$, produces.
  \qed
\end{proof}

% T = V, S = Q
% weak extensions of T to Kl(S)
% (Def. 2.2 p.52 Goy)
% = functor T-bar sur Kl(S) + nat. trans \mu-bar: T-bar^2 -> T-bar
% such that T-bar F_S = F_S T and \mu^{T-bar} F_S = F_S \mu^T
% where F_S : C -> Kl(S)
% sends X to X and f to eta o f

\part{A weak distributive law between $\Val_\bullet$ and $\HV$}

Just as $\SV$ is a model of demonic non-determinism, the Hoare
powerdomain is a model of \emph{angelic} non-determinism, and is
obtained as follows.  For every topological space $X$, let $\Hoare X$
be the set of non-empty closed subsets of $X$.  The \emph{lower
  Vietoris} topology on that set has subbasic open subsets $\Diamond
U$ consisting of those non-empty closed subsets that intersect $U$,
for each open subset $U$ of $X$.  We write $\HV X$ for the resulting
topological space.  Its specialization ordering is (ordinary)
inclusion $\subseteq$.

The $\HV$ construction was studied by Schalk
\cite[Section~6]{schalk:diss}, as well as a localic variant and a
variant with the Scott topology.  See also \cite[Sections~6.2.2,
6.2.3]{AJ:domains} or \cite[Section~IV-8]{GHKLMS:contlatt}.

There is a monad $(\HV, \eta^\Hoare, \mu^\Hoare)$ on $\Topcat$.  For
every continuous map $f \colon X \to Y$, $\HV f$ maps every
$C \in \SV X$ to $cl (f [C])$.  The unit $\eta^\Hoare$ is defined by
$\eta^\Hoare_X (x) \eqdef \dc x$ for every $x \in X$.  For every
continuous map $f \colon X \to \SV Y$, there is an extension
$f^\flat \colon \HV X \to \HV Y$, defined by
$f^\flat (C) \eqdef cl (\bigcup_{x \in C} f (x))$.  The multiplication
$\mu^\Hoare$ is defined by
$\mu^\Hoare_X \eqdef \identity {\HV X}^\flat$, namely
$\mu^\Hoare_X (\mathcal C) \eqdef cl (\bigcup \mathcal C)$.

We note the equalities ${\eta^\Hoare}^{-1} (\Diamond U) = U$,
${(\HV f)}^{-1} (\Diamond V) = \Diamond {f^{-1} (V)}$, and
${\mu^\Hoare}^{-1} (\Diamond U) = \Diamond {\Diamond U}$.

The remainder of this part is more or less directly copied from
Part~I, and is therefore pretty boring.  For the most part, we replace
$\Box$ with $\Diamond$, $\supseteq$ by $\subseteq$, $\lambda^\sharp_X$
by $\lambda^\flat_X$, etc.  But there are a few differences, if we
look closely enough.  For example, the sets $\Diamond U$ form a
\emph{subbase}, while the sets $\Box U$ formed a \emph{base}.  Perhaps
more importantly, we will not reason on the whole category $\Topcat$,
but on a full subcategory $\Topcat^\flat$ of spaces $X$ satisfying
certain conditions.  The reason is that there are maps $r_\AN$ and
$s_\AN^\bullet$, similar to the maps $r_\DN$ and $s_\DN^\bullet$, but
they are only known to enjoy similar properties on spaces $X$ that are
$\AN_\bullet$-friendly (see \cite{JGL-mscs16} and its errata
\cite{JGL:mscs16:errata})---we introduce $\AN_\bullet$-friendliness
below.  Also, the formula(e) for inverse images of subbasic open sets
by $s_\AN^\bullet$ will be trickier.

We import the following from \cite{Keimel:topcones2}.  A \emph{cone}
is a set with a scalar multiplication operation, by scalars from
$\Rp$, and with an addition operation, satisfying the expected laws.
A \emph{semitopological cone} is a cone with a topology that makes
both scalar multiplication and addition separately continuous, where
$\Rp$ is given the Scott topology.  For example, $\Lform X$, $\Val X$,
$\Pred_\DN X$, $\Pred_\AN X$ are semitopological cones, and
$\Val_\bullet X$, $\Pred_\DN^\bullet X$, $\Pred_\AN^\bullet X$ are
convex subspaces of the latter three.  In a semitopological cone,
scalar multiplication is always jointly continuous, but addition may
fail to be.  A \emph{topological cone} is one where addition is
jointly continuous.  A semitopological cone $C$ is \emph{locally
  convex} if and only if for every $x \in C$, every open neighborhood
of $x$ contains a convex open neighborhood of $x$.  It is
\emph{locally convex-compact} if and only if for every $x \in C$,
every open neighborhood of $x$ contains a convex compact saturated
neighborhood of $x$.

A space $X$ is \emph{$\AN_\bullet$-friendly}
\cite[Definition~1]{JGL:mscs16:errata}
% \cite[Definition~3.A]{JGL:mscs16:rev}
if and only if:
\begin{itemize}[label=---]
\item $\bullet$ is nothing or ``$\leq 1$'', and $\Lform X$ is locally
  convex;
\item or $\bullet$ is ``$1$'', and either:
  \begin{enumerate}
  \item $\Lform X$ is locally convex and $X$ is compact;
  \item or $\Lform X$ is a locally convex, locally convex-compact,
    sober topological cone;
  \item or $X$ is LCS-complete.
  \end{enumerate}
\end{itemize}
Every locally compact space is core-compact, and every core-compact
space is $\AN_\bullet$-friendly, for any value of $\bullet$
\cite[Remark~2]{JGL:mscs16:errata}.
%\cite[Remark~3.B]{JGL:mscs16:rev}.
Every LCS-complete space is $\AN_\bullet$-friendly for any value of
$\bullet$
\cite[Remark~3]{JGL:mscs16:errata}.
% \cite[Remark~3.C]{JGL:mscs16:rev}.
We state this formally here.
\begin{fact}
  \label{fact:assum}
  Let $\bullet$ be nothing, ``$\leq 1$'', or ``$1$''.  Every locally
  compact space $X$ (even every core-compact space), every
  LCS-complete space $X$ is $\AN_\bullet$-friendly.
\end{fact}

The categorical machinery requires the subcategory $\Topcat^\flat$ to
be closed under the $\HV$ and $\Val_\bullet$ functors, as well as
under retracts (so that $\Topcat^\flat$, just like $\Topcat$, splits
idempotents).  For example, we may take $\Topcat^\flat$ to be the full
subcategory of locally compact spaces.  First, by
Fact~\ref{fact:assum}, such spaces are $\AN_\bullet$-friendly.
Second, for every locally compact space $X$, $\HV X$ is locally
compact, by \cite[Proposition 6.11]{schalk:diss}, and $\Val_\bullet X$
is locally compact if $\bullet$ is nothing or ``$\leq 1$'' (see
\cite[Theorem 12.2]{JGL:projlim:prev}, or
\cite[Theorem~4.1]{JGL:vlc}).  When $\bullet$ is ``$1$'',
$\Val_\bullet X$ is locally compact and compact if $X$ locally compact
and compact, and $\HV X$ is also locally compact, as we have seen, and
compact: the proof of \cite[Proposition 6.11]{schalk:diss} shows in
particular that
$\Diamond K \eqdef \{C \in \HV X \mid C \cap K \neq \emptyset\}$ is
compact for every compact subset $K$ of $X$, and the claim follows by
taking $K$ equal to $X$.  Finally, any retract of a locally compact
(resp., compact) space is locally compact (resp., compact); this
follows from the arguments developed in the proof of \cite[Proposition
2.17]{Jung:scs:prob}.  In brief, we work under the following
assumption.
\begin{definition}
  \label{defn:Top:flat}
  Let $\Topcat^\flat$ be any full subcategory of $\Topcat$ consisting
  of $\AN_\bullet$-friendly spaces, and closed under $\HV$,
  $\Val_\bullet$ and under retracts---for example the category of
  locally compact spaces if $\bullet$ is nothing or ``$\leq 1$'', or
  the category of locally compact, compact spaces if $\bullet$ is
  ``$1$''.
\end{definition}

\section{The monad of sublinear previsions}
\label{sec:monad-subl-prev}

A \emph{sublinear} prevision on a space $X$ is a prevision $F$ such
that $F (h+h') \leq F (h)+F (h')$ for all $h, h' \in \Lform X$.  We
write $\Pred_\AN X$ for the space of sublinear previsions on $X$,
$\Pred_\AN^{\leq 1} X$ for its subspace of subnormalized sublinear
previsions, and $\Pred_\AN^1 X$ for its subspace of normalized
sublinear previsions, and we use the synthetic notation
$\Pred_\AN^\bullet X$ for each.  The topology is generated by sets
that we continue to write as $[h > r]$, with $h \in \Lform X$ and
$r \in \Rp$, and defined as
$\{F \in \Pred_\AN^\bullet X \mid F (h) > r\}$.  Its specialization
ordering is the pointwise ordering: $F \leq F'$ if and only if
$F (h) \leq F (h')$ for every $h \in \Lform X$.

There is a $\Pred_\AN^\bullet$ endofunctor on $\Topcat$, which is
defined with the exact same formulae as for $\Pred_\DN^\bullet$ or
$\Pred_\Nature^\bullet$.  Its action on morphisms $f \colon X \to Y$
is given by
$\Pred_\AN^\bullet f (F) = (h \in \Lform Y \mapsto F (h \circ f))$.

As with superlinear and linear previsions, sublinear previsions form a
monad on $\Topcat$, which is a natural counterpart of the monad of
sublinear previsions on the category of dcpos given in
\cite[Proposition~2]{Gou-csl07}.  The following, which is
Proposition~\ref{prop:DN:monad} with ``$\DN$'' replaced by ``$\AN$'',
is proved in exactly the same way.
\begin{proposition}
  \label{prop:AN:monad}
  Let $\bullet$ be nothing, ``$\leq 1$'', or ``$1$''.  There is a
  monad
  $(\Pred_\AN^\bullet, \allowbreak \eta^\AN, \allowbreak \mu^\AN)$ on
  $\Topcat$, whose unit and multiplication are defined at every
  topological space $X$ by:
  \begin{itemize}
  \item for every $x \in X$, $\eta^\AN_X (x) \eqdef (h \in \Lform X
    \mapsto h (x))$,
  \item for every
    $\mathcal F \in \Pred_\AN^\bullet {\Pred_\AN^\bullet X}$,
    $\mu^\AN_X (\mathcal F) \eqdef (h \in \Lform X \mapsto \mathcal F
    (F \in \Pred_\AN^\bullet X \mapsto F (h)))$.
  \end{itemize}
\end{proposition}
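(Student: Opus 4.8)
The plan is to transcribe the proof of Proposition~\ref{prop:DN:monad}, replacing ``$\DN$'' by ``$\AN$'' throughout and reversing every inequality that expressed superlinearity into the one that expresses sublinearity, while verifying the same three ingredients go through: membership in the relevant space of sublinear previsions, continuity, and the monad laws. First I would dispatch the unit: the functional $h \in \Lform X \mapsto h(x)$ is linear, hence in particular sublinear, and it is normalized, hence subnormalized, so it lies in $\Pred_\AN^\bullet X$ for every value of $\bullet$; and $\eta^\AN_X$ is continuous because the inverse image of $[h > r]$ is $h^{-1}(]r, \infty])$, which is open since $h \in \Lform X$.

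Next I would handle the multiplication. For fixed $h \in \Lform X$, the evaluation $F \in \Pred_\AN^\bullet X \mapsto F(h)$ is lower semicontinuous, since the inverse image of $]r, \infty]$ is the subbasic open set $[h > r]$; hence $\mathcal F(F \mapsto F(h))$ is well-defined for $\mathcal F \in \Pred_\AN^\bullet {\Pred_\AN^\bullet X}$, the map $h \mapsto \mathcal F(F \mapsto F(h))$ is Scott-continuous because $\mathcal F$ is and because $h \mapsto (F \mapsto F(h))$ is Scott-continuous pointwise, and positive homogeneity is immediate. For sublinearity, since each $F$ is sublinear, $(F \mapsto F(h+h')) \leq (F \mapsto F(h)) + (F \mapsto F(h'))$ in $\Lform {\Pred_\AN^\bullet X}$, so by monotonicity and then sublinearity of $\mathcal F$ we get $\mathcal F(F \mapsto F(h+h')) \leq \mathcal F((F \mapsto F(h)) + (F \mapsto F(h'))) \leq \mathcal F(F \mapsto F(h)) + \mathcal F(F \mapsto F(h'))$, so $\mu^\AN_X(\mathcal F)$ is sublinear. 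The (sub)normalization check is verbatim the one in Proposition~\ref{prop:DN:monad}: when $\bullet$ is ``$\leq 1$'' and $F$ ranges over $\Pred_\AN^{\leq 1} X$, $\mu^\AN_X(\mathcal F)(\one+h) = \mathcal F(F \mapsto F(\one+h)) \leq \mathcal F(F \mapsto 1+F(h)) \leq 1 + \mathcal F(F \mapsto F(h)) = 1 + \mu^\AN_X(\mathcal F)(h)$, with all inequalities equalities when $\bullet$ is ``$1$'' and $F$ ranges over normalized previsions; and $\mu^\AN_X$ is continuous since the inverse image of $[h > r]$ is $[(F \mapsto F(h)) > r]$.

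Finally, naturality of $\eta^\AN$ and $\mu^\AN$ and the three monad laws are verified by literally the same computations as for the continuation monad, which is given by the very same formulae; the only extra point, namely that the conditions carving out sublinear (sub/normalized) previsions among all functionals are stable under the operations involved, is exactly what the two previous paragraphs establish. I do not expect a genuine obstacle: the argument is routine and parallels the superlinear case point for point. If anything deserves a second glance, it is that $F \mapsto F(h)$ is only lower semicontinuous and not continuous; but this is precisely what makes it a legal argument for a functional in $\Pred_\AN^\bullet {\Pred_\AN^\bullet X}$, and it is handled identically in the $\DN$ case, so no difficulty arises.
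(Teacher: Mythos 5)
Your proposal is correct and follows exactly the route the paper takes: the paper proves Proposition~\ref{prop:AN:monad} by declaring it is Proposition~\ref{prop:DN:monad} with ``$\DN$'' replaced by ``$\AN$'', proved in exactly the same way, and your transcription (linearity of the unit, lower semicontinuity of $F \mapsto F(h)$, monotonicity-plus-sublinearity of $\mathcal F$ for the sublinearity of $\mu^\AN_X(\mathcal F)$, the verbatim (sub)normalization check, continuity via inverse images of $[h>r]$, and the continuation-monad verification of the monad laws) is precisely that argument spelled out.
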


For every $\AN_\bullet$-friendly space $X$, for example, for every
locally compact or LCS-complete space $X$ (see Fact~\ref{fact:assum}),
there is a function
$r_\AN \colon \HV (\Val_\bullet X) \to \Pred_{\AN}^\bullet X$, defined
by
$r_\AN (C) (h) \eqdef \sup_{\nu \in C} \allowbreak \int_{x \in X} h
(x)\,d\nu$, and a map
$s_\AN^\bullet \colon \Pred_{\AN}^\bullet X \to \HV (\Val_\bullet X)$
defined by
$s_\AN^\bullet (F) \eqdef \{\nu \in \Val_\bullet X \mid \forall h \in
\Lform X, \int_{x \in X} h (x) \,d\nu \leq F (h)\}$.  Both are
continuous, and
$r_\AN \circ s_\AN^\bullet = \identity {\Pred_{\AN}^\bullet X}$.  This
is the content of Corollary~3.12 of \cite{JGL-mscs16}, up to two
changes: first, we have replaced the space $\Pred_\Nature^\bullet X$
used there by $\Val_\bullet X$, since the two are homeomorphic;
second, we need to assume $X$ to be $\AN_\bullet$-friendly for this,
as indicated in the errata \cite[Corollary~3.12, repaired]{JGL:mscs16:errata}.
% as done in the revised version \cite[Corollary~3.12]{JGL:mscs16:rev}.

Additionally, $r_\AN$ and $s_\AN^\bullet$ are natural in $X$ on the
category of $\AN_\bullet$-spaces \cite[Lemma~13.2]{JGL:projlim:prev},
in particular on the category $\Topcat^\flat$.

\begin{corollary}
  \label{corl:AN:monad:flat}
  The monad $(\Pred_\AN^\bullet, \allowbreak \eta^\AN, \allowbreak
  \mu^\AN)$ restricts to a monad on any subcategory $\Topcat^\flat$ as
  given in Definition~\ref{defn:Top:flat}.
\end{corollary}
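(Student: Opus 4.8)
The plan is to reduce the statement to a single fact: that $\Pred_\AN^\bullet$ carries objects of $\Topcat^\flat$ to objects of $\Topcat^\flat$. Everything else will follow formally from the fact that $\Topcat^\flat$ is a \emph{full} subcategory of $\Topcat$ and that $(\Pred_\AN^\bullet,\eta^\AN,\mu^\AN)$ is already a monad on $\Topcat$ by Proposition~\ref{prop:AN:monad}.

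First I would fix $X$ in $\Topcat^\flat$. By Definition~\ref{defn:Top:flat}, $\Lform X$ is locally convex, so the maps $r_\AN\colon\HV(\Val_\bullet X)\to\Pred_\AN^\bullet X$ and $s_\AN^\bullet\colon\Pred_\AN^\bullet X\to\HV(\Val_\bullet X)$ recalled above are both continuous and satisfy $r_\AN\circ s_\AN^\bullet=\identity{\Pred_\AN^\bullet X}$; that is, $\Pred_\AN^\bullet X$ is a retract of $\HV(\Val_\bullet X)$ in $\Topcat$. Now $\Topcat^\flat$ is closed under $\Val_\bullet$, so $\Val_\bullet X\in\Topcat^\flat$; it is closed under $\HV$, so $\HV(\Val_\bullet X)\in\Topcat^\flat$; and it is closed under retracts, so $\Pred_\AN^\bullet X\in\Topcat^\flat$. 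Iterating the same argument, $\Pred_\AN^\bullet\Pred_\AN^\bullet X\in\Topcat^\flat$ as well.

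Next, since $\Topcat^\flat$ is full in $\Topcat$, for every morphism $f\colon X\to Y$ of $\Topcat^\flat$ the continuous map $\Pred_\AN^\bullet f\colon\Pred_\AN^\bullet X\to\Pred_\AN^\bullet Y$ is again a morphism of $\Topcat^\flat$ (its source and target lie in $\Topcat^\flat$ by the previous step), and functoriality is inherited from $\Topcat$; hence $\Pred_\AN^\bullet$ restricts to an endofunctor of $\Topcat^\flat$. Likewise, each component $\eta^\AN_X\colon X\to\Pred_\AN^\bullet X$ and $\mu^\AN_X\colon\Pred_\AN^\bullet\Pred_\AN^\bullet X\to\Pred_\AN^\bullet X$ is a continuous map between objects of $\Topcat^\flat$, hence a morphism of $\Topcat^\flat$; naturality of $\eta^\AN$ and $\mu^\AN$, and the three monad laws, hold in $\Topcat^\flat$ simply because they hold in $\Topcat$ and $\Topcat^\flat$ is full. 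This exhibits the restricted monad.

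The argument has essentially no obstacle: the one place where something genuinely needs to be true is the retraction identity $r_\AN\circ s_\AN^\bullet=\identity{\Pred_\AN^\bullet X}$ together with continuity of $r_\AN$ and $s_\AN^\bullet$, and that is precisely Corollary~3.12 of \cite{JGL-mscs16}, applicable because $\Lform X$ is locally convex for $X$ in $\Topcat^\flat$. The rest is bookkeeping with fullness and the closure properties that were built into Definition~\ref{defn:Top:flat} exactly for this purpose. \qed
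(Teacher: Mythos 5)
Your proposal is correct and follows essentially the same route as the paper: the paper's proof likewise reduces everything to the single fact that $\Pred_\AN^\bullet X$ lies in $\Topcat^\flat$, obtained because it is a retract of $\HV{(\Val_\bullet X)}$ via $r_\AN$ and $s_\AN^\bullet$, and because $\Topcat^\flat$ is closed under $\HV$, $\Val_\bullet$ and retracts. Your write-up merely spells out the fullness bookkeeping that the paper leaves implicit.
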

\begin{proof}
  It suffices to verify that $\Pred_\AN^\bullet X$ is an object of
  $\Topcat^\flat$ for every object $X$ of $\Topcat^\flat$, and this
  follows from the fact that it arises as a retract of $\HV
  {\Val_\bullet X}$ through $r_\AN$ and $s_\AN^\bullet$.  \qed
\end{proof}

This retraction even cuts down to a homeomorphism between
$\HV^{cvx} (\Val_\bullet X)$ and $\Pred_\AN^\bullet X$ provided that
$X$ is $\AN_\bullet$-friendly (see Theorem~4.11 of
\cite[Theorem~4.11]{JGL-mscs16} and the errata
\cite{JGL:mscs16:errata}); $\HV^{cvx} (\Val_\bullet X)$ denotes the
subspace of $\HV (\Val_\bullet X)$ consisting of convex non-empty
closed subsets of $\Val_\bullet X$.

As with $s_\DN^\bullet$, we propose the following simpler proof that
$s_\AN^\bullet$ is continuous, exhibiting the shape of inverse images
of subbasic open subsets $\Diamond {\bigcap_{i=1}^n [h_i > 1]}$ of
$\HV (\Val_\bullet X)$ ($n \geq 1$, $h_i \in \Lform X$).
\begin{lemma}
  \label{lemma:sAP:inv:func}
  For every $n \geq 1$, for every natural number $N > n$, let
  $\frac 1 N \nat$ denote the set of non-negative integer multiples of
  $\frac 1 N$, and $\Delta_n^N$ be the finite set
  $\{(b_1, \cdots, b_n) \in \frac 1 N \nat \mid 1 - \frac n N <
  \sum_{i=1}^n b_i \leq 1\}$.

  Let $\bullet$ be nothing, ``$\leq 1$'' or ``$1$'', and $X$ be an
  $\AN_\bullet$-friendly space.  For every
  $n \geq 1$, for all $h_1, \cdots, h_n \in \Lform X$,
  \begin{align*}
    {(s_\AN^\bullet)}^{-1} (\Diamond {(\bigcap_{i=1}^n [h_i > 1])})
    & = \bigcap_{\vec a \in \Delta_n} \left[\sum_{i=1}^n a_i h_i >
      1\right] \\
    & = \bigcup_{N > n} \bigcap_{\vec b \in \Delta_n^N} \left[\sum_{i=1}^n b_i h_i >
      1\right].
  \end{align*}
\end{lemma}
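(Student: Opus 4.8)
The plan is to mirror the proof of Lemma~\ref{lemma:sDP:inv:func}, again invoking the Minimax Theorem~3.3 of \cite{JGL-minimax17}, but with the roles of the two arguments exchanged: this time the compact convex factor will be the simplex $\Delta_n$, not $s_\AN^\bullet (F)$, which is merely closed (an intersection of complements of basic open sets $[h > F (h)]$) and is not compact in general. I first prove the equality of the left-hand side with $\bigcap_{\vec a \in \Delta_n} [\sum_{i=1}^n a_i h_i > 1]$, and then deduce the second displayed equality by elementary means.

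The inclusion of the left-hand side in $\bigcap_{\vec a \in \Delta_n} [\sum_i a_i h_i > 1]$ is easy: if $F \in {(s_\AN^\bullet)}^{-1} (\Diamond {(\bigcap_{i=1}^n [h_i > 1])})$, there is $\nu \in s_\AN^\bullet (F)$ with $\int_{x \in X} h_i (x) \,d\nu > 1$ for every $i$, and then for any $\vec a \in \Delta_n$, $F (\sum_i a_i h_i) \geq \int_{x \in X} \sum_i a_i h_i (x) \,d\nu = \sum_i a_i \int_{x \in X} h_i (x) \,d\nu \geq \min_i \int_{x \in X} h_i (x) \,d\nu > 1$, using $\nu \in s_\AN^\bullet (F)$ and the fact that a convex combination of reals all $> 1$ is $> 1$. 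For the reverse inclusion, assume $F (\sum_i a_i h_i) > 1$ for every $\vec a \in \Delta_n$, and let $f (\vec a, \nu) \eqdef \int_{x \in X} \sum_i a_i h_i (x) \,d\nu$ on $\Delta_n \times s_\AN^\bullet (F)$, noting that $s_\AN^\bullet (F)$ is non-empty and convex. The map $f (\_, \nu)$ is lower semicontinuous in $\vec a$ for each $\nu$ (it is affine when the $\int_{x \in X} h_i (x)\,d\nu$ are all finite, and jumps up to $\infty$ otherwise), and $f$ is linear in each argument, hence closely convex in the first and closely concave in the second by Remark~3.4 of \cite{JGL-minimax17}. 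The Minimax Theorem~3.3 of \cite{JGL-minimax17} then applies with $\Delta_n$ as the compact convex factor and yields $\sup_{\nu \in s_\AN^\bullet (F)} \inf_{\vec a \in \Delta_n} f (\vec a, \nu) = \min_{\vec a \in \Delta_n} \sup_{\nu \in s_\AN^\bullet (F)} f (\vec a, \nu)$, with the minimum on the right attained. Since $r_\AN (s_\AN^\bullet (F)) = F$, the right-hand side is $\min_{\vec a \in \Delta_n} F (\sum_i a_i h_i)$, which is $> 1$ because that minimum is attained at some $\vec a_0$ and $F (\sum_i (a_0)_i h_i) > 1$ by hypothesis. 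Hence some $\nu \in s_\AN^\bullet (F)$ has $\inf_{\vec a \in \Delta_n} \sum_i a_i \int_{x \in X} h_i (x)\,d\nu > 1$, and taking $\vec a$ to be the standard basis vectors gives $\int_{x \in X} h_i (x)\,d\nu > 1$ for every $i$, so $\nu \in s_\AN^\bullet (F) \cap \bigcap_i [h_i > 1]$, i.e.\ $F \in {(s_\AN^\bullet)}^{-1} (\Diamond {(\bigcap_i [h_i > 1])})$.

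For the second equality, $\bigcup_{N > n} \bigcap_{\vec b \in \Delta_n^N} [\sum_i b_i h_i > 1] \subseteq \bigcap_{\vec a \in \Delta_n} [\sum_i a_i h_i > 1]$ holds because, given $\vec a \in \Delta_n$ and $N > n$, rounding down to $b_i \eqdef \lfloor N a_i \rfloor / N$ gives $b_i \leq a_i$, $\sum_i b_i \leq 1$ and $\sum_i b_i > 1 - n/N$, so $\vec b \in \Delta_n^N$, and since $\sum_i b_i h_i \leq \sum_i a_i h_i$ pointwise and $F$ is monotone, $F (\sum_i a_i h_i) \geq F (\sum_i b_i h_i) > 1$. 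For the reverse inclusion, let $F \in \bigcap_{\vec a \in \Delta_n} [\sum_i a_i h_i > 1]$; by the first equality there is $\nu \in s_\AN^\bullet (F)$ with $\int_{x \in X} h_i (x)\,d\nu > 1$ for all $i$, so $\varepsilon \eqdef \min_i \min (\int_{x \in X} h_i (x)\,d\nu, 2) - 1 > 0$. Choosing $N > n$ with $(1 - n/N)(1 + \varepsilon) > 1$ (possible since the left-hand side tends to $1 + \varepsilon$ as $N \to \infty$), every $\vec b \in \Delta_n^N$ satisfies $F (\sum_i b_i h_i) \geq \int_{x \in X} \sum_i b_i h_i (x)\,d\nu = \sum_i b_i \int_{x \in X} h_i (x)\,d\nu \geq (1 + \varepsilon) \sum_i b_i > 1$, so $F \in \bigcap_{\vec b \in \Delta_n^N} [\sum_i b_i h_i > 1]$.

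I expect the main obstacle to be the middle step: one has to make sure the Minimax Theorem of \cite{JGL-minimax17} can be applied when only $\Delta_n$ is compact while $s_\AN^\bullet (F)$ is merely closed, which is precisely why the set-up here is the mirror image of the one in Lemma~\ref{lemma:sDP:inv:func}; and one has to use the ``minimum attained'' part of the conclusion to upgrade ``$F (\sum_i a_i h_i) > 1$ for every $\vec a$'' to ``$\min_{\vec a} F (\sum_i a_i h_i) > 1$''. The hypothesis that $\Lform X$ be locally convex enters only through $r_\AN \circ s_\AN^\bullet = \identity {\Pred_\AN^\bullet X}$, which is what makes $r_\AN (s_\AN^\bullet (F)) = F$ available.
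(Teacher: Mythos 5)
Your proof is correct and follows essentially the same route as the paper's: the easy inclusion by direct integration, the rounding-down argument relating $\Delta_n$ and $\Delta_n^N$, the large-$N$ estimate, and the same application of the Minimax Theorem~3.3 of \cite{JGL-minimax17} with $\Delta_n$ as the compact convex factor and $s_\AN^\bullet (F)$ as the merely closed convex one (the paper organizes the argument as the cycle $(1) \subseteq (3) \subseteq (2) \subseteq (1)$ instead of proving the two equalities separately, but that is cosmetic). The only detail worth making explicit, as the paper does, is the choice of topology on $\Delta_n$ (the subspace topology induced from $(\creal)^n$ with the Scott topology, which coincides with the usual Euclidean subspace topology) so that $\Delta_n$ is simultaneously compact and $f$ is lower semicontinuous in its first argument.
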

The first formula produces an intersection of open sets
$[\sum_{i=1}^n a_i h_i > 1]$ over all $\vec a \in \Delta_n$.  That is
an infinite intersection in general, and therefore it is not
immediately clear that this intersection is open.  The next line is an
infinite union of finite intersections of open sets, and is therefore
open.

\begin{proof}
  Letting (1), (2), (3) be the three values above, in order to prove
  $(1)=(2)=(3)$, we show $(1) \subseteq (3) \subseteq (2) \subseteq
  (1)$.

  $(1) \subseteq (3)$.  Let
  $F \in {(s_\AN^\bullet)}^{-1} (\Diamond {(\bigcap_{i=1}^n [h_i >
    1])})$.  There is a $\nu \in s_\AN^\bullet (F)$ such that
  $\int_{x \in X} h_i (x) \,d\nu > 1$ for every
  $i \in \{1, \cdots, n\}$.  We pick a natural number $N > n$ so large
  than $\frac N {N-n} < \min_{i=1}^n \int_{x \in X} h_i (x) \,d\nu$.
  Then, for every $\vec b \in \Delta_n^N$,
  $F (\sum_{i=1}^n b_i h_i) \geq \int_{x \in X} \sum_{i=1}^n b_i h_i
  (x) \,d\nu$ (because $\nu \in s_\AN^\bullet (F)$)
  $= \sum_{i=1}^n b_i \int_{x \in X} h_i (x) \,d\nu > \sum_{i=1}^n b_i
  \frac N {N-n} > (1 - \frac n N) \frac N {N-n} = 1$.

  $(3) \subseteq (2)$.  Let $F \in \Pred_\AN^\bullet X$ be such that
  there is a natural number $N > n$ such that, for every
  $\vec b \in \Delta_n^N$, $F (\sum_{i=1}^n b_i h_i) > 1$.  For every
  $\vec a \in \Delta_n$, we define $\vec b$ by
  $b_i \eqdef \frac 1 N \lfloor N a_i \rfloor$ for each
  $i \in \{1, \cdots, n\}$.  Then $b_i \in \frac 1 N \nat$, and
  $\sum_{i=1}^n b_i$ lies between
  $\sum_{i=1}^n \frac {Na_i - 1} N = 1 - \frac n N$ (strictly) and
  $\sum_{i=1}^n a_i = 1$, so $\vec b$ is in $\Delta_n^N$.  Also,
  $a_i \geq b_i$ for every $i$, so, using the monotonicity of $F$,
  $F (\sum_{i=1}^n a_i h_i) \geq F (\sum_{i=1}^n b_i h_i) > 1$; this
  holds for every $\vec a \in \Delta_n$.

  $(2) \subseteq (1)$.  Let
  $F \in \bigcap_{\vec a \in \Delta_n} [\sum_{i=1}^n a_i h_i > 1]$.
  Since $r_\AN \circ s_\AN^\bullet$ is the identity,
  $F (\sum_{i=1}^n a_i h_i) = \sup_{\nu \in s_\AN^\bullet (F)} \int_{x
    \in X} \sum_{i=1}^n a_i h_i (x) \,d\nu$---and
  $F (\sum_{i=1}^n a_i h_i) > 1$---for every $\vec a \in \Delta_n$.

  Let $f \colon \Delta_n \times s_\AN^\bullet (F) \to \creal$ map
  $(\vec a, \nu)$ to $\int_{x \in X} \sum_{i=1}^n a_i h_i (x) \,d\nu$.
  We have just shown that
  $\sup_{\nu \in s_\AN^\bullet (F)} f (\vec a, \nu) > 1$ for every
  $\vec a \in \Delta_n$.
  
  We equip $\Delta_n$ with the subspace topology induced by the
  inclusion in $(\creal)^n$, where each copy of $\creal$ has the Scott
  topology: this makes $f$ lower semicontinuous in its first argument,
  since addition and scalar multiplication are Scott-continuous on
  $\creal$.  The topology on $\Delta_n$ has a base of open subsets
  $\Delta_n \cap \prod_{i=1}^n ]b_i, \infty]$, where each
  $b_i \in \Rp$ and $\sum_{i=1}^n b_i < 1$ (otherwise the intersection
  with $\Delta_n$ is empty, and can therefore be disregarded).  But
  $\Delta_n \cap \prod_{i=1}^n ]b_i, \infty] = \Delta_n \cap
  \prod_{i=1}^n ]b_i, 1+\epsilon[$, where $\epsilon$ is any strictly
  positive real number; therefore $\Delta_n$ also has the subspace
  topology induced by the inclusion in $\real^n$ with its usual metric
  topology.  We then know that $\Delta_n$ is compact (and Hausdorff).
  The map $f$ preserves pairwise linear combinations (with
  coefficients $a$ and $1-a$, $a \in [0, 1]$) of its first arguments,
  and similarly with its second arguments, so by Remark~3.4 and the
  Minimax Theorem~3.3 of \cite{JGL-minimax17},
  $\sup_{\nu \in s_\AN^\bullet (F)} \inf_{\vec a \in \Delta_n} f (\vec
  a, \nu) = \min_{\vec a \in \Delta_n} \sup_{\nu \in s_\AN^\bullet
    (F)} f (\vec a, \nu)$.

  But $\sup_{\nu \in s_\AN^\bullet (F)} f (\vec a, \nu) > 1$ for every
  $\vec a \in \Delta_n$, in other words
  $\min_{\vec a \in \Delta_n} \allowbreak \sup_{\nu \in s_\AN^\bullet
    (F)} f (\vec a, \nu) > 1$, so
  $\sup_{\nu \in s_\AN^\bullet (F)} \inf_{\vec a \in \Delta_n} f (\vec
  a, \nu) > 1$.  Hence there is a continuous valuation
  $\nu \in s_\AN^\bullet (F)$ such that
  $\inf_{\vec a \in \Delta_n} f (\vec a, \nu) > 1$, in particular such
  that $\int_{x \in X} \sum_{i=1}^n a_i h_i (x) \,d\nu > 1$ for every
  $\vec a \in \Delta_n$, in particular such that
  $\int_{x \in X} h_i (x) \,d\nu > 1$ for every
  $i \in \{1, \cdots n\}$.  Therefore
  $\nu \in \bigcap_{i=1}^n [h_i > 1]$, showing that
  $s_\AN^\bullet (F) \in \Diamond {\bigcap_{i=1}^n [h_i > 1]}$.  \qed
\end{proof}

The following is immediate.
\begin{fact}
  \label{fact:r:cont:flat}
  For every topological space $X$, for every $h \in \Lform X$,
  ${(r_\AN)}^{-1} ([h > 1]) = \Diamond {[h > 1]}$.
\end{fact}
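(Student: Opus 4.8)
The plan is to unwind the definitions on both sides of the claimed equality and reduce it to the elementary observation that a supremum of extended non-negative reals is strictly greater than $1$ exactly when one of the terms is.

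First I would disambiguate the notation $[h > 1]$, which is overloaded here: on the right-hand side it occurs inside $\Diamond {\cdot}$ and therefore denotes the subbasic open subset $\{\nu \in \Val_\bullet X \mid \int_{x \in X} h (x)\,d\nu > 1\}$ of $\Val_\bullet X$, so that $\Diamond {[h > 1]} = \{C \in \HV (\Val_\bullet X) \mid C \cap [h > 1] \neq \emptyset\}$ by the definition of the lower Vietoris topology; on the left-hand side, $[h > 1] = \{F \in \Pred_\AN^\bullet X \mid F (h) > 1\}$. Both readings are instances of the conventions already fixed for subbasic opens of weak topologies.

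Next I would take an arbitrary $C \in \HV (\Val_\bullet X)$ and compute $r_\AN (C) (h) = \sup_{\nu \in C} \int_{x \in X} h (x)\,d\nu$ directly from the definition of $r_\AN$. This value is $> 1$ if and only if there exists $\nu \in C$ with $\int_{x \in X} h (x)\,d\nu > 1$: for the forward direction, if every such integral were $\leq 1$ then $1$ would be an upper bound of the family and the supremum would be $\leq 1$; the reverse direction is immediate from monotonicity of the supremum. The condition on the right of this equivalence is precisely $C \cap [h > 1] \neq \emptyset$, i.e.\ $C \in \Diamond {[h > 1]}$, which yields the claim.

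There is no genuine obstacle here; the only care needed is with the overloaded notation, and with the fact that — unlike the demonic case of Fact~\ref{fact:r:cont}, where $r_\DN$ is a pointwise minimum over a compact set and attainment of the infimum must be invoked — no compactness or closedness of $C$ is used, since the argument only relies on the least-upper-bound property of $\sup$ in $\creal$.
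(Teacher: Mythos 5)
Your proof is correct and is exactly the routine unwinding that the paper has in mind when it labels this fact ``immediate'' and omits a proof: $r_\AN (C) (h) = \sup_{\nu \in C} \int_{x \in X} h (x)\,d\nu > 1$ iff some $\nu \in C$ lies in $[h > 1]$, i.e.\ iff $C \in \Diamond {[h > 1]}$. Your side remark is also apt: no compactness or closedness of $C$ is needed, in contrast with the attainment of the minimum used for $r_\DN$ in the demonic case.
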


\section{The weak distributive law}
\label{sec:weak-distr-law:flat}

We will show that there is a weak distributive law of $\HV$ over
$\Val_\bullet$ on $\Topcat^\flat$.  This will be the collection of
maps ${\lambda^\flat_X}$ as $s_\AN^\bullet \circ \Psi$,
where $\Psi$ is given below.

\begin{lemma}
  \label{lemma:Psi}
  Let $X$ be a topological space.  For every $h \in \Lform X$, the
  function $h_* \colon C \mapsto \sup_{x \in C} h (x)$ is in
  $\Lform {\HV X}$, and for every $r \in \Rp$,
  $h_*^{-1} (]r, \infty]) = \Diamond {h^{-1} (]r, \infty])}$.  There
  is a continuous map
  $\Psi \colon \Val_\bullet {\HV X} \to \Pred_\AN^\bullet X$ such
  that:
  \begin{align*}
    \Psi (\mu) (h) & \eqdef \int_{C \in \HV X} h_* (C) \,d\mu
  \end{align*}
  for every $h \in \Lform X$.  For every subbasic open subset
  $[h > r]$ of $\Pred_\AN^\bullet X$,
  \begin{align*}
    \Psi^{-1} ([h > r]) & = [h_* > r].
  \end{align*}
\end{lemma}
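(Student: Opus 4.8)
The plan is to mirror, on the angelic side, the construction of $\Phi$ and Fact~\ref{fact:Phi:cont}, replacing the $\min$ functional $h^*(Q)=\min_{x\in Q}h(x)$ by the $\sup$ functional $h_*(C)=\sup_{x\in C}h(x)$; this is exactly what turns superlinearity into sublinearity. I would split the statement into three parts: first the lower semicontinuity of $h_*$ together with the formula $h_*^{-1}(]r,\infty])=\Diamond{h^{-1}(]r,\infty])}$; then the verification that $\Psi(\mu)\eqdef(h\in\Lform X\mapsto\int_{C\in\HV X}h_*(C)\,d\mu)$ is a sublinear prevision lying in $\Pred_\AN^\bullet X$; and finally the identity $\Psi^{-1}([h>r])=[h_*>r]$, which yields continuity.

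For the first part, I would simply observe that for $C\in\HV X$ and $r\in\Rp$, $\sup_{x\in C}h(x)>r$ holds precisely when some $x\in C$ satisfies $h(x)>r$, i.e.\ when $C$ meets the open set $h^{-1}(]r,\infty])$; hence $h_*^{-1}(]r,\infty])=\Diamond{h^{-1}(]r,\infty])}$ is open and $h_*\in\Lform{\HV X}$. For the second part, since $h_*\in\Lform{\HV X}$ the Choquet integral is defined; positive homogeneity and Scott-continuity of $\Psi(\mu)$ follow from $(ah)_*=a\,h_*$ for $a\in\Rp$ and from $(\dsup_{g\in D}g)_*=\dsup_{g\in D}g_*$ (a trivial exchange of suprema) together with homogeneity and Scott-continuity of the integral in its integrand; sublinearity follows from the pointwise inequality $(h+h')_*\le h_*+h'_*$ and from monotonicity and additivity of the integral. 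For the normalization clauses I would use $(\one+h)_*=\one+h_*$, where $\one$ now denotes the constant function $1$ on $\HV X$, together with $\int\chi_U\,d\mu=\mu(U)$ applied to $U=\HV X$, giving $\Psi(\mu)(\one+h)=\mu(\HV X)+\Psi(\mu)(h)$; this is $\le 1+\Psi(\mu)(h)$ when $\bullet$ is ``$\leq 1$'' and an equality when $\bullet$ is ``$1$'', so $\Psi(\mu)\in\Pred_\AN^\bullet X$ in all cases.

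For the third part, $\mu\in\Psi^{-1}([h>r])$ unfolds to $\int_{C\in\HV X}h_*(C)\,d\mu>r$, i.e.\ $\mu\in[h_*>r]$; this set is open because, applying the discussion around \eqref{eq:mu-1} with $\HV X$ in place of $X$, the sets $[g>r]$ with $g\in\Lform{\HV X}$ form an alternate base of the weak topology on $\Val_\bullet{\HV X}$, and since the sets $[h>r]$ with $h\in\Lform X$, $r\in\Rp$, generate the topology of $\Pred_\AN^\bullet X$, continuity of $\Psi$ follows. I do not expect any genuine obstacle here: everything rests on elementary manipulations of suprema together with the standard linearity, monotonicity and Scott-continuity of the Choquet integral in its integrand for a continuous valuation. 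The only points deserving care are that $\sup$ interacts with $+$ only through an inequality — which is exactly what makes $\Psi(\mu)$ sublinear rather than linear — and that the three normalization regimes for $\bullet$ are all handled through the single identity $\int\one\,d\mu=\mu(\HV X)$.
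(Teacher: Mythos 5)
Your proposal is correct and follows essentially the same route as the paper: the identity $h_*^{-1}(]r,\infty])=\Diamond{h^{-1}(]r,\infty])}$ gives $h_*\in\Lform{\HV X}$, Scott-continuity of $\Psi(\mu)$ comes from the exchange of suprema plus Scott-continuity of the Choquet integral in its integrand, sublinearity from $(ah)_*=ah_*$ and $(h+h')_*\leq h_*+h'_*$, (sub)normalization from $(\one+h)_*=\one+h_*$, and $\Psi^{-1}([h>r])=[h_*>r]$ yields continuity. Your extra detail on the normalization clauses via $\int\one\,d\mu=\mu(\HV X)$ just makes explicit what the paper leaves implicit; there is no substantive difference.
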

\begin{proof}
  For every $r \in \Rp$, $h_*^{-1} (]r, \infty]) = \{C \in \HV X \mid
  \sup_{x \in C} h (x) > r\} = \{C \in \HV X \mid \exists x \in C, h
  (x) > r\} = \Diamond {h^{-1} (]r, \infty])}$.  This entails that
  $h_*$ is lower semicontinuous.

  In particular, $\Psi (\mu) (h)$ is well-defined for every
  $\mu \in \Val_\bullet X$ and for every $h \in \Lform X$.  It is
  clear that $\Psi$ is monotonic in $h$.  If $h$ is the (pointwise)
  supremum of a directed family ${(h_i)}_{i \in I}$ in $\Lform X$,
  then for every $C \in \HV X$,
  $h_* (C) = \sup_{x \in C} \dsup_{i \in I} h_i (x) = \dsup_{i \in I}
  h_i^* (C)$.  Since integration with respect to the continuous
  valuation $\mu$ is Scott-continuous,
  $\Psi (\mu) (h) = \dsup_{i \in I} \Psi (\mu) (h_i)$, showing that
  $\Psi (\mu)$ is Scott-continuous.  For all $a \in \Rp$,
  $h, h' \in \Lform X$, we have $(ah)_* = a h_*$ and
  $(h+h')_* \leq h_* + {h'}_*$, so $\Psi (\mu)$ is sublinear.
  Additionally, $(\one + h)_* = \one + h_*$, so $\Psi (\mu)$ is
  (sub)normalized if $\mu$ is a (sub)probability valuation.  Hence
  $\Psi (\mu) \in \Pred_\AN^\bullet X$.

  Finally, $\Psi^{-1} ([h > r])$ is the collection of all continuous
  valuations $\mu \in \Val_\bullet X$ such that
  $\int_{C \in \HV X} h_* (C) \,d\mu > r$, namely $[h^* > r]$.  In
  particular, $\Psi$ is continuous.  \qed
\end{proof}

\begin{proposition}
  \label{prop:lambda:flat}
  Let $\bullet$ be nothing, ``$\leq 1$'' or ``$1$'', and $X$ be an
  $\AN_\bullet$-friendly space.  The continuous function
  ${\lambda^\flat_X} \eqdef s_\AN^\bullet \circ \Psi \colon
  \Val_\bullet {\HV X} \to \HV {\Val_\bullet X}$ maps every
  $\mu \in \Val_\bullet {\HV X}$ to the collection of continuous
  valuations $\nu \in \Val_\bullet X$ such that
  $\nu (U) \leq \mu (\Diamond U)$ for every $U \in \Open X$.  On
  subbasic open sets,
  \begin{align*}
    {\lambda^\flat_X}^{-1} (\Diamond {\bigcap_{i=1}^n [h_i > 1]})
    & = \bigcap_{\vec a \in \Delta_n} \left[\left(\sum_{i=1}^n a_i h_i\right)_* >
      1\right] \\
    & = \bigcup_{N > n} \bigcap_{\vec b \in \Delta_n^N} \left[\left(\sum_{i=1}^n b_i h_i\right)_* >
      1\right].
  \end{align*}
\end{proposition}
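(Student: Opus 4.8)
The plan is to unfold the definition ${\lambda^\flat_X} \eqdef s_\AN^\bullet \circ \Psi$ and reduce the two claims---the pointwise description of ${\lambda^\flat_X} (\mu)$, and the two formulas for the inverse image of a subbasic open set---to Lemma~\ref{lemma:Psi} and Lemma~\ref{lemma:sAP:inv:func}. For the pointwise description, unfolding the definitions of $s_\AN^\bullet$ and $\Psi$ gives
\[
  {\lambda^\flat_X} (\mu) = s_\AN^\bullet (\Psi (\mu))
  = \{\nu \in \Val_\bullet X \mid \forall h \in \Lform X,\ \int_{x \in X} h (x) \,d\nu \leq \int_{C \in \HV X} h_* (C) \,d\mu\},
\]
and this is automatically a non-empty closed subset of $\Val_\bullet X$ since $s_\AN^\bullet$ takes its values in $\HV (\Val_\bullet X)$. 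I would then show this set equals $\{\nu \in \Val_\bullet X \mid \forall U \in \Open X,\ \nu (U) \leq \mu (\Diamond U)\}$ by two inclusions. From left to right, with $h \eqdef \chi_U$: we have $\int_{x \in X} \chi_U (x) \,d\nu = \nu (U)$, while $(\chi_U)_* (C) = \sup_{x \in C} \chi_U (x) = \chi_{\Diamond U} (C)$, so $\int_{C \in \HV X} (\chi_U)_* (C) \,d\mu = \mu (\Diamond U)$, giving $\nu (U) \leq \mu (\Diamond U)$.

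For the converse inclusion, suppose $\nu (U) \leq \mu (\Diamond U)$ for every $U \in \Open X$. I would use the Choquet formula together with the identity $h_*^{-1} (]t, \infty]) = \Diamond {h^{-1} (]t, \infty])}$ of Lemma~\ref{lemma:Psi}: for every $h \in \Lform X$ and every $t \in \Rp$, $h^{-1} (]t, \infty])$ is open, so $\nu (h^{-1} (]t, \infty])) \leq \mu (\Diamond {h^{-1} (]t, \infty])}) = \mu (h_*^{-1} (]t, \infty]))$, and integrating over $t$,
\[
  \int_{x \in X} h (x) \,d\nu = \int_0^\infty \nu (h^{-1} (]t, \infty])) \,dt \leq \int_0^\infty \mu (h_*^{-1} (]t, \infty])) \,dt = \int_{C \in \HV X} h_* (C) \,d\mu.
\]
Hence $\nu \in {\lambda^\flat_X} (\mu)$, settling the pointwise description.

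For the inverse image formula, I would write ${\lambda^\flat_X}^{-1} (\Diamond {\bigcap_{i=1}^n [h_i > 1]}) = \Psi^{-1} ({(s_\AN^\bullet)}^{-1} (\Diamond {\bigcap_{i=1}^n [h_i > 1]}))$ and apply Lemma~\ref{lemma:sAP:inv:func}, which gives ${(s_\AN^\bullet)}^{-1} (\Diamond {\bigcap_{i=1}^n [h_i > 1]}) = \bigcap_{\vec a \in \Delta_n} [\sum_{i=1}^n a_i h_i > 1] = \bigcup_{N > n} \bigcap_{\vec b \in \Delta_n^N} [\sum_{i=1}^n b_i h_i > 1]$. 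Since taking preimages commutes with arbitrary unions and intersections, it remains only to compute $\Psi^{-1} ([g > 1])$ when $g$ is a finite non-negative linear combination of the $h_i$, and Lemma~\ref{lemma:Psi} gives $\Psi^{-1} ([g > 1]) = [g_* > 1]$, a genuine subbasic open of $\Val_\bullet {\HV X}$ because $g_* \in \Lform {\HV X}$. Distributing $\Psi^{-1}$ over the two presentations above then yields exactly the two displayed formulas.

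I do not expect a real obstacle here: the substantive work---a minimax argument resting on local convexity of $\Lform X$, which is the standing hypothesis on $X$---was already absorbed into Lemma~\ref{lemma:sAP:inv:func}, and what remains is bookkeeping about preimages, the Choquet formula, and the elementary identities $(\chi_U)_* = \chi_{\Diamond U}$ and $h_*^{-1} (]t, \infty]) = \Diamond {h^{-1} (]t, \infty])}$. The only point needing a moment's care is making sure that every intermediate set in the inverse-image computation is genuinely open; this is guaranteed by $g_* \in \Lform {\HV X}$ and by the second, union-of-finite-intersections form provided by Lemma~\ref{lemma:sAP:inv:func}.
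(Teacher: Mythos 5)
Your proposal is correct and follows essentially the same route as the paper: the pointwise description is obtained exactly as in the paper's proof (the $\chi_U$ special case with $(\chi_U)_* = \chi_{\Diamond U}$ in one direction, the Choquet formula together with $h_*^{-1}(]t,\infty]) = \Diamond{h^{-1}(]t,\infty])}$ in the other), and the inverse-image formulas are deduced from Lemma~\ref{lemma:sAP:inv:func} and Lemma~\ref{lemma:Psi}, which is precisely what the paper does, with your preimage bookkeeping merely spelling out what the paper calls a ``direct consequence''.
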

\begin{proof}
  The formula for inverse images is a direct consequence of
  Lemma~\ref{lemma:sAP:inv:func} and of Lemma~\ref{lemma:Psi}.

  Let $\mu \in \Val_\bullet {\HV X}$.
  The elements $\nu$ of ${\lambda^\flat_X} (\mu)$ are those $\nu \in
  \Val_\bullet X$ such that $\int_{x \in X} h (x)\,d\nu \leq \int_{C
    \in \HV X} h_* (C) \,d\mu$ for every $h \in \Lform X$.
  In particular, for $h \eqdef \chi_U$, where $U \in \Open X$, it is
  easy to
  check that $h_* =
  \chi_{\Diamond U}$, so $\nu (U) \leq \mu (\Diamond U)$.
  Conversely, if $\nu (U) \leq \mu (\Diamond U)$ for every $U \in
  \Open X$, for every $h \in \Lform X$,
  $\int_{x \in X} h (x)\,d\nu = \int_0^\infty \nu (h^{-1} (]t,
  \infty])) \,dt \leq \int_0^\infty \mu (\Diamond {h^{-1} (]t,
    \infty])})\,dt = \int_0^\infty \mu (h_*^{-1} (]t, \infty])) \,dt =
  \int_{x \in X} h_* (x) \,d\mu$.  \qed
\end{proof}

When $n = 1$, $\Delta_n = \{(1)\}$, and $\Delta_n^N = \{(1)\}$ for
every $N > n$.  Lemma~\ref{lemma:sAP:inv:func} and
Proposition~\ref{prop:lambda:flat} then specialize to the following.
\begin{corollary}
  \label{corl:lambda:flat:inv:func:spec}
  Let $\bullet$ be nothing, ``$\leq 1$'' or ``$1$'', and let $X$ be an
  $\AN_\bullet$-friendly space.  For every $h \in \Lform X$,
  \begin{align*}
    {(s_\AN^\bullet)}^{-1} (\Diamond {[h > 1]})
    & = [h > 1] \\
    {\lambda^\flat_X}^{-1} (\Diamond {[h > 1]})
    & = [h_* >1].
  \end{align*}
\end{corollary}

\begin{remark}
  \label{rem:Psi}
  Let $X$ be $\AN_\bullet$-friendly.  For every
  $\mu \in \Val_\bullet {\SV X}$, $\lambda^\flat_X (\mu)$ is also the
  set
  $\{\nu \in \Val_\bullet X \mid \forall h \in \Lform X, \int_{x \in
    X} h (x) \,d\nu \leq \int_{C \in \HV X} \sup_{x \in C} h (x)
  \,d\mu\}$.

  In order to illustrate what ${\lambda^\flat_X}$ does, let us look at
  the special case where $\mu \eqdef \sum_{i=1}^n a_i \delta_{C_i}$,
  where $n \geq 1$, $C_i \eqdef \dc E_i$ and where each set $E_i$ is
  non-empty and finite.  For every $h \in \Lform X$,
  $\int_{C \in \HV X} \sup_{x \in C} h (x) \,d\mu = \sum_{i=1}^n a_i
  \sup_{x_i \in C_i} h (x_i) = \sum_{i=1}^n a_i \max_{x_i \in E_i} h
  (x_i) = \max_{x_1 \in E_1, \cdots, x_n \in E_n} \sum_{i=1}^n a_i h
  (x_i)$, which is equal to
  $\max_{x_1 \in E_1, \cdots, x_n \in E_n} \int_{x \in X} h (x)
  \,d\sum_{i=1}^n a_i \delta_{x_i}$, so
  $\Psi = r_\AN (\dc \mathcal E)$ where $\mathcal E$ is the finite set
  $\{\sum_{i=1}^n a_i \delta_{x_i} \mid x_1 \in E_1, \cdots, x_n \in
  E_n\}$.  By Proposition~4.19 of \cite{JGL-mscs16} (see also the
  errata \cite{JGL:mscs16:errata}), which applies since $X$ is
  $\AN_\bullet$-friendly, $s_\AN^\bullet \circ r_\AN$ maps every
  closed subset $C$ of $\Val_\bullet X$ to its closed convex hull,
  namely to $cl (conv (C)$, where $conv$ denotes convex hull.  Hence
  ${\lambda^\flat_X} (\mu) = cl (conv (\{\sum_{i=1}^n a_i \delta_{x_i}
  \mid x_1 \in E_1, \cdots, x_n \in E_n\}))$.
\end{remark}

We will use the following easily proved fact.
\begin{fact}
  \label{fact:sup:cl}
  For every topological space $X$, for every $h \in \Lform X$,
  for every subset $A$ of $X$, $\sup_{x \in cl (A)} h (x) = \sup_{x
    \in A} h (x)$.
\end{fact}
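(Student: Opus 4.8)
The plan is to reduce the statement to the single fact that $h$, being lower semicontinuous, is a continuous map from $X$ to $\creal$ equipped with the Scott topology, so that $h^{-1} (]t, \infty])$ is open in $X$ for every $t \in \Rp$. One inequality is immediate and needs no hypothesis on $h$: since $A \subseteq cl (A)$, we have $\sup_{x \in A} h (x) \le \sup_{x \in cl (A)} h (x)$.

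For the converse, set $s \eqdef \sup_{x \in A} h (x)$. If $s = \infty$ there is nothing to prove, since then the first inequality already forces $\sup_{x \in cl (A)} h (x) = \infty$. So assume $s \in \Rp$ and consider the open set $U \eqdef h^{-1} (]s, \infty])$, which is open precisely because $h \in \Lform X$ and $]s, \infty]$ is Scott-open in $\creal$. By the very definition of $s$, every point $x \in A$ satisfies $h (x) \le s$, hence lies outside $U$; thus $A \subseteq X \diff U$, and since $X \diff U$ is closed, $cl (A) \subseteq X \diff U$ as well. This says exactly that $h (y) \le s$ for every $y \in cl (A)$, whence $\sup_{y \in cl (A)} h (y) \le s$. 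Combining this with the first inequality yields the claimed equality.

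There is essentially no obstacle here; the only point requiring a moment's attention is the value $\infty$, which is why we dispatch the case $s = \infty$ separately before invoking the openness of $h^{-1} (]s, \infty])$, a set that literally makes sense as written only when $s \in \Rp$.
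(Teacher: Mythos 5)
Your proof is correct and rests on the same core ingredients as the paper's: lower semicontinuity makes the superlevel set $h^{-1} (]t, \infty])$ open, and a closed set containing $A$ contains $cl (A)$ (equivalently, an open set meets $cl (A)$ iff it meets $A$). The only difference is presentational: you argue at the single threshold $s = \sup_{x \in A} h (x)$ via complements (with the harmless $s = \infty$ case split), whereas the paper quantifies over all $t \in \Rp$; this is essentially the same argument.
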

Indeed, we use the fact that an open subset intersects $cl (A)$ if and
only if it intersects $A$.  Then, for every $t \in \Rp$,
$\sup_{x \in cl (A)} h (x) > t$ if and only if the open set
$h^{-1} (]t, \infty])$ intersects $cl (A)$, if and only if it
intersects $A$, if and only if $\sup_{x \in A} h (x) > t$.

\begin{lemma}
  \label{lemma:lambda:flat:nat}
  $\lambda^\flat$ is a natural transformation on $\Topcat^\flat$.
\end{lemma}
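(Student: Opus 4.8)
The plan is to follow the same strategy as in the proof of Lemma~\ref{lemma:lambda:nat}. Since ${\lambda^\flat_X} = s_\AN^\bullet \circ \Psi$ and $s_\AN^\bullet$ is already known to be natural on $\Topcat^\flat$ (it is natural in $X$ on the category of spaces with $\Lform X$ locally convex, hence in particular on $\Topcat^\flat$), it suffices to show that $\Psi$ is a natural transformation, i.e.\ that $\Psi \circ \Val_\bullet {\HV f} = \Pred_\AN^\bullet f \circ \Psi$ for every morphism $f \colon X \to Y$ of $\Topcat^\flat$. Both sides are continuous maps from $\Val_\bullet {\HV X}$ to $\Pred_\AN^\bullet Y$, and all the spaces involved remain inside $\Topcat^\flat$ by Definition~\ref{defn:Top:flat}, so the composites make sense.

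To verify this equality, I would fix $\mu \in \Val_\bullet {\HV X}$ and $h \in \Lform Y$ and evaluate both sides at $(\mu, h)$. On the left, $(\Psi \circ \Val_\bullet {\HV f}) (\mu) (h) = \Psi (\HV f [\mu]) (h) = \int_{C \in \HV Y} h_* (C) \,d\HV f [\mu]$; by the change of variable formula this equals $\int_{C \in \HV X} h_* (\HV f (C)) \,d\mu$. Now $\HV f (C) = cl (f [C])$, so by Fact~\ref{fact:sup:cl} we have $h_* (\HV f (C)) = \sup_{y \in cl (f [C])} h (y) = \sup_{y \in f [C]} h (y) = \sup_{x \in C} h (f (x)) = (h \circ f)_* (C)$. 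Hence the left-hand side evaluates to $\int_{C \in \HV X} (h \circ f)_* (C) \,d\mu = \Psi (\mu) (h \circ f)$. On the right, $(\Pred_\AN^\bullet f \circ \Psi) (\mu) (h) = \Pred_\AN^\bullet f (\Psi (\mu)) (h) = \Psi (\mu) (h \circ f)$ by the definition of the functorial action of $\Pred_\AN^\bullet$. The two agree, so $\Psi$ is natural, and composing with the natural transformation $s_\AN^\bullet$ shows that $\lambda^\flat$ is natural.

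There is essentially no genuine obstacle here: the argument is a routine transcription of the Smyth case, with $\min$ replaced by $\sup$ and compact saturated subsets replaced by closed subsets. The only point requiring a little care is the step $h_* (cl (f [C])) = \sup_{x \in C} h (f (x))$, where Fact~\ref{fact:sup:cl} (closure does not change the supremum of a lower semicontinuous function) is used; in the proof of Lemma~\ref{lemma:lambda:nat} the analogous step instead used monotonicity of $h$ together with $\upc f [Q]$. One should also keep in mind that restricting to $\Topcat^\flat$ is exactly what guarantees the naturality of $s_\AN^\bullet$ that we invoke.
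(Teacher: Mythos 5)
Your proposal is correct and follows essentially the same route as the paper's own proof: reduce to naturality of $\Psi$ via the known naturality of $s_\AN^\bullet$, then verify $\Psi \circ \Val_\bullet {\HV f} = \Pred_\AN^\bullet f \circ \Psi$ pointwise using the change of variable formula and Fact~\ref{fact:sup:cl} to pass from $cl(f[C])$ to $f[C]$. Nothing further is needed.
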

\begin{proof}
  Since $s_\AN^\bullet$ is natural, it suffices to show that $\Psi$ is
  natural.  Let $f \colon X \to Y$ be any continuous map.  We need to
  show that
  $\Psi \circ \Val_\bullet {\HV f} = \Pred_\AN^\bullet f \circ \Psi$.
  Let $\mu \in \Val_\bullet {\HV X}$ and $h \in \Lform Y$.  Then
  $(\Psi \circ \Val_\bullet {\HV f}) (\mu) (h) = \Psi (\HV f [\mu])
  (h) = \int_{C \in \HV Y} \sup_{y \in C} h (y) \,d\HV f [\mu]$.  By
  the change of variable formula, this is equal to
  $\int_{C \in \HV X} \sup_{y \in \HV f (C)} h (y) \,d\mu$.  Now
  $\sup_{y \in \HV f (C)} h (y) = \sup_{y \in cl (f [C])} h (y) =
  \sup_{y \in f [C]} h (y)$ (by Fact~\ref{fact:sup:cl})
  $= \sup_{x \in C} h (f (x))$.  We therefore obtain that
  $(\Psi \circ \Val_\bullet {\HV f}) (\mu) (h) = \int_{C \in \HV X}
  \sup_{x \in C} h (f (x)) \,d\mu$.  But
  $(\Pred_\AN^\bullet f \circ \Psi) (\mu) (h) = \Pred_\AN^\bullet f
  (\Psi (\mu)) (h) = \Psi (\mu) (h \circ f) = \int_{C \in \HV X}
  \sup_{x \in C} h (f (x)) \,d\mu$.  \qed
\end{proof}

We prove the first of the weak distributivity laws.
\begin{lemma}
  \label{lemma:lambda:flat:etaS}
  For every space $X$ in $\Topcat^\flat$,
  ${\lambda^\flat_X} \circ \Val_\bullet {\eta^\Hoare_X} =
  \eta^\Hoare_{\Val_\bullet X}$.
\end{lemma}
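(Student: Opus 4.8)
The plan is to mimic, almost verbatim, the direct proof of Lemma~\ref{lemma:lambda:etaS}, interchanging the roles of $\Box$ and $\Diamond$, of $\supseteq$ and $\subseteq$, and of $\geq$ and $\leq$. First I would recall from Proposition~\ref{prop:lambda:flat} that, for every $\mu \in \Val_\bullet {\HV X}$, the set ${\lambda^\flat_X} (\mu)$ consists exactly of those continuous valuations $\nu \in \Val_\bullet X$ such that $\nu (U) \leq \mu (\Diamond U)$ for every $U \in \Open X$; and that $\eta^\Hoare_X (x) = \dc x$, with ${(\eta^\Hoare_X)}^{-1} (\Diamond U) = U$.

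Next, I would fix $\nu \in \Val_\bullet X$ and compute $\Val_\bullet {\eta^\Hoare_X} (\nu) (\Diamond U) = \nu ({(\eta^\Hoare_X)}^{-1} (\Diamond U)) = \nu (U)$ for every $U \in \Open X$. Hence the elements of $({\lambda^\flat_X} \circ \Val_\bullet {\eta^\Hoare_X}) (\nu)$ are precisely those $\nu' \in \Val_\bullet X$ with $\nu' (U) \leq \nu (U)$ for every $U \in \Open X$, i.e.\ those $\nu'$ with $\nu' \leq \nu$ in the stochastic ordering, which is the specialization ordering of $\Val_\bullet X$. That set is $\dc\nu$, which is exactly $\eta^\Hoare_{\Val_\bullet X} (\nu)$ by definition of the Hoare unit. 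This establishes the desired equality pointwise, hence the equality of the two continuous maps.

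There is essentially no obstacle here; the one point deserving a moment's care is that $\dc\nu$ is a bona fide element of $\HV {\Val_\bullet X}$, that is, a non-empty closed subset of $\Val_\bullet X$ — which holds because $\dc\nu$ is the closure of the singleton $\{\nu\}$ in the $T_0$ space $\Val_\bullet X$ — so that the description of $\eta^\Hoare_{\Val_\bullet X} (\nu)$ as $\dc\nu$ and of $({\lambda^\flat_X} \circ \Val_\bullet {\eta^\Hoare_X}) (\nu)$ as $\{\nu' \mid \nu' \leq \nu\}$ name the same object. Alternatively, one could give a Trick~A proof in the style of the displayed alternate proof of Lemma~\ref{lemma:lambda:etaS}: using Proposition~\ref{prop:lambda:flat} together with the identity $(\sum_{i=1}^n a_i h_i)_* \circ \eta^\Hoare_X = \sum_{i=1}^n a_i h_i$ (valid since $h_* (\dc x) = h (x)$ by monotonicity of $h$), one would reduce to checking that $\bigcap_{\vec a \in \Delta_n} [\sum_{i=1}^n a_i h_i > 1] = \bigcap_{i=1}^n [h_i > 1]$ as open subsets of $\Val_\bullet X$, which follows by testing against the standard basis tuples $\vec a$.
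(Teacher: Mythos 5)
Your main argument is exactly the paper's proof: compute $\Val_\bullet {\eta^\Hoare_X} (\nu) (\Diamond U) = \nu (U)$ and conclude that $({\lambda^\flat_X} \circ \Val_\bullet {\eta^\Hoare_X}) (\nu)$ is the set of $\nu' \leq \nu$, i.e.\ $\eta^\Hoare_{\Val_\bullet X} (\nu)$; your extra remark that this set is indeed a non-empty closed subset (the closure of $\{\nu\}$) and your sketched Trick~A alternative are both correct but not needed beyond what the paper does.
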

\begin{proof}
  For every $\nu \in \Val_\bullet X$, the elements of
  $({\lambda^\flat_X} \circ \Val_\bullet {\eta^\Hoare_X}) (\nu)$ are
  those $\nu' \in \Val_\bullet X$ such that
  $\nu' (U) \leq \Val_\bullet {\eta^\Hoare_X} (\nu) (\Diamond U)$ for
  every $U \in \Open X$.  Now
  $\Val_\bullet {\eta^\Hoare_X} (\nu) (\Diamond U) = \nu
  ({(\eta^\Hoare_X)}^{-1} (\Diamond U)) = \nu (U)$, so
  $({\lambda^\flat_X} \circ \Val_\bullet {\eta^\Hoare_X}) (\nu)$ is
  simply the collection of $\nu' \in \Val_\bullet X$ such that
  $\nu' \leq \nu$, namely $\eta^\Hoare_{\Val_\bullet X} (\nu)$.  \qed
\end{proof}

For the remaining proofs, we will use Trick~A, and the following
variant of Trick~B.  \vskip0.5em \textbf{Trick~C.} \emph{In order to
  show that $f=g$, where $f$ and $g$ are continuous map from a space
  $Y$ to a space of the form $\HV {\Val_\bullet Z}$, where $Z$ is an
  object of $\Topcat^\flat$, show that $f (y)$ and $g (y)$ are convex
  for every $y \in Y$, and then show that
  $r_\AN \circ f = r_\AN \circ g$.}

\vskip0.5em Indeed, we recall that $r_\AN$ is a homeomorphism of
$\HV^{cvx} (\Val_\bullet Z)$ onto $\Pred_\AN^\bullet Z$.

We also note the following.  In a semitopological cone, the closure of
a convex subsets is convex \cite[Lemma~4.10~(a)]{Keimel:topcones2},
and we obtain the following as an easy consequence.
\begin{fact}
  \label{fact:cl:conv}
  Given any convex subspace $Z$ of a semitopological cone, the closure
  of any convex subset of $Z$ in $Z$ is convex.
\end{fact}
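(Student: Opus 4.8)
The plan is to reduce the relative statement to the absolute one, which is already known from \cite[Lemma~4.10~(a)]{Keimel:topcones2}. Write $C$ for the ambient semitopological cone, so that $Z$ is a convex subspace of $C$, and let $A$ be a convex subset of $Z$. Since a convex combination $r G_1 + (1-r) G_2$ (with $G_1, G_2 \in A$ and $r \in [0,1]$) is computed the same way in $Z$ as in $C$, the set $A$ is also a convex subset of $C$.

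First I would recall the elementary topological identity $cl_Z (A) = cl_C (A) \cap Z$, which holds because $Z$ carries the subspace topology. By \cite[Lemma~4.10~(a)]{Keimel:topcones2}, $cl_C (A)$ is a convex subset of $C$, since $A$ is convex in $C$. As $Z$ is convex by hypothesis, and the intersection of two convex subsets of a cone is convex (an immediate check on convex combinations), $cl_Z (A) = cl_C (A) \cap Z$ is convex. Because $cl_Z(A) \subseteq Z$, this displays it as a convex subset of $Z$, as desired.

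There is essentially no obstacle here. The only point worth flagging is that $Z$ need not be closed in $C$, so $cl_Z(A)$ may be strictly smaller than $cl_C(A)$; but this is immaterial, since intersecting the convex set $cl_C(A)$ with the convex set $Z$ still yields a convex set. One could alternatively argue directly, using the separate continuity of addition and scalar multiplication on $C$ to push convex combinations through closures, but that would merely re-prove \cite[Lemma~4.10~(a)]{Keimel:topcones2} relative to $Z$, so invoking it and intersecting is the shortest route.
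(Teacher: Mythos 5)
Your proof is correct and follows exactly the route the paper intends: the paper also derives the fact as an easy consequence of \cite[Lemma~4.10~(a)]{Keimel:topcones2}, and your reduction via $cl_Z(A) = cl_C(A) \cap Z$ together with the stability of convexity under intersection is the standard way to make that "easy consequence" explicit.
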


\begin{lemma}
  \label{lemma:lambda:flat:muS}
  For every object $X$ of $\Topcat^\flat$,
  ${\lambda^\flat_X} \circ \Val_\bullet {\mu^\Hoare_X} =
  \mu^\Hoare_{\Val_\bullet X} \circ \HV {\lambda^\flat_X} \circ
  \lambda^\flat_{\HV X}$.
\end{lemma}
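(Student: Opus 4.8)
The plan is to treat this exactly as the angelic counterpart of Lemma~\ref{lemma:lambda:muS}. First I would check that both composites take convex values, so that Trick~C reduces the identity to an equality of maps into $\Pred_\AN^\bullet X$; then I would establish that equality with Trick~A, by comparing inverse images of the subbasic open sets $[h > 1]$.

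Convexity of the left-hand side is immediate, since ${\lambda^\flat_X} = s_\AN^\bullet \circ \Psi$ and $s_\AN^\bullet$ takes its values in $\HV^{cvx} (\Val_\bullet X)$. For the right-hand side I would fix $\xi \in \Val_\bullet {\HV {\HV X}}$ and set $C' \eqdef (\mu^\Hoare_{\Val_\bullet X} \circ \HV {\lambda^\flat_X} \circ \lambda^\flat_{\HV X}) (\xi)$. Since $\HV {\lambda^\flat_X} (\lambda^\flat_{\HV X} (\xi)) = cl ({\lambda^\flat_X} [\lambda^\flat_{\HV X} (\xi)])$ and $\mu^\Hoare$ is ``union followed by closure'', a short argument---using that $\Diamond V$ is open in $\HV {\Val_\bullet X}$ for each open $V \subseteq \Val_\bullet X$, so that if a point lies in a set $D \in cl ({\lambda^\flat_X} [\lambda^\flat_{\HV X} (\xi)])$ then every $\Diamond V$ containing $D$ meets ${\lambda^\flat_X} [\lambda^\flat_{\HV X} (\xi)]$---shows that $C' = cl (\bigcup_{\mu \in \lambda^\flat_{\HV X} (\xi)} {\lambda^\flat_X} (\mu))$, the closure being taken in $\Val_\bullet X$. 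The union in question is convex: $\lambda^\flat_{\HV X} (\xi) = (s_\AN^\bullet \circ \Psi) (\xi)$ is convex, and if $\nu_i \in {\lambda^\flat_X} (\mu_i)$ with $\mu_i \in \lambda^\flat_{\HV X} (\xi)$ for $i = 1, 2$, then by Proposition~\ref{prop:lambda:flat} we have $\nu_i (U) \leq \mu_i (\Diamond U)$ for every $U \in \Open X$, hence $(a \nu_1 + (1-a) \nu_2) (U) \leq (a \mu_1 + (1-a) \mu_2) (\Diamond U)$ and therefore $a \nu_1 + (1-a) \nu_2 \in {\lambda^\flat_X} (a \mu_1 + (1-a) \mu_2)$ with $a \mu_1 + (1-a) \mu_2 \in \lambda^\flat_{\HV X} (\xi)$. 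As $\Val_\bullet X$ is a convex subspace of the semitopological cone $\Val X$, Fact~\ref{fact:cl:conv} then yields that $C'$ is convex.

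By Trick~C it then suffices to prove $\Psi \circ \Val_\bullet {\mu^\Hoare_X} = r_\AN \circ \mu^\Hoare_{\Val_\bullet X} \circ \HV {\lambda^\flat_X} \circ \lambda^\flat_{\HV X}$, the left-hand side of the lemma becoming $r_\AN \circ {\lambda^\flat_X} \circ \Val_\bullet {\mu^\Hoare_X}$ and $r_\AN \circ {\lambda^\flat_X} = r_\AN \circ s_\AN^\bullet \circ \Psi = \Psi$. I would then apply Trick~A to the subbasic open sets $[h > 1]$ of $\Pred_\AN^\bullet X$. On the left, Lemma~\ref{lemma:Psi} gives $[h > 1] \invto{\Psi} [h_* > 1] \invto{\Val_\bullet {\mu^\Hoare_X}} [h_* \circ \mu^\Hoare_X > 1]$, and $h_* \circ \mu^\Hoare_X = (h_*)_*$ because, for $\mathcal C \in \HV {\HV X}$, Fact~\ref{fact:sup:cl} gives $h_* (\mu^\Hoare_X (\mathcal C)) = \sup_{x \in cl (\bigcup \mathcal C)} h (x) = \sup_{x \in \bigcup \mathcal C} h (x) = \sup_{C \in \mathcal C} h_* (C)$. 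On the right, using that $\HV X$ is an object of $\Topcat^\flat$ (closed under $\HV$ by Definition~\ref{defn:Top:flat}), so that $\Lform {\HV X}$ is locally convex and Corollary~\ref{corl:lambda:flat:inv:func:spec} applies both to $X$ and to $\HV X$ with $h_* \in \Lform {\HV X}$, we get $[h > 1] \invto{r_\AN} \Diamond {[h > 1]} \invto{\mu^\Hoare_{\Val_\bullet X}} \Diamond {\Diamond {[h > 1]}} \invto{\HV {\lambda^\flat_X}} \Diamond {{\lambda^\flat_X}^{-1} (\Diamond {[h > 1]})} = \Diamond {[h_* > 1]} \invto{\lambda^\flat_{\HV X}} [(h_*)_* > 1]$, by Fact~\ref{fact:r:cont:flat}, the identities ${\mu^\Hoare}^{-1} (\Diamond U) = \Diamond {\Diamond U}$ and ${(\HV f)}^{-1} (\Diamond V) = \Diamond {f^{-1} (V)}$, and Corollary~\ref{corl:lambda:flat:inv:func:spec}. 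Both chains terminate at $[(h_*)_* > 1]$, so the two composites agree on a subbase of the $T_0$ space $\Pred_\AN^\bullet X$, and Trick~A concludes.

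The main obstacle is the convexity of the right-hand side. In the demonic case one could freely replace an arbitrary compact saturated set by $\upc \mu$, but here the closures built into $\mu^\Hoare$ and $\HV$ must first be unwound in order to recognize $C'$ as the closure of a convex union, and the passage from ``convex'' to ``convex after closure'' rests on Fact~\ref{fact:cl:conv}, hence on $\Val_\bullet X$ being a convex subspace of a semitopological cone. The remainder is a faithful transcription of the proof of Lemma~\ref{lemma:lambda:muS}, with $\Box$, $\supseteq$, $h^*$, $r_\DN$, $s_\DN^\bullet$, $\mu^\Smyth$ replaced by $\Diamond$, $\subseteq$, $h_*$, $r_\AN$, $s_\AN^\bullet$, $\mu^\Hoare$, and the ambient category restricted to $\Topcat^\flat$.
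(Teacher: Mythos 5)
Your proposal is correct and follows essentially the same route as the paper's proof: convexity of both sides (via Fact~\ref{fact:cl:conv} for the right-hand side), reduction by Trick~C to $\Psi \circ \Val_\bullet {\mu^\Hoare_X} = r_\AN \circ \mu^\Hoare_{\Val_\bullet X} \circ \HV {\lambda^\flat_X} \circ \lambda^\flat_{\HV X}$, and then Trick~A with exactly the two chains of inverse images ending in $[(h_*)_* > 1]$, justified by Lemma~\ref{lemma:Psi}, Fact~\ref{fact:sup:cl}, Fact~\ref{fact:r:cont:flat} and Corollary~\ref{corl:lambda:flat:inv:func:spec}. Your explicit remarks on unwinding the closures in $\mu^\Hoare$ and on needing $\HV X$ in $\Topcat^\flat$ are accurate and match the paper's own justifications.
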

The presence of the term $\lambda^\flat_{\HV X}$ is why we require
$\Topcat^\flat$ to be closed under $\HV$, see
Definition~\ref{defn:Top:flat}.

\begin{proof}
  Since ${\lambda^\flat_X} = s_\AN^\bullet \circ \Psi$ and the image
  of $s_\AN^\bullet$ consists of convex subsets, the left-hand side
  takes its values in convex sets.  For the right-hand side, for every
  $\xi \in \Val_\bullet {\HV {\HV X}}$,
  $(\mu^\Smyth_{\Val_\bullet X} \circ \HV {\lambda^\flat_X} \circ
  \lambda^\flat_{\HV X}) (\xi)$ is the closure of
  $\bigcup (\HV {\lambda^\flat_X} \circ \lambda^\flat_{\HV X}) (\xi) =
  \bigcup cl (\{\lambda^\flat_X (\mu) \mid \mu \in \lambda^\flat_{\HV X}
  (\xi)\})$, hence also of
  $\bigcup \{\lambda^\flat_X (\mu) \mid \mu \in \lambda^\flat_{\HV X}
  (\xi)\}$.  In order to show that
  $(\mu^\Smyth_{\Val_\bullet X} \circ \HV {\lambda^\flat_X} \circ
  \lambda^\flat_{\HV X}) (\xi)$ is convex, it suffices to show that
  $A \eqdef \bigcup \{\lambda^\flat_X (\mu) \mid \mu \in \lambda^\flat_{\HV X}
  (\xi)\}$ is, by Fact~\ref{fact:cl:conv}.
  
  Let $a \in [0, 1]$, and $\nu_1, \nu_2 \in A$.  there is a
  $\mu_1 \in \lambda^\flat_{\HV X} (\xi)$ such that
  $\nu_1 \in {\lambda^\flat_X} (\mu_1)$ and there is a
  $\mu_2 \in \lambda^\flat_{\HV X} (\xi)$ such that
  $\nu_2 \in {\lambda^\flat_X} (\mu_2)$.  We note that
  $\lambda^\flat_{\HV X} (\xi) = (s_\AN^\bullet \circ \Psi) (\xi)$ is
  convex, so $a\mu_1 + (1-a)\mu_2 \in \lambda^\flat_{\HV X} (\xi)$.
  Additionally,
  $a\nu_1 + (1-a)\nu_2 \in {\lambda^\flat_X} (a\mu_1 + (1-a)\mu_2)$:
  for every $U \in \Open X$, $\nu_1 (U) \leq \mu_1 (\Diamond U)$ and
  $\nu_2 (U) \leq \mu_2 (\Diamond U)$ by definition of
  ${\lambda^\flat_X}$, so
  $(a\nu_1 + (1-a)\nu_2) (U) \leq (a\mu_1 + (1-a) \mu_2) (\Diamond
  U)$.  Therefore there is a $\mu \in \lambda^\flat_{\HV X} (\xi)$,
  namely $\mu \eqdef a\mu_1 + (1-a) \mu_2$, such that
  $a \nu_1 + (1-a) \nu_2 \in {\lambda^\flat_X} (\mu)$.  Equivalently,
  $a \nu_1 + (1-a) \nu_2$ is in $A$.  Hence $A$ is convex, as
  promised.

  It follows that we can use Trick~C, and so we prove the following
  instead:
  \begin{align*}
    \Psi \circ \Val_\bullet {\mu^\Hoare_X}
    & = r_\AN \circ  \mu^\Hoare_{\Val_\bullet X} \circ \HV {\lambda^\flat_X} \circ \lambda^\flat_{\HV X}.
  \end{align*}
  Indeed,
  $r_\AN \circ {\lambda^\flat_X} = r_\AN \circ s_\AN^\bullet \circ \Psi =
  \Psi$, which allows us to simplify the left-hand side.

  We now use Trick~A:
  \begin{align*}
    [h > 1]
    & \invto{\Psi} [h_* > 1]
    & \text{by Lemma~\ref{lemma:Psi}} \\
    & \invto{\Val_\bullet {\mu^\Hoare_X}}
      [h_* \circ \mu^\Hoare_X > 1] = [h_{**} > 1],
  \end{align*}
  where for every $\mathcal C \in \HV {\HV X}$,
  $h_{**} (\mathcal C) = \sup_{C \in \mathcal C} h_* (C) = \sup_{C \in
    \mathcal C} \sup_{x \in C} h (x)$; indeed, the latter is equal to
  $\sup_{x \in \bigcup \mathcal C} h (x) = \sup_{x \in cl (\bigcup
    \mathcal C)} h (x)$ (by Fact~\ref{fact:sup:cl})
  $= h_* (\mu^\Hoare_X (\mathcal C))$.
  \begin{align*}
    [h > 1]
    & \invto{r_\AN}
      \Diamond {[h > 1]}
    & \text{by Fact~\ref{fact:r:cont:flat}} \\
    & \invto{\mu^\Hoare_{\Val_\bullet X}}
      \Diamond {\Diamond {[h > 1]}} \\
    & \invto{\HV {{\lambda^\flat_X}}}
      \Diamond {{\lambda^\flat_X}^{-1} (\Diamond {[h > 1]})}
      = \Diamond {[h_* > 1]}
    & \text{by Corollary~\ref{corl:lambda:flat:inv:func:spec}}\\
    & \invto{\lambda^\flat_{\HV X}}
      [h_{**} > 1]
    & \text{by Corollary~\ref{corl:lambda:flat:inv:func:spec}}.
  \end{align*}
  \qed
\end{proof}

\begin{lemma}
  \label{lemma:lambda:flat:muT}
  For every object $X$ of $\Topcat^\flat$,
  ${\lambda^\flat_X} \circ \mu_{\HV X} = \HV {\mu_X} \circ
  \lambda^\flat_{\Val_\bullet X} \circ \Val_\bullet
  {\lambda^\flat_X}$.
\end{lemma}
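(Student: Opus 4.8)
The plan is to transcribe the proof of Lemma~\ref{lemma:lambda:muT} almost verbatim, replacing $\SV$ by $\HV$, $\Box$ by $\Diamond$, minima by suprema, $(\cdot)^*$ by $(\cdot)_*$, $\Phi$ by $\Psi$, the maps $r_\DN,s_\DN^\bullet$ by $r_\AN,s_\AN^\bullet$, and Trick~B by Trick~C; the only genuine work is in the minor bookkeeping differences. First I would check that both sides of the claimed identity take their values in convex subsets, so that Trick~C applies. The left-hand side is $s_\AN^\bullet\circ\Psi\circ\mu_{\HV X}$, and $s_\AN^\bullet$ has image in $\HV^{cvx}(\Val_\bullet X)$. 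For the right-hand side, for every $\xi\in\Val_\bullet{\Val_\bullet{\HV X}}$ the set $(\HV{\mu_X}\circ\lambda^\flat_{\Val_\bullet X}\circ\Val_\bullet{\lambda^\flat_X})(\xi)$ is the closure in $\Val_\bullet X$ of the direct image of $\lambda^\flat_{\Val_\bullet X}(\lambda^\flat_X[\xi])$ under $\mu_X$; since $\lambda^\flat_{\Val_\bullet X}=s_\AN^\bullet\circ\Psi$ has image in convex sets and $\mu_X$ is linear, that direct image is convex, and by Fact~\ref{fact:cl:conv} so is its closure. This is where closure enters, and it is harmless.

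Next, using $r_\AN\circ\lambda^\flat_X=r_\AN\circ s_\AN^\bullet\circ\Psi=\Psi$, Trick~C reduces the goal to the equality of continuous maps $\Val_\bullet{\Val_\bullet{\HV X}}\to\Pred_\AN^\bullet X$ given by $\Psi\circ\mu_{\HV X}=r_\AN\circ\HV{\mu_X}\circ\lambda^\flat_{\Val_\bullet X}\circ\Val_\bullet{\lambda^\flat_X}$, which I would establish with Trick~A on the subbasic open sets $[h>1]$, $h\in\Lform X$ (noting that $\Pred_\AN^\bullet X$ is $T_0$). On the left, via Lemma~\ref{lemma:Psi} and then (\ref{eq:mu-1}), one computes
\begin{align*}
  [h>1]
  &\invto{\Psi}[h_*>1]\\
  &\invto{\mu_{\HV X}}\left[\left(\mu\in\Val_\bullet{\HV X}\mapsto\int_{C\in\HV X}h_*(C)\,d\mu\right)>1\right].
\end{align*}
On the right, successively by Fact~\ref{fact:r:cont:flat}, the identity ${(\HV f)}^{-1}(\Diamond V)=\Diamond{f^{-1}(V)}$ together with (\ref{eq:mu-1}), Corollary~\ref{corl:lambda:flat:inv:func:spec}, and ${(\Val_\bullet f)}^{-1}([h'>r])=[h'\circ f>r]$, one computes
\begin{align*}
  [h>1]
  &\invto{r_\AN}\Diamond{[h>1]}\\
  &\invto{\HV{\mu_X}}\Diamond{\left[\left(\nu\in\Val_\bullet X\mapsto\int_{x\in X}h(x)\,d\nu\right)>1\right]}\\
  &\invto{\lambda^\flat_{\Val_\bullet X}}\left[\left(\nu\in\Val_\bullet X\mapsto\int_{x\in X}h(x)\,d\nu\right)_*>1\right]\\
  &\invto{\Val_\bullet{\lambda^\flat_X}}\left[\left(\nu\in\Val_\bullet X\mapsto\int_{x\in X}h(x)\,d\nu\right)_*\circ\lambda^\flat_X>1\right].
\end{align*}

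It then remains to identify the two resulting open subsets of $\Val_\bullet{\Val_\bullet{\HV X}}$, which reduces to the pointwise identity $\left(\nu\in\Val_\bullet X\mapsto\int_{x\in X}h(x)\,d\nu\right)_*\circ\lambda^\flat_X=\left(\mu\in\Val_\bullet{\HV X}\mapsto\int_{C\in\HV X}h_*(C)\,d\mu\right)$: for $\mu\in\Val_\bullet{\HV X}$ the left-hand side equals $\sup_{\nu\in\lambda^\flat_X(\mu)}\int_{x\in X}h(x)\,d\nu=\sup_{\nu\in s_\AN^\bullet(\Psi(\mu))}\int_{x\in X}h(x)\,d\nu=r_\AN(s_\AN^\bullet(\Psi(\mu)))(h)=\Psi(\mu)(h)=\int_{C\in\HV X}h_*(C)\,d\mu$, using $\lambda^\flat_X=s_\AN^\bullet\circ\Psi$ and $r_\AN\circ s_\AN^\bullet=\mathrm{id}$. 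I do not expect a real obstacle here; the only points needing care are keeping track of the closures introduced by $\HV{\mu_X}$ (handled in the convexity step by Fact~\ref{fact:cl:conv}, and invisible to the Trick~A computation, which manipulates inverse images) and using that $\Topcat^\flat$ is closed under $\HV$ and $\Val_\bullet$, so that every intermediate space remains in $\Topcat^\flat$ and the maps $r_\AN$, $s_\AN^\bullet$, $\lambda^\flat_{\Val_\bullet X}$ stay available.
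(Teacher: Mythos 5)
Your proposal is correct and follows essentially the same route as the paper's proof: check convexity of both sides (using that $\lambda^\flat=s_\AN^\bullet\circ\Psi$ lands in convex sets, linearity of $\mu_X$, and Fact~\ref{fact:cl:conv} for the closure), reduce via Trick~C to $\Psi \circ \mu_{\HV X} = r_\AN \circ \HV{\mu_X} \circ \lambda^\flat_{\Val_\bullet X} \circ \Val_\bullet{\lambda^\flat_X}$, run the same two Trick~A chains on $[h>1]$, and conclude with the pointwise identity $\left(\nu \mapsto \int_{x\in X} h(x)\,d\nu\right)_* \circ \lambda^\flat_X = \left(\mu \mapsto \int_{C\in\HV X} h_*(C)\,d\mu\right)$ via $r_\AN\circ s_\AN^\bullet=\mathrm{id}$. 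No gaps.
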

The term $\lambda^\flat_{\Val_\bullet X}$ is why we require
$\Topcat^\flat$ to be closed under $\Val_\bullet$ (see
Definition~\ref{defn:Top:flat}).

\begin{proof}
  The left-hand side takes its values in convex sets.  We claim that
  the right-hand side does, too.  For every
  $\xi \in \Val_\bullet {\Val_\bullet {\HV X}}$,
  $(\HV {\mu_X} \circ \lambda^\flat_{\Val_\bullet X} \circ
  \Val_\bullet {\lambda^\flat_X}) (\xi)$ is the closure of
  $A \eqdef \bigcup \{\mu_X (\mu) \mid \mu \in
  \lambda^\flat_{\Val_\bullet X} ({\lambda^\flat_X} [\xi])\}$ so it
  suffices to show that $A$ is convex, and to rely on
  Fact~\ref{fact:cl:conv}.  For all $a \in [0, 1]$,
  $\mu_1, \mu_2 \in \lambda^\flat_{\Val_\bullet X} ({\lambda^\flat_X}
  [\xi])$, $\mu \eqdef a \mu_1 + (1-a) \mu_2$ is in
  $\lambda^\flat_{\Val_\bullet X} ({\lambda^\flat_X} [\xi])$, since
  $\lambda^\flat_{\Val_\bullet X}$ takes its values in convex sets.
  Then
  $\mu_X (\mu) = (\mathcal U \in \Open {\Val_\bullet X} \mapsto \int_{\nu \in
    \Val_\bullet X} \nu (\mathcal U) \,d\mu) = a \mu_X (\mu_1) + (1-a) \mu_X
  (\mu_2)$ is in $A$, showing that $A$ is convex.

  We can therefore use Trick~C, and since $r_\AN \circ {\lambda^\flat_X} =
  r_\AN \circ s_\AN^\bullet \circ \Psi = \Psi$, it remains to show:
  \begin{align*}
    \Psi \circ \mu_{\HV X}
    & = r_\AN \circ \HV {\mu_X} \circ \lambda^\flat_{\Val_\bullet X} \circ \Val_\bullet {\lambda^\flat_X}.
  \end{align*}
  To this end, we use Trick~A:
  \begin{align*}
    [h > 1]
    & \invto{\Psi}
      [h_* > 1]
    & \text{by Lemma~\ref{lemma:Psi}} \\
    & \invto{\mu_{\HV X}}
      \left[\left(\mu \in \Val_\bullet {\HV X} \mapsto \int_{C \in \HV X} h_*
      (C) \,d\mu\right) > 1\right]
    & \text{by (\ref{eq:mu-1})}
  \end{align*}
  while:
  \begin{align*}
    [h > 1]
    & \invto{r_\AN}
      \Diamond {[h > 1]}
    & \text{by Fact~\ref{fact:r:cont:flat}} \\
    & \invto{\HV {\mu_X}}
      \Diamond {\mu_X^{-1} ([h > 1])}
      = \Diamond {\left[\left(\nu \in \Val_\bullet X \mapsto \int_{x \in X} h (x)
      \,d\nu\right) > 1\right]}
      \mskip-80mu
    & \text{by (\ref{eq:mu-1})} \\
    & \invto{\lambda^\flat_{\Val_\bullet X}}
      \left[\left(\nu \in \Val_\bullet X \mapsto \int_{x \in X} h (x)
      \,d\nu\right)_* > 1\right]
    & \text{by Corollary~\ref{corl:lambda:flat:inv:func:spec}} \\
    & \invto{\Val_\bullet {\lambda^\flat_X}}
      \left[\left(\nu \in \Val_\bullet X \mapsto \int_{x \in X} h (x)
      \,d\nu\right)_* \circ {\lambda^\flat_X} > 1\right].
  \end{align*}
  In order to conclude, it suffices to show that:
  \begin{align*}
    \left(\nu \in \Val_\bullet X \mapsto \int_{x \in X} h (x)
    \,d\nu\right)_* \circ {\lambda^\flat_X}
    & = (\mu \in \Val_\bullet {\HV X} \mapsto \int_{C \in \HV X} h_*
      (C) \,d\mu).
  \end{align*}
  The left-hand side maps every $\mu \in \Val_\bullet {\HV X}$ to
  \begin{align*}
    \sup_{\nu \in s_\AN^\bullet (\Psi (\mu))} \int_{x \in X} h (x)
    \,d\nu
    & = r_\AN (s_\AN^\bullet (\Psi (\mu))) (h) \\
    & = \Psi (\mu) (h) = \int_{C \in \HV X} h_*
      (C),
  \end{align*}
  and this completes the proof.  \qed
\end{proof}

Putting Lemma~\ref{lemma:lambda:flat:nat}, Lemma~\ref{lemma:lambda:flat:etaS},
Lemma~\ref{lemma:lambda:flat:muS} and Lemma~\ref{lemma:lambda:flat:muT}
together, we obtain the following.
\begin{theorem}
  \label{thm:lambda:flat}
  Let $\bullet$ be nothing, ``$\leq 1$'', or ``$1$''.  Then
  $\lambda^\flat$ is a weak distributive law of $\HV$ over
  $\Val_\bullet$ on any category $\Topcat^\flat$ as given in
  Definition~\ref{defn:Top:flat}.
\end{theorem}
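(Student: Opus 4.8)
The plan is to recall that, in Garner's sense, a weak distributive law of a monad $S$ over a monad $T$ is a natural transformation $\lambda \colon TS \Rightarrow ST$ satisfying three of Beck's four compatibility equations---the one expressing compatibility with the unit $\eta^S$ of $S$, and the two expressing compatibility with the multiplications $\mu^S$ and $\mu^T$---while the equation expressing compatibility with the unit $\eta^T$ of $T$ is dropped. Here $S \eqdef \HV$, $T \eqdef \Val_\bullet$, and $\lambda \eqdef \lambda^\flat$. So the proof consists of nothing more than checking these four ingredients, each of which has already been established for objects $X$ of $\Topcat^\flat$.

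First I would invoke Lemma~\ref{lemma:lambda:flat:nat}, which says that $\lambda^\flat$ is a natural transformation from $\Val_\bullet {\HV}$ to $\HV {\Val_\bullet}$ on $\Topcat^\flat$. Then: compatibility with the unit $\eta^\Hoare$ of $\HV$, namely ${\lambda^\flat_X} \circ \Val_\bullet {\eta^\Hoare_X} = \eta^\Hoare_{\Val_\bullet X}$, is precisely Lemma~\ref{lemma:lambda:flat:etaS}; compatibility with the multiplication $\mu^\Hoare$ of $\HV$, namely ${\lambda^\flat_X} \circ \Val_\bullet {\mu^\Hoare_X} = \mu^\Hoare_{\Val_\bullet X} \circ \HV {\lambda^\flat_X} \circ \lambda^\flat_{\HV X}$, is Lemma~\ref{lemma:lambda:flat:muS}; and compatibility with the multiplication $\mu$ of $\Val_\bullet$, namely ${\lambda^\flat_X} \circ \mu_{\HV X} = \HV {\mu_X} \circ \lambda^\flat_{\Val_\bullet X} \circ \Val_\bullet {\lambda^\flat_X}$, is Lemma~\ref{lemma:lambda:flat:muT}. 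Assembling these four statements yields the theorem.

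The one point that deserves a word of care is that every space occurring in these equations genuinely lies in $\Topcat^\flat$, so that the maps $\lambda^\flat$, $r_\AN$, $s_\AN^\bullet$ and the cited lemmas apply at them: in particular the term $\lambda^\flat_{\HV X}$ of Lemma~\ref{lemma:lambda:flat:muS} needs $\HV X$ to be in $\Topcat^\flat$, and the term $\lambda^\flat_{\Val_\bullet X}$ of Lemma~\ref{lemma:lambda:flat:muT} needs $\Val_\bullet X$ to be in $\Topcat^\flat$; both hold because $\Topcat^\flat$ is, by Definition~\ref{defn:Top:flat}, closed under $\HV$ and under $\Val_\bullet$ (and under retracts, so that idempotents split). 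I do not expect any genuine obstacle at this final step: all of the difficulty has been absorbed into the preceding lemmas---above all the Minimax-theorem argument of Lemma~\ref{lemma:sAP:inv:func} describing ${(s_\AN^\bullet)}^{-1}$ on subbasic opens, and the convexity computations carried out via Trick~C in the proofs of Lemmas~\ref{lemma:lambda:flat:muS} and \ref{lemma:lambda:flat:muT}---so that Theorem~\ref{thm:lambda:flat} itself is just their conjunction, exactly as Theorem~\ref{thm:lambda} was the conjunction of the corresponding lemmas in Part~I.
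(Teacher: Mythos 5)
Your proposal is correct and matches the paper exactly: Theorem~\ref{thm:lambda:flat} is obtained precisely by putting together Lemma~\ref{lemma:lambda:flat:nat} (naturality), Lemma~\ref{lemma:lambda:flat:etaS} (compatibility with $\eta^\Hoare$), Lemma~\ref{lemma:lambda:flat:muS} and Lemma~\ref{lemma:lambda:flat:muT} (compatibility with the two multiplications), all of which are stated on $\Topcat^\flat$. Your added remark that the closure of $\Topcat^\flat$ under $\HV$ and $\Val_\bullet$ is what legitimizes the terms $\lambda^\flat_{\HV X}$ and $\lambda^\flat_{\Val_\bullet X}$ is exactly the point the paper makes in the comments following those two lemmas.
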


\section{The associated weak composite monad}
\label{sec:associated-monad:flat}

We proceed as in Section~\ref{sec:associated-monad}.
We consider the following idempotent instead of (\ref{eq:idemp}):
\begin{equation}
  \label{eq:idemp:flat}
  \xymatrix{
    \HV {\Val_\bullet X} \ar[r]^{\eta_{\HV {\Val_\bullet X}}}
    & \Val_\bullet {\HV {\Val_\bullet X}}
    \ar[r]^{\lambda^\flat_{\Val_\bullet X}}
    & \HV {\Val_\bullet {\Val_\bullet X}}
    \ar[r]^{\HV {\mu_X}}
    & \HV {\Val_\bullet X}
  }
\end{equation}

\begin{lemma}
  \label{lemma:idemp:split:flat}
  For every object $X$ of $\Topcat^\flat$, a splitting of the
  idempotent (\ref{eq:idemp:flat}) is
  $\xymatrix{\HV {\Val_\bullet X} \ar[r]^{r_\AN} & \Pred_\AN^\bullet X
    \ar[r]^{s_\AN^\bullet} & \HV {\Val_\bullet X}}$.
\end{lemma}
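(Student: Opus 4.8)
The plan is to mirror the proof of Lemma~\ref{lemma:idemp:split}, replacing $\SV$ by $\HV$, $\Box$ by $\Diamond$, $r_\DN$ by $r_\AN$, $s_\DN^\bullet$ by $s_\AN^\bullet$, $\Phi$ by $\Psi$, and Trick~B by Trick~C. Write $e$ for the idempotent (\ref{eq:idemp:flat}). First I would check that $e$ takes its values in $\HV^{cvx} {\Val_\bullet X}$: the map $\lambda^\flat_{\Val_\bullet X} = s_\AN^\bullet \circ \Psi$ has its image among the convex closed subsets of $\Val_\bullet {\Val_\bullet X}$; the map $\mu_X \colon \Val_\bullet {\Val_\bullet X} \to \Val_\bullet X$ is linear (integration is linear in the valuation), so for any convex closed $D$ the image $\mu_X [D]$ is convex and $\HV {\mu_X} (D) = cl (\mu_X [D])$ is convex by Fact~\ref{fact:cl:conv}, applied to the convex subspace $\Val_\bullet X$ of the semitopological cone $\Val X$. (This is the one place where the $\HV$ argument genuinely differs from the $\SV$ one, where instead one uses that the upward closure of a convex set in the stochastic order is convex.) Since $s_\AN^\bullet$ also has convex image, both $e$ and $s_\AN^\bullet \circ r_\AN$ land in $\HV^{cvx} {\Val_\bullet X}$; and because $r_\AN \circ s_\AN^\bullet = \identity {\Pred_\AN^\bullet X}$ already gives the retraction half $\pi \circ \iota = \mathrm{id}$, splitting $e$ as $s_\AN^\bullet \circ r_\AN$ reduces, by Trick~C (using that $\Val_\bullet X$ lies in $\Topcat^\flat$), to proving $r_\AN = r_\AN \circ e$.

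Second, I would prove $r_\AN = r_\AN \circ e$ by Trick~A, tracing the preimage of the subbasic open set $[h > 1]$ of $\Pred_\AN^\bullet X$ ($h \in \Lform X$) backwards:
\begin{align*}
  [h > 1]
  & \invto{r_\AN} \Diamond {[h > 1]}
  & \text{by Fact~\ref{fact:r:cont:flat}} \\
  & \invto{\HV {\mu_X}} \Diamond {\mu_X^{-1} ([h > 1])} \\
  & \qquad = \Diamond {\left[\left(\nu \in \Val_\bullet X \mapsto \int_{x \in X} h (x)\,d\nu\right) > 1\right]}
  & \text{by (\ref{eq:mu-1})} \\
  & \invto{\lambda^\flat_{\Val_\bullet X}}
    \left[\left(\nu \in \Val_\bullet X \mapsto \int_{x \in X} h (x)\,d\nu\right)_* > 1\right]
  & \text{by Corollary~\ref{corl:lambda:flat:inv:func:spec}} \\
  & \invto{\eta_{\HV {\Val_\bullet X}}}
    \left\{C \in \HV {\Val_\bullet X} \mid \left(\nu \in \Val_\bullet X \mapsto \int_{x \in X} h (x)\,d\nu\right)_* (C) > 1\right\},
\end{align*}
where $\eta$ is the unit of $\Val_\bullet$, so that $\eta_Y^{-1} ([g > r]) = g^{-1} (]r, \infty])$. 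The last set equals $\{C \in \HV {\Val_\bullet X} \mid \sup_{\nu \in C} \int_{x \in X} h (x)\,d\nu > 1\} = \{C \mid r_\AN (C) (h) > 1\} = r_\AN^{-1} ([h > 1])$, which is precisely the preimage produced by $r_\AN$ alone; since $\Pred_\AN^\bullet X$ is $T_0$, Trick~A concludes.

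I do not expect a genuine obstacle here: all the ingredients --- Fact~\ref{fact:r:cont:flat}, Corollary~\ref{corl:lambda:flat:inv:func:spec}, equation~(\ref{eq:mu-1}), Fact~\ref{fact:cl:conv}, and the local convexity of $\Lform X$ guaranteeing that $r_\AN$ and $s_\AN^\bullet$ are well behaved --- are already available, so the proof is essentially a transcription of Lemma~\ref{lemma:idemp:split}. The only two points worth care are (i) that the $\HV$-closures inserted by $\HV {\mu_X}$ (and by $s_\AN^\bullet \circ r_\AN$) do not destroy convexity, which is why Fact~\ref{fact:cl:conv} replaces the direct $\upc$-argument available in the $\SV$ setting, and (ii) that the splitting may equivalently be realized by corestricting $e$ to its image $e [\HV {\Val_\bullet X}]$, which by the computation above is homeomorphic to $\HV^{cvx} {\Val_\bullet X} \cong \Pred_\AN^\bullet X$.
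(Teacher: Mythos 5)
Your proof is correct and follows essentially the same route as the paper's: show the idempotent lands in $\HV^{cvx}{\Val_\bullet X}$ (convexity of the image, with the closure handled via Fact~\ref{fact:cl:conv}), then use Trick~C together with $r_\AN \circ s_\AN^\bullet = \identity{}$ to reduce to $r_\AN = r_\AN \circ e$, which you verify by the same Trick~A computation with Fact~\ref{fact:r:cont:flat}, (\ref{eq:mu-1}) and Corollary~\ref{corl:lambda:flat:inv:func:spec}. No gaps.
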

We note that the middle object $\Pred_\AN^\bullet X$ occurs as a
retract of $\HV {\Val_\bullet X}$ in $\Topcat$, through $r_\AN$.
Hence it is in $\Topcat^\flat$, since $\Topcat^\flat$ is closed under
$\HV$, $\Val_\bullet$ and retracts (Definition~\ref{defn:Top:flat}).

\begin{proof}
  Let $f$ denote the idempotent (\ref{eq:idemp:flat}).  We recall that
  $\lambda^\flat_{\Val_\bullet X} = s_\AN^\bullet \circ \Psi$.  (And
  we remember that $\Val_\bullet X$ is in $\Topcat^\flat$, since
  $\Topcat^\flat$ is closed under $\Val_\bullet$.)  In particular, any
  element in its range is convex.  The element of any convex element
  $C$ of a space $\HV {\Val_\bullet}$ by $\HV f$, where $f$ is any
  linear continuous map from $\Val_\bullet Y \to \Val_\bullet Z$, is
  convex; and $\mu_X$ is linear.  Therefore $f$ takes its values in
  $\HV^{cvx} {\Val_\bullet X}$.

  Using Trick~C, and simplifying using the fact that $r_\AN \circ
  s_\AN^\bullet$ is the identity, it remains to show that $r_\AN$ is
  equal to $r_\AN \circ f$, namely to:
  \[
    \xymatrix{
      \HV {\Val_\bullet X} \ar[r]^{\eta_{\HV {\Val_\bullet X}}}
      & \Val_\bullet {\HV {\Val_\bullet X}}
      \ar[r]^{\lambda^\flat_{\Val_\bullet X}}
      & \HV {\Val_\bullet {\Val_\bullet X}}
      \ar[r]^{\HV {\mu_X}}
      & \HV {\Val_\bullet X}
      \ar[r]^{r_\AN}
      & \Pred_\AN^\bullet X
    }
  \]
  Using Trick~A,
  \begin{align*}
    [h > 1]
    & \invto{r_\AN}
      \Diamond {[h > 1]}
    & \text{by Fact~\ref{fact:r:cont:flat}} \\
    & \invto{\HV {\mu_X}}
      \Diamond {\mu_X^{-1} ([h > 1])} \\
    & \qquad= \Diamond {\left[\left(\nu \in \Val_\bullet X \mapsto \int_{x \in
      X} h (x) \,d\nu\right) > 1\right]}
    & \text{by (\ref{eq:mu-1})} \\
    & \invto{\lambda^\flat_{\Val_\bullet X}}
      \left[\left(\nu \in \Val_\bullet X \mapsto \int_{x \in
      X} h (x) \,d\nu\right)_* > 1\right]
    & \text{by Corollary~\ref{corl:lambda:flat:inv:func:spec}} \\
    & \invto{\eta_{\HV {\Val_\bullet X}}}
      \left\{C \in \HV {\Val_\bullet X} \mid \left(\nu \in \Val_\bullet X \mapsto \int_{x \in
      X} h (x) \,d\nu\right)_* (C) > 1\right\}
    \mskip-360mu \\
    & \qquad= \left\{C \in \HV {\Val_\bullet X} \mid r_\AN (C) (h) > 1 \right\}
      = r_\AN^{-1} ([h > 1]).
      \mskip-250mu
  \end{align*}
  \qed
\end{proof}

Generalizing (\ref{eq:idemp:flat}), there is an idempotent:
\begin{equation}
  \label{eq:idemp:alpha:flat}
  \xymatrix{
    \HV X \ar[r]^{\eta_{\HV X}}
    & \Val_\bullet {\HV X}
    \ar[r]^{\lambda^\flat_X}
    & \HV {\Val_\bullet X}
    \ar[r]^{\HV \alpha}
    & \HV X
  }
\end{equation}
for every $\Val_\bullet$-algebra $\alpha$.  Finding out what its
splitting is is harder in general, especially because we have no
complete characterization of $\Val_\bullet$-algebras in $\Topcat$.  As
before, we will only need to compute this splitting for one additional
$\Val_\bullet$-algebra.
\begin{lemma}
  \label{lemma:idemp:P:flat}
  For every topological space $X$, there is a $\Val_\bullet$-algebra
  $\alpha_\AN^\bullet \colon \Val_\bullet {\Pred_\AN^\bullet X} \to
  \Pred_\AN^\bullet X$ defined by
  $\alpha_\AN^\bullet (\xi) (h) \eqdef \int_{F \in \Pred_\AN^\bullet
    X} F (h) \,d\xi$ for all
  $\xi \in \Val_\bullet {\Pred_\DN^\bullet X}$ and $h \in \Lform X$.

  If $X$ is an object of $\Topcat^\flat$, then a splitting of the
  idempotent (\ref{eq:idemp:alpha:flat}) where
  $\alpha \eqdef \alpha_\AN^\bullet$ is
  \begin{align*}
    \xymatrix{\HV {\Pred_\AN^\bullet X} \ar[r]
    & \HV^{cvx} {\Pred_\AN^\bullet X} \ar[r]
    & \HV {\Pred_\AN^\bullet X}},
  \end{align*}
  where the arrow on the right is the inclusion map, and the arrow on
  the left is the corestriction of (\ref{eq:idemp:alpha:flat}), and
  maps every $C \in \HV^{cvx} {\Pred_\AN^\bullet X}$ to itself.
\end{lemma}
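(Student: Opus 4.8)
The plan is to mirror the proof of Lemma~\ref{lemma:idemp:P} almost verbatim, replacing the demonic data ($\SV$, $\Box$, $\min$, $\lambda^\sharp$, $s_\DN^\bullet$, $r_\DN$) by the angelic data ($\HV$, $\Diamond$, $\sup$, $\lambda^\flat$, $s_\AN^\bullet$, $r_\AN$), and invoking Lemma~\ref{lemma:sAP:inv:func}, Proposition~\ref{prop:lambda:flat} and Fact~\ref{fact:cl:conv} in place of their demonic analogues. First I would check that $\alpha_\AN^\bullet$ is a well-defined continuous $\Val_\bullet$-algebra. For each $h \in \Lform X$, the map $F \in \Pred_\AN^\bullet X \mapsto F(h)$ is lower semicontinuous (its preimage of $]t,\infty]$ is $[h>t]$), so $\int_{F \in \Pred_\AN^\bullet X} F(h)\,d\xi$ makes sense; $\alpha_\AN^\bullet(\xi)$ is Scott-continuous because each $F$ is and integration against $\xi$ is Scott-continuous, and sublinear because each $F$ is and integration against $\xi$ is linear; when $\xi$ is a (sub)probability valuation, $\alpha_\AN^\bullet(\xi)$ is (sub)normalized, by the same computation as in Proposition~\ref{prop:DN:monad}; and ${(\alpha_\AN^\bullet)}^{-1}([h>r]) = [(F \mapsto F(h)) > r]$ gives continuity. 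The algebra laws $\alpha_\AN^\bullet \circ \eta_{\Pred_\AN^\bullet X} = \mathrm{id}$ and $\alpha_\AN^\bullet \circ \mu_{\Pred_\AN^\bullet X} = \alpha_\AN^\bullet \circ \Val_\bullet{\alpha_\AN^\bullet}$ are then verified exactly as in Lemma~\ref{lemma:idemp:P}: the first because integrating against a Dirac valuation is evaluation, the second via Trick~A and (\ref{eq:mu-1}).

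For the splitting, I first recall that $\Pred_\AN^\bullet X$ lies in $\Topcat^\flat$ (it is a retract of $\HV{\Val_\bullet X}$, and $\Topcat^\flat$ is closed under $\HV$, $\Val_\bullet$ and retracts), so that $\lambda^\flat_{\Pred_\AN^\bullet X}$ is available. I would then show that the image of (\ref{eq:idemp:alpha:flat}), with $\alpha \eqdef \alpha_\AN^\bullet$, is exactly $\HV^{cvx}{\Pred_\AN^\bullet X}$. For the inclusion: the image of $C \in \HV{\Pred_\AN^\bullet X}$ under (\ref{eq:idemp:alpha:flat}) is $cl(\alpha_\AN^\bullet[\lambda^\flat_{\Pred_\AN^\bullet X}(\delta_C)])$; since $\lambda^\flat_{\Pred_\AN^\bullet X} = s_\AN^\bullet \circ \Psi$ has convex image and $\alpha_\AN^\bullet$ preserves linear combinations in its valuation argument, $\alpha_\AN^\bullet[\lambda^\flat_{\Pred_\AN^\bullet X}(\delta_C)]$ is convex, and its closure in the convex subspace $\Pred_\AN^\bullet X$ of the semitopological cone $\Pred_\AN X$ is convex by Fact~\ref{fact:cl:conv}. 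For surjectivity onto $\HV^{cvx}{\Pred_\AN^\bullet X}$, it suffices to show that (\ref{eq:idemp:alpha:flat}) restricts to the identity on $\HV^{cvx}{\Pred_\AN^\bullet X}$, which by Trick~A reduces to computing, for each subbasic open $\Diamond{\bigcap_{i=1}^n [h_i > 1]}$, the successive preimages along the idempotent: $\Diamond{\bigcap_{i=1}^n [(F \mapsto F(h_i)) > 1]}$ under $\HV{\alpha_\AN^\bullet}$, then $\bigcap_{\vec a \in \Delta_n}[(F \mapsto \sum_{i=1}^n a_i F(h_i))_* > 1]$ under $\lambda^\flat_{\Pred_\AN^\bullet X}$ by Proposition~\ref{prop:lambda:flat}, then $\bigcap_{\vec a \in \Delta_n}((F \mapsto \sum_{i=1}^n a_i F(h_i))_*)^{-1}(]1,\infty])$ under $\eta_{\HV{\Pred_\AN^\bullet X}}$, and then checking that the convex $C \in \HV{\Pred_\AN^\bullet X}$ lying in this last open set are precisely those in $\Diamond{\bigcap_{i=1}^n [h_i > 1]}$.

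That last check is the one genuinely delicate point, and I would handle it via the Minimax Theorem~3.3 of \cite{JGL-minimax17}, applied to $f \colon \Delta_n \times C \to \creal$, $f(\vec a, F) \eqdef \sum_{i=1}^n a_i F(h_i)$, which is lower semicontinuous and closely convex in its first argument (for the topology on $\Delta_n$ used in the proof of Lemma~\ref{lemma:sAP:inv:func}, with $\Delta_n$ compact Hausdorff) and closely concave in its second argument (it is linear, and convexity of $C$ keeps $aF_1 + (1-a)F_2$ in $C$), yielding $\min_{\vec a \in \Delta_n}\sup_{F \in C} f(\vec a, F) = \sup_{F \in C}\inf_{\vec a \in \Delta_n} f(\vec a, F)$ with the left $\min$ attained. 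Then $C$ lies in the last open set iff $\sup_{F \in C}\sum_{i=1}^n a_i F(h_i) > 1$ for every $\vec a \in \Delta_n$, iff the left-hand minimax is strictly above $1$, iff the right-hand one is, iff some $F \in C$ has $\inf_{\vec a} f(\vec a, F) > 1$, iff — since that infimum is attained at a vertex of $\Delta_n$ — some $F \in C$ has $F(h_i) > 1$ for all $i$, iff $C \in \Diamond{\bigcap_{i=1}^n [h_i > 1]}$; the reverse implications are immediate because $\sum_i a_i = 1$. The main obstacle, such as it is, is exactly this minimax step: unlike in Lemma~\ref{lemma:idemp:P}, where the compact set $Q$ sat on the $\min$ side, here the compact side is $\Delta_n$ and the non-compact side is the convex closed set $C$, so one must use the theorem in the form applied in part $(2)\subseteq(1)$ of the proof of Lemma~\ref{lemma:sAP:inv:func}, placing lower semicontinuity and close convexity on the $\Delta_n$ variable; everything else is a routine transcription of the demonic arguments.
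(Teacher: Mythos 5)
Your proposal is correct and follows essentially the same route as the paper's proof: the same well-definedness and algebra-law checks for $\alpha_\AN^\bullet$ via Trick~A and (\ref{eq:mu-1}), the same convexity-of-the-image argument via Fact~\ref{fact:cl:conv}, the same Trick~A computation of inverse images using Proposition~\ref{prop:lambda:flat}, and the same final minimax step with $f(\vec a, F) = \sum_{i=1}^n a_i F(h_i)$ on $\Delta_n \times C$, with compactness and lower semicontinuity placed on the $\Delta_n$ side and the infimum attained at a vertex. The point you flag as the delicate one—that the compact side is now $\Delta_n$ rather than the hyperspace element, as in part $(2)\subseteq(1)$ of Lemma~\ref{lemma:sAP:inv:func}—is exactly how the paper handles it.
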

In the discussion after Lemma~\ref{lemma:idemp:split:flat}, we have
noted that $\Pred_\AN^\bullet X$ is an object of $\Topcat^\flat$, for
every object $X$ of $\Topcat^\flat$.  Since $\Topcat^\flat$ is closed
under $\HV$ and under retracts (see Definition~\ref{defn:Top:flat}),
the middle object $\HV^{cvx} {\Pred_\AN^\bullet X}$ is in
$\Topcat^\flat$, too.

\begin{proof}
  The map $F \mapsto F (h)$ is lower semicontinuous, since the inverse
  image of $]t, \infty]$ is $[h > t]$, for all $h \in \Lform X$ and
  $t \in \Rp$.  Hence the integral
  $\int_{F \in \Pred_\AN^\bullet X} F (h) \,d\xi$ makes sense.  The
  map ${\alpha_\AN^\bullet} (\xi)$ is Scott-continuous because each
  $F \in \Pred_\AN^\bullet X$ is and because integration with respect
  to $\nu$ is Scott-continuous; it is sublinear because $F$ is and
  integration with respect to $\xi$ is linear.  Additionally,
  ${(\alpha_\AN^\bullet)}^{-1} ([h > r]) = [(F \mapsto F (h)) > r]$,
  so ${\alpha_\AN^\bullet}$ is continuous.

  We claim that the map $\alpha_\AN^\bullet$ is a
  $\Val_\bullet$-algebra.  We have
  $\alpha_\AN^\bullet (\eta_{\Pred_\AN^\bullet X} (F)) (h) = \int_{F'
    \in \Pred_\AN^\bullet X} F' (h)\,d\delta_F = F (h)$, so
  $\alpha_\AN^\bullet \circ \eta_{\Pred_\AN^\bullet X}$ is the
  identity map.  In order to show that
  $\alpha_\AN^\bullet \circ \mu_{\Pred_\AN^\bullet X} =
  \alpha_\AN^\bullet \circ \Val_\bullet {\alpha_\AN^\bullet}$, we use
  Trick~A:
  \begin{align*}
    [h > 1]
    & \invto{\alpha_\AN^\bullet}
      [(F \in \Pred_\AN^\bullet X \mapsto F (h)) > 1] \\
    & \invto{\mu_{\Pred_\AN^\bullet X}}
      [(\xi \in \Val_\bullet {\Pred_\AN^\bullet X} \mapsto
      \int_{F \in \Pred_\AN^\bullet X} F (h) \,d\xi) > 1]
    & \text{by (\ref{eq:mu-1})} \\
    [h > 1]
    & \invto{\alpha_\AN^\bullet}
      [(F \in \Pred_\AN^\bullet X \mapsto F (h)) > 1] \\
    & \invto{\Val_\bullet {\alpha_\AN^\bullet}}
      [(F \in \Pred_\AN^\bullet X \mapsto F (h)) \circ
      \alpha_\AN^\bullet > 1] \\
    & = [(\xi \in \Val_\bullet {\Pred_\AN^\bullet X} \mapsto
      \alpha_\AN^\bullet (\xi) (h)) > 1] \\
    & = [(\xi \in \Val_\bullet {\Pred_\AN^\bullet X} \mapsto
      \int_{F \in \Pred_\AN^\bullet X} F (h) \,d\xi) > 1].
  \end{align*}
  In order to justify the announced splitting of
  (\ref{eq:idemp:alpha:flat}), we need to show that the image of
  (\ref{eq:idemp:alpha:flat}) is exactly $\HV^{cvx} {\Pred_\AN^\bullet X}$.
  For every $C \in \HV X$, the image of $C$ by (\ref{eq:idemp:alpha:flat})
  is $cl (\alpha_\AN^\bullet [C'])$ where
  $C' \eqdef \lambda^\flat_{\Pred_\AN^\bullet X} (\delta_C)$ is
  convex, and $\alpha_\AN^\bullet$ is linear (in its $\xi$ argument),
  so that image is convex, using Fact~\ref{fact:cl:conv}.

  In order to see that every element $C$ of
  $\HV^{cvx} {\Pred_\AN^\bullet X}$ is reached as the image of some
  element by (\ref{eq:idemp:alpha:flat}), we simply show that $C$ is its
  own image by (\ref{eq:idemp:alpha:flat}), or equivalently, that
  (\ref{eq:idemp:alpha:flat}) restricts to the identity on
  $\HV^{cvx} {\Pred_\AN^\bullet X}$.  We use Trick~A:
  \begin{align*}
    \Diamond {\bigcap_{i=1}^n [h_i > 1]}
    & \invto{\HV {\alpha_\AN^\bullet}}
      \Diamond {\bigcap_{i=1}^n {(\alpha_\AN^\bullet)}^{-1} ([h_i > 1])}
    \\
    & = \Diamond {\bigcap_{i=1}^n [(F \mapsto F (h_i)) > 1]} \\
    & \invto{\lambda^\flat_{\Pred_\AN^\bullet X}}
      \bigcap_{\vec a \in \Delta_n} \left[\left(F \mapsto \sum_{i=1}^n
      a_i F (h_i)\right)_* > 1\right]
    & \text{by Proposition~\ref{prop:lambda:flat}} \\
    & \invto{\eta_{\HV {\Pred_\AN^\bullet X}}}
      \bigcap_{\vec a \in \Delta_n}  \left(\left(F \mapsto \sum_{i=1}^n
      a_i F (h_i)\right)_*\right)^{-1} (]1, \infty]),
      \mskip-50mu
  \end{align*}
  and we claim that the convex elements $C$ of $\HV {\Pred_\AN^\bullet
    X}$ that are in the latter open set are exactly those of $\Diamond
  {\bigcup_{i=1}^n [h_i > 1]}$, which will show the claim.

  For every $C \in \HV {\Pred_\AN^\bullet X}$,
  $C \in \bigcap_{\vec a \in \Delta_n} \left(\left(F \mapsto
      \sum_{i=1}^n a_i F (h_i)\right)_*\right)^{-1} (]1, \infty])$ if
  and only for every $\vec a \in \Delta_n$,
  $\sup_{F \in C} \sum_{i=1}^n a_i F (h_i) > 1$.  Let
  $f \colon \Delta_n \times C \to \creal$ map $(\vec a, F)$ to
  $\int_{x \in X} \sum_{i=1}^n a_i F (h_i)$.  This is a lower
  semicontinuous map in its first argument, $\Delta_n$ is compact (and
  Hausdorff), as in the proof of Lemma~\ref{lemma:sAP:inv:func}, and
  $f$ preserves pairwise linear combinations (with coefficients $a$
  and $1-a$, $a \in [0, 1]$) of its first arguments, and similarly
  with its second arguments (which exist because $C$ is convex), so by
  Remark~3.4 and the Minimax Theorem~3.3 of \cite{JGL-minimax17},
  $\sup_{F \in C} \inf_{\vec a \in \Delta_n} f (\vec a, F) =
  \min_{\vec a \in \Delta_n} \sup_{F \in C} f (\vec a, F)$.  But the
  elements $C \in \HV {\Pred_\AN^\bullet X}$ that are in
  $\bigcap_{\vec a \in \Delta_n} \left(\left(F \mapsto \sum_{i=1}^n
      a_i F (h_i)\right)_*\right)^{-1} (]1, \infty])$ are those such
  that $\min_{\vec a \in \Delta_n} \sup_{F \in C} f (\vec a, F) > 1$,
  hence such that
  $\sup_{F \in C} \inf_{\vec a \in \Delta_n} f (\vec a, F) > 1$, hence
  such that for some $F \in C$,
  $\inf_{\vec a \in \Delta_n} f (\vec a, F) > 1$.  Since
  $f (\vec a, F)$ is linear in $\vec a$ (and lower semicontinuous on a
  compact set), the latter inf is reached at one of the vertices of
  $\Delta_n$, so $\inf_{\vec a \in \Delta_n} f (\vec a, F) > 1$ is
  equivalent to $F (h_i) > 1$ for every $i \in \{1, \cdots, n\}$.
  Therefore, for every $C \in \HV {\Pred_\AN^\bullet X}$,
  $C \in \bigcap_{\vec a \in \Delta_n} \left(\left(F \mapsto
      \sum_{i=1}^n a_i F (h_i)\right)_*\right)^{-1} (]1, \infty])$ if
  and only if there is an $F \in C$ such that $F (h_i) > 1$ for every
  $i \in \{1, \cdots, n\}$, if and only if
  $C \in \Diamond {\bigcap_{i=1}^n [h_i > 1]}$.  \qed
\end{proof}

\begin{lemma}
  \label{lemma:Sbar:flat}
  For $S \eqdef \HV$, $T \eqdef \Val_\bullet$,
  $\lambda \eqdef \lambda^\flat$, the monad $\overline S$ on the
  category of $\Val_\bullet$-algebras over $\Topcat^\flat$ has the
  following properties:
  \begin{enumerate}
  \item For every free $\Val_\bullet$-algebra
    $\mu_X \colon \Val_\bullet {\Val_\bullet X} \to \Val_\bullet X$,
    $\overline S \mu_X \colon \Val_\bullet {\Pred_\AN^\bullet X} \to
    \Pred_\AN^\bullet X$ is $\alpha_\AN^\bullet$ (see
    Lemma~\ref{lemma:idemp:P:flat}).
    % defined by
    % $\overline S \mu_X (\xi) (h) \eqdef \int_{F \in \Pred_\AN^\bullet
    %   X} F (h) \,d\xi$ for every
    % $\xi \in \Val_\bullet {\Pred_\AN^\bullet X}$, and for every
    % $h \in \Lform X$.
  \item The unit $\eta^{\overline S}_{\mu_X}$ evaluated at the free
    $\Val_\bullet$-algebra
    $\mu_X \colon \Val_\bullet {\Val_\bullet X} \to \Val_\bullet X$ is
    $r_\AN \circ \eta^\Hoare_{\Val_\bullet X}$.
  \item The multiplication $\mu^{\overline S}_{\mu_X}$ evaluated at
    the free $\Val_\bullet$-algebra
    $\mu_X \colon \Val_\bullet {\Val_\bullet X} \to \Val_\bullet X$ is
    the function that maps every $C \in \HV^{cvx} {\Pred_\AN^\bullet
      X}$ to $(h \in \Lform X \mapsto \sup_{F \in C} F (h)) \in
    \Pred_\AN^\bullet X$; the inverse image of $[h > 1]$ by that
    function is $\Diamond {[h > 1]} \cap \HV^{cvx} {\Pred_\AN^\bullet X}$,
    for every $h \in \Lform X$.
  \item The counit $\epsilon^T$ of the adjunction $F^T \dashv U^T$
    between $\Topcat$ and the category of $T$-algebras
    ($\Val_\bullet$-algebras) is given at each algebra $\beta \colon
    \Val_\bullet X \to X$ as $\beta$ itself.
    % epsilon : FU -> id
  \end{enumerate}
\end{lemma}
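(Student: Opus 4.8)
The plan is to transcribe the proof of Lemma~\ref{lemma:Sbar} into the angelic setting, systematically replacing $\SV$ by $\HV$, $\Box$ by $\Diamond$, $r_\DN$ by $r_\AN$, $s_\DN^\bullet$ by $s_\AN^\bullet$, $\min$ by $\sup$, $h^*$ by $h_*$, and $\eta^\Smyth,\mu^\Smyth$ by $\eta^\Hoare,\mu^\Hoare$, while invoking the ``flat'' analogues Lemma~\ref{lemma:idemp:split:flat}, Lemma~\ref{lemma:idemp:P:flat}, Fact~\ref{fact:r:cont:flat}, Corollary~\ref{corl:lambda:flat:inv:func:spec} in place of their Part~I counterparts. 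Throughout we work over $\Topcat^\flat$, which is closed under $\HV$, $\Val_\bullet$ and retracts (Definition~\ref{defn:Top:flat}), so it splits idempotents and the objects $\Pred_\AN^\bullet X$ and $\HV^{cvx}{\Pred_\AN^\bullet X}$ appearing below are again objects of $\Topcat^\flat$ by the remarks after Lemma~\ref{lemma:idemp:split:flat} and Lemma~\ref{lemma:idemp:P:flat}; hence the B\"ohm--Garner weak lifting construction applies.

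For item~1, by Lemma~\ref{lemma:idemp:split:flat} we may take $\overline S{\Val_\bullet X}=\Pred_\AN^\bullet X$ with $\pi_{\mu_X}=r_\AN$ and $\iota_{\mu_X}=s_\AN^\bullet$, so $\overline S\mu_X = r_\AN\circ\HV{\mu_X}\circ\lambda^\flat_{\Val_\bullet X}\circ\Val_\bullet{s_\AN^\bullet}$. I would compute its inverse images on the subbasic sets $[h>1]$ by Trick~A: $r_\AN$ turns $[h>1]$ into $\Diamond{[h>1]}$ (Fact~\ref{fact:r:cont:flat}), $\HV{\mu_X}$ then into $\Diamond$ of $\mu_X^{-1}([h>1])=[(\nu\mapsto\int_{x\in X}h(x)\,d\nu)>1]$ by~(\ref{eq:mu-1}), $\lambda^\flat_{\Val_\bullet X}$ into $[(\nu\mapsto\int_{x\in X}h(x)\,d\nu)_*>1]$ by Corollary~\ref{corl:lambda:flat:inv:func:spec}, and $\Val_\bullet{s_\AN^\bullet}$ into $[(\nu\mapsto\int_{x\in X}h(x)\,d\nu)_*\circ s_\AN^\bullet>1]$. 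Since $\big((\nu\mapsto\int_{x\in X}h(x)\,d\nu)_*\circ s_\AN^\bullet\big)(F)=\sup_{\nu\in s_\AN^\bullet(F)}\int_{x\in X}h(x)\,d\nu=r_\AN(s_\AN^\bullet(F))(h)=F(h)$, this last set equals $[(F\in\Pred_\AN^\bullet X\mapsto F(h))>1]=(\alpha_\AN^\bullet)^{-1}([h>1])$, which (with $\Pred_\AN^\bullet X$ being $T_0$) gives $\overline S\mu_X=\alpha_\AN^\bullet$. Item~2 is then immediate from the general formula: $\eta^{\overline S}_{\mu_X}=\pi_{\mu_X}\circ\eta^\Hoare_{\Val_\bullet X}=r_\AN\circ\eta^\Hoare_{\Val_\bullet X}$.

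For item~3, I would unfold $\mu^{\overline S}_{\mu_X}=\pi_{\mu_X}\circ\mu^\Hoare_{\Val_\bullet X}\circ\HV{\iota_{\mu_X}}\circ\iota_{\overline S\mu_X}$, taking $\iota_{\overline S\mu_X}=\iota_{\alpha_\AN^\bullet}$ to be the subspace inclusion $\HV^{cvx}{\Pred_\AN^\bullet X}\hookrightarrow\HV{\Pred_\AN^\bullet X}$ of Lemma~\ref{lemma:idemp:P:flat} (legitimate since $\overline S\mu_X=\alpha_\AN^\bullet$ by item~1) and $\HV{\iota_{\mu_X}}=\HV{s_\AN^\bullet}$ by Lemma~\ref{lemma:idemp:split:flat}. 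Trick~A then gives $[h>1]\invto{r_\AN}\Diamond{[h>1]}\invto{\mu^\Hoare_{\Val_\bullet X}}\Diamond{\Diamond{[h>1]}}\invto{\HV{s_\AN^\bullet}}\Diamond{((s_\AN^\bullet)^{-1}(\Diamond{[h>1]}))}=\Diamond{[h>1]}$ by Corollary~\ref{corl:lambda:flat:inv:func:spec}, and then $\invto{\iota_{\alpha_\AN^\bullet}}\Diamond{[h>1]}\cap\HV^{cvx}{\Pred_\AN^\bullet X}$. This is exactly $r^{-1}([h>1])$ for the map $r\colon\HV^{cvx}{\Pred_\AN^\bullet X}\to\Pred_\AN^\bullet X$, $r(C)(h)\eqdef\sup_{F\in C}F(h)$, which is well-defined and a sublinear prevision (Scott-continuity in $h$ because $\sup$ commutes with directed $\sup$s); hence $\mu^{\overline S}_{\mu_X}=r$, as claimed. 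Item~4 is standard category theory for a monadic adjunction, e.g.\ \cite[Chapter~VI, Section~2, Theorem~1]{McLane:cat:math}.

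I do not expect a genuine obstacle: the argument is a mechanical translation of the proof of Lemma~\ref{lemma:Sbar}. The only place where the angelic case differs is that $\HV$ and $\mu^\Hoare$ involve closure operators and that $\Diamond U$ forms only a subbase, so one must know that images of convex closed sets under the relevant (linear) maps stay convex; but this is Fact~\ref{fact:cl:conv}, and that bookkeeping, together with the membership of all the auxiliary spaces in $\Topcat^\flat$, has already been carried out in Lemma~\ref{lemma:idemp:split:flat} and Lemma~\ref{lemma:idemp:P:flat}, so here it reduces to assembling those results.
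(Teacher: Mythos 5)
Your proof is correct and follows essentially the same route as the paper's: the same unfolding of $\overline S\mu_X$, $\eta^{\overline S}_{\mu_X}$ and $\mu^{\overline S}_{\mu_X}$ via the splitting $r_\AN, s_\AN^\bullet$ of Lemma~\ref{lemma:idemp:split:flat} and Lemma~\ref{lemma:idemp:P:flat}, the same Trick~A computations using Fact~\ref{fact:r:cont:flat}, (\ref{eq:mu-1}) and Corollary~\ref{corl:lambda:flat:inv:func:spec}, and the same identification of the resulting maps with $\alpha_\AN^\bullet$ and $C\mapsto(h\mapsto\sup_{F\in C}F(h))$.
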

\begin{proof}
  1. We know that $\overline S \mu_X$ must be a $\Val_\bullet$-algebra
  on $\overline S {\Val_\bullet X}$, and the latter can be taken as
  $\Pred_\AN^\bullet X$, by Lemma~\ref{lemma:idemp:split}.  We know
  that
  $\overline S \mu_X = \pi_{\mu_X} \circ S \mu_X \circ \lambda_X \circ
  T\iota_{\mu_X} = r_\AN \circ \HV \mu_X \circ {\lambda^\flat_X} \circ
  \Val_\bullet {s_\AN^\bullet}$, which we elucidate using Trick~A:
  \begin{align*}
    [h > 1]
    & \invto{r_\AN}
      \Diamond {[h > 1]}
    & \text{by Fact~\ref{fact:r:cont:flat}} \\
    & \invto{\HV {\mu_X}}
      \Diamond {\mu_X^{-1} ([h > 1])}
      = \Diamond {\left[\left(\nu \in \Val_\bullet X \mapsto \int_{x \in X} h (x)
      \,d\nu\right) > 1\right]}
      \mskip-80mu
    & \text{by (\ref{eq:mu-1})} \\
    & \invto{\lambda^\flat_{\Val_\bullet X}}
      \left[\left(\nu \in \Val_\bullet X \mapsto \int_{x \in X} h (x)
      \,d\nu\right)_* > 1\right]
    & \text{by Corollary~\ref{corl:lambda:flat:inv:func:spec}} \\
    & \invto{\Val_\bullet {s_\AN^\bullet}}
      \left[\left(\nu \in \Val_\bullet X \mapsto \int_{x \in X} h (x)
      \,d\nu\right)_* \circ s_\AN^\bullet > 1\right] \\
    & = [(F \in \Pred_\AN^\bullet X \mapsto F (h)) > 1].
  \end{align*}
  The last equality is justified as follows.  For every
  $F \in \Pred_\AN^\bullet X$,
  \begin{align*}
    \left(\left(\nu \in \Val_\bullet X \mapsto \int_{x \in X} h (x)
    \,d\nu\right)_* \circ s_\AN^\bullet\right) (F)
    & = \sup_{\nu \in s_\AN^\bullet (F)} \int_{x \in X} h (x)
      \,d\nu \\
    & = r_\AN (s_\AN^\bullet (F)) (h) = F (h).
  \end{align*}
  Finally, $[(F \in \Pred_\AN^\bullet X \mapsto F (h)) > 1]$ is the
  collection of continuous valuations $\xi \in \Val_\bullet
  {\Pred_\AN^\bullet X}$ such that $\int_{F \in \Pred_\AN^\bullet X} F
  (h) \,d\xi > 1$, namely ${(\alpha_\AN^\bullet)}^{-1} ([h > 1])$.

  2.
  $\eta^{\overline S}_{\mu_X} = \pi_{\mu_X} \circ \eta^S_{\Val_\bullet
    X} = r_\AN \circ \eta^\Hoare_{\Val_\bullet X}$.

  3. In general, $\mu^{\overline S}_\alpha$ is a morphism from
  $\overline S {\overline S \alpha}$ to $\overline S \alpha$ in the
  category of $\Val_\bullet$-algebras.  When
  $\alpha = \mu_X \colon \Val_\bullet {\Val_\bullet X} \to
  \Val_\bullet X$, it is therefore one from
  $\Pred_\AN^\bullet {\Pred_\AN^\bullet X}$ to $\Pred_\AN^\bullet X$.
  We have
  $\mu^{\overline S}_{\mu_X} \eqdef \pi_{\mu_X} \circ
  \mu^S_{\Val_\bullet X} \circ S \iota_{\mu_X} \circ \iota_{\overline
    S \mu_X}$.  We may take
  $\iota_{\overline S \mu_X} = \iota_{\alpha_\AN^\bullet}$ to be the
  right arrow, from $\HV^{cvx} {\Pred_\AN^\bullet X}$ to
  $\HV {\Pred_\AN^\bullet X}$ in Lemma~\ref{lemma:idemp:P:flat}---just
  subspace inclusion.  Then
  $S \iota_{\mu_X} = \HV {s_\AN^\bullet} \colon \HV {\Pred_\AN^\bullet
    X} \to \HV {\HV {\Val_\bullet X}}$, by
  Lemma~\ref{lemma:idemp:split}.  We compose that with
  $\mu^\Hoare_{\Val_\bullet X} \colon \HV {\HV {\Val_\bullet X}} \to
  \HV {\Val_\bullet X}$, then with
  $r_\AN \colon \HV {\Val_\bullet X} \to \Pred_\AN^\bullet X$.  We
  elucidate what this composition is by relying on Trick~A:
  \begin{align*}
    [h > 1]
    & \invto{r_\AN}
      \Diamond {[h > 1]}
    & \text{by Fact~\ref{fact:r:cont:flat}} \\
    & \invto{\mu^\Hoare_{\Val_\bullet X}}
      \Diamond {\Diamond {[h > 1]}} \\
    & \invto{\HV {s_\AN^\bullet}}
      \Diamond {{(s_\AN^\bullet)}^{-1} (\Diamond {[h > 1]})} \\
    & = \Diamond {[h > 1]}
    & \text{by Corollary~\ref{corl:lambda:flat:inv:func:spec}} \\
    & \invto{\iota_{\alpha_\AN^\bullet}}
      \Diamond {[h > 1]} \cap \HV^{cvx} {\Pred_\AN^\bullet X}.
  \end{align*}
  Now this looks just like $r_\AN$, and we imitate it by letting
  $r \colon \HV^{cvx} {\Pred_\AN^\bullet X} \to \Pred_\AN^\bullet X$
  be defined by $r (C) (h) \eqdef \sup_{F \in C} F (h)$.  This is
  well-defined and lower semicontinuous, and
  $r^{-1} ([h > 1]) = \Diamond {[h > 1]} \cap \HV^{cvx} {\Pred_\AN^\bullet
    X}$, so $\mu^{\overline S}_{\mu_X} = r$.

  4. Standard category theory, see \cite[Chapter~VI, Section~2,
  Theorem~1]{McLane:cat:math} for example.  \qed
\end{proof}

Finally, the weak composite monad of $S$ with $T$ is
$U^T \overline S F^T$, with unit
$U^T \eta^{\overline S} F^T \circ \eta^T$ and multiplication
$U^T \mu^{\overline S} F^T \circ U^T \overline S \epsilon^T \overline
S F^T$, where $F^T \dashv U^T$ is the adjunction between the base
category ($\Topcat$ in our case) and the category of $T$-algebras, and
$\epsilon^T$ is its counit.

\begin{theorem}
  \label{thm:weaklift:flat}
  The weak composite monad associated with
  $\lambda^\flat \colon \Val_\bullet {\HV} \to \HV {\Val_\bullet}$ on
  $\Topcat^\flat$ (see Definition~\ref{defn:Top:flat}) is
  $(\Pred_\AN^\bullet, \eta^\AN, \mu^\AN)$.
\end{theorem}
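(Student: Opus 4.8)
The plan is to mirror the proof of Theorem~\ref{thm:weaklift} essentially line by line, replacing $\SV$ by $\HV$, $\Box$ by $\Diamond$, $\supseteq$ by $\subseteq$, $\lambda^\sharp$ by $\lambda^\flat$, the retraction $(r_\DN, s_\DN^\bullet)$ by $(r_\AN, s_\AN^\bullet)$, $\Phi$ by $\Psi$, $\min$ by $\sup$, and $\alpha_\DN^\bullet$ by $\alpha_\AN^\bullet$, working throughout on a category $\Topcat^\flat$ as in Definition~\ref{defn:Top:flat}. Its closure under $\HV$, $\Val_\bullet$ and retracts is exactly what guarantees that the B\"ohm--Garner functor $\overline S$ and all the idempotent splittings below stay inside $\Topcat^\flat$. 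Set $T \eqdef \Val_\bullet$, $S \eqdef \HV$, $\lambda \eqdef \lambda^\flat$; the adjunction $F^T \dashv U^T$ is the Eilenberg--Moore adjunction for $\Val_\bullet$-algebras restricted to $\Topcat^\flat$, with $U^T$ and $F^T$ described exactly as in the proof of Theorem~\ref{thm:weaklift}.

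First I would read off the object part: by Lemma~\ref{lemma:idemp:split:flat}, $U^T\overline S F^T$ sends each space $X$ to the splitting $\Pred_\AN^\bullet X$ of the idempotent~(\ref{eq:idemp:flat}), with $\pi_{\mu_X} = r_\AN$ and $\iota_{\mu_X} = s_\AN^\bullet$. For a continuous map $f \colon X \to Y$, $U^T\overline S F^T(f) = \overline S(\Val_\bullet f) = r_\AN \circ \HV {\Val_\bullet f} \circ s_\AN^\bullet$, and I would check this equals $\Pred_\AN^\bullet f$ by Trick~A:
\begin{align*}
  [h > 1]
  & \invto{r_\AN} \Diamond {[h > 1]}
  & \text{by Fact~\ref{fact:r:cont:flat}} \\
  & \invto{\HV {\Val_\bullet f}} \Diamond {[h \circ f > 1]} \\
  & \invto{s_\AN^\bullet} [h \circ f > 1]
  & \text{by Corollary~\ref{corl:lambda:flat:inv:func:spec}},
\end{align*}
and $[h \circ f > 1] = (\Pred_\AN^\bullet f)^{-1}([h > 1])$. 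For the unit, Lemma~\ref{lemma:Sbar:flat}(2) gives $\eta^{\overline S}_{\mu_X} = r_\AN \circ \eta^\Hoare_{\Val_\bullet X}$, namely $\nu \mapsto (h \in \Lform X \mapsto \int_{x \in X} h(x)\,d\nu)$, since $r_\AN(\dc \nu)(h) = \sup_{\nu' \leq \nu}\int h\,d\nu' = \int h\,d\nu$ by monotonicity of integration; precomposing with $\eta^T_X = (x \mapsto \delta_x)$ sends $x$ to $(h \mapsto h(x))$, which is $\eta^\AN_X$.

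The multiplication is the substantive step, and again I would follow Theorem~\ref{thm:weaklift}. Unfolding $U^T\mu^{\overline S}F^T \circ U^T\overline S\epsilon^T\overline S F^T$ at $X$, and using Lemma~\ref{lemma:Sbar:flat}(4) (so $\epsilon^T$ at $\overline S F^T X$ is $\alpha_\AN^\bullet = \overline S\mu_X$ by Lemma~\ref{lemma:Sbar:flat}(1)), Lemma~\ref{lemma:idemp:split:flat} (so $\overline S T X = \Pred_\AN^\bullet X$ with $\iota = s_\AN^\bullet$), and Lemma~\ref{lemma:idemp:P:flat} (so $\pi_\beta$ is the corestriction of~(\ref{eq:idemp:alpha:flat}), acting as the identity on convex sets), the weak composite multiplication at $X$ becomes the composite
\[
  \xymatrix{
    \Pred_\AN^\bullet {\Pred_\AN^\bullet X}
    \ar[r]^{s_\AN^\bullet}
    & \HV {\Val_\bullet {\Pred_\AN^\bullet X}}
    \ar[r]^{\HV {\alpha_\AN^\bullet}}
    & \HV {\Pred_\AN^\bullet X}
    \ar[r]^{f}
    & \HV^{cvx} {\Pred_\AN^\bullet X}
    \ar[r]^{\mu^{\overline S}_{\mu_X}}
    & \Pred_\AN^\bullet X .
  }
\]
For $\mathcal F \in \Pred_\AN^\bullet {\Pred_\AN^\bullet X}$ one has $\HV {\alpha_\AN^\bullet}(s_\AN^\bullet(\mathcal F)) = cl(\{\alpha_\AN^\bullet(\xi) \mid \xi \in s_\AN^\bullet(\mathcal F)\})$, which is convex by Fact~\ref{fact:cl:conv} (since $s_\AN^\bullet(\mathcal F)$ is convex and $\alpha_\AN^\bullet$ is linear in $\xi$), hence fixed by $f$; then, by Lemma~\ref{lemma:Sbar:flat}(3), it is sent to the functional that maps every $h \in \Lform X$ to
\begin{align*}
  \sup_{G \in cl(\{\alpha_\AN^\bullet(\xi) \mid \xi \in s_\AN^\bullet(\mathcal F)\})} G(h)
  & = \sup_{\xi \in s_\AN^\bullet(\mathcal F)} \alpha_\AN^\bullet(\xi)(h) \\
  & = \sup_{\xi \in s_\AN^\bullet(\mathcal F)} \int_{F \in \Pred_\AN^\bullet X} F(h)\,d\xi \\
  & = r_\AN(s_\AN^\bullet(\mathcal F))\bigl(F \mapsto F(h)\bigr)
    = \mathcal F\bigl(F \mapsto F(h)\bigr),
\end{align*}
where the first equality drops the closure via Fact~\ref{fact:sup:cl} (the map $G \mapsto G(h)$ being lower semicontinuous) and the last uses that $r_\AN \circ s_\AN^\bullet$ is the identity. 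This is exactly $\mu^\AN_X(\mathcal F)$, so the weak composite monad is $(\Pred_\AN^\bullet, \eta^\AN, \mu^\AN)$.

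The only genuine obstacle is the bookkeeping in this last step: correctly threading $\epsilon^T$ and the two nested applications of $\overline S$ through the splittings of Lemmas~\ref{lemma:idemp:split:flat} and~\ref{lemma:idemp:P:flat}, checking at each stage that the relevant subset of $\HV {\Pred_\AN^\bullet X}$ is convex so that $f$ acts as the identity on it, and ensuring that every space stays in $\Topcat^\flat$ — which is precisely why Definition~\ref{defn:Top:flat} demands closure under $\HV$, $\Val_\bullet$ and retracts. Everything else is the $\Diamond$/$\sup$-translation of computations already carried out in Part~I.
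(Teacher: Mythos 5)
Your proposal is correct and follows essentially the same route as the paper's proof: the same splitting data from Lemma~\ref{lemma:idemp:split:flat}, the same use of Lemma~\ref{lemma:Sbar:flat} and Lemma~\ref{lemma:idemp:P:flat} to identify unit and multiplication, the same Trick~A computation for the action on morphisms, and the same final $\sup$-computation using Fact~\ref{fact:cl:conv}, Fact~\ref{fact:sup:cl} and $r_\AN \circ s_\AN^\bullet = \identity {\Pred_\AN^\bullet X}$. Nothing is missing.
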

\begin{proof}
  We take $T \eqdef \Val_\bullet$, $S \eqdef \HV$,
  $\lambda \eqdef \lambda^\flat$.  The functor $U^T$ takes every
  $\Val_\bullet$-algebra $\alpha \colon \Val_\bullet X \to X$ to $X$
  and every $\Val_\bullet$-algebra morphism
  $f \colon (\alpha \colon \Val_\bullet X \to X) \to (\beta \colon
  \Val_\bullet Y \to Y)$ to the underlying continuous map
  $f \colon X \to Y$.  The functor $F^T$ takes every space $X$ to
  $\mu_X \colon \Val_\bullet {\Val_\bullet X} \to \Val_\bullet X$, and
  every continuous map $f \colon X \to Y$ to $\Val_\bullet f$, seen as
  a morphism of $\Val_\bullet$-algebras.

  Then $U^T \overline S F^T$ maps every space $X$ to the splitting
  $\Pred_\AN^\bullet X$ of
  $\HV {\mu_X} \circ {\lambda^\flat_X} \circ \eta_{\HV X}$ we have
  obtained in Lemma~\ref{lemma:idemp:split:flat}.  We also obtain
  $\pi_{\mu_X} \eqdef r_\AN$, $\iota_{\mu_X} \eqdef s_\AN^\bullet$.

  Given any continuous map $f \colon X \to Y$, $U^T \overline S F^T
  (f)$ is equal to (the map underlying the $\Val_\bullet$-algebra)
  $\overline S (\Val_\bullet f)$, namely $\pi_{\mu_Y} \circ \HV
  {\Val_\bullet f} \circ \iota_{\mu_X} = r_\AN \circ \HV
  {\Val_\bullet f} \circ s_\AN^\bullet$.  We claim that this is equal
  to $\Pred_\AN^\bullet (f)$, namely that it maps every $F \in
  \Pred_\AN^\bullet X$ to $(h \in \Lform X \mapsto F (f \circ h))$.
  We use Trick~A:
  \begin{align*}
    [h > 1]
    & \invto{r_\AN}
      \Diamond {[h > 1]}
    & \text{by Fact~\ref{fact:r:cont:flat}} \\
    & \invto{\HV {\Val_\bullet f}}
      \Diamond {(\Val_\bullet f)^{-1} ([h > 1])}
      = \Diamond {[h \circ f > 1]} \\
    & \invto{s_\AN^\bullet}
      [h \circ f > 1]
    & \text{by Corollary~\ref{corl:lambda:flat:inv:func:spec}} \\
    & = {(\Pred_\AN^\bullet (f))}^{-1} ([h > 1]).
  \end{align*}
  From Lemma~\ref{lemma:Sbar:flat}, item~2, the unit of $\overline S$
  at the algebra $\mu_X$ is
  $\eta^{\overline S}_{\mu_X} = r_\AN \circ \eta^\Hoare_{\Val_\bullet
    X}$, which is the map
  $\nu \in \Val_\bullet X \mapsto r_\AN (\dc \nu)$, namely
  $\nu \in \Val_\bullet X \mapsto (h \in \Lform X \mapsto \int_{x \in
    X} h (x) \,d\nu)$ from $\Val_\bullet X$ to $\Pred_\AN^\bullet X$.

  The unit of the weak composite monad is
  $U^T \eta^{\overline S} F^T \circ \eta^T$.  At object $X$, this maps
  every $x \in X$ to
  $(h \in \Lform X \mapsto \int_{x' \in X} h (x') \,d\delta_x)$,
  namely to $(h \in \Lform X \mapsto h (x))$, and that is exactly the
  unit of the $\Pred_\AN^\bullet$ monad.

  The multiplication is
  $U^T \mu^{\overline S} F^T \circ U^T \overline S \epsilon^T
  \overline S F^T$.  We recall what that means.  At any object $X$,
  $\overline S F^T X$ is the $T$-algebra
  $\overline S\mu_X \colon T \overline S T X \to \overline S T X$.
  Then $\epsilon^T$ evaluated at this $T$-algebra is the bottom arrow
  in the following diagram:
  \[
    \xymatrix@C+10pt{
      TT\overline S T X
      \ar[d]_{\mu_{\overline S T X}}
      \ar[r]^{T \epsilon^T_{\overline S F^T X}}
      & T \overline S T X
      \ar[d]^{\overline S \mu_X}
      \\
      T \overline S T X
      \ar[r]_{\epsilon^T_{\overline S F^T X}}
      & \overline S T X
    }
  \]
  from the $T$-algebra $\alpha \eqdef \mu_{\overline S T X}$ that is
  the leftmost vertical arrow to the $T$-algebra
  $\beta \eqdef \overline S \mu_X$ on the right.  We apply $\overline
  S$ to the morphism $\epsilon^T_{\overline S F^T X}$ (i.e., we now
  apply $\overline S$ to a morphism of $T$-algebras, not to a
  $T$-algebra, as we did before), and we obtain the composition:
  \begin{equation}
    \label{eq:mu:part:flat}
    \xymatrix{
      \overline S T \overline S T X
      \ar[r]^{\iota_\alpha}
      &
      S T \overline S T X
      \ar[r]^{S \epsilon^T_{\overline S F^T X}}
      &
      S \overline S T X
      \ar[r]^{\pi_\beta}
      &
      \overline S \overline S T X.
    }
  \end{equation}
  Applying $U^T$ to the latter, we obtain the same composition, this
  time seen as a morphism in the base category, instead of as a
  morphism in the category of $T$-algebras.  We finally compose it
  with $U^T \mu^{\overline S}_{F^T X}$, which is
  $\mu^{\overline S}_{\mu_X} \colon \overline S \overline S T X \to
  \overline S T X$.

  By Lemma~\ref{lemma:Sbar:flat}, item~4,
  $\epsilon^T_{\overline S F^T X} \colon \alpha \to \beta$ is $\beta$
  itself, where $\beta = \overline S \mu_X = \alpha_\AN^\bullet$
  (Lemma~\ref{lemma:Sbar:flat}, item~1) and
  $\alpha = \mu_{\overline S T X} = \mu_{\Pred_\AN^\bullet X}$: we
  have $\overline S T X = \Pred_\AN^\bullet X$, using
  Lemma~\ref{lemma:idemp:split:flat}, and $\iota_\alpha = s_\AN^\bullet$.
  Using Lemma~\ref{lemma:idemp:P:flat},
  $\pi_\beta \colon \HV {\Pred_\AN^\bullet X} \to \HV^{cvx}
  {\Pred_\AN^\bullet X}$ is the corestriction of
  (\ref{eq:idemp:alpha:flat}) (call it $f$), which maps every convex
  element of $\HV {\Pred_\AN^\bullet X}$ to itself.

  Hence the multiplication of the weak composite monad, evaluated at
  $X$, is the composition of (\ref{eq:mu:part:flat}) with
  $\mu^{\overline S}_{\mu_X}$ (given in Lemma~\ref{lemma:Sbar:flat},
  item~3), namely:
  \begin{equation}
    \label{eq:Sbarmu:flat}
    \xymatrix{
      \Pred_\AN^\bullet {\Pred_\AN^\bullet X}
      \ar[r]^{s_\AN^\bullet}
      & \HV {\Val_\bullet {\Pred_\AN^\bullet X}}
      \ar[r]^{\HV {\alpha_\AN^\bullet}}
      & \HV {\Pred_\AN^\bullet X}
      \ar[r]^f
      & \HV^{cvx} {\Pred_\AN^\bullet X}
      \ar[r]^{\mu^{\overline S}_{\mu_X}}
      & \Pred_\AN^\bullet X
      }
  \end{equation}
  For every $\mathcal F \in \Pred_\AN^\bullet {\Pred_\AN^\bullet X}$,
  $\HV {\alpha_\AN^\bullet} (s_\AN^\bullet (\mathcal F)) = cl
  (\{\alpha_\AN^\bullet (\xi) \mid \xi \in s_\AN^\bullet (\mathcal
  F)\}) = cl (\{(h \in \Lform X \mapsto \int_{F \in \Pred_\AN^\bullet
    X} F (h) \,d\xi) \mid \xi \in s_\AN^\bullet (\mathcal F)\})$.  It
  is easy to see that this is a convex set, because
  $s_\AN^\bullet (\mathcal F)$ is convex, integration is linear in the
  continuous valuation, and closure preserves convexity
  (Fact~\ref{fact:cl:conv}).  Therefore $f$ maps it to itself, and,
  using Lemma~\ref{lemma:Sbar:flat}, item~3,
  $\mu^{\overline S}_{\mu_X}$ maps it to:
  \begin{align*}
    (h \in \Lform X \mapsto \sup_{F \in \HV {\alpha_\AN^\bullet}
    (s_\AN^\bullet (\mathcal F))} F (h))
    & = (h \in \Lform X \mapsto \sup_{F \in cl (\alpha_\AN^\bullet
      [s_\AN^\bullet (\mathcal F)])}
      F (h)) \\
    & = (h \in \Lform X \mapsto \sup_{F \in \alpha_\AN^\bullet
      [s_\AN^\bullet (\mathcal F)]}
      F (h))
      \quad \text{by Fact~\ref{fact:sup:cl}} \\
    & = (h \in \Lform X \mapsto \sup_{\xi \in s_\AN^\bullet (\mathcal
      F)} \alpha_\AN^\bullet (\xi) (h)) \\
    & = (h \in \Lform X \mapsto \sup_{\xi \in s_\AN^\bullet (\mathcal
      F)} \int_{F \in \Pred_\AN^\bullet X} F (h) \,d\xi) \\
    & = (h \in \Lform X \mapsto  r_\AN (s_\AN^\bullet (\mathcal F)) (F \in \Pred_\AN^\bullet X
      \mapsto F (h))) \\
    & = (h \in \Lform X \mapsto  \mathcal F (F \in \Pred_\AN^\bullet X
      \mapsto F (h)),
  \end{align*}
  and this is exactly what the multiplication of the
  $\Pred_\AN^\bullet$ monad, evaluated at $X$ and applied to
  $\mathcal F$, produces.  \qed
\end{proof}

\part{A weak distributive law between $\Val_\bullet$ and $\QLV$,
  resp.\ $\Plotkinn$}

Combining $\SV$ and $\HV$, there is a monad of lenses and a monad of
quasi-lenses, which we define below.  We start with lenses, which are
probably more well-known.  Quasi-lenses are easier to work with.

A \emph{lens} is a non-empty set of the form $Q \cap C$ where $Q$ is
compact saturated and $C$ is closed in $X$.  The \emph{Vietoris
  topology} has subbasic open subsets of the form $\Box U$ (the set of
lenses included in $U$) and $\Diamond U$ (the set of lenses that
intersect $U$), for each open subset $U$ of $X$.  We write
$\Plotkin X$ for the set of lenses on $X$, and we let $\Plotkinn X$
denote $\Plotkin X$ with the Vietoris topology.  The specialization
ordering of $\Plotkinn X$ is the \emph{topological Egli-Milner
  ordering}: $L \TEMleq L'$ if and only if $\upc L \supseteq \upc L'$
and $cl (L) \subseteq cl (L')$ \cite[Discussion before
Fact~4.1]{GL:duality}.  This is an ordering, not just a preordering,
hence $\Plotkinn X$ is $T_0$.

A \emph{quasi-lens} on a topological space $X$ is a pair $(Q, C)$ of a
compact saturated subset $Q$ and a closed subset $C$ of $X$ such that:
\begin{enumerate}
\item $Q$ intersects $C$;
\item $Q \subseteq \upc (Q \cap C)$;
\item for every open neighborhood $U$ of $Q$, $C \subseteq cl (U
  \cap C)$.
\end{enumerate}
The notion originates from \cite[Theorem~9.6]{heckmann:2ndorder}, and
was rediscovered in \cite[Section~5]{GL:duality}.  We write $\QL X$
for the space of quasi-lenses on $X$.  The \emph{Vietoris topology} on
$\QL X$ is generated by the subbasic open sets
$\Box^\quasi U \eqdef \{(Q, C) \in \QL X \mid Q \subseteq U\}$ and
$\Diamond^\quasi U \eqdef \{(Q, C) \in \QL X \mid C \cap U \neq
\emptyset\}$.  We write $\QLV X$ for $\QL X$ with the Vietoris
topology.  Its specialization preordering is
$\supseteq \times \subseteq$, which is antisymmetric, so $\QLV X$ is
$T_0$ \cite[Lemma~8.2, item~2]{JGL:projlim:prev}; also, the inclusion
of $\QLV X$ into $\SV X \times \HV X$ is a topological embedding
\cite[Lemma~8.2, item~1]{JGL:projlim:prev}, and $\QLV$ extends to an
endofunctor on $\Topcat$, whose action on morphisms $f \colon X \to Y$
is defined by
$\QLV f (Q, C) \eqdef (\SV f (Q), \HV f (C)) = (\upc f [Q], cl (f
[C]))$, and is such that
${(\QLV f)}^{-1} (\Box^\quasi V) = \Box^\quasi {f^{-1} (V)}$ and
${(\QLV f)}^{-1} (\Diamond^\quasi V) = \Diamond^\quasi {f^{-1} (V)}$
for every $V \in \Open Y$ \cite[Lemma~8.3]{JGL:projlim:prev}.

The relationship between spaces of lenses and of quasi-lenses is as
follows.  For every topological space $X$, there is a topological
embedding $\iota_X \colon \Plotkinn X \to \QLV X$, defined by
$\iota_X (L) \eqdef (\upc L, cl (L))$.  There is a map
$\varrho_X \colon \QLV X \to \Plotkinn X$ defined by
$\varrho_X (Q, C) \eqdef Q \cap C$, and
$\varrho_X \circ \iota_X = \identity X$.  When $X$ is weakly Hausdorff
\cite[Lemma~6.1, Proposition~6.2, Theorem~6.3]{JGL:wHaus} or
quasi-Polish \cite[Theorem 9.6]{JGL:projlim:prev}, those define
inverse continuous maps, hence a homeomorphism between $\Plotkinn X$
and $\QLV X$.  A space is \emph{weakly Hausdorff} if and only if given
any two points $x$ and $y$, every open neighborhood $W$ of
$\upc x \cap \upc y$ contains an intersection $U \cap V$ of an open
neighborhood $U$ of $x$ and an open neighborhood $V$ of $y$
\cite[Lemma~6.6]{KL:measureext}.  Every Hausdorff space is weakly
Hausdorff, every stably locally compact space is weakly Hausdorff
\cite[Lemma~8.1]{KL:measureext}.  The \emph{quasi-Polish spaces} are
the topological spaces underlying a separable Smyth-complete
quasi-metric space \cite{deBrecht:qPolish}.  We will not define them
any further, except than to notice that every $\omega$-continuous dcpo
is quasi-Polish in its Scott topology.  (See Section~2.2 of
\cite{AJ:domains} for continuous and $\omega$-continuous dcpos.)

We will write $\pi_1 \colon \QLV X \to \SV X$ and
$\pi_2 \colon \QLV X \to \HV X$ for the respective projections.  For
every $U \in \Open X$, $\pi_1^{-1} (\Box U) = \Box^\quasi U$ and
$\pi_2^{-1} (\Diamond U) = \Diamond^\quasi U$, so they are continuous.
Those define natural transformations $\pi_1 \colon \QLV \to \SV$ and
$\pi_2 \colon \QLV \to \HV$, as one can see easily using Trick~A.

\begin{proposition}
  \label{prop:QLV:monad}
  There is a monad $(\QLV, \eta^\quasi, \mu^\quasi)$ on $\Topcat$,
  \begin{itemize}
  \item whose unit $\eta^\quasi_X \colon X \to \QLV X$ maps every
    $x \in X$ to
    $(\eta^\Smyth_X (x), \eta^\Hoare_X (x)) = (\upc x, \dc x)$, and is
    such that
    ${(\eta^\quasi_X)}^{-1} (\Box^\quasi U) = {(\eta^\quasi_X)}^{-1}
    (\Diamond^\quasi U) = U$ for every $U \in \Open X$, and is
    characterized uniquely by the commutativity of the diagram:
    \begin{equation}
      \label{eq:etaq}
      \xymatrix{
        X
        \ar[d]_{\eta^\Smyth_X}
        &
        X
        \ar@{=}[r]
        \ar[d]|{\eta^\quasi_X}
        \ar@{=}[l]
        &
        X
        \ar[d]^{\eta^\Hoare_X}
        \\
        \SV X
        &
        \QLV X
        \ar[l]^{\pi_1}
        \ar[r]_{\pi_2}
        &
        \HV X
      }
    \end{equation}
  \item and whose multiplication
    $\mu^\quasi_X \colon \QLV {\QLV X} \to \QLV X$ maps every
    quasi-lens $(\mathcal Q, \mathcal C)$ on $\QLV X$ to
    $(\bigcup \pi_1 [\mathcal Q], cl (\bigcup \pi_2 [\mathcal C]))$,
    is such that
    ${(\mu^\quasi_X)}^{-1} (\Box^\quasi U) = \Box^\quasi {\Box^\quasi
      U}$ and
    ${(\mu^\quasi_X)}^{-1} (\Diamond^\quasi U) = \Diamond^\quasi
    {\Diamond^\quasi U}$ for every $U \in \Open X$, and is
    characterized uniquely by the commutativity of the diagram:
    \begin{equation}
      \label{eq:muq}
      \xymatrix{
        \SV {\SV X}
        \ar[d]_{\mu^\Smyth_X}
        &
        \SV {\QLV X}
        \ar[l]_{\SV {\pi_1}}
        &
        \QLV {\QLV X}
        \ar[l]_{\pi_1}
        \ar[d]|{\mu^\quasi_X}
        \ar[r]^{\pi_2}
        &
        \HV {\QLV X}
        \ar[r]^{\HV {\pi_2}}
        &
        \HV {\HV X}
        \ar[d]^{\mu^\Hoare_X}
        \\
        \SV X
        &
        &
        \QLV X
        \ar[ll]^{\pi_1}
        \ar[rr]_{\pi_2}
        &
        &
        \HV X
      }
    \end{equation}
  \end{itemize}
\end{proposition}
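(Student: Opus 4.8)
The plan is to exploit the topological embedding $(\pi_1,\pi_2)\colon\QLV Z\hookrightarrow\SV Z\times\HV Z$ systematically: since $\pi_1$ and $\pi_2$ are jointly monic, any equality of continuous maps into some $\QLV Z$ may be verified after composing with $\pi_1$ and with $\pi_2$, and the naturality squares $\pi_1\circ\QLV f=\SV f\circ\pi_1$, $\pi_2\circ\QLV f=\HV f\circ\pi_2$, together with the known monad structures on $\SV$ and $\HV$, then do all the bookkeeping. Concretely I would proceed in four steps: (i) show $\eta^\quasi_X$ is well defined and makes (\ref{eq:etaq}) commute; (ii) show $\mu^\quasi_X$ is well defined and makes (\ref{eq:muq}) commute; (iii) deduce continuity, the stated inverse-image formulas, uniqueness, and naturality of $\eta^\quasi$ and $\mu^\quasi$; (iv) derive the three monad laws from those for $\SV$ and $\HV$.

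For step (i), one checks directly that $(\upc x,\dc x)$ satisfies the three defining conditions of a quasi-lens: $\upc x$ meets $\dc x$ at $x$; $\upc x\subseteq\upc(\upc x\cap\dc x)$ because $x\in\upc x\cap\dc x$; and for every open $U\supseteq\upc x$ one has $x\in U\cap\dc x$, so $\dc x=cl (\{x\})\subseteq cl (U\cap\dc x)$. Commutativity of (\ref{eq:etaq}) is then the identity $\pi_1(\upc x,\dc x)=\upc x=\eta^\Smyth_X(x)$ and $\pi_2(\upc x,\dc x)=\dc x=\eta^\Hoare_X(x)$, and uniqueness of $\eta^\quasi_X$ follows because $(\pi_1,\pi_2)$ is injective. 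Continuity and the formula ${(\eta^\quasi_X)}^{-1}(\Box^\quasi U)={(\eta^\quasi_X)}^{-1}(\Diamond^\quasi U)=U$ come from $\Box^\quasi U=\pi_1^{-1}(\Box U)$, $\Diamond^\quasi U=\pi_2^{-1}(\Diamond U)$, and ${(\eta^\Smyth_X)}^{-1}(\Box U)={(\eta^\Hoare_X)}^{-1}(\Diamond U)=U$.

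For step (ii), I would set $\mu^\quasi_X(\mathcal Q,\mathcal C)\eqdef(\mu^\Smyth_X(\SV\pi_1(\mathcal Q)),\mu^\Hoare_X(\HV\pi_2(\mathcal C)))=(\bigcup\pi_1[\mathcal Q],\,cl (\bigcup\pi_2[\mathcal C]))$; the first component lies in $\SV X$ and the second in $\HV X$ by construction, so the only thing to prove is that the pair is a quasi-lens. Conditions (1) and (2) are routine: a witness $(Q_0,C_0)\in\mathcal Q\cap\mathcal C$ (which exists by condition (1) for $(\mathcal Q,\mathcal C)$) is itself a quasi-lens, and combining its own conditions (1) and (2) with the inclusions $Q_0\supseteq Q$, $C_0\subseteq C$ provided by condition (2) for $(\mathcal Q,\mathcal C)$ shows that below any point of $\bigcup\pi_1[\mathcal Q]$ there is a point of $(\bigcup\pi_1[\mathcal Q])\cap(\bigcup\pi_2[\mathcal C])$. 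The hard part is condition (3). Given an open $U\supseteq\bigcup\pi_1[\mathcal Q]$, the subbasic open set $\Box^\quasi U$ is an open neighborhood of $\mathcal Q$ in $\QLV X$, so condition (3) for $(\mathcal Q,\mathcal C)$ gives $\mathcal C\subseteq cl (\Box^\quasi U\cap\mathcal C)$. Then for any $y\in C_1$ with $(Q_1,C_1)\in\mathcal C$ and any open $V\ni y$, the neighborhood $\Diamond^\quasi V$ of $(Q_1,C_1)$ meets $\Box^\quasi U\cap\mathcal C$, producing $(Q_2,C_2)\in\mathcal C$ with $Q_2\subseteq U$ and $C_2\cap V\neq\emptyset$; applying condition (3) for the quasi-lens $(Q_2,C_2)$ to its open neighborhood $U$ yields $C_2\subseteq cl (U\cap C_2)$, so $V$ meets $U\cap C_2\subseteq U\cap cl (\bigcup\pi_2[\mathcal C])$, whence $y\in cl (U\cap cl (\bigcup\pi_2[\mathcal C]))$, and condition (3) follows after one more closure. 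Commutativity of (\ref{eq:muq}) holds by the definition of $\mu^\quasi_X$, uniqueness again follows from injectivity of $(\pi_1,\pi_2)$, and $\mu^\quasi_X$ is continuous as the corestriction to the subspace $\QLV X$ of a continuous map into $\SV X\times\HV X$; the formulas ${(\mu^\quasi_X)}^{-1}(\Box^\quasi U)=\Box^\quasi{\Box^\quasi U}$ and ${(\mu^\quasi_X)}^{-1}(\Diamond^\quasi U)=\Diamond^\quasi{\Diamond^\quasi U}$ are obtained by pulling back along $\pi_1$ and $\pi_2$ and using the corresponding formulas for $\mu^\Smyth$ and $\mu^\Hoare$, plus the fact that an open set meets a closure exactly when it meets the set.

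Finally, for steps (iii) and (iv): naturality of $\eta^\quasi$ and of $\mu^\quasi$, and the two unit laws and associativity, are each an equality of continuous maps into a space of the form $\QLV Z$; composing with $\pi_1$ and invoking naturality of $\pi_1$, the diagrams (\ref{eq:etaq}) and (\ref{eq:muq}), and naturality of $\eta^\Smyth$ and $\mu^\Smyth$, each equation collapses to the corresponding identity in the monad $\SV$; composing with $\pi_2$ does the same with $\HV$; joint monicity of $(\pi_1,\pi_2)$ then yields the equation itself. The one genuinely delicate point in the whole argument is the verification of quasi-lens condition (3) for $\mu^\quasi_X(\mathcal Q,\mathcal C)$; everything else is either a direct check or a mechanical transfer along the embedding $(\pi_1,\pi_2)$.
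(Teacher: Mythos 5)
Your proposal is correct and follows essentially the same route as the paper: the same pointwise verification that $(\upc x,\dc x)$ and $(\bigcup\pi_1[\mathcal Q], cl(\bigcup\pi_2[\mathcal C]))$ are quasi-lenses (including the identical key argument for condition (3), passing through $\Box^\quasi U$ and $\Diamond^\quasi V$), and the same transfer of naturality and the monad laws to $\SV$ and $\HV$ via the jointly injective pair $\langle\pi_1,\pi_2\rangle$. The only differences are cosmetic: you obtain continuity and the inverse-image formulas by pulling back along the commuting diagrams and the embedding rather than by the paper's direct computation, and your sketch of conditions (1)--(2) for the multiplication (with the slightly loose ``$Q_0\supseteq Q$, $C_0\subseteq C$'') is exactly the paper's argument once the unnamed $(Q,C)$ is read as the member of $\mathcal Q$ containing the given point.
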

\begin{proof}
  Unit.  For every $x \in X$, we claim that
  $(Q, C) \eqdef (\upc x, \dc x)$ is a quasi-lens.  First,
  $\upc x \cap \dc x$ is non-empty, since it contains $x$.  Second,
  $Q \subseteq \upc (Q \cap C)$, since every element of $Q$ is larger
  than or equal to $x$, which is in $Q \cap C$.  Third, since
  $x \in Q \cap C$, $C = \dc x = cl (\{x\}) \subseteq cl (Q \cap C)$,
  from which is follows that for every open neighborhood $U$ of $Q$,
  $C \subseteq (U \cap C)$.  The commutativity of (\ref{eq:etaq}) is
  an equivalent way of stating the definition of $\eta^\quasi_X$
  (modulo the fact that all arrows are indeed continuous maps, which
  we show next).

  For every $U \in \Open X$, for every $x \in X$,
  $x \in {(\eta^\quasi_X)}^{-1} (\Box^\quasi U)$ if and only if
  $\eta^\Smyth_X (x) \in \Box U$, if and only if $x \in U$, and the
  equality ${(\eta^\quasi_X)}^{-1} (\Diamond^\quasi U) = U$ is proved
  similarly.  In particular, $\eta^\quasi_X$ is continuous.

  Finally, the commutativity of (\ref{eq:etaq}) characterizes
  $\eta^\quasi_X$, since the pairing
  $\langle \pi_1, \pi_2 \rangle \colon \QLV X \to \SV X \times \HV X$
  is the inclusion map, hence is injective.

  Multiplication.  For every
  $(\mathcal Q, \mathcal C) \in \QLV {\QLV X}$, let
  $(Q, C) \eqdef (\bigcup \pi_1 [\mathcal Q], \allowbreak cl (\bigcup
  \pi_2 [\mathcal C]))$.  We need to check that this is a quasi-lens.
  
  We have
  $\mu^\Smyth_X (\SV {\pi_1} (\mathcal Q)) = \bigcup \upc \pi_1
  [\mathcal Q] = \bigcup \pi_1 [\mathcal Q] = Q$ (because
  $\pi_1 [\mathcal Q]$ is upwards-closed, as one checks easily).  In
  particular, $Q$ is in $\SV X$.  Also,
  $\mu^\Hoare_X (\HV {\pi_2} (\mathcal C)) = cl (\bigcup cl (\pi_2
  [\mathcal C]))$.  The outer closure is taken in $X$, while the inner
  closure is taken in $\HV X$.  We claim that this is equal to $C$.
  Clearly, $C \subseteq cl (\bigcup cl (\pi_2 [\mathcal C]))$.
  Conversely, every open set $U$ that intersects
  $cl (\bigcup cl (\pi_2 [\mathcal C]))$ intersects
  $\bigcup cl (\pi_2 [\mathcal C])$, so there is an element of
  $cl (\pi_2 [\mathcal C])$ that intersects $U$; in other words,
  $cl (\pi_2 [\mathcal C])$ intersects $\Diamond U$, so
  $\pi_2 [\mathcal C]$ intersects $\Diamond U$, equivalently,
  $\bigcup \pi_2 [\mathcal C] = C$ intersects $U$.  Since
  $C = \mu^\Hoare_X (\HV {\pi_2} (\mathcal C))$, in particular
  $C \in \HV X$.  Also, we have just shown that the diagram
  (\ref{eq:muq}) commutes (modulo the fact that all arrows are
  continuous maps, which we will check later).

  Since $(\mathcal Q, \mathcal C)$ is a quasi-lens, there is an
  element $(Q', C')$ in $\mathcal Q \cap \mathcal C$.  Then there is
  an element $x$ in $Q' \cap C'$, because $(Q', C')$ is a quasi-lens.
  Since $(Q', C') \in \mathcal Q$, $Q'$ is in $\pi_1 [\mathcal Q]$, so
  $Q' \subseteq \bigcup \pi_1 [\mathcal Q] = Q$; then
  $x \in Q' \subseteq Q$.  Since $(Q', C') \in \mathcal C$, $C'$ is in
  $\pi_2 [\mathcal C]$, so
  $C' \subseteq cl (\bigcup \pi_2 [\mathcal C]) =C$; then
  $x \in C' \subseteq C$.  It follows that $Q \cap C \neq \emptyset$.
  
  Let us show that $Q \subseteq \upc (Q \cap C)$.  For every
  $x \in Q = \bigcup \pi_1 [\mathcal Q]$, there is quasi-lens
  $(Q', C') \in \mathcal Q$ such that $x \in Q'$.  Since
  $(\mathcal Q, \mathcal C)$ is itself a quasi-lens,
  $\mathcal Q \subseteq \upc (\mathcal Q \cap \mathcal C)$, so there
  is a quasi-lens $(Q'', C'') \in \mathcal Q \cap \mathcal C$ such
  that $(Q'', C'') \mathrel{(\supseteq \times \subseteq)} (Q', C')$.
  We have $Q'' \supseteq Q'$ and $x \in Q'$, so $x \in Q''$; also,
  $Q'' \subseteq \upc (Q'' \cap C'')$, so there is a point
  $x'' \in Q'' \cap C''$ such that $x'' \leq x$.  Since $x'' \in Q''$
  and $(Q'', C'') \in \mathcal Q$, $x''$ is in
  $\bigcup \pi_1 [\mathcal Q] = Q$, and since $x'' \in C''$ and
  $(Q'', C'') \in \mathcal C$, $x''$ is in
  $\bigcup \pi_2 [\mathcal C]$, hence in its closure $C$.  We have
  obtained that $x$ is larger than or equal to a point $x''$ in
  $Q \cap C$, so $x \in \upc (Q \cap C)$.
  % on veut montrer que x est au-dessus d'un point
  % - dans Q = \bigcup \pi_1 [\mathcal Q], par ex. dans Q''
  % - dans C = cl (\bigcup \pi_2 [\mathcal C])

  We now show that for every open neighborhood $U$ of $Q$,
  $C \subseteq cl (U \cap C)$.  It suffices to show that for every
  open neighborhood $U$ of $Q$, every open set $V$ that intersects $C$
  also intersects $U \cap C$.  Since
  $Q = \bigcup \pi_1 [\mathcal Q] \subseteq U$, every quasi-lens
  $(Q', C') \in \mathcal Q$ is such that $Q' \subseteq U$, namely is
  in $\Box^\quasi U$.  Therefore $\mathcal Q \subseteq \Box^\quasi U$.
  Since $(\mathcal Q, \mathcal C)$ is a quasi-lens,
  $\mathcal C \subseteq cl (\mathcal C \cap \mathcal \Box^\quasi U)$.
  Hence every open subset $\mathcal V$ of $\QLV X$ that intersects
  $\mathcal C$ also intersects
  $\mathcal C \cap \mathcal \Box^\quasi U$.  Since
  $C = cl (\bigcup \pi_2 [\mathcal C])$ and $V$ intersects $C$, $V$
  intersects $\bigcup \pi_2 [\mathcal C]$, hence $V$ intersects some
  closed set $C'$ such that $(Q', C') \in \mathcal C$.  In other
  words, $\Diamond^\quasi V$ intersects $\mathcal C$.  Taking
  $\mathcal V \eqdef \Diamond^\quasi V$, we obtain that
  $\Diamond^\quasi V$ intersects
  $\mathcal C \cap \mathcal \Box^\quasi U$.  In other words, there is
  a quasi-lens $(Q', C')$ in $\mathcal C$ such that $Q' \subseteq U$
  and $C'$ intersects $V$.  Since $(Q', C')$ is a quasi-lens,
  $C' \subseteq cl (C' \cap U)$, so $V$ intersects $C' \cap U$.  Hence
  $C'$ intersects $U \cap V$.  But, since $(Q', C') \in \mathcal C$,
  $C'$ is in $\pi_2 [\mathcal C]$, so
  $C' \subseteq C = cl (\bigcup \pi_2 [\mathcal C])$.  Therefore $C$
  also intersects $U \cap V$, showing that $V$ intersects $U \cap C$.

  It remains to show that $\mu^\quasi_X$ is continuous.  For every
  open subset $U$ of $X$,
  ${(\mu^\quasi_X)}^{-1} (\Box^\quasi U) = \{(\mathcal Q, \mathcal C)
  \in \QLV {\QLV X} \mid \bigcup \pi_1 [\mathcal Q] \subseteq U\} =
  \{(\mathcal Q, \mathcal C) \in \QLV {\QLV X} \mid \forall (Q, C) \in
  \mathcal Q, Q \subseteq U\} = \Box^\quasi {\Box^\quasi U}$, and
  ${(\mu^\quasi_X)}^{-1} (\Diamond^\quasi U) = \{(\mathcal Q, \mathcal
  C) \in \QLV {\QLV X} \mid cl (\bigcup \pi_2 [\mathcal C]) \cap U
  \neq \emptyset\} = \{(\mathcal Q, \mathcal C) \in \QLV {\QLV X} \mid
  \bigcup \pi_2 [\mathcal C] \cap U \neq \emptyset\} = \{(\mathcal Q,
  \mathcal C) \in \QLV {\QLV X} \mid \exists (Q, C) \in \mathcal C, C
  \cap U \neq \emptyset\} = \Diamond^\quasi {\Diamond^\quasi U}$.

  There are two easy ways to verify that $\eta^\quasi$ and
  $\mu^\quasi$ are natural, and the monad laws.  One is by using
  Trick~A, and the other one is as follows.  Given any two maps
  $f, g \colon Y \to \QLV Z$ (for some spaces $Y$ and $Z$), $f=g$ if
  and only if $\pi_1 \circ f = \pi_1 \circ g$ and
  $\pi_2 \circ f = \pi_2 \circ g$, since the pairing
  $\langle \pi_1, \pi_2 \rangle$ is the inclusion map, hence is
  injective.  We use this new trick, as it will be clear that this
  reduces the question for $\QLV$ to those for $\SV$ and $\HV$.
  
  Naturality.  Given any continuous map $f \colon X \to Y$, we check
  that $\eta^\quasi_Y \circ f = \QLV f \circ \eta^\quasi_X$:
  $\pi_1 \circ \eta^\quasi_Y \circ f = \eta^\Smyth_Y \circ f$ (by
  (\ref{eq:etaq}) $= \SV f \circ \eta^\Smyth_X$ (by naturality of
  $\eta^\Smyth$) $= \pi_1 \circ \QLV f \circ \eta^\Smyth_X$, and
  similarly
  $\pi_2 \circ \eta^\quasi_Y \circ f = \pi_2 \circ \QLV f \circ
  \eta^\Smyth_X$.

  We also check that $\QLV f \circ \mu^\quasi_X = \mu^\quasi_Y \circ
  \QLV {\QLV f}$ by:
  \begin{align*}
    \pi_1 \circ \QLV f \circ \mu^\quasi_X
    & = \SV f \circ \pi_1 \circ \mu^\quasi_X
    & \text{def.\ of $\QLV f$} \\
    & = \SV f \circ \mu^\Smyth_X \circ \SV {\pi_1} \circ \pi_1
    & \text{by (\ref{eq:muq})} \\
    & = \mu^\Smyth_Y \circ \SV {\SV f} \circ \SV {\pi_1} \circ \pi_1
    & \text{nat.\ of $\mu^\Smyth$} \\
    & = \mu^\Smyth_Y \circ \SV {\pi_1} \circ \SV {\QLV f} \circ \pi_1
    & \text{nat.\ of $\pi_1$} \\
    & = \mu^\Smyth_Y \circ \SV {\pi_1} \circ \pi_1 \circ \QLV {\QLV f}
    & \text{nat.\ of $\pi_1$} \\
    & = \pi_1 \circ \mu^\quasi_Y \circ \QLV {\QLV f}
    & \text{by (\ref{eq:muq})},
  \end{align*}
  and similarly with $\pi_2$ instead of $\pi_1$, $\Hoare$ in lieu of
  $\Smyth$, and $\HV$ instead of $\SV$.
  
  The monad laws.  We proceed similarly.  We verify that $\mu^\quasi_X
  \circ \eta^\quasi_{\QLV X} = \identity {\QLV X}$ by:
  \begin{align*}
    \pi_1 \circ \mu^\quasi_X \circ \eta^\quasi_{\QLV X}
    & = \mu^\Smyth_X \circ \SV {\pi_1} \circ \pi_1 \circ
      \eta^\quasi_{\QLV X}
    & \text{by (\ref{eq:muq})} \\
    & = \mu^\Smyth_X \circ \SV {\pi_1} \circ \eta^\Smyth_{\QLV X}
    & \text{by (\ref{eq:etaq})} \\
    & = \mu^\Smyth_X \circ \eta_{\SV X} \circ \pi_1
    & \text{nat.\ of $\pi_1$} \\
    & = \pi_1,
  \end{align*}
  by the monad law
  $\mu^\Smyth_X \circ \eta^\Smyth_{\SV X} = \identity {\SV X}$ for
  $\SV$; similarly with $\pi_2$ instead of $\pi_1$.

  We verify that
  $\mu^\quasi_X \circ \QLV {\eta^\quasi_X} = \identity {\QLV X}$ by:
  \begin{align*}
    \pi_1 \circ \mu^\quasi_X \circ \QLV {\eta^\quasi_X}
    & = \mu^\Smyth_X \circ \SV {\pi_1} \circ \pi_1 \circ \QLV {\eta^\quasi_X}
    & \text{by (\ref{eq:muq})} \\
    & = \mu^\Smyth_X \circ \SV {\pi_1} \circ \SV {\eta^\quasi_X} \circ
      \pi_1
    & \text{def.\ of $\QLV$ on morphisms} \\
    & = \mu^\Smyth_X \circ \SV {\eta^\Smyth_X} \circ \pi_1
    & \text{by (\ref{eq:etaq})} \\
    & = \pi_1,
  \end{align*}
  by the monad law
  $\mu^\Smyth_X \circ \SV {\eta^\Smyth_X} = \identity {\SV X}$ for
  $\SV$; similarly with $\pi_2$ instead of $\pi_1$.

  Finally, we verify that $\mu^\quasi_X \circ \mu^\quasi_{\QLV X} =
  \mu^\quasi_X \circ \QLV {\mu^\quasi_X}$.  Once again, we do this for
  $\pi_1$ only:
  \begin{align*}
    \pi_1 \circ \mu^\quasi_X \circ \mu^\quasi_{\QLV X}
    & = \mu^\Smyth_X \circ \SV {\pi_1} \circ \pi_1
      \circ \mu^\quasi_{\QLV X}
    & \text{by (\ref{eq:muq})} \\
    & = \mu^\Smyth_X \circ \SV {\pi_1}
      \circ \mu^\Smyth_{\QLV X} \circ \SV {\pi_1} \circ \pi_1
    & \text{by (\ref{eq:muq})} \\
    & = \mu^\Smyth_X
      \circ \mu^\Smyth_{\QLV X} \circ \SV {\SV {\pi_1}} \circ \SV
      {\pi_1} \circ \pi_1
    & \text{nat.\ of $\mu^\Smyth$}
  \end{align*}
  while:
  \begin{align*}
    \pi_1 \circ \mu^\quasi_X \circ \QLV {\mu^\quasi_X}
    & = \mu^\Smyth_X \circ \SV {\pi_1} \circ \pi_1
      \circ \QLV {\mu^\quasi_X}
    & \text{by (\ref{eq:muq})} \\
    & = \mu^\Smyth_X \circ \SV {\pi_1}
      \circ \SV {\mu^\quasi_X} \circ \pi_1
    & \text{def.\ of $\QLV$ on morphisms} \\
    & = \mu^\Smyth_X
      \circ \SV {\mu^\Smyth_X} \circ \SV {\SV {\pi_1}} \circ \SV
      {\pi_1} \circ \pi_1
      \mskip-150mu
    & \text{by (\ref{eq:muq})},
  \end{align*}
  and we conclude since
  $\mu^\Smyth_X \circ \mu^\Smyth_{\QLV X} = \mu^\Smyth_X \circ \SV
  {\mu^\Smyth_X}$, by the final monad law on $\SV$.  \qed
\end{proof}

\begin{remark}
  \label{rem:Plotkinn}
  There is also a $\Plotkinn$ monad (of lenses) on $\Topcat$, but we
  will not care about it except on full subcategories of weakly
  Hausdorff of quasi-Polish spaces, where we can transport the monad
  structure of $\QLV$ over to $\Plotkinn$.  We obtain that for every
  continuous map $f \colon X \to Y$,
  $\Plotkinn f = \varrho_Y \circ \QLV f \circ \iota_X$ maps every lens
  $L$ on $X$ to $\upc f [\upc L] \cap cl (f [cl (L)])$ (which we may
  simplify as $\upc f [L] \cap cl (f [L])$), the unit
  $\eta^{\Plotkin}_X$ maps every $x \in X$ to
  $\varrho_X (\upc x, \dc x) = \upc x \cap \dc x$ (which simplifies to
  $\{x\}$ if $X$ is $T_0$), and the multiplication
  $\mu^{\Plotkin}_X = \varrho_X \circ \mu^\quasi_X \circ \QLV
  {\iota_X} \circ \iota_{\Plotkin X}$ maps every lens
  $\mathcal L \in \Plotkinn {\Plotkinn X}$ to
  $\upc \bigcup \mathcal L \cap cl (\bigcup \mathcal L)$.
\end{remark}

In order to obtain a distributive law over $\Val_\bullet$, we need to
further restrict the category of topological spaces we can work on.
Not only do we need our spaces $X$ to be such that $\Lform X$ is
locally convex, but addition on $\Lform X$ needs to be \emph{almost
  open}, meaning that given any two open subsets $\mathcal U$ and
$\mathcal V$ of $\Lform X$, $\upc (\mathcal U + \mathcal V)$ is open.
(We will also need $X$ to be compact when $\bullet$ is ``$1$''.)  This
is the case if $X$ is stably locally compact \cite[Lemma
3.24]{JGL-mscs16}.  (We have already stated that every stably locally
compact space $X$ is weakly Hausdorff \cite[Lemma~8.1]{KL:measureext},
so that $\Plotkinn X \cong \QLV X$ in this case.)

We also need our category of spaces to be closed under $\Val_\bullet$,
$\QLV$, and under retracts.  The category of stably compact spaces is
closed under retracts \cite[Proposition, bottom of p.153, and
subsequent discussion]{Lawson:versatile}, see also
\cite[Proposition~2.17]{Jung:scs:prob}.

% The category of stably locally compact
% spaces (resp., stably compact spaces) is closed under retracts: this
% was shown for stably compact spaces in \cite[Proposition, bottom of
% p.153, and subsequent discussion]{Lawson:versatile}, see also
% \cite[Proposition~2.17]{Jung:scs:prob}; the proof of the latter also
% shows that stably locally compact spaces are closed under retracts.

As far as $\Val_\bullet X$ is concerned, it was shown in
\cite[Theorem~39]{AMJK:scs:prob} that $\Val_{\leq 1} X$ is stably
compact for every stably compact space $X$.  The same proof shows that
$\Val X$ is stably compact under the same assumption.  Additionally,
$\Val_1 X$ occurs as the subspace $\langle X \geq 1\rangle$, where
$\langle Q \geq r \rangle$ denotes
$\{\nu \in \Val_{\leq 1} X \mid \forall U \in \Open X, Q \subseteq U
\limp \nu (U) \geq 1\}$, and is compact saturated for every compact
saturated subset $Q$ of the stably compact space $X$ and for every
$r \in \Rp$ \cite[Lemma 6.6]{GL:duality}.  But any compact saturated
subset of a stably compact space is stably compact; more generally,
any patch-closed subset of a stably compact space, namely, any subset
that we can obtain as an intersection of finite unions of closed
subsets and of compact saturated subsets, is stably compact
\cite[Proposition~2.16]{Jung:scs:prob}.

As for $\QLV X$, it was noted in \cite[Fact~5.2]{GL:duality} that for
every sober space $X$, $\QLV X$ is homeomorphic to Heckmann's space of
$\mathbf A$-valuations on $X$ \cite{heckmann96}.  If $X$ is stably
compact, then, both spaces are stably compact
\cite[Proposition~5.13]{GL:duality}.
\begin{definition}
  \label{defn:Top:nat}
  Let $\Topcat^\natural$ be any full subcategory of $\Topcat$
  consisting of spaces $X$ such that $\Lform X$ is locally convex and
  has an almost open addition map (and such that $X$ is compact if
  $\bullet$ is ``$1$''), and which is closed under $\QLV$,
  $\Val_\bullet$ and under retracts---for example the category of
  stably compact spaces.
\end{definition}
We note that every object of $\Topcat^\natural$ is an
$\AN_\bullet$-friendly space.  In particular, if $\bullet$ is ``$1$'',
every compact space $X$ such that $\Lform X$ is locally convex is
$\AN_1$-friendly, by definition.

\section{The monad of forks}
\label{sec:monad-forks}

A \emph{fork} on a space $X$ is any pair
$(F^-, F^+)$ of a superlinear prevision $F^-$ on $X$ and of a
sublinear prevision $F^+$ on $X$ satisfying \emph{Walley's condition}:
\[
  F^- (h+h') \leq F^- (h) + F^+ (h') \leq F^+ (h+h')
\]
for all $h, h' \in \Lform X$ \cite{Gou-csl07,KP:predtrans:pow}.  A
fork is \emph{subnormalized}, resp.\ \emph{normalized} if and only if
both $F^-$ and $F^+$ are.

We write $\Pred_{\ADN} X$ for the set of all forks on $X$, and
$\Pred^{\leq 1}_{\ADN} X$, $\Pred^1_{\ADN} X$ for their subsets of
subnormalized, resp.\ normalized, forks.  The \emph{weak topology} on
each is the subspace topology induced by the inclusion into the larger
space $\Pred_{\DN} X \times \Pred_{\AN} X$.  A subbase of the weak
topology is composed of two kinds of open subsets: $[h > r]^-$,
defined as $\{(F^-, F^+) \mid F^- (h) > r\}$, and $[h > r]^+$, defined
as $\{(F^-, F^+) \mid F^+ (h) > r\}$, where $h \in \Lform X$,
$r \in \real^+$.  The specialization ordering of spaces of forks is
the product ordering $\leq \times \leq$, where $\leq$ denotes the
pointwise ordering on previsions.  In particular, all those spaces of
forks are $T_0$.

Whether $\bullet$ is nothing, ``$\leq 1$'', or
``$1$'', $\Pred_\ADN^\bullet$ defines an endofunctor on $\Topcat$,
whose action on morphisms is given by
$\Pred_\ADN^\bullet f \eqdef (\Pred f, \Pred f)$.

\begin{proposition}
  \label{prop:rADP}
  Let $\bullet$ be nothing, ``$\leq 1$'', or ``$1$''.  For every
  topological space $X$ such that $\Lform X$ is locally convex and has
  an almost open addition map (and such that $X$ is compact, in case
  $\bullet$ is ``$1$''), there are continuous maps
  $r_\ADN \colon \QLV {\Val_\bullet X} \to \Pred_\ADN^\bullet X$ and
  $s_\ADN^\bullet \colon \Pred_\ADN^\bullet X \to \QLV {\Val_\bullet
    X}$ defined by:
  \begin{align*}
    r_\ADN (Q, C)
    & \eqdef (r_\DN (Q), r_\AN (C)) \\
    s_\ADN^\bullet (F^-, F^+)
    & \eqdef (s_\DN^\bullet (F^-), s_\AN^\bullet (F^+)).
  \end{align*}
  Additionally, $r_\ADN$ and $s_\ADN^\bullet$ are natural in $X$.
\end{proposition}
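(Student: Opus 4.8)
The plan is to reduce the whole statement to facts already available in the demonic and angelic cases, plus the two topological embeddings $\langle\pi_1,\pi_2\rangle\colon\QLV Y\hookrightarrow\SV Y\times\HV Y$ (\cite[Lemma~8.2]{JGL:projlim:prev}) and $\Pred_\ADN^\bullet X\hookrightarrow\Pred_\DN X\times\Pred_\AN X$. On underlying sets $r_\ADN=(r_\DN,r_\AN)\circ\langle\pi_1,\pi_2\rangle$, and $s_\ADN^\bullet$ is $(s_\DN^\bullet,s_\AN^\bullet)$ followed by the identification of $\QLV{\Val_\bullet X}$ with its image in $\SV{\Val_\bullet X}\times\HV{\Val_\bullet X}$. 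Since $\Pred_\ADN^\bullet X$ and $\QLV{\Val_\bullet X}$ carry subspace topologies of these products, continuity of $r_\ADN$ and $s_\ADN^\bullet$ is automatic from the already-established continuity of $r_\DN,r_\AN,s_\DN^\bullet,s_\AN^\bullet$, \emph{once} we know the two maps take their values in the claimed spaces. Naturality is likewise formal: post-composing a naturality square for $r_\ADN$ (resp.\ $s_\ADN^\bullet$) with the relevant injective product inclusion splits it into two component squares, which commute by naturality of $\pi_1,\pi_2$ (shown after Proposition~\ref{prop:QLV:monad}), of $r_\DN,s_\DN^\bullet$ (cited), and of $r_\AN,s_\AN^\bullet$ --- the last valid on every space with locally convex $\Lform(-)$, hence on all of $\Topcat^\natural$ by Definition~\ref{defn:Top:nat}. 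So the entire content is two well-definedness claims: (i) $(r_\DN(Q),r_\AN(C))$ is a fork on $X$ whenever $(Q,C)\in\QLV{\Val_\bullet X}$, and is (sub)normalized when $\bullet$ is ``$\leq1$'' (resp.\ ``$1$''); and (ii) $(s_\DN^\bullet(F^-),s_\AN^\bullet(F^+))$ is a quasi-lens on $\Val_\bullet X$ whenever $(F^-,F^+)\in\Pred_\ADN^\bullet X$.

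For (i), superlinearity of $r_\DN(Q)$, sublinearity of $r_\AN(C)$, and the (sub)normalization when $\bullet\neq$ nothing, are immediate from the cited properties of $r_\DN,r_\AN$ and from $Q,C\subseteq\Val_\bullet X$; the point is Walley's condition $r_\DN(Q)(h+h')\leq r_\DN(Q)(h)+r_\AN(C)(h')\leq r_\AN(C)(h+h')$. For the left inequality I would pick $\nu_0\in Q$ attaining $r_\DN(Q)(h)=\min_{\nu\in Q}\int_{x\in X}h(x)\,d\nu$; quasi-lens condition~(2) gives $\nu_1\in Q\cap C$ with $\nu_1\leq\nu_0$, so $\int h\,d\nu_1=r_\DN(Q)(h)$ (squeezed between $r_\DN(Q)(h)$ and $\int h\,d\nu_0$) and $\int h'\,d\nu_1\leq r_\AN(C)(h')$, whence $r_\DN(Q)(h+h')\leq\int(h+h')\,d\nu_1=r_\DN(Q)(h)+\int h'\,d\nu_1\leq r_\DN(Q)(h)+r_\AN(C)(h')$. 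For the right inequality, given $t<r_\DN(Q)(h)$ and $\epsilon>0$ one has $Q\subseteq[h>t]$, so quasi-lens condition~(3) gives $C\subseteq cl([h>t]\cap C)$; choosing $\nu\in C$ with $\int h'\,d\nu$ within $\epsilon$ of $r_\AN(C)(h')$ from below, the corresponding open set $[h'>s]$ meets $C$, hence meets $[h>t]\cap C$, yielding $\nu'\in C$ with $\int h\,d\nu'>t$ and $\int h'\,d\nu'>r_\AN(C)(h')-\epsilon$, so $r_\AN(C)(h+h')\geq\int(h+h')\,d\nu'>t+r_\AN(C)(h')-\epsilon$; letting $t\uparrow r_\DN(Q)(h)$ and $\epsilon\downarrow0$ finishes it.

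For (ii), put $Q\eqdef s_\DN^\bullet(F^-)$ and $C\eqdef s_\AN^\bullet(F^+)$; these are a non-empty compact saturated and a non-empty closed \emph{convex} subset of $\Val_\bullet X$ respectively, with $r_\DN(Q)=F^-$ and $r_\AN(C)=F^+$, by the retraction properties cited earlier. We must verify the three defining conditions of a quasi-lens. Condition~(1), $Q\cap C\neq\emptyset$, amounts to the existence of a continuous valuation $\nu$ with $F^-(h)\leq\int h\,d\nu\leq F^+(h)$ for every $h\in\Lform X$. I would prove this by the separation argument already used in this paper: if $Q\cap C=\emptyset$, then the sets $[h>F^+(h)]$ (with $F^+(h)<\infty$, necessarily) cover the compact set $Q$, so finitely many $[h_i>F^+(h_i)]$, $1\leq i\leq n$, cover it; the lower semicontinuous map $\nu\mapsto\max_i(\int h_i\,d\nu-F^+(h_i))$ attains a strictly positive minimum on $Q$, and feeding $(\vec a,\nu)\in\Delta_n\times Q\mapsto\sum_i a_i(\int h_i\,d\nu-F^+(h_i))$ to the Minimax Theorem~3.3 of \cite{JGL-minimax17}, exactly as in Lemma~\ref{lemma:sDP:inv:func}, produces $\vec a\in\Delta_n$ with $\min_{\nu\in Q}\int(\sum_i a_ih_i)\,d\nu>\sum_i a_iF^+(h_i)\geq F^+(\sum_i a_ih_i)$; but $\min_{\nu\in Q}\int(\sum_i a_ih_i)\,d\nu=r_\DN(Q)(\sum_i a_ih_i)=F^-(\sum_i a_ih_i)$, contradicting $F^-\leq F^+$ (which holds by Walley's condition with $h'=0$).

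Conditions~(2) and~(3) are where I expect the real difficulty. Condition~(2), $Q\subseteq\upc(Q\cap C)$, asks for each $\nu\in Q$ a valuation $\mu\leq\nu$ in $Q\cap C$, i.e.\ for $C\cap(Q\cap\dc\nu)\neq\emptyset$; since $Q\cap\dc\nu$ is again compact, convex, and non-empty (it contains $\nu$), the plan is to re-run the separation scheme of condition~(1) with $Q$ replaced by $Q\cap\dc\nu$, the delicate step being that a separating $g$ now only yields $\min_{\mu\in Q\cap\dc\nu}\int g\,d\mu>F^+(g)$, so one must still contradict this using Walley's condition and $\nu\in Q$ --- in effect, show that restricting to valuations below $\nu$ opens no genuine gap above $F^+$. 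Condition~(3) asks, for every open neighbourhood $\mathcal U$ of $Q$, that every basic open set $\bigcap_i[h_i>r_i]$ meeting $C$ already meets $\mathcal U\cap C$; this density statement I would again treat by the same separation/Minimax machinery, now applied to the patch-closed complement of $\mathcal U$ and to the $h_i$. An alternative to doing conditions~(2)--(3) by hand is to quote the known correspondence between forks and lenses of linear previsions, established on dcpos in \cite[Section~5]{Gou-csl07} (see also \cite{KP:predtrans:pow}) and over topological spaces in \cite{JGL-mscs16,JGL:projlim:prev}; in either approach the standing hypotheses of $\Topcat^\natural$ --- $\Lform X$ locally convex with almost open addition, and $X$ compact when $\bullet$ is ``$1$'' --- are exactly what keeps $s_\DN^\bullet$, $s_\AN^\bullet$ and their interaction under control, and are used precisely here.
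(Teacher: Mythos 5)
Your formal reductions (continuity and naturality obtained componentwise through the embeddings $\langle\pi_1,\pi_2\rangle\colon\QLV{\Val_\bullet X}\hookrightarrow\SV{\Val_\bullet X}\times\HV{\Val_\bullet X}$ and $\Pred_\ADN^\bullet X\hookrightarrow\Pred_\DN^\bullet X\times\Pred_\AN^\bullet X$) and your part (i) are fine and essentially what the paper does: it verifies Walley's two inequalities for $(r_\DN(Q),r_\AN(C))$ from quasi-lens conditions (2) and (3) together with Fact~\ref{fact:sup:cl}, just as you do. Your minimax separation proof of quasi-lens condition (1) for $s_\ADN^\bullet$ is a reasonable self-contained alternative to the paper's citation of Lemma~3.27 of \cite{JGL-mscs16}, modulo the harmless translation needed to keep the payoff function $\creal$-valued before invoking the Minimax Theorem.

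The genuine gap is in quasi-lens conditions (2) and (3), exactly where you say you ``expect the real difficulty'': you never prove them. For (2), your re-run of the separation scheme stalls at the step you yourself flag: applied to the compact convex set $Q\cap\dc\nu$, minimax only produces some $g$ with $\min_{\mu\in Q\cap\dc\nu}\int_{x\in X}g(x)\,d\mu>F^+(g)$, and this contradicts nothing; ruling such a gap out is precisely the statement to be proved, namely a sandwich-type theorem (for every $\nu'\in s_\DN^\bullet(F^-)$ there is a valuation $\nu\leq\nu'$ with $F^-\leq\int\cdot\,d\nu\leq F^+$). The pointwise minimum of a linear and a sublinear prevision is not sublinear, so this does not follow formally from Walley's condition plus compact convexity; in the paper it is Lemma~3.28 of \cite{JGL-mscs16}. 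Condition (3) is even more telling: the paper deduces the stronger inclusion $C\subseteq\dc(Q\cap C)$ from Lemma~3.29 of \cite{JGL-mscs16}, and that lemma is where the standing hypotheses --- $\Lform X$ locally convex with an almost open addition, and $X$ compact when $\bullet$ is ``$1$'' --- are actually needed; your proposed minimax argument makes no use of those hypotheses, so it cannot by itself establish (3) in the stated generality. Your fallback of quoting the fork/lens correspondence from \cite{Gou-csl07,JGL-mscs16,JGL:projlim:prev} is the right move and is in substance the paper's own proof (it cites Lemmas~3.27, 3.28 and 3.29 of \cite{JGL-mscs16} for the three quasi-lens conditions), but as written your primary plan leaves the key steps unproved, and the citation would need to be made precise (those three lemmas, transported along $\Pred_\Nature^\bullet X\cong\Val_\bullet X$) rather than a generic appeal to ``the correspondence''.
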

\begin{proof}
  For every quasi-lens $(Q, C)$ on $\Val_\bullet X$, we check that
  $(F^-, F^+) \eqdef (r_\DN (Q), r_\AN (C))$ is a fork.  For all
  $h, h' \in \Lform X$,
  \begin{align*}
    F^- (h+h')
    & = \min_{\nu \in Q} \int_{x \in X} (h (x) + h' (x)) \,d\nu \\
    & \leq \min_{\nu \in Q \cap C} \int_{x \in X} (h (x) + h' (x))
      \,d\nu \\
    & \leq \min_{\nu \in Q \cap C} \left(\int_{x \in X} h (x)\,d\nu
      + F^+ (h')\right) \\
    & \qquad\text{since $\int_{x \in X} h' (x)
      \,d\nu \leq F^+ (h')$ for every $\nu \in Q \cap C$} \\
    & = \min_{\nu \in Q \cap C} \int_{x \in X} h (x)\,d\nu + F^+ (h') \\
    & = \min_{\nu \in \upc (Q \cap C)} \int_{x \in X} h (x)\,d\nu +
      F^+ (h') \\
    & \leq \min_{\nu \in Q} \int_{x \in X} h (x)\,d\nu + F^+
      (h')
    \qquad \text{since $Q \subseteq \upc (Q \cap C)$} \\
    & = F^- (h) + F^+ (h').
  \end{align*}
  For Walley's other inequality, we observe that:
  \begin{align*}
    F^- (h) + F^+ (h')
    & = \dsup_{r \in \Rp, r < F^- (h)} (r + F^+ (h')) \\
    & = \sup_{r \in \Rp, r < F^- (h), \nu \in C} \left(r + \int_{x \in X} h' (x) \,d\nu\right).
  \end{align*}
  Then, for every $r \in \Rp$ such that $r < F^- (h)$, we have
  $r < \min_{\nu \in Q} \int_{x \in X} h (x) \,d\nu$, so
  $Q \subseteq [h > r]$, by definition of $F^- = r_\DN (Q)$.  Since
  $(Q, C)$ is a quasi-lens, $C \subseteq cl ([h > r] \cap C)$, so:
  \begin{align*}
    F^- (h) + F^+ (h')
    & \leq \sup_{r \in \Rp, r < F^- (h), \nu \in cl ([h > r] \cap C)}
      \left(r + \int_{x \in X} h' (x) \,d\nu\right) \\
    & = \sup_{r \in \Rp, r < F^- (h), \nu \in [h > r] \cap C}
      \left(r + \int_{x \in X} h' (x) \,d\nu\right)
    & \text{by Fact~\ref{fact:sup:cl}.}
  \end{align*}
  Fact~\ref{fact:sup:cl} applies, since
  $\nu \in \Val_\bullet X \mapsto \int_{x \in X} h' (x) \,d\nu$ is
  lower semicontinuous, as the inverse image of any basic open set
  $]t, \infty]$ is $[h' > t]$; and therefore
  $\nu \in \Val_\bullet X \mapsto r+\int_{x \in X} h' (x) \,d\nu$ is
  lower semicontinuous, too.  Now for every $r \in \Rp$ such that
  $r < F^- (h)$, for every $\nu \in [h > r] \cap C$, we have
  $\int_{x \in X} h (x) \,d\nu > r$ by definition of $[h > r]$, so:
  \begin{align*}
    F^- (h) + F^+ (h')
    & \leq \sup_{r \in \Rp, r < F^- (h), \nu \in [h > r] \cap C}
      \left(\int_{x \in X} h (x) \,d\nu + \int_{x \in X} h' (x)
      \,d\nu\right) \\
    & \leq \sup_{\nu \in C}
      \left(\int_{x \in X} h (x) \,d\nu + \int_{x \in X} h' (x)
      \,d\nu\right) \\
    & = F^+ (h+h').
  \end{align*}
  Additionally, if $\bullet$ is ``$\leq 1$'' (resp., ``$1$''), then
  $F^- = r_\DN (Q)$ and $F^+ = r_\AN (C)$ are subnormalized (resp.,
  normalized), so $r_\ADN$ takes its values in $\Pred_\ADN^\bullet X$.
  The fact that $r_\ADN$ is continuous follows from the fact that
  $r_\DN$ and $r_\AN$ are, and similarly for naturality.

  Turning to $s_\ADN^\bullet$, we must show that for every
  $(F^-, F^+) \in \Pred_\ADN^\bullet X$,
  $(Q, C) \eqdef (s_\DN^\bullet (F^-), s_\AN^\bullet (F^+))$ is a
  quasi-lens.  That $s_\ADN^\bullet$ is continuous and natural will
  then follow from the same properties for $s_\DN^\bullet$ and
  $s_\AN^\bullet$.

  Lemma~3.27 of \cite{JGL-mscs16} % (equivalently, of the revised
  % version \cite{JGL:mscs16:rev})
  states that there is a map (called $s_\ADN^\bullet$ there; although
  this is not the same map as our $s_\ADN^\bullet$, the two are
  definitely related) from $\Pred_\ADN^\bullet X$ to the space of
  lenses on $\Pred_\Nature^\bullet X$, and which sends $(F^-, F^+)$ to
  $s_\DN (F^-) \cap s_\AN^\bullet (F^+)$.  % A \emph{lens} is a
  % non-empty set arising as the intersection of a compact saturated
  % set
  % and of a closed set.
  In particular, $Q \cap C = s_\DN (F^-) \cap s_\AN^\bullet (F^+)$ is
  a lens, and is therefore non-empty.

  Up to the isomorphism
  $\Pred_\Nature^\bullet X \cong \Val_\bullet X$, Lemma~3.28 of
  \cite{JGL-mscs16} % (equivalently, of the revised version
  % \cite{JGL:mscs16:rev})
  states that for every
  $\nu' \in \Val_\bullet X$, if $\nu' \in s_\DN^\bullet (F^-)$, then
  there is a $\nu \in \Val_\bullet X$ such that $\nu \leq \nu'$ and
  for every $h \in \Lform X$,
  $F^- (h) \leq \int_{x \in X} h (x) \,d\nu \leq F^+ (h)$.  In other
  words, $Q \subseteq \upc (Q \cap C)$.

  Similarly, Lemma~3.29 of \cite{JGL-mscs16} % (equivalently, of the
  % revised version \cite{JGL:mscs16:rev})
  states that for every
  $\nu' \in \Val_\bullet X$ such that $\nu' \in s_\AN^\bullet (F^+)$,
  there is a $\nu \in \Val_\bullet X$ such that $\nu' \leq \nu$ and
  for every $h \in \Lform X$,
  $F^- (h) \leq \int_{x \in X} h (x) \,d\nu \leq F^+ (h)$.  This
  requires $\Lform X$ to be locally convex and to have an almost open
  addition map, and also $X$ to be compact if $\bullet$ is ``$1$''.
  In other words, $C \subseteq \dc (Q \cap C)$.  Since the
  downwards-closure of a set is always included in its closure,
  $C \subseteq cl (Q \cap C)$.  In particular, for every open
  neighborhood $U$ of $Q$, $C \subseteq cl (U \cap C)$.  \qed
\end{proof}

\begin{proposition}
  \label{prop:rADP:convex}
  Let $\bullet$ be nothing, ``$\leq 1$'', or ``$1$''.  For every
  topological space $X$ such that $\Lform X$ is locally convex and has
  an almost open addition map (and such that $X$ is compact, in case
  $\bullet$ is ``$1$''), $r_\ADN$ and $s_\ADN^\bullet$ restrict to a
  homeomorphism between the subspace $\QLVc {\Val_\bullet X}$ of
  $\QLV {\Val_\bullet X}$ consisting of quasi-lenses $(Q, C)$ such
  that both $Q$ and $C$ are convex, and $\Pred_\ADN^\bullet X$.
\end{proposition}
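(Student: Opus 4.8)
The plan is to verify that $r_\ADN$ and $s_\ADN^\bullet$ form a retraction $r_\ADN \circ s_\ADN^\bullet = \identity {\Pred_\ADN^\bullet X}$ and that the opposite composite $s_\ADN^\bullet \circ r_\ADN$ restricts to the identity exactly on the convex quasi-lenses, mimicking the two component cases $\SV^{cvx} {\Val_\bullet X} \cong \Pred_\DN^\bullet X$ \cite[Theorem~4.15]{JGL-mscs16} and $\HV^{cvx} {\Val_\bullet X} \cong \Pred_\AN^\bullet X$ \cite[Theorem~4.11]{JGL-mscs16}.  Both maps are already continuous and natural by Proposition~\ref{prop:rADP}, and $\QLVc {\Val_\bullet X}$ carries the subspace topology from $\QLV {\Val_\bullet X}$, so the homeomorphism will follow once the two algebraic identities are established.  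The first identity is immediate, computed componentwise: $r_\ADN (s_\ADN^\bullet (F^-, F^+)) = (r_\DN (s_\DN^\bullet (F^-)), r_\AN (s_\AN^\bullet (F^+))) = (F^-, F^+)$, since $r_\DN \circ s_\DN^\bullet$ and $r_\AN \circ s_\AN^\bullet$ are identities.

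First I would check that $s_\ADN^\bullet$ has image in $\QLVc {\Val_\bullet X}$.  By definition, $s_\DN^\bullet (F^-) = \bigcap_{h \in \Lform X} \{\nu \in \Val_\bullet X \mid \int_{x \in X} h (x) \,d\nu \geq F^- (h)\}$, and each member of this intersection is convex because $\nu \mapsto \int_{x \in X} h (x) \,d\nu$ is linear and $\Val_\bullet X$ is closed under convex combinations; hence $s_\DN^\bullet (F^-)$ is convex.  Likewise $s_\AN^\bullet (F^+) = \bigcap_{h \in \Lform X} \{\nu \in \Val_\bullet X \mid \int_{x \in X} h (x) \,d\nu \leq F^+ (h)\}$ is convex.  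By Proposition~\ref{prop:rADP}, $s_\ADN^\bullet (F^-, F^+) = (s_\DN^\bullet (F^-), s_\AN^\bullet (F^+))$ is a quasi-lens, and we have just seen that both of its components are convex, so it lies in $\QLVc {\Val_\bullet X}$.

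Then I would prove that $s_\ADN^\bullet \circ r_\ADN$ restricts to the identity on $\QLVc {\Val_\bullet X}$.  For $(Q, C) \in \QLVc {\Val_\bullet X}$, $s_\ADN^\bullet (r_\ADN (Q, C)) = (s_\DN^\bullet (r_\DN (Q)), s_\AN^\bullet (r_\AN (C)))$.  By \cite[Proposition~4.20]{JGL-mscs16}, $s_\DN^\bullet (r_\DN (Q))$ is the compact saturated convex hull $\upc conv (Q)$ of $Q$, which equals $\upc Q = Q$ because $Q$ is convex and saturated.  By \cite[Proposition~4.19]{JGL-mscs16}, $s_\AN^\bullet (r_\AN (C))$ is the closed convex hull $cl (conv (C))$ of $C$, which equals $cl (C) = C$ because $C$ is convex and closed.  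Hence $s_\ADN^\bullet (r_\ADN (Q, C)) = (Q, C)$.  Together with $r_\ADN \circ s_\ADN^\bullet = \identity {\Pred_\ADN^\bullet X}$, this shows that the corestriction $r_\ADN \colon \QLVc {\Val_\bullet X} \to \Pred_\ADN^\bullet X$ and the map $s_\ADN^\bullet \colon \Pred_\ADN^\bullet X \to \QLVc {\Val_\bullet X}$ are mutually inverse, hence mutually inverse homeomorphisms by the continuity noted in Proposition~\ref{prop:rADP}.

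There is no substantial obstacle here: the argument is a purely componentwise assembly of the known behaviour of $r_\DN, s_\DN^\bullet$ and $r_\AN, s_\AN^\bullet$.  The only point requiring attention is that the convex-hull formulas of \cite[Propositions~4.19 and~4.20]{JGL-mscs16}, together with the retraction identities $r_\DN \circ s_\DN^\bullet = \identity {\Pred_\DN^\bullet X}$ and $r_\AN \circ s_\AN^\bullet = \identity {\Pred_\AN^\bullet X}$, are exactly available under the standing hypotheses that $\Lform X$ is locally convex with an almost open addition map (and $X$ compact when $\bullet$ is ``$1$''); these are precisely the hypotheses already used to build $r_\ADN$ and $s_\ADN^\bullet$ in Proposition~\ref{prop:rADP}, so nothing further needs to be assumed.
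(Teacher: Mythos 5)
Your proof is correct and follows essentially the same route as the paper, which disposes of the statement as an immediate consequence of Proposition~\ref{prop:rADP} together with the component homeomorphisms $\SV^{cvx}{\Val_\bullet X} \cong \Pred_\DN^\bullet X$ and $\HV^{cvx}{\Val_\bullet X} \cong \Pred_\AN^\bullet X$ from \cite{JGL-mscs16}. You merely unfold what those homeomorphisms say (retraction identities plus the convex-hull formulas of Propositions~4.19 and~4.20 of \cite{JGL-mscs16}), which is a harmless elaboration rather than a different argument.
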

\begin{proof}
  Immediate consequence of Proposition~\ref{prop:rADP}, of the fact
  that $r_\DN$ and $s_\DN^\bullet$ form a homeomorphism between
  $\SV^{cvx} {\Val_\bullet X}$ and $\Pred_\DN^\bullet X$, and of the
  fact that $r_\AN$ and $s_\AN^\bullet$ form a homeomorphism between
  $\HV^{cvx} {\Val_\bullet X}$ and $\Pred_\AN^\bullet X$.  Note that
  this holds because $X$ is $\AN_\bullet$-friendly; if $\bullet$ is
  ``$1$'', notably, then not only is $\Lform X$ locally convex, but we
  have also assumed that $X$ is compact.
  \qed
\end{proof}

Much like there are projection maps $\pi_1 \colon \QLV X \to \SV X$
and $\pi_2 \colon \QLV X \to \HV X$, there are projection maps, which
we write with the same notation,
$\pi_1 \colon \Pred_\ADN^\bullet X \to \Pred_\DN^\bullet X$ and
$\pi_2 \colon \Pred_\ADN^\bullet X \to \Pred_\AN^\bullet X$.  The
pairing $\langle \pi_1, \pi_2 \rangle$ is then the inclusion map
$\Pred_\ADN^\bullet X \to \Pred_\DN^\bullet X \times \Pred_\AN^\bullet
X$.  It is easy to see that this is a topological embedding.

It was shown in \cite[Proposition~3]{Gou-csl07} that there is a monad
of forks on the category of dcpos (resp., pointed dcpos) and
Scott-continuous maps, except that spaces of previsions were given the
Scott topology of the pointwise ordering.  As for previsions, we check
that there is a corresponding monad of (subnormalized, normalized)
forks on $\Topcat$.
\begin{proposition}
  \label{prop:fork:monad}
  Let $\bullet$ be nothing, ``$\leq 1$'', or ``$1$''.  There is a
  monad
  $(\Pred_\ADN^\bullet, \allowbreak \eta^\ADN, \allowbreak \mu^\ADN)$
  on $\Topcat$ (and, by restriction, on $\Topcat^\natural$) whose unit
  and multiplication are defined by:
  \begin{align*}
    \eta^\ADN_X (x)
    & \eqdef (\eta^\DN_X (x), \eta^\AN_X (x)) \\
    \mu^\ADN_X (\mathcal F^-, \mathcal F^+)
    & \eqdef (h \in \Lform X \mapsto \mathcal F^- ((F^-, F^+) \in
      \Pred_\ADN^\bullet X \mapsto F^- (h)), \\
      & \qquad
        h \in \Lform X \mapsto \mathcal F^+ ((F^-, F^+) \in
        \Pred_\ADN^\bullet X \mapsto F^+ (h)))
  \end{align*}
  for every $x \in X$ in the first case, and for every
  $(\mathcal F^-, \mathcal F^+) \in \Pred_\ADN^\bullet
  {\Pred_\ADN^\bullet X}$ in the second case.  In other words, unit
  and multiplication are defined uniquely by the commutativity of the
  following diagrams:
  \begin{equation}
    \label{eq:etaADN}
    \xymatrix{
      X
      \ar[d]_{\eta^\DN_X}
      &
      X
      \ar@{=}[r]
      \ar[d]|{\eta^\ADN_X}
      \ar@{=}[l]
      &
      X
      \ar[d]^{\eta^\AN_X}
      \\
      \Pred_\DN^\bullet X
      &
      \Pred_\ADN^\bullet X
      \ar[l]^{\pi_1}
      \ar[r]_{\pi_2}
      &
      \Pred_\AN^\bullet X
    }
  \end{equation}
  and:
  \begin{equation}
    \label{eq:muADN}
    \xymatrix{
      \Pred_\DN^\bullet {\Pred_\DN^\bullet X}
      \ar[d]_{\mu^\DN_X}
      &
      \Pred_\DN^\bullet {\Pred_\ADN^\bullet X}
      \ar[l]_{\Pred_\DN^\bullet {\pi_1}}
      &
      \Pred_\ADN^\bullet {\Pred_\ADN^\bullet X}
      \ar[l]_{\pi_1}
      \ar[d]|{\mu^\ADN_X}
      \ar[r]^{\pi_2}
      &
      \Pred_\AN^\bullet {\Pred_\ADN^\bullet X}
      \ar[r]^{\Pred_\AN^\bullet {\pi_2}}
      &
      \Pred_\AN^\bullet {\Pred_\AN^\bullet X}
      \ar[d]^{\mu^\AN_X}
      \\
      \Pred_\DN^\bullet X
      &
      &
      \Pred_\ADN^\bullet X
      \ar[ll]^{\pi_1}
      \ar[rr]_{\pi_2}
      &
      &
      \Pred_\AN^\bullet X
    }
  \end{equation}
\end{proposition}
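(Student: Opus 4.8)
The plan is to mimic the proof of Proposition~\ref{prop:QLV:monad}, exploiting that the pairing $\langle \pi_1, \pi_2 \rangle \colon \Pred_\ADN^\bullet X \to \Pred_\DN^\bullet X \times \Pred_\AN^\bullet X$ is a topological embedding, hence injective. First I would check that $\eta^\ADN_X$ and $\mu^\ADN_X$ are well-defined. For the unit, $\eta^\DN_X (x) = \eta^\AN_X (x) = (h \mapsto h (x))$ is a linear prevision, and Walley's condition for a pair whose two components both equal a single linear prevision $G$ reads $G (h+h') \leq G (h) + G (h') \leq G (h+h')$, which holds with equality; $\eta^\DN_X (x)$ is moreover normalized, hence subnormalized. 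Continuity of $\eta^\ADN_X$ is clear since ${(\eta^\ADN_X)}^{-1} ([h>r]^-) = {(\eta^\ADN_X)}^{-1} ([h>r]^+) = h^{-1} (]r,\infty])$; equivalently, $\eta^\ADN_X$ is the unique map making (\ref{eq:etaADN}) commute, and its continuity follows from that of $\eta^\DN_X$, $\eta^\AN_X$ and the embedding property.

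For the multiplication, write $\hat h^- \colon (F^-, F^+) \in \Pred_\ADN^\bullet X \mapsto F^- (h)$ and $\hat h^+ \colon (F^-, F^+) \mapsto F^+ (h)$; these lie in $\Lform {\Pred_\ADN^\bullet X}$ since ${(\hat h^-)}^{-1} (]r,\infty]) = [h>r]^-$ and ${(\hat h^+)}^{-1} (]r,\infty]) = [h>r]^+$. The two components of $\mu^\ADN_X (\mathcal F^-, \mathcal F^+)$ are then $G^- \eqdef (h \mapsto \mathcal F^- (\hat h^-))$ and $G^+ \eqdef (h \mapsto \mathcal F^+ (\hat h^+))$; in diagram form, $\pi_1 \circ \mu^\ADN_X = \mu^\DN_X \circ \Pred_\DN^\bullet {\pi_1} \circ \pi_1$ and $\pi_2 \circ \mu^\ADN_X = \mu^\AN_X \circ \Pred_\AN^\bullet {\pi_2} \circ \pi_2$, so that $G^-$ is superlinear, $G^+$ sublinear, and both are (sub)normalized when $\bullet$ is ``$\leq 1$'' or ``$1$'' by Proposition~\ref{prop:DN:monad} (resp.\ Proposition~\ref{prop:AN:monad}), once one notes that $\Pred_\DN^\bullet {\pi_1}$ preserves (sub)normalization (because $(\one+G) \circ \pi_1 = \one + G \circ \pi_1$). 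Continuity of $\mu^\ADN_X$ follows likewise from that of $\mu^\DN_X$, $\mu^\AN_X$, $\pi_1$, $\pi_2$ and the embedding property.

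The main point — and the only step that genuinely uses that we deal with forks rather than arbitrary pairs of previsions — is Walley's condition for $(G^-, G^+)$. Since every $(F^-, F^+) \in \Pred_\ADN^\bullet X$ is a fork, we have $\widehat{h+h'}^- \leq \hat h^- + \widehat{h'}^+$ pointwise on $\Pred_\ADN^\bullet X$, so by monotonicity of $\mathcal F^-$ and then Walley's condition for the fork $(\mathcal F^-, \mathcal F^+)$ applied to the pair $(\hat h^-, \widehat{h'}^+) \in \Lform {\Pred_\ADN^\bullet X}^2$, $G^- (h+h') = \mathcal F^- (\widehat{h+h'}^-) \leq \mathcal F^- (\hat h^- + \widehat{h'}^+) \leq \mathcal F^- (\hat h^-) + \mathcal F^+ (\widehat{h'}^+) = G^- (h) + G^+ (h')$. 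Symmetrically, Walley's condition for $(\mathcal F^-, \mathcal F^+)$ gives $G^- (h) + G^+ (h') = \mathcal F^- (\hat h^-) + \mathcal F^+ (\widehat{h'}^+) \leq \mathcal F^+ (\hat h^- + \widehat{h'}^+)$, and since each $(F^-, F^+)$ satisfies $F^- (h) + F^+ (h') \leq F^+ (h+h')$ we have $\hat h^- + \widehat{h'}^+ \leq \widehat{h+h'}^+$, whence $\mathcal F^+ (\hat h^- + \widehat{h'}^+) \leq \mathcal F^+ (\widehat{h+h'}^+) = G^+ (h+h')$ by monotonicity of $\mathcal F^+$. Thus $(G^-, G^+)$ is a fork. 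I expect this interleaving of the outer Walley inequalities with the pointwise ones for the inner forks to be the part requiring the most care.

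Finally, naturality of $\eta^\ADN$ and $\mu^\ADN$ and the three monad laws follow exactly as in the proof of Proposition~\ref{prop:QLV:monad}: since $\langle \pi_1, \pi_2 \rangle$ is injective, each identity reduces, upon post-composition with $\pi_1$ and with $\pi_2$, to the corresponding identity for the monad $\Pred_\DN^\bullet$ of superlinear previsions and for the monad $\Pred_\AN^\bullet$ of sublinear previsions, using the commuting squares (\ref{eq:etaADN}), (\ref{eq:muADN}), the naturality of $\pi_1 \colon \Pred_\ADN^\bullet \to \Pred_\DN^\bullet$ and $\pi_2 \colon \Pred_\ADN^\bullet \to \Pred_\AN^\bullet$ (immediate from the common formula $F \mapsto (h \mapsto F (h \circ f))$ defining the functorial actions), and Propositions~\ref{prop:DN:monad} and~\ref{prop:AN:monad}. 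The restriction to $\Topcat^\natural$ is then automatic: for $X$ an object of $\Topcat^\natural$, $\Pred_\ADN^\bullet X$ is a retract of $\QLV {\Val_\bullet X}$ through $r_\ADN$ and $s_\ADN^\bullet$ (Proposition~\ref{prop:rADP}), and $\QLV {\Val_\bullet X}$ is an object of $\Topcat^\natural$ since that subcategory is closed under $\Val_\bullet$ and $\QLV$, so $\Pred_\ADN^\bullet X$ is one too, by closure under retracts.
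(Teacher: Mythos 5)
Your proof is correct, and it follows exactly the route the paper intends but leaves unwritten (the paper states Proposition~\ref{prop:fork:monad} without proof, as the analogue of \cite[Proposition~3]{Gou-csl07} combined with the reduction technique of Proposition~\ref{prop:QLV:monad}): reduce everything through $\pi_1$, $\pi_2$ and the injectivity of $\langle \pi_1, \pi_2\rangle$ to the monads $\Pred_\DN^\bullet$ and $\Pred_\AN^\bullet$, the only genuinely new point being that $\mu^\ADN_X (\mathcal F^-, \mathcal F^+)$ satisfies Walley's condition. Your interleaving of the pointwise inequalities $\widehat{h+h'}^- \leq \hat h^- + \widehat{h'}^+ \leq \widehat{h+h'}^+$ with the outer Walley condition for $(\mathcal F^-, \mathcal F^+)$ is precisely the right argument, and the retract argument via $r_\ADN$, $s_\ADN^\bullet$ for the restriction to $\Topcat^\natural$ matches the pattern of Corollary~\ref{corl:AN:monad:flat}.
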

We recall that the functor part acts on morphisms by
$\Pred_\ADN^\bullet f = (\Pred f, \Pred f)$, hence is characterized by
the commutativity of the following diagram:
\begin{equation}
  \label{eq:PADNf}
  \xymatrix{
    \Pred_\DN^\bullet X
    \ar[d]_{\Pred f}
    &
    \Pred_\ADN^\bullet X
    \ar[l]_{\pi_1}
    \ar[d]|{\Pred_\ADN^\bullet f}
    \ar[r]^{\pi_2}
    &
    \Pred_\AN^\bullet X
    \ar[d]^{\Pred f}
    \\
    \Pred_\DN^\bullet Y
    &
    \Pred_\ADN^\bullet Y
    \ar[l]^{\pi_1}
    \ar[r]_{\pi_2}
    &
    \Pred_\AN^\bullet Y
    }
\end{equation}

\section{The weak distributive law}
\label{sec:weak-distr-law:nat}

\begin{proposition}
  \label{prop:PsiPhi}
  Let $X$ be a topological space.  There is a unique continuous map
  $\Theta \colon \Val_\bullet {\QLV X} \to \Pred_\ADN^\bullet X$ such
  that:
  \[
    \xymatrix{
      \Val_\bullet {\SV X}
      \ar[d]_{\Phi}
      & \Val_\bullet {\QLV X}
      \ar[l]_{\Val_\bullet {\pi_1}}
      \ar[d]|{\Theta}
      \ar[r]^{\Val_\bullet {\pi_2}}
      & \Val_\bullet {\HV X}
      \ar[d]^{\Psi}
      \\
      \Pred_\DN^\bullet X
      & \Pred_\ADN^\bullet X
      \ar[l]^{\pi_1}
      \ar[r]_{\pi_2}
      & \Pred_\AN^\bullet X.
      }
  \]
\end{proposition}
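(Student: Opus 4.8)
The plan is to build $\Theta$ componentwise and then check that what comes out is genuinely a fork. Uniqueness and continuity are formal: the pairing $\langle\pi_1,\pi_2\rangle\colon\Pred_\ADN^\bullet X\to\Pred_\DN^\bullet X\times\Pred_\AN^\bullet X$ is a topological embedding, hence injective, so commutativity of the diagram forces
\[
  \langle\pi_1,\pi_2\rangle\circ\Theta=\bigl\langle\Phi\circ\Val_\bullet\pi_1,\ \Psi\circ\Val_\bullet\pi_2\bigr\rangle,
\]
which determines $\Theta$ on underlying sets; and, again because $\langle\pi_1,\pi_2\rangle$ is an embedding, $\Theta$ is continuous as soon as this composite is, which it is, being the pairing of the continuous maps $\Phi\circ\Val_\bullet\pi_1$ and $\Psi\circ\Val_\bullet\pi_2$. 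So the only real content is existence: that for every $\mu\in\Val_\bullet{\QLV X}$ the pair $\Theta(\mu)\eqdef(\Phi(\Val_\bullet\pi_1(\mu)),\,\Psi(\Val_\bullet\pi_2(\mu)))$ actually lies in $\Pred_\ADN^\bullet X$.

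First I would rewrite the two components by the change of variable formula along $\pi_1\colon\QLV X\to\SV X$ and $\pi_2\colon\QLV X\to\HV X$: with the notation $h^*(Q)=\min_{x\in Q}h(x)$ of Fact~\ref{fact:Phi:cont} and $h_*(C)=\sup_{x\in C}h(x)$ of Lemma~\ref{lemma:Psi}, we get $F^-(h)\eqdef\Phi(\Val_\bullet\pi_1(\mu))(h)=\int_{(Q,C)\in\QLV X}h^*(Q)\,d\mu$ and $F^+(h)\eqdef\Psi(\Val_\bullet\pi_2(\mu))(h)=\int_{(Q,C)\in\QLV X}h_*(C)\,d\mu$. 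That $F^-$ is a superlinear prevision, and subnormalized (resp.\ normalized) when $\bullet$ is ``$\leq1$'' (resp.\ ``$1$''), is already part of the statement about $\Phi$ (Lemma~12.5 of \cite{JGL:projlim:prev}); likewise $F^+$ is a sublinear prevision, and (sub)normalized when needed, by Lemma~\ref{lemma:Psi}. Hence the whole proposition reduces to checking Walley's condition
\[
  F^-(h+h')\ \le\ F^-(h)+F^+(h')\ \le\ F^+(h+h')\qquad(h,h'\in\Lform X).
\]

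The key step will be the pointwise version: for a single quasi-lens $(Q,C)$ on $X$,
\[
  (h+h')^*(Q)\ \le\ h^*(Q)+{h'}_*(C)\ \le\ (h+h')_*(C).
\]
After that, I would integrate with respect to $\mu$, using additivity of the Choquet integral on lower semicontinuous functions together with the fact that $(Q,C)\mapsto h^*(Q)+{h'}_*(C)$ is lower semicontinuous on $\QLV X$ (it is $h^*\circ\pi_1+{h'}_*\circ\pi_2$, a sum of two elements of $\Lform{\QLV X}$). For the left inequality I would use $Q\cap C\ne\emptyset$, $Q\subseteq\upc(Q\cap C)$, and the monotonicity of lower semicontinuous maps for the specialization order to see $h^*(Q)=h^*(Q\cap C)$ and $(h+h')^*(Q)=(h+h')^*(Q\cap C)$, and then $(h+h')^*(Q\cap C)\le h^*(Q\cap C)+{h'}_*(Q\cap C)\le h^*(Q)+{h'}_*(C)$ since $Q\cap C\subseteq C$. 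For the right inequality I would fix $r<h^*(Q)$, note $Q\subseteq h^{-1}(]r,\infty])$, apply quasi-lens condition~(3) to get $C\subseteq cl(h^{-1}(]r,\infty])\cap C)$, use Fact~\ref{fact:sup:cl} to strip the closure, and conclude $r+{h'}_*(C)=\sup_{x\in h^{-1}(]r,\infty])\cap C}\,(r+h'(x))\le\sup_{x\in C}(h+h')(x)=(h+h')_*(C)$; letting $r\uparrow h^*(Q)$ finishes it. This pointwise computation is essentially the one carried out inside the proof of Proposition~\ref{prop:rADP} for $r_\ADN$ applied to $\QLV\delta(Q,C)$, where $\delta\colon X\to\Val_\bullet X$ sends $x$ to $\delta_x$ — but I would do it by hand precisely so as to avoid the local convexity and almost-openness hypotheses that $r_\ADN$ requires, since here $X$ is an arbitrary topological space. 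The main obstacle is therefore bookkeeping rather than ideas: keeping the three quasi-lens axioms straight, and making sure the lower semicontinuity hypotheses of Fact~\ref{fact:sup:cl} and of additivity of integration are genuinely met at each use.
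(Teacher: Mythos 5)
Your proposal is correct and follows essentially the same route as the paper's proof: uniqueness and continuity from the embedding $\langle \pi_1, \pi_2\rangle$ and the continuity of $\Phi$ and $\Psi$, then Walley's condition via exactly the pointwise inequalities $\min_{x \in Q}(h+h')(x) \leq \min_{x \in Q} h(x) + \sup_{x \in C} h'(x) \leq \sup_{x \in C}(h+h')(x)$ on quasi-lenses (using $Q \subseteq \upc (Q \cap C)$, condition~(3) with Fact~\ref{fact:sup:cl}), followed by integration. Your explicit remarks on additivity of the Choquet integral and on avoiding the hypotheses needed for $r_\ADN$ are sound and consistent with what the paper does implicitly.
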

\begin{proof}
  In other words, for every $\xi \in \Val_\bullet {\QLV X}$,
  $\Theta (\xi)$ must be equal to
  $(\Phi (\pi_1 [\xi]), \Psi (\pi_2 [\xi]))$.  It remains to show that
  the latter is a fork; continuity will follow from the continuity of
  $\Phi$ and of $\Psi$ (for the latter, see Lemma~\ref{lemma:Psi}).
  Let $(F^-, F^+) \eqdef (\Phi (\pi_1 [\xi]), \Psi (\pi_2 [\xi]))$.
  We know that $F^- \in \Pred_\DN^\bullet X$, that
  $F^+ \in \Pred_\AN^\bullet$, and it remains to show Walley's
  conditions.

  For every $h \in \Lform X$, we have:
  \begin{align}
    \nonumber
    F^- (h)
    & = \int_{Q \in \SV X} \min_{x \in Q} h (x)
      \,d\pi_1 [\xi] \\
    \label{eq:F-}
    & = \int_{(Q, C) \in \QLV X} \min_{x \in Q} h (x)
      \,d\xi \\
    \nonumber
    F^+ (h)
    & = \int_{C \in \HV X} \sup_{x \in C} h (x)
      \,d\pi_2 [\xi] \\
    \label{eq:F+}
    & = \int_{(Q, C) \in \QLV X} \sup_{x \in C} h (x)
      \,d\xi,
  \end{align}
  using the change of variable formula.
  
  For every $h \in \Lform X$, we have:
  \begin{align*}
    \min_{x \in Q} h (x)
    & \geq \min_{x \in \upc (Q \cap C)} h (x)
    & \text{since $Q \subseteq \upc (Q \cap C)$} \\
    & = \min_{x \in Q \cap C} h (x)
      \geq \min_{x \in Q} h (x),
  \end{align*}
  since $Q \cap C \subseteq C$.  This is a circular list of
  inequalities, hence all its terms are equal. 
  
  For every $h, h' \in \Lform X$, for all $(Q, C) \in \QLV X$, we then
  have:
  \begin{align}
    \nonumber
    \min_{x \in Q} (h (x) + h' (x))
    & = \min_{x \in Q \cap C} (h (x) + h' (x)) \\
    \nonumber
    & \leq \min_{x \in Q \cap C} (h (x) + \sup_{x \in C} h' (x)) \\
    \nonumber
    & = \min_{x \in Q \cap C} h (x) + \sup_{x \in C} h' (x) \\
    \label{eq:min:QLV}
    & = \min_{x \in Q} h (x) + \sup_{x \in C} h' (x).
  \end{align}
  Using (\ref{eq:F-}) and (\ref{eq:F+}), it follows that $F^- (h+h')
  \leq F^- (h) + F^+ (h')$.

  We claim that:
  \begin{align}
    \label{eq:sup:QLV}
    \min_{x \in Q} h (x) + \sup_{x \in C} h' (x)
    & \leq \sup_{x \in C} (h (x) + h' (x)).
  \end{align}
  It suffices to show that for every $t \in \Rp$ such that
  $t < \min_{x \in Q} h (x)$,
  $t + \sup_{x \in C} h' (x) \leq \sup_{x \in C} (h (x) + h' (x))$.
  Since $t < \min_{x \in Q} h (x)$,
  $Q \subseteq h^{-1} (]t, \infty])$.  Since $(Q, C)$ is a quasi-lens,
  $C \subseteq cl (h^{-1} (]t, \infty]) \cap C)$.  Hence
  $\sup_{x \in C} h' (x) \leq \sup_{x \in cl (h^{-1} (]t, \infty])
    \cap C)} h' (x) = \sup_{x \in h^{-1} (]t, \infty]) \cap C} h' (x)$
  (using Fact~\ref{fact:sup:cl}), and since $h (x) \geq t$ for every
  $x \in h^{-1} (]t, \infty]) \cap C$, we obtain that
  $t + \sup_{x \in C} h' (x) \leq \sup_{x \in h^{-1} (]t, \infty])
    \cap C} (h (x) + h' (x)) \leq \sup_{x \in C} (h (x) + h' (x))$.

  Using (\ref{eq:F-}) and (\ref{eq:F+}), it follows from
  (\ref{eq:sup:QLV}) that $F^- (h) + F^+ (h') \leq F^+ (h+h')$.  \qed
\end{proof}

\begin{theorem}
  \label{thm:lambda:nat}
  Let $\bullet$ be nothing, ``$\leq 1$'', or ``$1$''.  For every space
  $X$ such that $\Lform X$ is locally convex and has an almost open
  addition map (and such that $X$ is compact if $\bullet$ is ``$1$''),
  there is a unique continuous map
  $\lambda^\natural_X \colon \Val_\bullet {\QLV X} \to \QLV
  {\Val_\bullet X}$ such that:
  \begin{equation}
    \label{eq:lambda:nat}
    \xymatrix{
      \Val_\bullet {\SV X}
      \ar[d]_{\lambda^\sharp_X}
      & \Val_\bullet {\QLV X}
      \ar[l]_{\Val_\bullet {\pi_1}}
      \ar[d]|{\lambda^\natural_X}
      \ar[r]^{\Val_\bullet {\pi_2}}
      & \Val_\bullet {\HV X}
      \ar[d]^{\lambda^\flat_X}
      \\
      \SV {\Val_\bullet X}
      & \QLV {\Val_\bullet X}
      \ar[l]^{\pi_1}
      \ar[r]_{\pi_2}
      & \HV {\Val_\bullet X}
      }
    \end{equation}
    and that is $s_\ADN^\bullet \circ \Theta$.

    The collection $\lambda^\natural$ of maps $\lambda^\natural_X$ is
    a weak distributive law of $\QLV$ over $\Val_\bullet$ on
    $\Topcat^\natural$.
\end{theorem}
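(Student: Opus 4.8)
The plan is to take $\lambda^\natural_X \eqdef s_\ADN^\bullet \circ \Theta$ and to reduce every claim about it to the facts already proved for $\SV$ in Part~I and for $\HV$ in Part~II, exploiting that $\langle \pi_1, \pi_2\rangle$ is the (injective) inclusion of $\QLV {\Val_\bullet X}$ into $\SV {\Val_\bullet X} \times \HV {\Val_\bullet X}$.

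\emph{Construction, characterization, uniqueness.} First I would note that $\Theta \colon \Val_\bullet {\QLV X} \to \Pred_\ADN^\bullet X$ is continuous (Proposition~\ref{prop:PsiPhi}) and $s_\ADN^\bullet \colon \Pred_\ADN^\bullet X \to \QLV {\Val_\bullet X}$ is continuous (Proposition~\ref{prop:rADP}, which is where the hypotheses on $X$, including compactness when $\bullet$ is ``$1$'', are used), so $\lambda^\natural_X$ is a well-defined continuous map of the required type. To verify (\ref{eq:lambda:nat}), post-compose with $\pi_1$: from the definition of $s_\ADN^\bullet$ in Proposition~\ref{prop:rADP} one has $\pi_1 \circ s_\ADN^\bullet = s_\DN^\bullet \circ \pi_1$, and from the defining diagram of $\Theta$ one has $\pi_1 \circ \Theta = \Phi \circ \Val_\bullet {\pi_1}$, so $\pi_1 \circ \lambda^\natural_X = s_\DN^\bullet \circ \Phi \circ \Val_\bullet {\pi_1} = \lambda^\sharp_X \circ \Val_\bullet {\pi_1}$ by Fact~\ref{fact:lambda}; symmetrically $\pi_2 \circ \lambda^\natural_X = s_\AN^\bullet \circ \Psi \circ \Val_\bullet {\pi_2} = \lambda^\flat_X \circ \Val_\bullet {\pi_2}$, using $\lambda^\flat_X = s_\AN^\bullet \circ \Psi$ (Proposition~\ref{prop:lambda:flat}). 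Uniqueness is immediate, since $\langle \pi_1, \pi_2\rangle$ is injective, so any map into $\QLV {\Val_\bullet X}$ satisfying (\ref{eq:lambda:nat}) agrees with $\lambda^\natural_X$.

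\emph{Weak distributive law.} In Garner's sense this amounts to four statements about $\lambda^\natural$: naturality, compatibility with $\eta^\quasi$ (i.e.\ $\lambda^\natural_X \circ \Val_\bullet {\eta^\quasi_X} = \eta^\quasi_{\Val_\bullet X}$), compatibility with $\mu^\quasi$, and compatibility with the multiplication $\mu$ of $\Val_\bullet$ --- the exact analogues of Lemma~\ref{lemma:lambda:nat}, Lemma~\ref{lemma:lambda:etaS}, Lemma~\ref{lemma:lambda:muS} and Lemma~\ref{lemma:lambda:muT}. In each of the four equations both sides are continuous maps into a $\QLV$-space, so by injectivity of $\langle \pi_1, \pi_2\rangle$ it suffices to check the equation after composing with $\pi_1$ and, separately, with $\pi_2$. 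Composing with $\pi_1$, one moves $\pi_1$ inward through the composite: across $\lambda^\natural$ it becomes $\lambda^\sharp \circ \Val_\bullet {\pi_1}$ by the left square of (\ref{eq:lambda:nat}); across $\eta^\quasi$ and $\mu^\quasi$ it becomes $\eta^\Smyth$ and $\mu^\Smyth$ by (\ref{eq:etaq}) and (\ref{eq:muq}); across $\QLV f$, $\mu$, and applications of $\QLV$ it commutes by naturality of $\pi_1 \colon \QLV \to \SV$ and of $\mu$. After this bookkeeping each equation reduces to the corresponding one for $\SV$, valid on all of $\Topcat$ by the Part~I lemmas. Composing with $\pi_2$ works the same way, using the right square of (\ref{eq:lambda:nat}), (\ref{eq:etaq}), (\ref{eq:muq}), and naturality of $\pi_2 \colon \QLV \to \HV$, reducing the four equations to Lemma~\ref{lemma:lambda:flat:nat}, Lemma~\ref{lemma:lambda:flat:etaS}, Lemma~\ref{lemma:lambda:flat:muS} and Lemma~\ref{lemma:lambda:flat:muT}; the $\mu^\quasi$-law additionally invokes naturality of $\lambda^\flat$ at $\pi_2 \colon \QLV X \to \HV X$, and the $\mu^\quasi$- and $\mu$-laws invoke (\ref{eq:lambda:nat}) at $\QLV X$ and at $\Val_\bullet X$ --- legitimate because $\Topcat^\natural$ is closed under $\QLV$ and $\Val_\bullet$ (Definition~\ref{defn:Top:nat}).

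\emph{Main obstacle.} There is no genuinely hard step: the content is entirely the componentwise reduction above plus routine diagram chasing. The one point that needs care is that every space occurring as an argument of $\lambda^\sharp$, $\lambda^\flat$, $r_\DN$, $s_\DN^\bullet$, $r_\AN$ or $s_\AN^\bullet$ during the reduction --- namely $\SV X$, $\HV X$, $\QLV X$, $\Val_\bullet X$, and the iterate $\Val_\bullet {\Val_\bullet X}$ arising for the $\mu$-law --- must lie in the range of validity of the Part~I, resp.\ Part~II, result being used. The $\SV$ side is unconditional, so the real requirement is that $\HV X$, $\QLV X$ and $\Val_\bullet X$ have locally convex function spaces (with almost open addition where needed); for $\Topcat^\natural$ the category of stably compact spaces this is automatic, since $\SV$, $\HV$, $\QLV$ and $\Val_\bullet$ send (stably) locally compact spaces to locally compact ones and locally compact spaces have locally convex $\Lform$. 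Once these memberships are recorded, the four conditions hold, and $\lambda^\natural$ is a weak distributive law of $\QLV$ over $\Val_\bullet$ on $\Topcat^\natural$.
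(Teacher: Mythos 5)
Your proposal is correct and follows essentially the same route as the paper: define $\lambda^\natural_X \eqdef s_\ADN^\bullet \circ \Theta$, verify (\ref{eq:lambda:nat}) and uniqueness via the injectivity of $\langle \pi_1, \pi_2\rangle$, and obtain naturality and the three weak-distributivity equations by post-composing with $\pi_1$ and $\pi_2$ and reducing, through (\ref{eq:etaq}), (\ref{eq:muq}) and the naturality of $\pi_1$, $\pi_2$ and $\mu$, to the corresponding lemmas for $\lambda^\sharp$ and $\lambda^\flat$. Your closing remark on checking that the intermediate spaces ($\QLV X$, $\Val_\bullet X$, $\HV X$, etc.) stay within the range of validity of the Part~I and Part~II results matches the paper's parenthetical observation that the requirements on $\Topcat^\natural$ subsume those on $\Topcat^\flat$.
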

\begin{proof}
  Uniqueness is because $\langle \pi_1, \pi_2 \rangle$ is the
  inclusion map of $\QLV {\Val_\bullet X}$, which is injective.  Its
  existence as $s_\ADN^\bullet \circ \Theta$ follows from
  Proposition~\ref{prop:rADP} and Proposition~\ref{prop:PsiPhi}.  The
  fact that $\lambda^\natural$ is natural and satisfies the three
  equations of weak distributive laws follows from the similar
  properties of $\lambda^\sharp$ (Theorem~\ref{thm:lambda}) and
  $\lambda^\flat$ (Theorem~\ref{thm:lambda:flat}).  Given any morphism
  $f \colon X \to Y$ in $\Topcat^\natural$ (and noticing that the
  requirements on $\Topcat^\natural$ imply those on $\Topcat^\flat$,
  which enables us to reuse all our results on $\lambda^\flat$), we
  have the following diagram:
  \[
    \xymatrix{
      \Val_\bullet {\SV X}
      \ar[rdd]|(0.7){\Val_\bullet {\SV f}}
      \ar[d]_{\lambda^\sharp_X}
      & \Val_\bullet {\QLV X}
      \ar[rdd]|(0.7){\Val_\bullet {\QLV f}}
      \ar[l]_{\Val_\bullet {\pi_1}}
      \ar[d]|{\lambda^\natural_X}
      \ar[r]^{\Val_\bullet {\pi_2}}
      & \Val_\bullet {\HV X}
      \ar[rdd]|(0.7){\Val_\bullet {\HV f}}
      \ar[d]^{\lambda^\flat_X}
      \\
      \SV {\Val_\bullet X}
      \ar[rdd]|(0.3){\SV {\Val_\bullet f}}
      & \QLV {\Val_\bullet X}
      \ar[rdd]|(0.3){\QLV {\Val_\bullet f}}|!{[d];[rd]}{\hole}
      \ar[l]|{\hole}^{\pi_1}
      \ar[r]|{\hole}_{\pi_2}
      & \HV {\Val_\bullet X}
      \ar[rdd]|(0.3){\HV {\Val_\bullet f}}|!{[d];[rd]}{\hole}
      &
      \\
      &
      \Val_\bullet {\SV Y}
      \ar[d]_{\lambda^\sharp_Y}
      & \Val_\bullet {\QLV Y}
      \ar[l]_{\Val_\bullet {\pi_1}}
      \ar[d]|{\lambda^\natural_Y}
      \ar[r]^{\Val_\bullet {\pi_2}}
      & \Val_\bullet {\HV Y}
      \ar[d]^{\lambda^\flat_Y}
      &
      \\
      &
      \SV {\Val_\bullet Y}
      & \QLV {\Val_\bullet Y}
      \ar[l]^{\pi_1}
      \ar[r]_{\pi_2}
      & \HV {\Val_\bullet Y}
    }
  \]
  The two squares on the $X$ (back) face commute by definition of
  $\lambda^\natural_X$, and similarly for the two squares on the $Y$
  (front) face.  The vertical, slanted rectangles on the left and on
  the right commute by naturality of $\lambda^\sharp$, resp.\ of
  $\lambda^\flat$.  The four horizontal (slanted) rectangles commute
  by naturality of $\pi_1$ and of $\pi_2$.  The naturality of
  $\lambda^\natural$ is the commutativity of the middle vertical,
  slanted rectangle, which follows from the fact that all the other
  rectangles commute, and from the fact that
  $\langle \pi_1, \pi_2 \rangle$ is injective.

  The first law of weak distributivity laws,
  ${\lambda^\natural_X} \circ \Val_\bullet {\eta^\quasi_X} =
  \eta^\quasi_{\Val_\bullet X}$, is proved similarly.
  The following diagram commutes:
  \[
    \xymatrix{
      \Val_\bullet X
      \ar[d]_{\Val_\bullet {\eta^\Smyth_X}}
      &
      \Val_\bullet X
      \ar@{=}[l]
      \ar[d]|{\Val_\bullet {\eta^\quasi_X}}
      \ar@{=}[r]
      &
      \Val_\bullet X
      \ar[d]^{\Val_\bullet {\eta^\Hoare_X}}
      \\
      \Val_\bullet {\SV X}
      \ar[d]_{\lambda^\sharp_X}
      & \Val_\bullet {\QLV X}
      \ar[l]_{\Val_\bullet {\pi_1}}
      \ar[d]|{\lambda^\natural_X}
      \ar[r]^{\Val_\bullet {\pi_2}}
      & \Val_\bullet {\HV X}
      \ar[d]^{\lambda^\flat_X}
      \\
      \SV {\Val_\bullet X}
      & \QLV {\Val_\bullet X}
      \ar[l]^{\pi_1}
      \ar[r]_{\pi_2}
      & \HV {\Val_\bullet X}
      }
    \]
    Indeed, the top two squares commute by definition of $\eta^\quasi$
    (see (\ref{eq:etaq}), Proposition~\ref{prop:QLV:monad}), and the
    bottom two squares commute by definition of $\lambda^\natural$.
    The composition of the leftmost two vertical arrows is
    $\eta^\Smyth_{\Val_\bullet X}$, and the composition of the
    rightmost two vertical arrows is $\eta^\Hoare_{\Val_\bullet X}$,
    by the first equation of weak distributive laws for
    $\lambda^\sharp$ and $\lambda^\flat$ respectively.  Hence the
    composition of the middle two vertical arrows is
    $\eta^\quasi_{\Val_\bullet X}$, by the uniqueness of $\eta^\quasi$
    in (\ref{eq:etaq}).
  
    We turn to the second weak distributivity law
    $\lambda^\natural_X \circ \Val_\bullet \mu^\quasi_X =
    \mu^\quasi_{\Val_\bullet X} \circ \QLV {\lambda^\natural_X} \circ
    \lambda^\natural_{\QLV X}$.  The left-hand side is composition of
    the middle vertical arrows in the following diagram:
  \[
    \xymatrix@C-5pt{
      \Val_\bullet {\SV {\SV X}}
      \ar[d]_{\Val_\bullet {\mu^\Smyth_X}}
      &
      \Val_\bullet {\SV {\QLV X}}
      \ar[l]_{\Val_\bullet {\SV {\pi_1}}}
      &
      \Val_\bullet {\QLV {\QLV X}}
      \ar[l]_{\Val_\bullet {\pi_1}}
      \ar[d]|{\Val_\bullet {\mu^\quasi_X}}
      \ar[r]^{\Val_\bullet {\pi_2}}
      &
      \Val_\bullet {\HV {\QLV X}}
      \ar[r]^{\Val_\bullet {\HV {\pi_2}}}
      &
      \Val_\bullet {\HV {\HV X}}
      \ar[d]^{\Val_\bullet {\mu^\Hoare_X}}
      \\
      \Val_\bullet {\SV X}
      \ar[d]_{\lambda^\sharp_X}
      &&
      \Val_\bullet {\QLV X}
      \ar[d]|{\lambda^\natural_X}
      \ar[ll]|{\Val_\bullet {\pi_1}}
      \ar[rr]|{\Val_\bullet {\pi_2}}
      &&
      \Val_\bullet {\HV X}
      \ar[d]^{\lambda^\flat_X}
      \\
      \SV {\Val_\bullet X}
      && \QLV {\Val_\bullet X}
      \ar[ll]^{\pi_1}
      \ar[rr]_{\pi_2}
      && \HV {\Val_\bullet X}
      }
    \]
    where the top two squares are by (\ref{eq:muq}), and the bottom
    two squares are by definition of $\lambda^\natural$.  The
    right-hand side appears as the composition of middle vertical
    arrows in the following diagram:
    \[
      \xymatrix@C-5pt{
      \Val_\bullet {\SV {\SV X}}
      \ar[dd]_{\lambda^\sharp_{\SV X}}
      &
      \Val_\bullet {\SV {\QLV X}}
      \ar[d]|{\lambda^\sharp_{\QLV X}}
      \ar[l]_{\Val_\bullet {\SV {\pi_1}}}
      \ar@{}[ld]|{\text{nat.\ $\lambda^\sharp$}}
      &
      \Val_\bullet {\QLV {\QLV X}}
      \ar[l]_{\Val_\bullet {\pi_1}}
      \ar@{}[ld]|{\text{def.\ $\lambda^\natural$}}
      \ar@{}[rd]|{\text{def.\ $\lambda^\natural$}}
      \ar[d]|{\lambda^\natural_{\QLV X}}
      \ar[r]^{\Val_\bullet {\pi_2}}
      &
      \Val_\bullet {\HV {\QLV X}}
      \ar[d]|{\lambda^\flat_{\QLV X}}
      \ar[r]^{\Val_\bullet {\HV {\pi_2}}}
      \ar@{}[rd]|{\text{nat.\ $\lambda^\flat$}}
      &
      \Val_\bullet {\HV {\HV X}}
      \ar[dd]^{\lambda^\flat_{\HV X}}
      \\
      & \SV {\Val_\bullet {\QLV X}}
      \ar[ld]|{\SV {\Val_\bullet {\pi_1}}}
      \ar@{}[d]|{\text{nat. $\pi_1$}}
      & \QLV {\Val_\bullet {\QLV X}}
      \ar[l]|{\pi_1}
      \ar[ld]|{\QLV {\Val_\bullet {\pi_1}}}
      \ar[dd]|{\QLV {\lambda^\natural_X}}
      \ar[rd]|{\QLV {\Val_\bullet {\pi_2}}}
      \ar[r]|{\pi_2}
      & \HV {\Val_\bullet {\QLV X}}
      \ar[rd]|{\HV {\Val_\bullet {\pi_1}}}
      \ar@{}[d]|{\text{nat. $\pi_2$}}
      &  
      \\
      \SV {\Val_\bullet {\SV X}}
      \ar[d]_{\SV {\lambda^\sharp_X}}
      &
      \QLV {\Val_\bullet {\SV X}}
      \ar[l]|{\pi_1}
      \ar@{}[ld]|{\text{nat.\ $\pi_1$}}
      \ar[d]|{\QLV {\lambda^\sharp_X}}
      &
      \ar@{}[ld]|{\text{def.\ $\lambda^\natural$}}
      \ar@{}[rd]|{\text{def.\ $\lambda^\natural$}}
      &
      \QLV {\Val_\bullet {\HV  X}}
      \ar[r]|{\pi_2}
      \ar@{}[rd]|{\text{nat.\ $\pi_2$}}
      \ar[d]|{\QLV {\lambda^\flat_X}}
      &
      \HV {\Val_\bullet {\HV X}}
      \ar[d]^{\HV {\lambda^\flat_X}}
      \\
      \SV {\SV {\Val_\bullet X}}
      \ar[d]_{\mu^\Smyth_X}
      & \QLV {\SV {\Val_\bullet X}}
      \ar[l]|{\pi_1}
      &
      \QLV {\QLV {\Val_\bullet X}}
      \ar[l]^{\QLV {\pi_1}}
      \ar@{}[lld]|{\text{(\ref{eq:muq})}}
      \ar@{}[rrd]|{\text{(\ref{eq:muq})}}
      \ar[d]|{\mu^\quasi_{\Val_\bullet X}}
      \ar[r]_{\QLV {\pi_2}}
      & \QLV {\HV {\Val_\bullet X}}
      \ar[r]|{\pi_2}
      &
      \HV {\HV {\Val_\bullet X}}
      \ar[d]^{\mu^\Hoare_X}
      \\
      \SV {\Val_\bullet X}
      &
      & \QLV {\Val_\bullet X}
      \ar[ll]^{\pi_1}
      \ar[rr]_{\pi_2}
      &
      & \HV {\Val_\bullet X}
    }
  \]
  Those two diagrams have the same (composition of) leftmost vertical
  arrows, and similarly for the rightmost vertical arrows, because
  $\lambda^\sharp$ and $\lambda^\flat$ are weak distributivity laws.
  Since $\langle \pi_1, \pi_2 \rangle$ is injective, the (composition
  of) middle vertical arrows are the same, too.

  Finally, we address the third weak distributivity law
  ${\lambda^\natural_X} \circ \mu_{\QLV X} = \QLV {\mu_X} \circ
  \lambda^\natural_{\Val_\bullet X} \circ \Val_\bullet
  {\lambda^\natural_X}$.  As above, this consists in checking that the
  leftmost compositions of vertical arrows in the following two
  diagrams match, so that the middle compositions of vertical arrows
  coincide, too.
  \[
    \xymatrix{
      \Val_\bullet {\Val_\bullet {\SV X}}
      \ar[d]_{\mu_{\SV X}}
      &
      \Val_\bullet {\Val_\bullet {\QLV X}}
      \ar[l]_{\Val_\bullet {\Val_\bullet {\pi_1}}}
      \ar@{}[ld]|{\text{nat. $\mu$}}
      \ar@{}[rd]|{\text{nat. $\mu$}}
      \ar[d]|{\mu_{\QLV X}}
      \ar[r]^{\Val_\bullet {\Val_\bullet {\pi_2}}}
      &
      \Val_\bullet {\Val_\bullet {HV X}}
      \ar[d]^{\mu_{\HV X}}
      \\
      \Val_\bullet {\SV X}
      \ar[d]_{\lambda^\sharp_X}
      &
      \Val_\bullet {\QLV X}
      \ar[d]|{\lambda^\natural_X}
      \ar[l]|{\Val_\bullet {\pi_1}}
      \ar@{}[ld]|{\text{def.\ $\lambda^\natural$}}
      \ar@{}[rd]|{\text{def.\ $\lambda^\natural$}}
      \ar[r]|{\Val_\bullet {\pi_2}}
      &
      \Val_\bullet {\HV X}
      \ar[d]^{\lambda^\flat_X}
      \\
      \SV {\Val_\bullet X}
      & \QLV {\Val_\bullet X}
      \ar[l]^{\pi_1}
      \ar[r]_{\pi_2}
      & \HV {\Val_\bullet X}
    }
  \]
  Here is the second diagram:
  \[
    \xymatrix{
      \Val_\bullet {\Val_\bullet {\SV X}}
      \ar[d]_{\Val_\bullet {\lambda^\sharp_X}}
      &
      \Val_\bullet {\Val_\bullet {\QLV X}}
      \ar[l]_{\Val_\bullet {\Val_\bullet {\pi_1}}}
      \ar@{}[ld]|{\text{def. $\lambda^\natural$}}
      \ar@{}[rd]|{\text{def. $\lambda^\natural$}}
      \ar[d]|{\Val_\bullet {\lambda^\natural_X}}
      \ar[r]^{\Val_\bullet {\Val_\bullet {\pi_2}}}
      &
      \Val_\bullet {\Val_\bullet {HV X}}
      \ar[d]^{\Val_\bullet {\lambda^\flat_X}}
      \\
      \Val_\bullet {\SV {\Val_\bullet X}}
      \ar[d]^{\lambda^\sharp_X}
      &
      \Val_\bullet {\QLV {\Val_\bullet X}}
      \ar[l]_{\Val_\bullet {\pi_1}}
      \ar@{}[ld]|{\text{def. $\lambda^\natural$}}
      \ar@{}[rd]|{\text{def. $\lambda^\natural$}}
      \ar[d]|{\lambda^\natural_X}
      \ar[r]^{\Val_\bullet {\pi_2}}
      &
      \Val_\bullet {\HV {\Val_\bullet X}}
      \ar[d]^{\lambda^\flat_X}
      \\
      \SV {\Val_\bullet {\Val_\bullet X}}
      \ar[d]_{\SV {\mu_X}}
      &
      \QLV {\Val_\bullet {\Val_\bullet X}}
      \ar[l]|{\pi_1}
      \ar@{}[ld]|{\text{nat. $\pi_1$}}
      \ar@{}[rd]|{\text{nat. $\pi_2$}}
      \ar[d]|{\QLV {\mu_X}}
      \ar[r]|{\pi_2}
      &
      \HV {\Val_\bullet {\Val_\bullet X}}
      \ar[d]^{\HV {\mu_X}}
      \\
      \SV {\Val_\bullet X}
      &
      \QLV {\Val_\bullet X}
      \ar[l]^{\pi_1}
      \ar[r]_{\pi_2}
      &
      \HV {\Val_\bullet X}
    }
  \]
  \qed
\end{proof}

\begin{remark}
  \label{rem:PhiPsi}
  Using Remark~\ref{rem:Phi} and Remark~\ref{rem:Psi}, in the special
  case where $\mu \eqdef \sum_{i=1}^n a_i \delta_{(Q_i, C_i)}$, where
  $n \geq 1$, and where $Q_i \eqdef \upc E_i$, $C_i \eqdef \dc E_i$,
  where each set $E_i$ is finite and non-empty, then
  $\lambda^\natural_X (\mu) = (\upc conv (\mathcal E), cl (conv
  (\mathcal E)))$, where
  $\mathcal E \eqdef \{\sum_{i=1}^n a_i \delta_{x_i} \mid x_1 \in E_1,
  \cdots, x_n \in E_n\}$.
\end{remark}

\begin{remark}
  \label{rem:lambda:nat}
  We recall that if $X$ is weakly Hausdorff or quasi-Polish, then
  $\QLV X \cong \Plotkinn X$, and that those isomorphisms allow us to
  transport the $\QLV$ monad structure over to any full subcategory of
  $\Topcat$ consisting of weakly Hausdorff or quasi-Polish spaces.
  Theorem~\ref{thm:lambda:nat} then allows us to obtain a weak
  distributive law of $\Plotkinn$ over $\Val_\bullet$ on any full
  subcategory of $\Topcat^\natural$ consisting of weakly Hausdorff or
  quasi-Polish spaces---in particular on the category of stably
  compact spaces, which are all weakly Hausdorff.
\end{remark}

\begin{remark}
  \label{rem:PhiPsi:lens}
  Combining Remark~\ref{rem:PhiPsi} and Remark~\ref{rem:lambda:nat},
  in the special case where
  $\mu \eqdef \sum_{i=1}^n a_i \delta_{L_i}$, where $n \geq 1$, and
  where $L_i \eqdef \upc E_i \cap \dc E_i$, where each set $E_i$ is
  finite and non-empty, then
  $\lambda^\natural_X (\mu) = \upc conv (\mathcal E) \cap cl (conv
  (\mathcal E))$, where
  $\mathcal E \eqdef \{\sum_{i=1}^n a_i \delta_{x_i} \mid x_1 \in E_1,
  \cdots, x_n \in E_n\}$.
\end{remark}

\section{The associated weak composite monad}
\label{sec:assoc-weak-comp}

We proceed as with $\Pred_\DN^\bullet$ and $\Pred_\AN^\bullet$, and in
fact we reduce to the cases of those monads.  We start with the
idempotent:
\begin{equation}
  \label{eq:idemp:nat}
  \xymatrix{
    \QLV {\Val_\bullet X} \ar[r]^{\eta_{\QLV {\Val_\bullet X}}}
    & \Val_\bullet {\QLV {\Val_\bullet X}}
    \ar[r]^{\lambda^\natural_{\Val_\bullet X}}
    & \QLV {\Val_\bullet {\Val_\bullet X}}
    \ar[r]^{\QLV {\mu_X}}
    & \QLV {\Val_\bullet X}
  }
\end{equation}
whose splitting will define the action of the functor part of the
associated weak composite monad on $X$.
\begin{lemma}
  \label{lemma:idemp:split:nat}
  For every object $X$ of $\Topcat^\natural$, a splitting of
  (\ref{eq:idemp:nat}) is
  $\xymatrix{\QLV {\Val_\bullet X} \ar[r]^{r_\ADN} &
    \Pred_\ADN^\bullet X \ar[r]^{s_\ADN^\bullet} & \QLV {\Val_\bullet
      X}}$.
\end{lemma}
\begin{proof}
  Composing (\ref{eq:idemp:nat}) with $\pi_1$, resp.\ with $\pi_2$ on
  the right (of the diagram---namely computing $\pi_1 \circ f$ resp.\
  $\pi_2 \circ f$ where $f$ is (\ref{eq:idemp:nat})), and simplifying
  using the naturality of $\pi_1$ (resp., $\pi_2$),
  (\ref{eq:lambda:nat}) (see Theorem~\ref{thm:lambda:nat}) and
  (\ref{eq:etaADN}) (see Proposition~\ref{prop:fork:monad}), we obtain
  (\ref{eq:idemp}), resp.\ (\ref{eq:idemp:flat})---the corresponding
  idempotents in the $\DN$ and $\AN$ cases---composed with $\pi_1$,
  resp.\ $\pi_2$.

  By Lemma~\ref{lemma:idemp:split} (resp.,
  Lemma~\ref{lemma:idemp:split:flat}), this is equal to
  $s_\DN^\bullet \circ r_\DN \circ \pi_1$ (resp.,
  $s_\AN^\bullet \circ r_\AN \circ \pi_2$), and that, in turn, is
  equal to $\pi_1 \circ s_\ADN^\bullet \circ r_\ADN$ (resp.,
  $\pi_2 \circ s_\ADN^\bullet \circ r_\ADN$), by
  Proposition~\ref{prop:rADP}.  We conclude since
  $\langle \pi_1, \pi_2 \rangle$ is an inclusion map, hence is
  monotonic.  \qed
\end{proof}
Generalizing (\ref{eq:idemp:nat}), there is an idempotent:
\begin{equation}
  \label{eq:idemp:alpha:nat}
  \xymatrix{
    \QLV X \ar[r]^{\eta_{\QLV X}}
    & \Val_\bullet {\QLV X}
    \ar[r]^{\lambda^\natural_X}
    & \QLV {\Val_\bullet X}
    \ar[r]^{\QLV \alpha}
    & \QLV X
  }
\end{equation}
for every $\Val_\bullet$-algebra $\alpha$.  As before, we will only
need to compute this splitting for one additional
$\Val_\bullet$-algebra.  We write
${\QLV}^{cvx} {\Pred_\ADN^\bullet X}$ for the subspace of
$\QLV {\Pred_\ADN^\bullet X}$ consisting of \emph{convex}
quasi-lenses, namely quasi-lenses $(\mathcal Q, \mathcal C)$ such that
both $\mathcal Q$ and $\mathcal C$ are convex.
\begin{lemma}
  \label{lemma:idemp:P:nat}
  For every topological space $X$, there is a $\Val_\bullet$-algebra
  $\alpha_\ADN^\bullet \colon \Val_\bullet {\Pred_\ADN^\bullet X} \to
  \Pred_\ADN^\bullet X$ defined by
  $\alpha_\ADN^\bullet (\xi) \eqdef (h \in \Lform X \mapsto
  \alpha_\DN^\bullet (\pi_1 [\xi]), h \in \Lform X \mapsto
  \alpha_\AN^\bullet (\pi_2 [\xi]))$ for every
  $\xi \in \Val_\bullet {\Pred_\ADN^\bullet X}$.

  If $X$ is an object of $\Topcat^\natural$, then a splitting of the
  idempotent (\ref{eq:idemp:alpha:nat}) where
  $\alpha \eqdef \alpha_\ADN^\bullet$ is:
  \begin{align*}
    \xymatrix{
    \QLV {\Pred_\ADN^\bullet X} \ar[r]
    & {\QLV}^{cvx} {\Pred_\ADN^\bullet X} \ar[r]
    & \QLV {\Pred_\ADN^\bullet X,}}
  \end{align*}
  where the arrow on the right is the inclusion map, and the arrow on
  the left is the corestriction of (\ref{eq:idemp:alpha:nat}), and
  maps every $(Q, C) \in {\QLV}^{cvx} {\Pred_\ADN^\bullet X}$ to
  itself.
\end{lemma}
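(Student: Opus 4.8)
The plan is to treat the two assertions by the device used throughout Part~III: compose with the projections $\pi_1$ and $\pi_2$ and exploit that $\langle\pi_1,\pi_2\rangle$ is a topological embedding, hence a monomorphism, so that the $\QLV$/fork statements collapse to the already-proved $\DN$ and $\AN$ ones.

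\emph{That $\alpha_\ADN^\bullet$ is a $\Val_\bullet$-algebra.} Writing $\alpha_\ADN^\bullet(\xi)=(F^-,F^+)$, the change of variable formula gives $F^\pm(h)=\int_{(G^-,G^+)\in\Pred_\ADN^\bullet X}G^\pm(h)\,d\xi$, so Walley's inequalities $G^-(h+h')\leq G^-(h)+G^+(h')\leq G^+(h+h')$, valid for every fork $(G^-,G^+)$, integrate against $\xi$ (by monotonicity and additivity of the integral) to the same inequalities for $(F^-,F^+)$; hence $\alpha_\ADN^\bullet(\xi)$ is a fork, and it is (sub)normalized when $\bullet$ is ``$\leq1$'' or ``$1$'' because $\alpha_\DN^\bullet$ and $\alpha_\AN^\bullet$ produce (sub)normalized previsions. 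Continuity then follows from that of $\alpha_\DN^\bullet$, $\alpha_\AN^\bullet$ and the embedding $\langle\pi_1,\pi_2\rangle$. For the algebra laws, one uses $\pi_1\circ\alpha_\ADN^\bullet=\alpha_\DN^\bullet\circ\Val_\bullet{\pi_1}$ and $\pi_2\circ\alpha_\ADN^\bullet=\alpha_\AN^\bullet\circ\Val_\bullet{\pi_2}$, which hold by definition, together with the naturality of $\eta$ and $\mu$: composing $\alpha_\ADN^\bullet\circ\eta_{\Pred_\ADN^\bullet X}$ and $\alpha_\ADN^\bullet\circ\mu_{\Pred_\ADN^\bullet X}$, resp.\ $\alpha_\ADN^\bullet\circ\Val_\bullet{\alpha_\ADN^\bullet}$, with $\pi_1$ and with $\pi_2$ reduces the identities $\alpha_\ADN^\bullet\circ\eta_{\Pred_\ADN^\bullet X}=\mathrm{id}$ and $\alpha_\ADN^\bullet\circ\mu_{\Pred_\ADN^\bullet X}=\alpha_\ADN^\bullet\circ\Val_\bullet{\alpha_\ADN^\bullet}$ to the corresponding laws for $\alpha_\DN^\bullet$ (Lemma~\ref{lemma:idemp:P}) and $\alpha_\AN^\bullet$ (Lemma~\ref{lemma:idemp:P:flat}), after which the monomorphism $\langle\pi_1,\pi_2\rangle$ cancels.

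\emph{The splitting.} Write $e$ for the idempotent (\ref{eq:idemp:alpha:nat}) with carrier $\Pred_\ADN^\bullet X$ and $\alpha\eqdef\alpha_\ADN^\bullet$; this is legitimate because $\Pred_\ADN^\bullet X$ is an object of $\Topcat^\natural$, being a retract of $\QLV{\Val_\bullet X}$ through $r_\ADN$ and $s_\ADN^\bullet$ (Proposition~\ref{prop:rADP}), so $\lambda^\natural_{\Pred_\ADN^\bullet X}$ is defined. Using naturality of $\pi_1$ and $\eta$ and the defining square (\ref{eq:lambda:nat}), one obtains $\pi_1\circ e=e_1\circ\pi_1$ and $\pi_2\circ e=e_2\circ\pi_2$, where $e_1$ and $e_2$ are the idempotents (\ref{eq:idemp:alpha}) and (\ref{eq:idemp:alpha:flat}) with carrier $\Pred_\ADN^\bullet X$ and algebra $\alpha_\ADN^\bullet$. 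Although $e_1$ is not literally the idempotent treated in Lemma~\ref{lemma:idemp:P}, the proof of that lemma applies to it verbatim: the only features of $\alpha_\DN^\bullet$ it uses are that $\alpha_\DN^\bullet$ is linear in its argument — so that the Minimax Theorem~3.3 of \cite{JGL-minimax17} applies to the map $(\vec a,F)\mapsto\sum_i a_i\psi_i(F)$, where the $\psi_i\in\Lform{\Pred_\ADN^\bullet X}$ describe ${(\alpha_\ADN^\bullet)}^{-1}$ of subbasic open sets of $\Pred_\ADN^\bullet X$ — and that those inverse images have the shape handled by Lemma~\ref{lemma:lambda:inv:func}; membership $\Pred_\ADN^\bullet X\in\Topcat^\natural$ supplies the remaining standing hypotheses, and likewise for the $\HV$ side. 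Hence $e_1$ (resp.\ $e_2$) restricts to the identity on $\SV^{cvx}{\Pred_\ADN^\bullet X}$ (resp.\ $\HV^{cvx}{\Pred_\ADN^\bullet X}$) with that set as its image. Consequently, for any quasi-lens $(Q,C)\in\QLV{\Pred_\ADN^\bullet X}$ one has $\pi_1(e(Q,C))=e_1(Q)$ and $\pi_2(e(Q,C))=e_2(C)$, both convex, so $e$ takes its values in ${\QLV}^{cvx}{\Pred_\ADN^\bullet X}$; and if $(Q,C)$ is already a convex quasi-lens, then $\pi_1(e(Q,C))=Q$ and $\pi_2(e(Q,C))=C$, whence $e(Q,C)=(Q,C)$ because $\langle\pi_1,\pi_2\rangle$ is monic. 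So the image of $e$ is exactly ${\QLV}^{cvx}{\Pred_\ADN^\bullet X}$ and $e=\iota\circ\pi$ with $\iota$ the inclusion and $\pi$ the corestriction of $e$ (which thus sends every convex quasi-lens to itself), with $\pi\circ\iota=\mathrm{id}$, as claimed. The main obstacle is precisely this transfer of the Minimax-based arguments of Lemma~\ref{lemma:idemp:P} and Lemma~\ref{lemma:idemp:P:flat} from $\alpha_\DN^\bullet$ and $\alpha_\AN^\bullet$ to the algebra $\alpha_\ADN^\bullet$ on $\Pred_\ADN^\bullet X$, for which one must check that $\Pred_\ADN^\bullet X$ inherits local convexity, almost-open addition, and (when $\bullet$ is ``$1$'') compactness from being a retract of $\QLV{\Val_\bullet X}$.
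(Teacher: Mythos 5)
Your proposal is correct, and its first half (Walley's conditions verified under the integral, continuity and the algebra laws transferred through the embedding $\langle \pi_1, \pi_2\rangle$ from $\alpha_\DN^\bullet$ and $\alpha_\AN^\bullet$) is the same argument as the paper's. For the splitting, however, you take a genuinely different route at the decisive step. The paper projects two levels down: it intertwines (\ref{eq:idemp:alpha:nat}) with the idempotents already treated in Lemma~\ref{lemma:idemp:P} and Lemma~\ref{lemma:idemp:P:flat}, at the carriers $\Pred_\DN^\bullet X$ and $\Pred_\AN^\bullet X$, via $\SV {\pi_1} \circ \pi_1$ and $\HV {\pi_2} \circ \pi_2$ and the identities $\pi_1 \circ \alpha_\ADN^\bullet = \alpha_\DN^\bullet \circ \Val_\bullet {\pi_1}$, $\pi_2 \circ \alpha_\ADN^\bullet = \alpha_\AN^\bullet \circ \Val_\bullet {\pi_2}$, and then assembles the splitting map from $r^\sharp$ and $r^\flat$. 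You stop one level higher: you keep the idempotents $e_1$ on $\SV {\Pred_\ADN^\bullet X}$ and $e_2$ on $\HV {\Pred_\ADN^\bullet X}$ built from $\alpha_\ADN^\bullet$ itself, and re-run the proofs of Lemmas~\ref{lemma:idemp:P} and~\ref{lemma:idemp:P:flat} for this algebra, isolating exactly what those proofs use: linearity of the algebra map in its valuation argument, lower semicontinuity and linearity (on convex sets of forks) of the maps $\psi_i$ describing ${(\alpha_\ADN^\bullet)}^{-1}$ of the subbasic opens $[h>1]^\pm$, Lemma~\ref{lemma:lambda:inv:func} and Proposition~\ref{prop:lambda:flat} at the space $\Pred_\ADN^\bullet X$, and the standing hypotheses on that space. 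This buys something real: $\SV {\pi_1}$ and $\HV {\pi_2}$ are not jointly injective on $\QLV {\Pred_\ADN^\bullet X}$, so convexity of the components of $e (\mathcal Q, \mathcal C)$ and the fixed-point property on convex quasi-lenses cannot be read off from the bottom-level idempotents alone; your middle-level analysis supplies precisely these two facts, after which monicity of $\langle \pi_1, \pi_2 \rangle$ at the quasi-lens level finishes the argument, whereas the paper's shortcut costs only a citation but leaves that transfer implicit in its definition of $r^\natural$ from $r^\sharp$ and $r^\flat$. One small simplification to your closing remark: there is nothing to ``check'' about $\Pred_\ADN^\bullet X$ inheriting local convexity, almost-open addition (and compactness when $\bullet$ is ``$1$'', for the intended example of stably compact spaces): $\Pred_\ADN^\bullet X$ is a retract of $\QLV {\Val_\bullet X}$ by Proposition~\ref{prop:rADP}, and Definition~\ref{defn:Top:nat} makes $\Topcat^\natural$ closed under $\QLV$, $\Val_\bullet$ and retracts, so $\Pred_\ADN^\bullet X$ is an object of $\Topcat^\natural$ and has those properties by fiat; this is also what legitimizes $\lambda^\natural_{\Pred_\ADN^\bullet X}$, $\lambda^\sharp_{\Pred_\ADN^\bullet X}$ and $\lambda^\flat_{\Pred_\ADN^\bullet X}$ in both your argument and the paper's.
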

\begin{proof}
  For every $\xi \in \Val_\bullet {\Pred_\ADN^\bullet X}$, let
  $(F_0^-, F_0^+) \eqdef \alpha_\ADN^\bullet (\xi)$.  For every
  $h \in \Lform X$,
  $F_0^- (h) = \alpha_\DN^\bullet (\pi_1 [\xi]) (h) = \int_{F \in
    \Pred_\DN^\bullet X} F (h) \,d\pi_1 [\xi] = \int_{(F^-, F^+) \in
    \Pred_\ADN^\bullet X} F^- (h) \,d\xi$, by the change of variable
  formula, and similarly
  $F_0^+ (h) = \int_{(F^-, F^+) \in \Pred_\ADN^\bullet X} F^+ (h)
  \,d\xi$.  Hence, for all $h, h' \in \Lform X$,
  \begin{align*}
    F_0^- (h+h')
    & = \int_{(F^-, F^+) \in \Pred_\ADN^\bullet X} F^- (h+h') \,d\xi
    \\
    & \leq \int_{(F^-, F^+) \in \Pred_\ADN^\bullet X} (F^-
      (h)+F^+(h')) \,d\xi \\
    & = \int_{(F^-, F^+) \in \Pred_\ADN^\bullet X} F^- (h) \,d\xi
      + \int_{(F^-, F^+) \in \Pred_\ADN^\bullet X} F^+ (h') \,d\xi
    \\
    & = F_0^- (h) + F_0^+ (h'),
  \end{align*}
  and:
  \begin{align*}
    F_0^- (h) + F_0^+ (h')
    & = \int_{(F^-, F^+) \in \Pred_\ADN^\bullet X} (F^-
      (h)+F^+(h')) \,d\xi
    \\
    & \leq \int_{(F^-, F^+) \in \Pred_\ADN^\bullet X} F^+ (h+h')
      \,d\xi \\
    & = F_0^+ (h+h'),
  \end{align*}
  so $(F_0^-, F_0^+) \in \Pred_\ADN^\bullet X$.  The map
  $\alpha_\ADN^\bullet$ is continuous, since the inverse image of the
  subbasic open set $[h > r]^-$ is
  $[((F^+, F^-) \mapsto F^- (h)) > r]$, and the inverse image of the
  subbasic open set $[h > r]^+$ is
  $[((F^+, F^-) \mapsto F^+ (h)) > r]$.

  The map $\alpha_\ADN^\bullet$ is defined so that
  $\pi_1 \circ \alpha_\ADN^\bullet = \alpha_\DN^\bullet \circ
  \Val_\bullet {\pi_1}$ and
  $\pi_2 \circ \alpha_\ADN^\bullet = \alpha_\AN^\bullet \circ
  \Val_\bullet {\pi_2}$.  The fact that $\alpha_\ADN^\bullet$ is a
  $\Val_\bullet$-algebra then follows from the fact that
  $\alpha_\DN^\bullet$ and $\alpha_\AN^\bullet$ are.

  The idempotent (\ref{eq:idemp:alpha:nat}) with
  $\alpha \eqdef \alpha_\ADN^\bullet$ appears as the top row of the
  following diagram.
  \[
    \xymatrix@C+15pt{
      \QLV {\Pred_\ADN^\bullet X} \ar[r]^{\eta_{\QLV X}}
      \ar[d]_{\pi_1}
      \ar@{}[rd]|{\text{nat.\ $\pi_1$}}
      & \Val_\bullet {\QLV {\Pred_\ADN^\bullet X}}
      \ar[r]^{\lambda^\natural_{\Pred_\ADN^\bullet X}}
      \ar[d]|{\Val_\bullet {\pi_1}}
      \ar@{}[rd]|{(\ref{eq:lambda:nat})}
      & \QLV {\Val_\bullet {\Pred_\ADN^\bullet X}}
      \ar[r]^{\QLV {\alpha_\ADN^\bullet}}
      \ar[d]|{\pi_1}
      \ar@{}[rd]|{\text{nat.\ $\pi_1$}}
      & \QLV {\Pred_\ADN^\bullet X}
      \ar[d]^{\pi_1}
      \\
      \SV {\Pred_\ADN^\bullet X}
      \ar[r]|{\eta_{\SV {\Pred_\ADN^\bullet X}}}
      \ar[d]_{\SV {\pi_1}}
      \ar@{}[rd]|{\text{nat.\ $\eta$}}
      &
      \Val_\bullet {\SV {\Pred_\ADN^\bullet X}}
      \ar[r]|{\lambda^\sharp_{\Pred_\ADN^\bullet X}}
      \ar[d]|{\Val_\bullet {\SV {\pi_1}}}
      \ar@{}[rd]|{\text{nat.\ $\lambda^\sharp$}}
      &
      \SV {\Val_\bullet {\Pred_\ADN^\bullet X}}
      \ar[r]|{\SV {\alpha_\ADN^\bullet}}
      \ar[d]|{\SV {\Val_\bullet {\pi_1}}}
      \ar@{}[rd]|{\text{def.\ $\alpha_\ADN^\bullet$}}
      &
      \SV {\Pred_\ADN^\bullet X}
      \ar[d]^{\SV {\pi_1}}
      \\
      \SV {\Pred_\DN^\bullet X}
      \ar[r]_{\eta_{\SV {\Pred_\DN^\bullet X}}}
      &
      \Val_\bullet {\SV {\Pred_\DN^\bullet X}}
      \ar[r]_{\lambda^\sharp_{\Pred_\DN^\bullet X}}
      &
      \SV {\Val_\bullet {\Pred_\DN^\bullet X}}
      \ar[r]_{\SV {\alpha_\DN^\bullet}}
      &
      \SV {\Pred_\DN^\bullet X}
    }
  \]
  The bottom row is the idempotent (\ref{eq:idemp:alpha}) of
  Lemma~\ref{lemma:idemp:P} (with $\alpha \eqdef \alpha_\DN^\bullet$),
  which splits as
  $\xymatrix{\SV {\Pred_\DN^\bullet X} \ar[r]^{r^\sharp} & \SV^{cvx}
    {\Pred_\DN^\bullet X} \ar[r]^{s^\sharp} & \SV {\Pred_\DN^\bullet
      X}}$, where $s^\sharp$ is the inclusion map and $r^\sharp$ maps
  every $Q \in \SV^{cvx} {\Pred_\DN^\bullet X}$ to itself.

  Similarly, (\ref{eq:idemp:alpha:nat}) also appears as the top row of
  the following diagram.
  \[
    \xymatrix@C+15pt{
      \QLV {\Pred_\ADN^\bullet X} \ar[r]^{\eta_{\QLV X}}
      \ar[d]_{\pi_2}
      \ar@{}[rd]|{\text{nat.\ $\pi_2$}}
      & \Val_\bullet {\QLV {\Pred_\ADN^\bullet X}}
      \ar[r]^{\lambda^\natural_{\Pred_\ADN^\bullet X}}
      \ar[d]|{\Val_\bullet {\pi_2}}
      \ar@{}[rd]|{(\ref{eq:lambda:nat})}
      & \QLV {\Val_\bullet {\Pred_\ADN^\bullet X}}
      \ar[r]^{\QLV {\alpha_\ADN^\bullet}}
      \ar[d]|{\pi_2}
      \ar@{}[rd]|{\text{nat.\ $\pi_2$}}
      & \QLV {\Pred_\ADN^\bullet X}
      \ar[d]^{\pi_2}
      \\
      \HV {\Pred_\ADN^\bullet X}
      \ar[r]|{\eta_{\HV {\Pred_\ADN^\bullet X}}}
      \ar[d]_{\HV {\pi_2}}
      \ar@{}[rd]|{\text{nat.\ $\eta$}}
      &
      \Val_\bullet {\HV {\Pred_\ADN^\bullet X}}
      \ar[r]|{\lambda^\flat_{\Pred_\ADN^\bullet X}}
      \ar[d]|{\Val_\bullet {\HV {\pi_2}}}
      \ar@{}[rd]|{\text{nat.\ $\lambda^\flat$}}
      &
      \HV {\Val_\bullet {\Pred_\ADN^\bullet X}}
      \ar[r]|{\HV {\alpha_\ADN^\bullet}}
      \ar[d]|{\HV {\Val_\bullet {\pi_2}}}
      \ar@{}[rd]|{\text{def.\ $\alpha_\ADN^\bullet$}}
      &
      \HV {\Pred_\ADN^\bullet X}
      \ar[d]^{\HV {\pi_2}}
      \\
      \HV {\Pred_\AN^\bullet X}
      \ar[r]_{\eta_{\HV {\Pred_\AN^\bullet X}}}
      &
      \Val_\bullet {\HV {\Pred_\AN^\bullet X}}
      \ar[r]_{\lambda^\flat_{\Pred_\AN^\bullet X}}
      &
      \HV {\Val_\bullet {\Pred_\AN^\bullet X}}
      \ar[r]_{\HV {\alpha_\AN^\bullet}}
      &
      \HV {\Pred_\AN^\bullet X}
    }
  \]
  The bottom row is the idempotent (\ref{eq:idemp:alpha:flat}) of
  Lemma~\ref{lemma:idemp:P:flat} (with
  $\alpha \eqdef \alpha_\AN^\bullet$), which splits as
  $\xymatrix{\HV {\Pred_\AN^\bullet X} \ar[r]^{r^\flat} & \HV^{cvx}
    {\Pred_\AN^\bullet X} \ar[r]^{s^\flat} & \HV {\Pred_\AN^\bullet
      X}}$, where $s^\flat$ is the inclusion map and $r^\flat$ maps
  every $C \in \HV^{cvx} {\Pred_\AN^\bullet X}$ to itself.

  Let $r^\natural$ map every
  $(\mathcal Q, \mathcal C) \in \QLV {\Pred_\ADN^\bullet X}$ to
  $(r^\sharp (\SV {\pi_1} (\mathcal Q)), r^\flat (\HV {\pi_2}
  (\mathcal C)))$.  This is a pair in
  $\SV^{cvx} {\Pred_\ADN^\bullet X} \times \HV^{cvx}
  {\Pred_\ADN^\bullet X}$.  We need to show that this is also a
  quasi-lens, and that $r^\natural$ is continuous.  We observe that
  $\pi_1 \circ r^\natural = r^\sharp \circ \SV {\pi_1} \circ \pi_1$ is
  the corestriction of the idempotent (\ref{eq:idemp:alpha}) to
  $\SV^{cvx} {\Pred_\ADN^\bullet X}$, and
  $\pi_2 \circ r^\natural = r^\flat \circ \HV {\pi_1} \circ \pi_1$ is
  the corestriction of the idempotent (\ref{eq:idemp:alpha:flat}) to
  $\HV^{cvx} {\Pred_\ADN^\bullet X}$, so $r^\sharp$ is the
  corestriction of the idempotent (\ref{eq:idemp:alpha:nat}) to
  ${\QLV}^{cvx} {\Pred_\ADN^\bullet X}$.  In particular, $r^\natural$
  takes its values in (convex) quasi-lenses and is continuous.

  Since $r^\sharp$ maps convex elements to themselves, and similarly
  for $r^\flat$, $r^\natural$ maps convex quasi-lenses to themselves.

  We also define
  $s^\natural \colon {\QLV}^{cvx} {\Pred_\ADN^\bullet X} \to \QLV
  {\Pred_\ADN^\bullet X}$ as the inclusion map.  Since $r^\natural$
  maps convex quasi-lenses to themselves,
  $r^\natural \circ s^\natural = \identity {{\QLV}^{cvx}
    {\Pred_\ADN^\bullet X}}$.  Also, by definition of $r^\natural$ as
  the corestriction of the idempotent (\ref{eq:idemp:alpha:nat}) to
  ${\QLV}^{cvx} {\Pred_\ADN^\bullet X}$, (\ref{eq:idemp:alpha:nat}) is
  equal to $s^\natural \circ r^\natural$.  \qed
\end{proof}

\begin{lemma}
  \label{lemma:Sbar:nat}
  For $S \eqdef \QLV$, $T \eqdef \Val_\bullet$,
  $\lambda \eqdef \lambda^\natural$, the monad $\overline S$ on the
  category of $\Val_\bullet$-algebras over $\Topcat^\natural$ has the
  following properties:
  \begin{enumerate}
  \item For every free $\Val_\bullet$-algebra
    $\mu_X \colon \Val_\bullet {\Val_\bullet X} \to \Val_\bullet X$,
    $\overline S \mu_X \colon \Val_\bullet {\Pred_\ADN^\bullet X} \to
    \Pred_\ADN^\bullet X$ is $\alpha_\ADN^\bullet$ (see
    Lemma~\ref{lemma:idemp:P:nat}).
  \item The unit $\eta^{\overline S}_{\mu_X}$ evaluated at the free
    $\Val_\bullet$-algebra
    $\mu_X \colon \Val_\bullet {\Val_\bullet X} \to \Val_\bullet X$ is
    $r_\ADN \circ \eta^\quasi_{\Val_\bullet X}$.
  \item The multiplication $\mu^{\overline S}_{\mu_X}$ evaluated at
    the free $\Val_\bullet$-algebra
    $\mu_X \colon \Val_\bullet {\Val_\bullet X} \to \Val_\bullet X$ is
    the function that maps every
    $(Q, C) \in {\QLV}^{cvx} {\Pred_\ADN^\bullet X}$ to
    $(h \in \Lform X \mapsto \min_{(F^-, F^+) \in Q} F^- (h), h \in
    \Lform X \mapsto \sup_{(F^+, F^-) \in C} F^+ (h)) \in
    \Pred_\ADN^\bullet X$.
    % ; the inverse image of $[h > 1]$ by that
    % function is
    % $\Diamond {[h > 1]} \cap \HV^{cvx} {\Pred_\AN^\bullet X}$, for
    % every $h \in \Lform X$.
  \item The counit $\epsilon^T$ of the adjunction $F^T \dashv U^T$
    between $\Topcat$ and the category of $T$-algebras
    ($\Val_\bullet$-algebras) is given at each algebra $\beta \colon
    \Val_\bullet X \to X$ as $\beta$ itself.
    % epsilon : FU -> id
  \end{enumerate}
\end{lemma}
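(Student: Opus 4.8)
The plan is to reduce each of the four items to its already-established $\DN$ and $\AN$ analogues (Lemma~\ref{lemma:Sbar} and Lemma~\ref{lemma:Sbar:flat}), using repeatedly that $\langle\pi_1,\pi_2\rangle$ is an injective inclusion, both as $\QLV Z\hookrightarrow\SV Z\times\HV Z$ and as $\Pred_\ADN^\bullet Z\hookrightarrow\Pred_\DN^\bullet Z\times\Pred_\AN^\bullet Z$. Concretely, by Lemma~\ref{lemma:idemp:split:nat} I would fix $\overline S{\Val_\bullet X}=\Pred_\ADN^\bullet X$ with $\pi_{\mu_X}=r_\ADN$ and $\iota_{\mu_X}=s_\ADN^\bullet$, and, granting item~1, by Lemma~\ref{lemma:idemp:P:nat} applied to the $\Val_\bullet$-algebra $\overline S\mu_X=\alpha_\ADN^\bullet$, fix $\overline S\overline S{\Val_\bullet X}={\QLV}^{cvx}{\Pred_\ADN^\bullet X}$ with $\iota_{\overline S\mu_X}$ the subspace inclusion. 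Every diagram chase below is carried out after post-composing with $\pi_1$ and with $\pi_2$, and is then closed by injectivity of $\langle\pi_1,\pi_2\rangle$.

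For item~1, $\overline S\mu_X=\pi_{\mu_X}\circ\QLV{\mu_X}\circ\lambda^\natural_X\circ\Val_\bullet{\iota_{\mu_X}}=r_\ADN\circ\QLV{\mu_X}\circ\lambda^\natural_X\circ\Val_\bullet{s_\ADN^\bullet}$. Post-composing with $\pi_1$, I would push $\pi_1$ to the right using: $\pi_1\circ r_\ADN=r_\DN\circ\pi_1$ (Proposition~\ref{prop:rADP}); the defining square (\ref{eq:lambda:nat}) to turn $\pi_1\circ\lambda^\natural_X$ into $\lambda^\sharp_X\circ\Val_\bullet{\pi_1}$; naturality of $\pi_1$ to move it past $\QLV{\mu_X}$; and $\Val_\bullet{\pi_1}\circ\Val_\bullet{s_\ADN^\bullet}=\Val_\bullet{s_\DN^\bullet}\circ\Val_\bullet{\pi_1}$, from $s_\ADN^\bullet=(s_\DN^\bullet,s_\AN^\bullet)$. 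This gives $\pi_1\circ\overline S\mu_X=(r_\DN\circ\SV{\mu_X}\circ\lambda^\sharp_X\circ\Val_\bullet{s_\DN^\bullet})\circ\Val_\bullet{\pi_1}$, which by Lemma~\ref{lemma:Sbar}(1) equals $\alpha_\DN^\bullet\circ\Val_\bullet{\pi_1}=\pi_1\circ\alpha_\ADN^\bullet$, the last step by the definition of $\alpha_\ADN^\bullet$ in Lemma~\ref{lemma:idemp:P:nat}. The $\pi_2$-computation is symmetric via Lemma~\ref{lemma:Sbar:flat}(1), so item~1 follows. Item~2 is immediate: $\eta^{\overline S}_{\mu_X}=\pi_{\mu_X}\circ\eta^\quasi_{\Val_\bullet X}=r_\ADN\circ\eta^\quasi_{\Val_\bullet X}$.

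For item~3, $\mu^{\overline S}_{\mu_X}=\pi_{\mu_X}\circ\mu^\quasi_{\Val_\bullet X}\circ\QLV{\iota_{\mu_X}}\circ\iota_{\overline S\mu_X}=r_\ADN\circ\mu^\quasi_{\Val_\bullet X}\circ\QLV{s_\ADN^\bullet}\circ\iota_{\alpha_\ADN^\bullet}$, a continuous map ${\QLV}^{cvx}{\Pred_\ADN^\bullet X}\to\Pred_\ADN^\bullet X$. Post-composing with $\pi_1$ and using naturality of $\pi_1$, the multiplication diagram (\ref{eq:muq}) for $\mu^\quasi$, and $r_\ADN=(r_\DN,r_\AN)$, $s_\ADN^\bullet=(s_\DN^\bullet,s_\AN^\bullet)$, I would rewrite $\pi_1\circ\mu^{\overline S}_{\mu_X}$ as $(r_\DN\circ\mu^\Smyth_{\Val_\bullet X}\circ\SV{s_\DN^\bullet})$ precomposed with the map $(\mathcal Q,\mathcal C)\mapsto\SV{\pi_1}(\mathcal Q)=\pi_1[\mathcal Q]$ (the image is already saturated); this map lands in $\SV^{cvx}{\Pred_\DN^\bullet X}$ because $\mathcal Q$ is convex and $\pi_1$ preserves convex combinations, hence factors through $\iota_{\alpha_\DN^\bullet}$. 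By Lemma~\ref{lemma:Sbar}(3) the composite sends $(\mathcal Q,\mathcal C)$ to $h\in\Lform X\mapsto\min_{F\in\pi_1[\mathcal Q]}F(h)=\min_{(F^-,F^+)\in\mathcal Q}F^-(h)$; symmetrically, $\pi_2\circ\mu^{\overline S}_{\mu_X}$ sends $(\mathcal Q,\mathcal C)$ to $h\mapsto\sup_{(F^+,F^-)\in\mathcal C}F^+(h)$ via Lemma~\ref{lemma:Sbar:flat}(3). Since $\langle\pi_1,\pi_2\rangle$ is injective and $\mu^{\overline S}_{\mu_X}$ already lands in $\Pred_\ADN^\bullet X$, the pair announced in item~3 is exactly $\mu^{\overline S}_{\mu_X}$ (in particular it is automatically a fork). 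Item~4 is the standard identification of the counit of the Eilenberg--Moore adjunction; see \cite[Chapter~VI, Section~2, Theorem~1]{McLane:cat:math}, exactly as in the earlier lemmas.

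The mathematical content is essentially all inherited: Lemma~\ref{lemma:Sbar}, Lemma~\ref{lemma:Sbar:flat}, the characterizing diagrams (\ref{eq:etaq}) and (\ref{eq:muq}) of Proposition~\ref{prop:QLV:monad}, Lemma~\ref{lemma:idemp:split:nat} and Lemma~\ref{lemma:idemp:P:nat} do the work, and the rest is bookkeeping. The step where I expect to have to be most careful is the chain of ``up-to-isomorphism'' identifications of splittings: one must use the choices $\overline S{\Val_\bullet X}=\Pred_\ADN^\bullet X$, $\pi_{\mu_X}=r_\ADN$, $\iota_{\mu_X}=s_\ADN^\bullet$, $\overline S\overline S{\Val_\bullet X}={\QLV}^{cvx}{\Pred_\ADN^\bullet X}$ consistently across all four items, and make sure they are precisely the ones compatible, through $\pi_1$ and $\pi_2$, with the choices fixed in the $\DN$ and $\AN$ proofs; otherwise the various diagram chases would not close on the nose. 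Once this consistency is pinned down, every chase is routine.
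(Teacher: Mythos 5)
Your proposal is correct and follows essentially the same route as the paper's proof: each item is reduced to Lemma~\ref{lemma:Sbar} and Lemma~\ref{lemma:Sbar:flat} by post-composing with $\pi_1$ and $\pi_2$, using the componentwise definitions of $r_\ADN$, $s_\ADN^\bullet$ and $\alpha_\ADN^\bullet$, the diagrams (\ref{eq:lambda:nat}) and (\ref{eq:muq}), and injectivity of $\langle\pi_1,\pi_2\rangle$, with the splittings chosen exactly as in Lemmas~\ref{lemma:idemp:split:nat} and~\ref{lemma:idemp:P:nat}. The consistency-of-splittings point you flag is precisely what the paper handles (via the inclusions it calls $m^\natural$, $m^\sharp$, $m^\flat$ in item~3), so no gap remains.
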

\begin{proof}
  1.  We know that $\overline S \mu_X$ must be a
  $\Val_\bullet$-algebra on $\overline S {\Val_\bullet X}$, and the
  latter can be taken as $\Pred_\ADN^\bullet X$, by
  Lemma~\ref{lemma:idemp:split:nat}.  We know that
  $\overline S \mu_X = \pi_{\mu_X} \circ S \mu_X \circ \lambda_X \circ
  T\iota_{\mu_X} = r_\ADN \circ \QLV \mu_X \circ {\lambda^\natural_X}
  \circ \Val_\bullet {s_\ADN^\bullet}$.  Hence
  $\pi_1 \circ \overline S \mu_X = r_\DN \circ \SV \mu_X \circ
  \lambda^\sharp_X \circ \Val_\bullet {s_\DN^\bullet} \circ
  \Val_\bullet {\pi_1}$ using the definition of $r_\ADN$
  (Proposition~\ref{prop:rADP}), the naturality of $\pi_1$, the
  definition of $\lambda^\natural$ (Theorem~\ref{thm:lambda:nat}), and
  the definition of $s_\ADN^\bullet$ (Proposition~\ref{prop:rADP}).
  Hence, by Lemma~\ref{lemma:Sbar}, item~1,
  $\pi_1 \circ \overline S \mu_X = \alpha_\DN^\bullet \circ
  \Val_\bullet {\pi_1}$.  Similarly,
  $\pi_2 \circ \overline S \mu_X = r_\AN \circ \HV \mu_X \circ
  \lambda^\flat_X \circ \Val_\bullet {s_\AN^\bullet} \circ
  \Val_\bullet {\pi_2}$, so
  $\pi_2 \circ \overline S \mu_X = \alpha_\AN^\bullet \circ
  \Val_\bullet {\pi_2}$, using Lemma~\ref{lemma:Sbar:flat}.  We also
  have
  $\pi_1 \circ \alpha_\ADN^\bullet = \alpha_\DN^\bullet \circ
  \Val_\bullet {\pi_1}$ and
  $\pi_2 \circ \alpha_\ADN^\bullet = \alpha_\AN^\bullet \circ
  \Val_\bullet {\pi_2}$, by definition of $\alpha_\ADN^\bullet$
  (Lemma~\ref{lemma:idemp:P:nat}).  Since
  $\langle \pi_1, \pi_2 \rangle$ is injective,
  $\overline S \mu_X = \alpha_\ADN^\bullet$.
  
  2.
  $\eta^{\overline S}_{\mu_X} = \pi_{\mu_X} \circ \eta^S_{\Val_\bullet
    X} = r_\ADN \circ \eta^\quasi_{\Val_\bullet X}$.

  3. In general, $\mu^{\overline S}_\alpha$ is a morphism from
  $\overline S {\overline S \alpha}$ to $\overline S \alpha$ in the
  category of $\Val_\bullet$-algebras.  When
  $\alpha = \mu_X \colon \Val_\bullet {\Val_\bullet X} \to
  \Val_\bullet X$, it is therefore one from
  $\Pred_\ADN^\bullet {\Pred_\ADN^\bullet X}$ to
  $\Pred_\ADN^\bullet X$.  We have
  $\mu^{\overline S}_{\mu_X} \eqdef \pi_{\mu_X} \circ
  \mu^S_{\Val_\bullet X} \circ S \iota_{\mu_X} \circ \iota_{\overline
    S \mu_X}$.  We may take
  $\iota_{\overline S \mu_X} = \iota_{\alpha_\ADN^\bullet}$ to be the
  right arrow, from ${\QLV}^{cvx} {\Pred_\ADN^\bullet X}$ to
  $\QLV {\Pred_\ADN^\bullet X}$ in
  Lemma~\ref{lemma:idemp:P:nat}---just subspace inclusion.  Let
  $m^\natural$ be that subspace inclusion.  Writing $m^\sharp$ for the
  subspace inclusion of $\SV^{cvx} {\Pred_\DN^\bullet X}$ into
  $\SV {\Pred_\DN^\bullet X}$ and $m^\flat$ for the subspace inclusion
  of $\HV^{cvx} {\Pred_\AN^\bullet X}$ into
  $\HV {\Pred_\AN^\bullet X}$, we have
  $\SV {\pi_1} \circ \pi_1 \circ m^\natural = m^\sharp \circ \SV
  {\pi_1} \circ \pi_1$ and
  $\HV {\pi_2} \circ \pi_2 \circ m^\natural = m^\flat \circ \HV
  {\pi_2} \circ \pi_2$.
  
  Then
  $S \iota_{\mu_X} = \QLV {s_\ADN^\bullet} \colon \QLV
  {\Pred_\ADN^\bullet X} \to \QLV {\QLV {\Val_\bullet X}}$, by
  Lemma~\ref{lemma:idemp:split:nat}.  We compose that with
  $\mu^\quasi_{\Val_\bullet X} \colon \QLV {\QLV {\Val_\bullet X}} \to
  \QLV {\Val_\bullet X}$, then with
  $r_\ADN \colon \QLV {\Val_\bullet X} \to \Pred_\ADN^\bullet X$.  The
  resulting composition is
  $f \eqdef r_\ADN \circ \mu^\quasi_{\Val_\bullet X} \circ \QLV
  {s_\ADN^\bullet} \circ m^\natural$.

  Then
  $\pi_1 \circ f = r_\DN \circ \mu^\Smyth_{\Val_\bullet X} \circ \SV
  {\pi_1} \circ \pi_1 \circ \QLV {s_\ADN^\bullet} \circ m^\natural$ by
  definition of $r_\ADN$ (Proposition~\ref{prop:rADP}) and by
  (\ref{eq:muq}) (see Proposition~\ref{prop:QLV:monad}).  Now
  $\SV {\pi_1} \circ \pi_1 \circ \QLV {s_\ADN^\bullet} \circ
  m^\natural = \SV {\pi_1} \circ \SV {s_\ADN^\bullet} \circ \pi_1
  \circ m^\natural$ (by naturality of $\pi_1$)
  $= \SV {s_\DN^\bullet} \circ m^\sharp \circ \SV {\pi_1} \circ \pi_1$
  (by definition of $s_\ADN^\bullet$, see Proposition~\ref{prop:rADP})
  $= \SV {s_\DN^\bullet} \circ m^\sharp \circ \SV {\pi_1} \circ
  \pi_1$.  Therefore
  $\pi_1 \circ f = r_\DN \circ \mu^\Smyth_{\Val_\bullet X} \circ \SV
  {s_\DN^\bullet} \circ m^\sharp \circ \SV {\pi_1} \circ \pi_1$.  The
  composition
  $r_\DN \circ \mu^\Smyth_{\Val_\bullet X} \circ \SV {s_\DN^\bullet}
  \circ m^\sharp$ was elucidated in Lemma~\ref{lemma:Sbar}, item~3, as
  mapping every $Q \in \SV^{cvx} {\Pred_\DN^\bullet X}$ to
  $(h \in \Lform X \mapsto \min_{F \in Q} F (h))$.  Hence
  $\pi_1 \circ f$ maps every convex quasi-lens
  $(Q, C) \in {\QLV}^{cvx} {\Pred_\ADN^\bullet X}$ to
  $(h \in \Lform X \mapsto \min_{F \in \SV {\pi_1} (Q)} F (h)) = (h
  \in \Lform X \mapsto \min_{F \in \upc \pi_1 [Q]} F (h)) = (h \in
  \Lform X \mapsto \min_{F \in \pi_1 [Q]} F (h)) = (h \in \Lform X
  \mapsto \min_{(F^-, F^+) \in Q} F^- (h))$.

  Similarly, and using Lemma~\ref{lemma:Sbar:flat}, item~3,
  $\pi_2 \circ f$ maps every convex quasi-lens
  $(Q, C) \in {\QLV}^{cvx} {\Pred_\ADN^\bullet X}$ to
  $(h \in \Lform X \mapsto \sup_{(F^-, F^+) \in Q} F^+ (h))$.

  4. Standard category theory \cite[Chapter~VI, Section~2,
  Theorem~1]{McLane:cat:math}.  \qed
\end{proof}

\begin{theorem}
  \label{thm:weaklift:nat}
  The weak composite monad associated with
  $\lambda^\natural \colon \Val_\bullet {\QLV} \to \QLV
  {\Val_\bullet}$ on $\Topcat^\natural$ is
  $(\Pred_\ADN^\bullet, \eta^\ADN, \mu^\ADN)$.
\end{theorem}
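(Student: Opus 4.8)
The plan is to reduce the $\QLV$ case to the already-established $\SV$ and $\HV$ cases, exactly as Theorem~\ref{thm:lambda:nat} was reduced to Theorem~\ref{thm:lambda} and Theorem~\ref{thm:lambda:flat}. The pervasive tool is that $\langle \pi_1, \pi_2 \rangle$ is a topological embedding, hence injective, both as $\QLV {\Val_\bullet Z} \hookrightarrow \SV {\Val_\bullet Z} \times \HV {\Val_\bullet Z}$ and as $\Pred_\ADN^\bullet Z \hookrightarrow \Pred_\DN^\bullet Z \times \Pred_\AN^\bullet Z$: two continuous maps landing in either space coincide as soon as they coincide after post-composition with $\pi_1$ and with $\pi_2$. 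I would take $T \eqdef \Val_\bullet$, $S \eqdef \QLV$, $\lambda \eqdef \lambda^\natural$, with $F^T$ sending $X$ to the free algebra $\mu_X$ and $f$ to $\Val_\bullet f$, and $U^T$ the forgetful functor.

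First I would invoke Lemma~\ref{lemma:idemp:split:nat}: the idempotent (\ref{eq:idemp:nat}) splits as $s_\ADN^\bullet \circ r_\ADN$, so $U^T \overline S F^T$ sends $X$ to $\Pred_\ADN^\bullet X$ with $\pi_{\mu_X} \eqdef r_\ADN$ and $\iota_{\mu_X} \eqdef s_\ADN^\bullet$. On a morphism $f$, $U^T \overline S F^T (f) = r_\ADN \circ \QLV {\Val_\bullet f} \circ s_\ADN^\bullet$; composing with $\pi_1$ and using that $r_\ADN$, $s_\ADN^\bullet$ are pairs (Proposition~\ref{prop:rADP}), the naturality of $\pi_1$, and the identity $\pi_1 \circ \QLV {\Val_\bullet f} = \SV {\Val_\bullet f} \circ \pi_1$, I would obtain $r_\DN \circ \SV {\Val_\bullet f} \circ s_\DN^\bullet \circ \pi_1$, which by the computation in the proof of Theorem~\ref{thm:weaklift} equals $\Pred_\DN^\bullet f \circ \pi_1 = \pi_1 \circ \Pred_\ADN^\bullet f$ via (\ref{eq:PADNf}); symmetrically on $\pi_2$ via Theorem~\ref{thm:weaklift:flat}. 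Injectivity of $\langle \pi_1, \pi_2 \rangle$ then gives $U^T \overline S F^T (f) = \Pred_\ADN^\bullet f$, so the functor parts agree.

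For the unit I would use Lemma~\ref{lemma:Sbar:nat}, item~2, to write the unit of the weak composite monad at $X$ as $r_\ADN \circ \eta^\quasi_{\Val_\bullet X} \circ \eta_X$; post-composing with $\pi_1$ and using (\ref{eq:etaq}), Proposition~\ref{prop:rADP} and the naturality of $\eta^\Smyth$ reduces it to $r_\DN \circ \eta^\Smyth_{\Val_\bullet X} \circ \eta_X$, which the proof of Theorem~\ref{thm:weaklift} identifies with $\eta^\DN_X = \pi_1 \circ \eta^\ADN_X$ by (\ref{eq:etaADN}); likewise for $\pi_2$. Hence the unit is $\eta^\ADN$. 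The multiplication I would handle by unwinding $U^T \mu^{\overline S} F^T \circ U^T \overline S \epsilon^T \overline S F^T$ exactly as in the proof of Theorem~\ref{thm:weaklift}, being careful that the inner $\overline S$ is applied to the morphism of $T$-algebras $\epsilon^T_{\overline S F^T X}$ and the outer one to a $T$-algebra; using Lemma~\ref{lemma:Sbar:nat}, items~1, 3, 4 and Lemma~\ref{lemma:idemp:P:nat}, this becomes a composition analogous to (\ref{eq:Sbarmu}): $s_\ADN^\bullet$, then $\QLV {\alpha_\ADN^\bullet}$, then the corestriction to ${\QLV}^{cvx} {\Pred_\ADN^\bullet X}$, then $\mu^{\overline S}_{\mu_X}$. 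Post-composing with $\pi_1$ and with $\pi_2$ and using that $\alpha_\ADN^\bullet$, $\mu^\quasi$ and $r_\ADN$ respect the projections (Lemma~\ref{lemma:idemp:P:nat}, (\ref{eq:muq}), Proposition~\ref{prop:rADP}, Lemma~\ref{lemma:Sbar:nat}, item~3), the computation reduces to the $\DN$ and $\AN$ computations already done, which identify the two components as $h \mapsto \mathcal F^- (\,(F^-, F^+) \mapsto F^- (h)\,)$ and $h \mapsto \mathcal F^+ (\,(F^-, F^+) \mapsto F^+ (h)\,)$; by (\ref{eq:muADN}) this is $\mu^\ADN_X$, and injectivity of $\langle \pi_1, \pi_2 \rangle$ finishes the proof.

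The main obstacle is purely the bookkeeping of the multiplication: one must chase the same (\ref{eq:mu:part:flat})-style composition in the $\natural$ setting and keep straight which occurrence of $\overline S$ acts on an algebra and which on a morphism of algebras. There is no new mathematical content, though: every map in sight — $r_\ADN$, $s_\ADN^\bullet$, $\alpha_\ADN^\bullet$, $\mu^\quasi$, $\eta^\quasi$ — was constructed as the pairing of its $\SV$/$\Pred_\DN^\bullet$ and $\HV$/$\Pred_\AN^\bullet$ avatars, so the embedding $\langle \pi_1, \pi_2 \rangle$ systematically reduces every required identity to the two cases proved in Theorem~\ref{thm:weaklift} and Theorem~\ref{thm:weaklift:flat}.
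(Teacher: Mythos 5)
Your proposal is correct and follows essentially the same route as the paper's own proof: split the idempotent via Lemma~\ref{lemma:idemp:split:nat}, use Lemma~\ref{lemma:Sbar:nat} and Lemma~\ref{lemma:idemp:P:nat} to identify the pieces, and reduce every identity (functor part, unit, multiplication) to the $\DN$ and $\AN$ computations of Theorems~\ref{thm:weaklift} and~\ref{thm:weaklift:flat} through the injective pairing $\langle \pi_1, \pi_2 \rangle$. No gap to report.
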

\begin{proof}
  We take $T \eqdef \Val_\bullet$, $S \eqdef \QLV$,
  $\lambda \eqdef \lambda^\natural$.  The functor $U^T$ takes every
  $\Val_\bullet$-algebra $\alpha \colon \Val_\bullet X \to X$ to $X$
  and every $\Val_\bullet$-algebra morphism
  $f \colon (\alpha \colon \Val_\bullet X \to X) \to (\beta \colon
  \Val_\bullet Y \to Y)$ to the underlying continuous map
  $f \colon X \to Y$.  The functor $F^T$ takes every space $X$ to
  $\mu_X \colon \Val_\bullet {\Val_\bullet X} \to \Val_\bullet X$, and
  every continuous map $f \colon X \to Y$ to $\Val_\bullet f$, seen as
  a morphism of $\Val_\bullet$-algebras.

  Then $U^T \overline S F^T$ maps every space $X$ to the splitting
  $\Pred_\ADN^\bullet X$ of
  $\QLV {\mu_X} \circ {\lambda^\natural_X} \circ \eta_{\HV X}$ we have
  obtained in Lemma~\ref{lemma:idemp:split:nat}.  We also obtain
  $\pi_{\mu_X} \eqdef r_\ADN$, $\iota_{\mu_X} \eqdef s_\ADN^\bullet$.
  
  Given any continuous map $f \colon X \to Y$,
  $U^T \overline S F^T (f)$ is equal to (the map underlying the
  $\Val_\bullet$-algebra) $\overline S (\Val_\bullet f)$, namely
  $\pi_{\mu_Y} \circ \QLV {\Val_\bullet f} \circ \iota_{\mu_X} =
  r_\ADN \circ \QLV {\Val_\bullet f} \circ s_\ADN^\bullet$.  We claim
  that this is equal to $\Pred_\ADN^\bullet (f)$.  It suffices to
  verify that
  $\pi_1 \circ r_\ADN \circ \QLV {\Val_\bullet f} \circ s_\ADN^\bullet
  = \pi_1 \circ \Pred_\ADN^\bullet (f)$, and similarly with $\pi_2$.

  By definition of $r_\ADN$ (Proposition~\ref{prop:rADP}), naturality
  of $\pi_1$ and definition of $s_\ADN^\bullet$
  (Proposition~\ref{prop:rADP}),
  $\pi_1 \circ r_\ADN \circ \QLV {\Val_\bullet f} \circ s_\ADN^\bullet
  = r_\DN \circ \SV {\Val_\bullet f} \circ s_\DN^\bullet \circ \pi_1$,
  namely to
  $\Pred (f) \circ \pi_1 = \pi_1 \circ \Pred_\ADN^\bullet (f)$.
  Similarly with $\pi_2$.

  The unit of the weak composite monad is
  $U^T \eta^{\overline S} F^T \circ \eta^T$.  At object $X$, this maps
  every $x \in X$ to $r_\ADN \circ \eta^\quasi_{\Val_\bullet X}$, by
  Lemma~\ref{lemma:Sbar:nat}, item~2.  In order to show that this is
  equal to $\eta^\ADN_X$, it suffices to show that
  $\pi_1 \circ r_\ADN \circ \eta^\quasi_{\Val_\bullet X} = \pi_1 \circ
  \eta^\ADN_X$ and similarly with $\pi_2$.  This follows from the
  definition of $r_\ADN$ (Proposition~\ref{prop:rADP}), the definition
  of $\eta^\quasi$ (see (\ref{eq:etaq}) in
  Proposition~\ref{prop:QLV:monad}) and the definition of $\eta^\ADN$
  (see (\ref{eq:etaADN}) in Proposition~\ref{prop:fork:monad}).

  The multiplication is
  $U^T \mu^{\overline S} F^T \circ U^T \overline S \epsilon^T
  \overline S F^T$.  We recall what that means.  At any object $X$,
  $\overline S F^T X$ is the $T$-algebra
  $\overline S\mu_X \colon T \overline S T X \to \overline S T X$.
  Then $\epsilon^T$ evaluated at this $T$-algebra is the bottom arrow
  in the following diagram:
  \[
    \xymatrix@C+10pt{
      TT\overline S T X
      \ar[d]_{\mu_{\overline S T X}}
      \ar[r]^{T \epsilon^T_{\overline S F^T X}}
      & T \overline S T X
      \ar[d]^{\overline S \mu_X}
      \\
      T \overline S T X
      \ar[r]_{\epsilon^T_{\overline S F^T X}}
      & \overline S T X
    }
  \]
  from the $T$-algebra $\alpha \eqdef \mu_{\overline S T X}$ that is
  the leftmost vertical arrow to the $T$-algebra
  $\beta \eqdef \overline S \mu_X$ on the right.  We apply $\overline
  S$ to the morphism $\epsilon^T_{\overline S F^T X}$ (i.e., we now
  apply $\overline S$ to a morphism of $T$-algebras, not to a
  $T$-algebra, as we did before), and we obtain the composition:
  \begin{equation}
    \label{eq:mu:part:nat}
    \xymatrix{
      \overline S T \overline S T X
      \ar[r]^{\iota_\alpha}
      &
      S T \overline S T X
      \ar[r]^{S \epsilon^T_{\overline S F^T X}}
      &
      S \overline S T X
      \ar[r]^{\pi_\beta}
      &
      \overline S \overline S T X.
    }
  \end{equation}
  Applying $U^T$ to the latter, we obtain the same composition, this
  time seen as a morphism in the base category, instead of as a
  morphism in the category of $T$-algebras.  We finally compose it
  with $U^T \mu^{\overline S}_{F^T X}$, which is
  $\mu^{\overline S}_{\mu_X} \colon \overline S \overline S T X \to
  \overline S T X$.

  By Lemma~\ref{lemma:Sbar:nat}, item~4,
  $\epsilon^T_{\overline S F^T X} \colon \alpha \to \beta$ is $\beta$
  itself, where $\beta = \overline S \mu_X = \alpha_\ADN^\bullet$
  (Lemma~\ref{lemma:Sbar:nat}, item~1) and
  $\alpha = \mu_{\overline S T X} = \mu_{\Pred_\ADN^\bullet X}$: we
  have $\overline S T X = \Pred_\ADN^\bullet X$, using
  Lemma~\ref{lemma:idemp:split:nat}), and
  $\iota_\alpha = s_\ADN^\bullet$.  Using
  Lemma~\ref{lemma:idemp:P:nat},
  $\pi_\beta \colon \QLV {\Pred_\ADN^\bullet X} \to {\QLV}^{cvx}
  {\Pred_\ADN^\bullet X}$ is the corestriction of
  (\ref{eq:idemp:alpha:nat}) (call it $f$), which maps every convex
  element of $\QLV {\Pred_\ADN^\bullet X}$ to itself.

  Hence the multiplication of the weak composite monad, evaluated at
  $X$, is the composition of (\ref{eq:mu:part:nat}) with
  $\mu^{\overline S}_{\mu_X}$ (given in Lemma~\ref{lemma:Sbar:nat},
  item~3), namely:
  \[
    \xymatrix{
      \Pred_\ADN^\bullet {\Pred_\ADN^\bullet X}
      \ar[r]^{s_\ADN^\bullet}
      & \QLV {\Val_\bullet {\Pred_\ADN^\bullet X}}
      \ar[r]^{\QLV {\alpha_\ADN^\bullet}}
      & \QLV {\Pred_\ADN^\bullet X}
      \ar[r]^f
      & {\QLV}^{cvx} {\Pred_\ADN^\bullet X}
      \ar[r]^{\mu^{\overline S}_{\mu_X}}
      & \Pred_\ADN^\bullet X
    }
  \]
  Let us call $g$ for that composition.  Using the explicit expression
  for $\mu^{\overline S}_{\mu_X}$, and the appropriate naturality
  equations, we recognize $\pi_1 \circ g$ as being equal to the
  composition $(\ref{eq:Sbarmu}) \circ \SV {\pi_1} \circ \pi_1$;
  (\ref{eq:Sbarmu}) is the expression of the weak composite monad we
  obtained from $\lambda^\sharp$ in the proof of
  Theorem~\ref{thm:weaklift}, which we elucidated as mapping every
  $\mathcal F \in \Pred_\DN^\bullet {\Pred_\DN^\bullet X}$ to
  $(h \in \Lform X \mapsto \mathcal F (F \in \Pred_\DN^\bullet X
  \mapsto F (h))$.

  Similarly, $\pi_2 \circ g$ is equal to
  $(\ref{eq:Sbarmu:flat}) \circ \HV {\pi_2} \circ \pi_2$, where
  (\ref{eq:Sbarmu:flat}) was elucidated in the proof of
  Theorem~\ref{thm:weaklift:flat} as mapping every
  $\mathcal F \in \Pred_\AN^\bullet {\Pred_\AN^\bullet X}$ to
  $(h \in \Lform X \mapsto \mathcal F (F \in \Pred_\AN^\bullet X
  \mapsto F (h))$.  Hence $g$ maps every
  $(\mathcal F^-, \mathcal F^+) \in \Pred_\ADN^\bullet
  {\Pred_\ADN^\bullet X}$ to the pair of maps
  $(h \in \Lform X \mapsto \HV {\pi_1} (\mathcal F^-) (F \in
  \Pred_\DN^\bullet X \mapsto F (h))$ and
  $(h \in \Lform X \mapsto \HV {\pi_2} (\mathcal F^+) (F \in
  \Pred_\DN^\bullet X \mapsto F (h))$, namely to
  $\mu^\ADN_X (\mathcal F^+, \mathcal F^-)$ (see
  Proposition~\ref{prop:fork:monad}).  \qed
\end{proof}

\begin{remark}
  \label{rem:weaklift:nat}
  The isomorphic weak distributive law of Remark~\ref{rem:lambda:nat}
  on the category of stably compact spaces must induce the same weak
  composite monad, namely
  $(\Pred_\ADN^\bullet, \eta^\ADN, \mu^\ADN)$---or rather, its
  restriction to the category of stably compact spaces.
\end{remark}

\begin{remark}
  \label{rem:Vietoris}
  A space is $T_1$ if and only if its specialization preordering is
  the equality relation.  Every Hausdorff space is $T_1$.  On a $T_1$
  space $X$, a lens is the same thing as a non-empty compact subset.
  Hence $\Plotkinn X$ is the usual hyperspace of non-empty compact
  subsets, with the (usual) Vietoris topology, and the $\Plotkinn$
  monad specializes to what has sometimes been called the Vietoris
  monad on the category $\mathbf{KHaus}$ of compact Hausdorff spaces
  (although Vietoris had no knowledge of monads, and while he defined
  the Vietoris topology on the space of closed subsets of a space
  \cite{Vietoris:hyper}, spaces of subsets were further explored by
  Hausdorff \cite[\S 28]{Hausdorff:Mengen} then by Michael
  \cite{Michael:hyper}, among others).  In other words, relying on
  Remark~\ref{rem:Plotkinn}, removing the now useless signs $\upc$ and
  simplifying, $\Plotkinn f \colon \Plotkinn X \to \Plotkinn Y$ maps
  every non-empty compact subset $L$ to its image $f [L]$ under $f$,
  the unit $\eta^{\Plotkin}_X$ maps every $x \in X$ to $\{x\}$, and
  the multiplication $\mu^{\Plotkin}_X$ maps every
  $\mathcal L \in \Plotkinn {\Plotkinn X}$ to $\bigcup \mathcal L$.
  
  Specializing Remark~\ref{rem:weaklift:nat} to the appropriate
  subcategory, we obtain a weak distributive law of the Vietoris monad
  over $\Val_\bullet$ on $\mathbf{KHaus}$, and its associated weak
  composite monad is the appropriate restriction of
  $(\Pred_\ADN^\bullet, \eta^\ADN, \mu^\ADN)$, by
  Theorem~\ref{thm:weaklift:nat}.
\end{remark}

\begin{remark}
  \label{rem:PhiPsi:lens:T1}
  We refine Remark~\ref{rem:PhiPsi:lens} in the case of regular
  spaces.  A space is \emph{regular} if and only if for every point
  $x$, every open neighborhood $U$ of $x$ contains a closed
  neighborhood $C$ of $x$; equivalently, if every open set is the
  union of the directed family of interiors $\interior C$ of closed
  subsets of $U$.  For every regular space $X$, $\Val_1 X$ is
  Hausdorff: given any two distinct elements
  $\mu, \nu \in \Val_\bullet$, there must be an open subset $U$ of $X$
  such that $\mu (U) \neq \nu (U)$, say $\mu (U) < \nu (U)$.  Let $r$
  be such that $\mu (U) < r < \nu (U)$.  Since
  $r < \nu (U) = \dsup_C \nu (\interior C)$, where $C$ ranges over the
  closed subsets of $U$, we have $r < \nu (\interior C)$ for some
  closed subset $C$ of $U$.  Then $\nu$ is in $[\interior C > r]$,
  $\mu$ is in $[X \diff C > 1-r]$ (since
  $1 = \mu (X) = \mu ((X \diff C) \cup U) \leq \mu (X \diff C) + \mu
  (U) < \mu (X \diff C) + r$, hence $\mu (X \diff C) > 1-r$), and
  $[\interior C > r]$ and $[X \diff C > 1-r]$ are disjoint; for the
  latter, any probability valuation in the intersection must map the
  disjoint union $\interior C \cup (X \diff C)$ to a number strictly
  larger than $r + (1-r)=1$, which is impossible.  (One could show
  that $\Val_1 X$ is in fact regular Hausdorff, namely $T_3$, but we
  will not need that.)

  Every compact Hausdorff space $X$ is regular, so the specialization
  ordering on $\Val_1 X$ is equality.  Specializing
  Remark~\ref{rem:PhiPsi:lens} to this case, the weak distributive law
  $\lambda^\natural$ on $\mathbf{KHaus}$ maps every
  $\mu \eqdef \sum_{i=1}^n a_i \delta_{E_i}$, where $n \geq 1$, and
  each set $E_i$ is finite and non-empty, to
  $\lambda^\natural_X (\mu) = \upc conv (\mathcal E) \cap cl (conv
  (\mathcal E))$ (where
  $\mathcal E \eqdef \{\sum_{i=1}^n a_i \delta_{x_i} \mid x_1 \in E_1,
  \cdots, x_n \in E_n\}$)
  $= conv (\mathcal E) \cap cl (conv (\mathcal E)) = conv (\mathcal
  E)$.  This is equivalent to the formula given by Goy and Petri\c san
  for their weak distributive law of the powerset monad over the
  finite distribution monad \cite[Lemma~3.1]{goypetr-dp}.  (Note that
  their powerset includes the empty set, while our monad is one of
  non-empty compact sets, but that seems to be a minor difference
  here.)
\end{remark}

\section{Radon measures}
\label{sec:radon-measures}

We finish with a discussion of Radon measures.  A \emph{Radon measure}
on a Hausdorff topological space $X$ is a Borel measure $\mu$ such
that for every Borel subset $B$ of $X$, for every $\epsilon > 0$,
there is a compact set $K_\epsilon$ included in $B$ such that
$\mu (B \diff K_\epsilon) \leq \epsilon$
\cite[Definition~7.1.1]{Bogachev:mes:II}.  One needs to take care to
modify this definition slightly on non-Hausdorff spaces
\cite[Definition~7.2]{KL:measureext}, if only because $K_\epsilon$ may
fail to be Borel: a \emph{Radon measure} $\mu$ on a coherent sober
space $X$ is a Borel measure on a $\sigma$-algebra that contains the
patch topology on $X$, whose restriction to $\Open {(X^\patch)}$ is
locally finite (every point has a patch-open neighborhood of finite
$\mu$-measure) and \emph{inner regular}, meaning that
% that $\mu (K) < \infty$ for every $K \in \mathcal C X$, and such
% that
$\mu (B) = \dsup_{K \in \mathcal C X, K \subseteq B} \mu (K)$ for
every Borel subset $B$; the collection $\mathcal C X$ consists of the
intersections of non-empty families of finite unions of lenses.  (See
\ref{sec:KL} for Keimel and Lawson's definition, and why we changed it
to the current one.)
% $\mathcal C X \cup \{X\}$ is the collection of closed sets for a
% topology on $X$ called the \emph{lens topology}. 
The \emph{patch topology} on $X$ is the coarsest topology that
contains the original topology on $X$, as well as the complements of
all compact saturated subsets of $X$.  The patch topology coincides
with the original topology on $X$ if $X$ is Hausdorff, since in that
case every compact set is closed.
% , and the lens topology coincides with the patch topology if
% $X$ is compact \cite[Comment before Lemma~7.1]{KL:measureext}.

In the following, a \emph{well-filtered} space $X$ is a topological
space such that for every filtered family ${(Q_i)}_{i \in I}$ of
compact saturated subsets, for every open subset $U$ of $Z$, if
$\fcap_{i \in I} Q_i \subseteq U$ then $Q_i \subseteq U$ for some
$i \in I$.  A family ${(Q_i)}_{i \in I}$ is \emph{filtered} if and
only if it is non-empty, and for any two elements $Q_i$ and $Q_{i'}$,
there is an element $Q_{i''}$ included in both.  In a well-filtered
space $X$, for every filtered family ${(Q_i)}_{i \in I}$,
$\fcap_{i \in I} Q_i$ is compact saturated
\cite[Proposition~8.3.6]{JGL-topology}.  Every sober space is
well-filtered \cite[Proposition 8.3.5]{JGL-topology}.  % The following
% lemma is well-known in stably compact space spaces, but neither
% compactness nor local compactness is actually required.
\begin{lemma}
  \label{lemma:lens:compact}
  For every coherent well-filtered space $X$, every element of
  $\mathcal C X$ is compact in the patch topology, hence in the
  original topology on $X$.
\end{lemma}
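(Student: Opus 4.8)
The plan is to establish compactness in the patch topology first, and then deduce compactness in the original topology as a trivial consequence (since the patch topology is finer, every patch-compact set is compact in the original topology). Recall that an element of $\mathcal C X$ is, by definition, an intersection $\bigcap_{i \in I} A_i$ of a non-empty family where each $A_i$ is a finite union of lenses. So the first step is to understand a single finite union of lenses in the patch topology: each lens $L = Q \cap C$ is the intersection of a compact saturated set $Q$ and a closed set $C$, hence it is patch-closed (both $Q$ and $C$ are patch-closed, $Q$ because it is closed in the original topology's complement structure---wait, more carefully: $C$ is closed in the original topology hence patch-closed, and $Q$ is compact saturated hence its complement is patch-open by definition of the patch topology, so $Q$ is patch-closed). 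A finite union of lenses is therefore patch-closed, and an arbitrary intersection of patch-closed sets is patch-closed. So every element $A \eqdef \bigcap_{i\in I} A_i$ of $\mathcal C X$ is patch-closed.

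The next step is to show that $A$ is actually patch-compact, and this is where coherence and well-filteredness enter. The strategy I would use is to exhibit $A$ as a filtered intersection of compact saturated sets, or rather to reduce to that situation. First, a finite union of lenses $A_i = \bigcup_{k=1}^{m_i} (Q_{ik} \cap C_{ik})$ is contained in the finite union $\bigcup_{k} Q_{ik}$, which by coherence is... not compact saturated in general (coherence only gives closure under \emph{pairwise intersection}); but a finite union of compact sets is compact, and its saturation $\upc \bigcup_k Q_{ik}$ is compact saturated. Now I want to intersect over $i \in I$. The family of the $A_i$ need not be filtered, but I can replace $I$ by the directed set $\Pfin(I)$ of finite subsets and consider $A_J \eqdef \bigcap_{i \in J} A_i$ for $J \in \Pfin(I)$; then $A = \bigcap_{J} A_J$ and the $A_J$ form a filtered family (under reverse inclusion of $J$). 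So it suffices to handle the case of a \emph{filtered} family of finite unions of lenses, say $\fcap_{i\in I} A_i$ with $A_i \subseteq Q'_i$ where $Q'_i = \upc\bigcup_k Q_{ik}$ is compact saturated. The idea is then that $\fcap_i A_i \subseteq \fcap_i Q'_i$, and by well-filteredness together with coherence the intersection $\fcap_i Q'_i$ is compact saturated (using \cite[Proposition~8.3.6]{JGL-topology} after first noting that the $Q'_i$ can be made into a genuinely filtered family---or invoking that a filtered family of finite unions of compacts, each coherent, behaves well).

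The main obstacle, I expect, is the bookkeeping needed to pass from an arbitrary family of finite-union-of-lenses sets to a filtered family of compact saturated sets without losing the intersection, and then to conclude patch-compactness of $A$ from the fact that $A$ is a patch-closed subset of the patch-compact set $\fcap_i Q'_i$. Here I would use: $\fcap_i Q'_i$ is compact in the original topology (well-filteredness + coherence), and in fact it is patch-compact because it is compact saturated---wait, that is exactly the nontrivial point, since patch-compactness of compact saturated sets is one of the defining features of stably compact spaces but here we only assume coherent well-filtered. Let me reconsider: the cleanest route is to observe that $X$ equipped with the patch topology has, as its compact sets, exactly the sets that are "patch-closed and contained in a compact saturated set"---this is essentially Lemma~2.16-style reasoning in \cite{Jung:scs:prob}. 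Concretely: a patch-closed subset $A$ of a compact saturated $Q$ is patch-compact, because one shows directly, using coherence and well-filteredness, that every filtered family of patch-closed subsets of $Q$ has non-empty intersection if each member is non-empty, hence $Q^\patch$ is compact and $A$ is a closed subset of it. Once patch-compactness of $A$ is in hand, compactness of $A$ in the original topology on $X$ is immediate since the identity map $X^\patch \to X$ is continuous. So the skeleton is: (1) every element of $\mathcal C X$ is patch-closed; (2) it is contained in a compact saturated set, obtained as a filtered intersection of saturations of finite unions of the relevant $Q$'s, using coherence and well-filteredness to keep this compact saturated; (3) a patch-closed subset of a compact saturated set is patch-compact; (4) patch-compact $\Rightarrow$ compact in the original topology.
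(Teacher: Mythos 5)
Your plan is correct in outline, but it takes a genuinely different route from the paper's proof. The paper (following Lawson) introduces the auxiliary \emph{lens topology} $X^\ell$, whose closed sets are the elements of $\mathcal C X$ (plus possibly $X$), obtains compactness of $X^\ell$ from Alexander's subbase lemma (citing Keimel and Lawson), and then shows, using coherence, that the lens and patch topologies induce the same subspace topology on each $K \in \mathcal C X$; patch-compactness of $K$ then follows because $K$ is closed in the compact space $X^\ell$. You instead note that $K$ is patch-closed, that it sits inside a compact saturated set, and reduce everything to the claim that compact saturated subsets of a coherent well-filtered space are patch-compact. That claim is true, and your decomposition works; it is also more self-contained, since it isolates a reusable fact instead of relying on the compactness of the lens topology from the literature, at the price of having to prove that fact yourself.

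Two points need attention. First, the crux---patch-compactness of a compact saturated set $Q$---is exactly where your write-up waves its hands: for \emph{arbitrary} patch-closed subsets of $Q$ there is no ``direct'' filtered-intersection argument, and you need Alexander's subbase lemma anyway. The short completion is: given a cover of $Q$ by subbasic patch-open sets $U_i \in \Open X$ and $X \diff Q_j$ with $Q_j$ compact saturated, one has $Q \cap \bigcap_j Q_j \subseteq \bigcup_i U_i$; the sets $Q \cap \bigcap_{j \in F} Q_j$, $F$ finite, form a filtered family of compact saturated sets by coherence, so well-filteredness yields a finite $F$ with $Q \cap \bigcap_{j \in F} Q_j \subseteq \bigcup_i U_i$, and compactness of that set extracts finitely many $U_i$; hence $Q$ is covered by those $U_i$ together with the $X \diff Q_j$, $j \in F$, and Alexander's subbase lemma gives patch-compactness of $Q$. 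A patch-closed subset of $Q$ is then closed in the compact subspace $Q$ of $X^\patch$, hence patch-compact. Second, your step (2) is over-engineered: since $I \neq \emptyset$, pick a single index $i_0$; then $K \subseteq \bigcup_k Q_{i_0 k}$, which is already compact and saturated (finite unions of compact saturated sets are compact and upwards closed), so no filtered intersection of the $Q'_i$ and no appeal to well-filteredness is needed there. Your final step is fine: the patch topology refines the original one, so patch-compact sets are compact in $X$.
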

In the sequel, we would only need to show that every element of
$\mathcal C X$ is compact in $X$, not in $X^\patch$.  But
Lemma~\ref{lemma:lens:compact} fixes a flaw.  Let me explain.

Lemma~7.1~(5) of \cite{KL:measureext} states that, in a coherent sober
space $X$, every element of $\mathcal C X$ is compact in the patch
topology, and the proof really only uses well-filteredness, not
sobriety.  However, the result is obtained from the statement that
every patch-closed subset of $X$ is the union of an element of
$\mathcal C X$ with a closed subset of $X$
\cite[Lemma~7.1~(4)]{KL:measureext}, and that is wrong.  For a
counterexample (to item (4), not (5), of that lemma), consider $\real$
with its Scott topology for $X$.  Then $X^\patch$ is $\real$ with its
usual metric topology, while it is an easy exercise to show that the
elements of $\mathcal C X$ are the compact subsets of $\real$ (in
accordance with Lemma~\ref{lemma:lens:compact}; the easiest elementary
proof consists in verifying that every element of $\mathcal C X$ is
closed and bounded in $\real$).  Then any union of an element of
$\mathcal C X$ with a closed subset of $X$ must be bounded from above.
Hence, for example, $\mathbb Z$, which is closed in
$X^\patch = \real$, is not of that form.  The fixed proof below, which
is in the spirit of the original proof of
\cite[Lemma~7.1~(5)]{KL:measureext}, was communicated to me by Jimmie
Lawson on August 22nd, 2024.

\begin{proof}
  Just as in \cite[Lemma~7.1]{KL:measureext}, let the \emph{lens
    topology} be the topology on $X$ whose closed subsets are the
  intersections of finite unions of lenses.  We will write $X^\ell$
  for $X$ with the lens topology.  The closed subsets of $X^\ell$ are
  the elements of $\mathcal C X$, plus possibly $X$ itself (the empty
  intersection) if $X$ is not compact.  It is clear that the lens
  topology is coarser than the patch topology.  It is a consequence of
  Alexander's subbase lemma that $X^\ell$ is compact (see
  \cite[Lemma~7.1~(2)]{KL:measureext}).

  We now claim that: $(*)$ for every $K \in \mathcal C X$, the lens
  and the patch topologies on $X$ induce the same subspace topology on
  $K$.  Since the lens topology is coarser than the patch topology on
  $X$, the same inclusion holds for their subspace topologies on $K$.
  Conversely, we must show that given any patch-closed subset $D$ of
  $X$, $D \cap K$ is closed in $K$ seen as a subspace of $X^\ell$.
  Since patch-closed subsets are intersections of finite unions of
  closed subsets and of compact saturated subsets of $X$, it suffices
  to show this when $D$ is either closed or compact saturated in $X$.

  Let us write $K$ as
  $\bigcap_{i \in I} \bigcup_{j=1}^{n_i} (Q_{ij} \cap C_{ij})$ where
  $I \neq \emptyset$, each $Q_{ij}$ is compact saturated and each
  $C_{ij}$ is closed in $X$.
  
  If $D$ is a closed subset $C$ of $X$, then
  $C \cap K = \bigcap_{i \in I} \bigcup_{j=1}^{n_i} (Q_{ij} \cap C
  \cap C_{ij})$.  Every set $Q_{ij} \cap C \cap C_{ij}$ is a lens (or
  empty), so $C \cap K$ is in $\mathcal C X$, hence closed in
  $X^\ell$.  Then $C \cap K$ is also equal to the intersection of
  itself with $K$, hence is closed in $K$ as a subspace of $X^\ell$.

  If $D$ is a compact saturated subset $Q$ of $X$, then similarly
  $Q \cap K = \bigcap_{i \in I} \bigcup_{j=1}^{n_i} (Q \cap Q_{ij}
  \cap C_{ij})$.  Since $X$ is coherent, each set $Q \cap Q_{ij}$ is
  compact saturated in $X$, so $Q \cap Q_{ij} \cap C_{ij}$ is a lens
  (or is empty).  Therefore $Q \cap K$ is in $\mathcal C X$, hence is
  closed in $X^\ell$.  Then $Q \cap K$ is also equal to the
  intersection of that set $Q \cap K$ with $K$, hence is closed in $K$
  as a subspace of $X^\ell$.  This ends the proof of claim $(*)$.

  Using $(*)$, we can now show that every $K \in \mathcal C X$ is
  compact in $X^\patch$.  $K$ is closed in $X^\ell$, hence it is also
  closed in $K$, seen as a subspace of $X^\ell$.  Since $X^\ell$ is
  compact, $K$ is compact in $X^\ell$, hence also in $K$, seen as a
  subspace of $X^\ell$.  But, by $(*)$, the latter is also equal to
  $K$, see as a subspace of $X^\patch$.  Any subset of a subspace is
  compact in the surrounding space, so $K$ is compact in $X^\patch$.

  Finally, every compact subset of $X^\patch$ is compact in $X$: every
  cover by open subsets from $X$ is also an open cover by open subsets
  from $X^\patch$.  \qed
\end{proof}

Given any topological space $X$, the collection $\blacksquare Q$ of
open neighborhoods of an arbitrary compact saturated subset $Q$ is a
Scott-open subset of $\Open X$.  It follows that any union
$\bigcup_{i \in I} \blacksquare {Q_i}$, where each $Q_i$ is compact
saturated in $X$, is Scott-open in $\Open X$.  A space is
\emph{consonant} if and only if all the Scott-open subsets of
$\Open X$ are of this form.  The notion arises from
\cite{DGL:consonant}, where it was proved that every regular
\v{C}ech-complete space is consonant; every locally compact space is
consonant, too, as well as every LCS-complete space \cite[Proposition
12.1]{dBGLJL:LCScomplete}.
\begin{proposition}
  \label{prop:V=Radon}
  For $\bullet$ equal to ``$\leq 1$'', resp.\ ``$1$'', for every
  coherent sober space $X$, let $\mathbf R_\bullet X$ be the set of
  Radon subprobability (resp.\ probability) measures on $X$, with the
  \emph{weak topology}, defined by subbasic open sets
  $\{\mu \in \mathbf R_\bullet X \mid \mu (U) > r\}$, $U \in \Open X$,
  $r \in \Rp$.

  If $X$ is consonant, weakly Hausdorff, coherent and sober (e.g., if
  $X$ is stably locally compact), then restriction to $\Open X$
  defines a homeomorphism between $\mathbf R_\bullet X$ and
  $\Val_\bullet X$.
\end{proposition}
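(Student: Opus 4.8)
The plan is to build the homeomorphism in two halves: first the map from $\mathbf R_\bullet X$ to $\Val_\bullet X$ by restriction to open sets, then the inverse (extension of a continuous valuation to a Radon measure), and finally check that both maps are continuous and mutually inverse. The restriction map $\rho\colon \mu \mapsto \mu|_{\Open X}$ is well-defined: a Borel (or patch-Borel) measure restricted to $\Open X$ is strict and modular automatically, and Scott-continuity of the restriction follows from inner regularity together with Lemma~\ref{lemma:lens:compact}—indeed, for a directed family of opens $(U_i)$ with union $U$, inner regularity gives $\mu(U) = \dsup_{K \in \mathcal C X, K \subseteq U} \mu(K)$, and each such compact $K$ is contained in some $U_i$ by compactness (Lemma~\ref{lemma:lens:compact}), so $\mu(U) \leq \dsup_i \mu(U_i)$, the reverse inequality being monotonicity. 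The subprobability/probability normalization is preserved trivially. Continuity of $\rho$ is immediate since the preimage of $[U>r]$ in $\Val_\bullet X$ is exactly the subbasic open set $\{\mu \mid \mu(U)>r\}$ defining the weak topology on $\mathbf R_\bullet X$.

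For the inverse, I would invoke the measure-extension machinery of Keimel--Lawson: under the stated hypotheses (consonant, weakly Hausdorff, coherent, sober), every continuous valuation $\nu$ on $X$ extends to a Radon measure $\widehat\nu$ on a $\sigma$-algebra containing the patch topology, with $\widehat\nu$ inner regular with respect to $\mathcal C X$; this is the content of \cite{KL:measureext} (as corrected in \ref{sec:KL} and via Lemma~\ref{lemma:lens:compact}). Consonance is what guarantees that the valuation-theoretic ``compact approximation'' can be upgraded to genuine inner regularity over $\mathcal C X$: Scott-open subsets of $\Open X$ being of the form $\bigcup_i \blacksquare{Q_i}$ lets one express $\nu(U)$ as a directed supremum of $\nu$ of opens containing compact saturated sets, and then coherence plus well-filteredness (automatic from sobriety) closes these up into elements of $\mathcal C X$. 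Uniqueness of the extension—hence well-definedness of $\nu \mapsto \widehat\nu$—follows because two Radon measures agreeing on $\Open X$ agree on all lenses (a lens $Q\cap C$ has $\widehat\nu(Q\cap C) = \finf\{\widehat\nu(U) \mid U \supseteq Q\} - \dots$, or more directly: inner regularity over $\mathcal C X$ forces the measure of any Borel set to be determined by its values on opens), and then on the $\sigma$-algebra they generate.

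That $\rho \circ (\nu \mapsto \widehat\nu) = \identity{\Val_\bullet X}$ is definitional: $\widehat\nu$ restricted to opens is $\nu$ by construction. The other composite $(\mu \mapsto \widehat{\rho(\mu)})$ equals $\mu$ by the uniqueness statement just discussed, since $\widehat{\rho(\mu)}$ and $\mu$ are both Radon (inner regular over $\mathcal C X$) and agree on $\Open X$. Finally, continuity of the inverse: the weak topology on $\Val_\bullet X$ is generated by $[U > r]$, and its preimage under $\nu \mapsto \widehat\nu$ is $\{\mu \mid \widehat{\rho(\mu)}(U) > r\} = \{\mu \mid \mu(U) > r\}$, a subbasic open of $\mathbf R_\bullet X$; so the inverse is continuous. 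Since both maps are continuous bijections inverse to each other, we have a homeomorphism.

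The main obstacle I anticipate is getting the measure-extension step cleanly right given the acknowledged flaw in \cite[Lemma~7.1~(4)]{KL:measureext}: I must make sure that every invocation of ``patch-closed $=$ element of $\mathcal C X$ union closed'' is avoided and replaced by the corrected Lemma~\ref{lemma:lens:compact} (compactness of elements of $\mathcal C X$ in $X^\patch$), and that consonance is genuinely used—not merely weak Hausdorffness—to pass from valuation-style compact approximation to inner regularity over $\mathcal C X$. A secondary subtlety is the normalization case $\bullet = {}$``$1$'': one wants $\widehat\nu(X) = \nu(X) = 1$, which is fine since $X$ is patch-open in itself and $\widehat\nu$ extends $\nu$; but one should double-check that $X$ itself lies in the relevant $\sigma$-algebra and that the local finiteness condition in the definition of Radon measure is met (it is, since $\nu(X) \leq 1 < \infty$).
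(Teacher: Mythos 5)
Your overall route is the same as the paper's: restriction to $\Open X$ one way (strict, modular, Scott-continuous via inner regularity over $\mathcal C X$ together with the compactness of its elements, Lemma~\ref{lemma:lens:compact}), and the Keimel--Lawson extension theorem the other way, with consonance supplying tightness of continuous valuations and local finiteness being automatic for (sub)probability valuations. Your direct verification of Scott-continuity of the restriction is fine (the paper instead checks tightness and cites the fact that tight maps $\Open X \to \creal$ are Scott-continuous), and the continuity of both maps and the identity $\rho\circ(\nu\mapsto\widehat\nu)=\identity{\Val_\bullet X}$ are handled as in the paper.

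The genuine gap is in your uniqueness argument, which is needed for the other composite to be the identity. You write that uniqueness holds because ``inner regularity over $\mathcal C X$ forces the measure of any Borel set to be determined by its values on opens''; but inner regularity only determines a Radon measure from its values on $\mathcal C X$, and the whole difficulty is to show that those values are themselves determined by the restriction to $\Open X$. This is exactly the unjustified final sentence in the proof of \cite[Theorem~7.3]{KL:measureext} that the paper's \ref{sec:KL} was written to repair: one must pass from $\nu$ on $\Open X$ to the values on patch-open sets (via the base of sets $U\diff Q$ with $\nu(U)<\infty$, modularity/inclusion--exclusion, and Scott-continuity of the restriction to the patch topology, which needs patch-compactness of the elements of $\mathcal C X$), and the key identity $\mu(U\diff Q)=\dsup_{V\supseteq Q}(\nu(U)-\nu(U\cap V))$ requires inner regularity, coherence and, crucially, weak Hausdorffness. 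Your alternative sketch via an outer-regularity formula for lenses ($\widehat\nu(Q\cap C)=\finf\{\widehat\nu(U)\mid U\supseteq Q\}-\cdots$) is neither completed nor justified in this generality. Citing the corrected statement from \ref{sec:KL} would close the gap, but the inline reasoning you give in its place does not.
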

\begin{proof}
  % Let $X_\bot$ be $X$ plus a fresh element $\bot$, and let the open
  % subsets of $X_\bot$ be those of $X$, plus $X_\bot$ itself.  Its
  % specialization preordering $\leq_\bot$ is given by $x \leq_\bot y$
  % if and only if $x=\bot$ or $x\leq y$ in $X$.  If $X$ is stably
  % locally compact, then $X_\bot$ is stably compact \cite[end of
  % Section~2.1]{GL:duality}.  There is a one-to-one correspondence
  % between stably compact spaces and compact pospaces, see
  % \cite[Section~9.1]{JGL-topology}: with the patch topology,
  % $(X_\bot, \leq_\bot)$ is a compact pospace, namely $X_\bot$ is
  % compact and the graph of $\leq_\bot$ is closed in the product space;
  % and we retrieve the original topology on $X_\bot$ by taking the
  % upwards-closed (with respect to $\leq_\bot$) open subsets in
  % $X_\bot$ with the patch topology, and the compact saturated subsets
  % of $X_\bot$ as the upwards-closed closed subsets of $X_\bot$.  Every
  % lens $Q \cap C$ in $X$ can be written as $Q \cap (C \cup \{\bot\})$,
  % showing that it is also a lens in $X_\bot$, hence is closed in the
  % patch topology on $X_\bot$.  It follows that every subset $K$ of $X$
  % that is closed in the lens topology on $X$ is closed in the patch
  % topology on $X_\bot$.  Its upward closure $Q$ in $X_\bot$ is then
  % compact saturated in the original topology on $X_\bot$.  But $\bot$
  % is least and not in $K$, so $Q$ is the upward closure $\upc K$ in
  % $X$.  We have shown that for every subset $K$ of $X$ that is closed
  % in the lens topology on $X$, $\upc K$ is compact saturated.
%  
  Every Radon measure $\mu$ on the $\sigma$-algebra generated by the
  patch topology on a coherent sober space $X$ has a restriction $\nu$
  to $\Open X$ that is clearly strict and modular.  We claim that it
  is \emph{tight}, namely that for every open subset $U$, for every
  $r < \nu (U)$, there is a compact saturated subset $Q$ of $U$ such
  that $r < \nu (V)$ for every open neighborhood of $V$.  Let
  $U \in \Open X$ and let $r < \nu (U)$; equivalently, $r < \mu (U)$.
  By definition of Radon measures, there is a proper closed subset $K$
  of $X$ in the lens topology such that $K \subseteq U$ and
  $r < \mu (K)$.  By Lemma~\ref{lemma:lens:compact}, $K$ is compact in
  $X$, so $Q \eqdef \upc K$ is compact saturated in $X$.  Since $U$ is
  upwards-closed, $Q \subseteq U$, and we have
  $r < \mu (K) \leq \mu (Q) \leq \mu (V) = \nu (V)$ for every open
  neighborhood $V$ of $Q$.  We now use the fact that every tight map
  from $\Open X$ to $\creal$ is Scott-continuous
  \cite[Lemma~6.2]{JGL:kolmogorov}, so $\nu$ is a continuous
  valuation.
  
  In the converse direction, by \cite[Theorem~7.3]{KL:measureext},
  every locally finite, tight valuation $\nu$ on a weakly Hausdorff
  coherent sober space $X$ extends to a unique Radon measure on the
  $\sigma$-algebra generated by the patch topology.  In this sentence,
  that $\nu$ is locally finite means that every point $x$ has an open
  neighborhood $U$ such that $\nu (U) < \infty$; this is certainly the
  case for all (sub)probability valuations.  Every continuous
  valuation $\nu$ on a consonant space is tight
  \cite[Lemma~6.2]{JGL:kolmogorov}.  Hence every (sub)probability
  valuation on a consonant, weakly Hausdorff, coherent sober space
  extends to a unique Radon measure on the $\sigma$-algebra generated
  by the patch topology.

  In conclusion, restriction to $\Open X$ defines a bijection of
  $\mathbf R_\bullet X$ onto $\Val_\bullet X$.  It is clear that it is
  a homeomorphism.  \qed
\end{proof}

It follows that there is a monad of Radon (sub)probability measures
$\mathbf R_\bullet$ on the category of stably compact spaces, which is
isomorphic to the monad $\Val_\bullet$ of (sub)probability valuations.
(We recall that $\Val_\bullet$ preserves stable compactness;
otherwise, the previous sentence makes no sense.)

Remark~\ref{rem:weaklift:nat} then entails the following.
\begin{proposition}
  \label{prop:weaklift:nat:radon}
  Let $\bullet$ be ``$\leq 1$'' or ``$1$''.  There is a weak
  distributive law of $\QLV$ (equivalently, $\Plotkinn$) over
  $\mathbf R_\bullet$ (equivalently, $\Val_\bullet$) on the category
  of stably compact spaces, and it induces the composite monad
  $(\Pred_\ADN^\bullet, \eta^\ADN, \mu^\ADN)$.
\end{proposition}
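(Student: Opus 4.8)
The plan is to obtain this as a transport of structure along the isomorphism of monads $\mathbf R_\bullet \cong \Val_\bullet$. First I would record that the category of stably compact spaces is a legitimate instance of $\Topcat^\natural$ in the sense of Definition~\ref{defn:Top:nat}: every stably compact space $X$ is stably locally compact, hence locally compact, so $\Lform X$ is locally convex, and its addition map is almost open; moreover the category is closed under $\QLV$, under $\Val_\bullet$ and under retracts, as recalled in the discussion preceding that definition. Hence Theorem~\ref{thm:lambda:nat} furnishes a weak distributive law $\lambda^\natural$ of $\QLV$ over $\Val_\bullet$ on this category, whose associated weak composite monad is $(\Pred_\ADN^\bullet, \eta^\ADN, \mu^\ADN)$ by Theorem~\ref{thm:weaklift:nat}. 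Next, Proposition~\ref{prop:V=Radon} together with the paragraph following it gives a natural isomorphism $\theta \colon \mathbf R_\bullet \to \Val_\bullet$ on stably compact spaces, namely restriction to open sets; here I use that $\Val_\bullet$ preserves stable compactness, so that $\mathbf R_\bullet$ is genuinely an endofunctor on the category.

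The core of the argument is the elementary observation, already used to justify Remark~\ref{rem:weaklift:nat}, that an isomorphism of monads transports weak distributive laws and their weak composite monads. Concretely, if $\theta \colon T' \to T$ is an isomorphism of monads and $\lambda \colon TS \to ST$ is a weak distributive law of $S$ over $T$, then $\lambda'_X \eqdef S\theta_X^{-1} \circ \lambda_X \circ \theta_{SX} \colon T'SX \to ST'X$ is a weak distributive law of $S$ over $T'$: the three weak distributivity equations transfer directly because $\theta$ respects units and multiplications, the splitting idempotent (the analogue of~(\ref{eq:idemp:nat})) is merely conjugated by $\theta$, so it splits through the same object up to isomorphism, and the resulting weak composite monad coincides, up to that isomorphism, with the one attached to $\lambda$. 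Applying this with $T' \eqdef \mathbf R_\bullet$, $T \eqdef \Val_\bullet$, $S \eqdef \QLV$ yields the weak distributive law of $\QLV$ over $\mathbf R_\bullet$ whose weak composite monad is $(\Pred_\ADN^\bullet, \eta^\ADN, \mu^\ADN)$.

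Finally, the parenthetical ``equivalently, $\Plotkinn$'' follows the same pattern: stably compact spaces are weakly Hausdorff, so $\iota_X$ and $\varrho_X$ are mutually inverse homeomorphisms and $\Plotkinn \cong \QLV$ as monads (Remark~\ref{rem:Plotkinn}, Remark~\ref{rem:lambda:nat}); transporting $\lambda'$ along this isomorphism of the left-hand monad (the symmetric version of the transport principle above) produces the weak distributive law of $\Plotkinn$ over $\mathbf R_\bullet$ with the same weak composite monad. The one point that needs genuine care — though it is still routine — is verifying that the restriction isomorphism $\theta$ is compatible with the monad structures of $\mathbf R_\bullet$ and $\Val_\bullet$, i.e.\ that the Radon extension of a pushforward measure, resp.\ of an integral, is the pushforward, resp.\ integral, of the Radon extensions; this is immediate from the uniqueness of Radon extensions in Proposition~\ref{prop:V=Radon} together with the fact that the two constructions agree on open sets. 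Everything else is diagram-chasing, and the substantive mathematics has already been done in Theorem~\ref{thm:lambda:nat}, Theorem~\ref{thm:weaklift:nat} and Proposition~\ref{prop:V=Radon}.
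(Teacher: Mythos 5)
Your proposal is correct and follows essentially the same route as the paper: the paper also obtains the result by transporting the weak distributive law $\lambda^\natural$ of Theorem~\ref{thm:lambda:nat} and its weak composite monad (Theorem~\ref{thm:weaklift:nat}) along the monad isomorphisms $\mathbf R_\bullet \cong \Val_\bullet$ (Proposition~\ref{prop:V=Radon}) and $\Plotkinn \cong \QLV$ on stably compact spaces, as packaged in Remarks~\ref{rem:lambda:nat} and~\ref{rem:weaklift:nat}. You merely spell out the conjugation formula and the compatibility of the restriction isomorphism with the monad structures more explicitly than the paper does.
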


Similarly, Remark~\ref{rem:Vietoris} entails the following.
\begin{proposition}
  \label{prop:Vietoris}
  Let $\bullet$ be ``$\leq 1$'' or ``$1$''.  There is a weak
  distributive law of the Vietoris monad over $\mathbf R_\bullet$
  (equivalently, $\Val_\bullet$) on $\mathbf{KHaus}$, and it induces
  the composite monad $(\Pred_\ADN^\bullet, \eta^\ADN, \mu^\ADN)$.
\end{proposition}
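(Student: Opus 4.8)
The plan is to read this off from results already in hand, exactly as Remark~\ref{rem:Vietoris} and the paragraph preceding the statement indicate. Two ingredients are needed. First, Remark~\ref{rem:Vietoris} already produces a weak distributive law of the Vietoris monad over $\Val_\bullet$ on $\mathbf{KHaus}$, with associated weak composite monad the appropriate restriction of $(\Pred_\ADN^\bullet, \eta^\ADN, \mu^\ADN)$; this rests on the facts that every compact Hausdorff space is stably compact and $T_1$, so that lenses there are just non-empty compact subsets and $\Plotkinn$ is the classical Vietoris monad, together with Remark~\ref{rem:lambda:nat} and Theorem~\ref{thm:weaklift:nat}. Second, Proposition~\ref{prop:V=Radon} shows that restriction to the open sets is a homeomorphism $\mathbf R_\bullet X \cong \Val_\bullet X$ for every stably locally compact space, in particular for every compact Hausdorff space, and (as noted just before the statement) these homeomorphisms assemble into an isomorphism of monads $\mathbf R_\bullet \cong \Val_\bullet$ on the category of stably compact spaces. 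So the whole content is: transport the weak distributive law and its weak composite monad along this monad isomorphism.

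Concretely, I would first recall why $\mathbf R_\bullet \cong \Val_\bullet$ is an isomorphism of monads: naturality is the statement that pushforward of Radon measures corresponds under restriction-to-opens to pushforward of valuations; compatibility with the units amounts to a Dirac Radon measure restricting to a Dirac valuation; and compatibility with the multiplications is the agreement of the iterated-integral formula for $\mathbf R_\bullet$ with the one for $\Val_\bullet$ used throughout (the displayed identity in Section~\ref{sec:preliminaries}), which in turn holds because Radon integration and Choquet integration of a lower semicontinuous function coincide. Since an isomorphism of monads carries weak distributive laws to weak distributive laws and, by functoriality of the splitting-of-idempotents construction of Section~\ref{sec:assoc-weak-comp}, weak composite monads to weak composite monads, applying it to $\lambda^\natural$ --- equivalently, to the Vietoris-over-$\Val_\bullet$ law of Remark~\ref{rem:Vietoris} --- yields a weak distributive law of the Vietoris monad over $\mathbf R_\bullet$ on $\mathbf{KHaus}$ whose weak composite monad is $(\Pred_\ADN^\bullet, \eta^\ADN, \mu^\ADN)$.

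The one place to be careful is the closure bookkeeping. For $\bullet$ equal to ``$1$'' the category $\mathbf{KHaus}$ is itself closed under $\Plotkinn$ and under $\Val_1 \cong \mathbf R_1$ (the probability valuation space of a compact Hausdorff space is again compact Hausdorff), so everything lives literally in $\mathbf{KHaus}$. For $\bullet$ equal to ``$\leq 1$'', however, $\Val_{\leq 1} X$ is stably compact but not Hausdorff, so one should run the argument in the ambient category $\Topcat^\natural$ of stably compact spaces --- where $\lambda^\natural$, the idempotents of Section~\ref{sec:assoc-weak-comp}, and the isomorphism $\mathbf R_\bullet \cong \Val_\bullet$ all live --- and then restrict the resulting structure to the full subcategory $\mathbf{KHaus}$, which is exactly the Vietoris-over-$\mathbf R_\bullet$ situation in the statement. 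Granting this, the proof is just the two citations together with the routine transport of structure described above.
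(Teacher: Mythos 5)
Your proposal is correct and takes essentially the paper's own route: the paper obtains this proposition by simply combining Remark~\ref{rem:Vietoris} (on $T_1$, hence compact Hausdorff, spaces $\Plotkinn$ is the classical Vietoris monad, so the weak law and its composite monad from Theorem~\ref{thm:weaklift:nat} specialize to $\mathbf{KHaus}$) with the monad isomorphism $\mathbf R_\bullet \cong \Val_\bullet$ coming from Proposition~\ref{prop:V=Radon}, which are exactly the two ingredients you cite and transport along. Your extra spelling-out of the monad isomorphism and of the closure bookkeeping for $\bullet =$ ``$\leq 1$'' (working in stably compact spaces before restricting) is more explicit than the paper, which leaves these points implicit, but it does not change the argument.
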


\section*{Competing interests}

The author declares none.

\section*{Acknowledgements}

The elegant proof of Lemma~\ref{lemma:lens:compact} is due to Jimmie
Lawson, and replaces a more complex argument of mine.  Thanks, Jimmie!
Thanks to Richard Garner, too, who kindly informed me that Gabi B\"ohm
was the inventor of weak distributive laws, not him.  Alexandre Goy
detected a mistake in Remark~\ref{rem:PhiPsi:lens:T1}, which came from
similar mistakes in Remarks~\ref{rem:Phi}, \ref{rem:Psi},
\ref{rem:PhiPsi} and~\ref{rem:PhiPsi:lens}.  He also mentioned Quentin
Aristote's work to me \cite{Aristote:weak:distr}.  Quentin has
independently (and before me) obtained weak distributive laws of the
Vietoris monad over $\mathbf R$ on stably compact spaces.  He also
showed that there is no \emph{monotone} weak distributive law of the
same on compact Hausdorff spaces.  Hence---unless one of us made a
mistake---the weak distributive law of
Section~\ref{sec:radon-measures} on $\mathbf{KHaus}$ is not monotone.
We have started discussing the matter.

\bibliographystyle{elsarticle-harv}
\ifarxiv

\else
\bibliography{projlim}

\begin{thebibliography}{46}
\expandafter\ifx\csname natexlab\endcsname\relax\def\natexlab#1{#1}\fi
\expandafter\ifx\csname url\endcsname\relax
  \def\url#1{\texttt{#1}}\fi
\expandafter\ifx\csname urlprefix\endcsname\relax\def\urlprefix{URL }\fi

\bibitem[{Abramsky and Jung(1994)}]{AJ:domains}
Abramsky, S., Jung, A., 1994. Domain theory. In: Abramsky, S., Gabbay, D.~M.,
  Maibaum, T. S.~E. (Eds.), Handbook of Logic in Computer Science. Vol.~3.
  Oxford University Press, pp. 1--168.
\newline\urlprefix\url{https://www.cs.bham.ac.uk/~axj/pub/papers/handy1.pdf}

\bibitem[{Adamski(1977)}]{Adamski:measures}
Adamski, W., 1977. {$\tau$}-smooth {B}orel measures on topological spaces.
  Mathematische Nachrichten 78, 97--107.

\bibitem[{Alvarez-Manilla et~al.(2004)Alvarez-Manilla, Jung, and
  Keimel}]{AMJK:scs:prob}
Alvarez-Manilla, M., Jung, A., Keimel, K., 2004. The probabilistic powerdomain
  for stably compact spaces. Theoretical Computer Science 328~(3), 221--244.

\bibitem[{Appert(1934)}]{Appert:abs}
Appert, A., 1934. Propri{\'e}t{\'e}s des espaces abstraits les plus
  g{\'e}n{\'e}raux. Ph.D. thesis, Facult{\'e} des sciences de l'universit{\'e}
  de Paris, s{\'e}rie A, num{\'e}ro 1465, num{\'e}ro d'ordre 2331. Available on
  NumDam (\url{http://www.numdam.org/item?id=THESE_1934__156__1_0}), in the
  ``th{\`e}ses de l'entre-deux-guerres'' series.

\bibitem[{Aristote(2024)}]{Aristote:weak:distr}
Aristote, Q., 2024. Weak distributive laws between powerspaces over stably
  compact spaces. Talk given at TACL'2024 (Topology, Algebra, Categories and
  Logic).

\bibitem[{Beck(1969)}]{Beck:distr}
Beck, J., 1969. Distributive laws. In: Eckmann, B. (Ed.), Seminar on Triples
  and Categorical Homology Theory. Vol.~80 of Lecture Notes on in Mathematics.
  Springer Verlag, pp. 119--140.

\bibitem[{Bogachev(2007)}]{Bogachev:mes:II}
Bogachev, V.~I., 2007. Measure Theory. Vol.~II. Springer-Verlag.

\bibitem[{B{\"o}hm(2010)}]{Bohm:weak:monads}
B{\"o}hm, G., 2010. The weak theory of monads. Advances in Mathematics 225,
  1--32.

\bibitem[{de~Brecht(2013)}]{deBrecht:qPolish}
de~Brecht, M., 2013. Quasi-{P}olish spaces. Annals of Pure and Applied Logic
  164~(3), 356--381.
\newline\urlprefix\url{https://www.sciencedirect.com/science/article/pii/S0168007212001820}

\bibitem[{de~Brecht et~al.(2019)de~Brecht, Goubault{-}Larrecq, Jia, and
  Lyu}]{dBGLJL:LCScomplete}
de~Brecht, M., Goubault{-}Larrecq, J., Jia, X., Lyu, Z., 2019. Domain-complete
  and {LCS}-complete spaces. Electronic Notes in Theoretical Computer
  SciencePresented at the 8th International Symposium of Domain Theory and its
  Applications (ISDT'19). Available on arXiv:1902.11142 [math.GN].
\newline\urlprefix\url{https://www.sciencedirect.com/science/article/pii/S1571066119300337}

\bibitem[{Dolecki et~al.(1995)Dolecki, Greco, and Lechicki}]{DGL:consonant}
Dolecki, S., Greco, G.~H., Lechicki, A., Aug. 1995. When do the upper
  {K}uratowski topology (homeomorphically, {S}cott topology) and the co-compact
  topology coincide? Transactions of the American Mathematical Society 347~(8),
  2869--2884.

\bibitem[{Garner(2020)}]{Garner:weak:distr}
Garner, R., 2020. The {V}ietoris monad and weak distributive laws. Applied
  Categorical Structures 28~(2).

\bibitem[{Gierz et~al.(2003)Gierz, Hofmann, Keimel, Lawson, Mislove, and
  Scott}]{GHKLMS:contlatt}
Gierz, G., Hofmann, K.~H., Keimel, K., Lawson, J.~D., Mislove, M., Scott,
  D.~S., 2003. Continuous Lattices and Domains. Vol.~93 of Encyclopedia of
  Mathematics and its Applications. Cambridge University Press.
\newline\urlprefix\url{https://www.cambridge.org/core/books/continuous-lattices-and-domains/D27E855CD9C88E6DB3C98196B88F0AF6}

\bibitem[{Goubault{-}Larrecq(2007)}]{Gou-csl07}
Goubault{-}Larrecq, J., Sep. 2007. Continuous previsions. In: Duparc, J.,
  Henzinger, {\relax Th}.~A. (Eds.), {P}roceedings of the 16th {A}nnual {EACSL}
  {C}onference on {C}omputer {S}cience {L}ogic ({CSL}'07). Vol. 4646 of Lecture
  Notes in Computer Science. Springer, Lausanne, Switzerland, pp. 542--557.
\newline\urlprefix\url{http://www.lsv.ens-cachan.fr/Publis/PAPERS/PDF/JGL-csl07.pdf}

\bibitem[{Goubault-Larrecq(2010)}]{GL:duality}
Goubault-Larrecq, J., 2010. {D}e {G}root duality and models of choice: Angels,
  demons and nature. Mathematical Structures in Computer Science 20~(2),
  169--237.
\newline\urlprefix\url{https://hal.inria.fr/hal-03195342}

\bibitem[{Goubault{-}Larrecq(2013)}]{JGL-topology}
Goubault{-}Larrecq, J., 2013. Non-{H}ausdorff Topology and Domain
  Theory---Selected Topics in Point-Set Topology. Vol.~22 of New Mathematical
  Monographs. Cambridge University Press.
\newline\urlprefix\url{https://www.cambridge.org/core/books/nonhausdorff-topology-and-domain-theory/47A93B1951D60717E2E71030CB0A4441}

\bibitem[{Goubault{-}Larrecq(2017{\natexlab{a}})}]{JGL-mscs16}
Goubault{-}Larrecq, J., Sep. 2017{\natexlab{a}}. Isomorphism theorems between
  models of mixed choice. Mathematical Structures in Computer Science 27~(6),
  1032--1067.
\newline\urlprefix\url{http://www.lsv.fr/Publis/PAPERS/PDF/JGL-mscs16.pdf}

\bibitem[{Goubault{-}Larrecq(2017{\natexlab{b}})}]{JGL-minimax17}
Goubault{-}Larrecq, J., 2017{\natexlab{b}}. A non-{H}ausdorff minimax theorem.
  Minimax Theory and its Applications 3~(1), 73--80.

\bibitem[{Goubault-Larrecq(2021)}]{JGL:kolmogorov}
Goubault-Larrecq, J., 2021. Products and projective limits of continuous
  valuations on {$T_0$} spaces. Mathematical Structures in Computer Science
  31~(2), 234--254.
\newline\urlprefix\url{https://www.cambridge.org/core/journals/mathematical-structures-in-computer-science/article/abs/products-and-projective-limits-of-continuous-valuations-on-t0-spaces/E46326A3597BBD943A9921B525C73EB9w}

\bibitem[{Goubault-Larrecq(2023)}]{JGL:wHaus}
Goubault-Larrecq, J., 2023. On weakly {H}ausdorff spaces and locally strongly
  sober spaces. Topology Proceedings 62, 117--131, available on
  arXiv:2211.10400 [math.GN].
\newline\urlprefix\url{http://topology.nipissingu.ca/tp/reprints/v62/}

\bibitem[{Goubault-Larrecq(2024{\natexlab{a}})}]{JGL:mscs16:errata}
Goubault-Larrecq, J., 2024{\natexlab{a}}. Errata to ``{I}somorphism theorems
  between models of mixed choice'', fixes and consequences. Submitted to MSCS.

\bibitem[{Goubault-Larrecq(2024{\natexlab{b}})}]{JGL:vlc}
Goubault-Larrecq, J., 2024{\natexlab{b}}. On local compactness of spaces of
  continuous valuations. In press. Submitted to Topology Proceedings, July
  2024.

\bibitem[{Goubault-Larrecq(2024{\natexlab{c}})}]{JGL:projlim:prev}
Goubault-Larrecq, J., 2024{\natexlab{c}}. On the preservation of projective
  limits by functors of non-deterministic, probabilistic, and mixed choice.
  arXiv, available on arXiv:2407.10235 [math.GN].

\bibitem[{Goy(2021)}]{Goy:PhD}
Goy, A., 2021. Compositionnalit{\'e} des monads par lois de distributivit{\'e}
  faibles (on the compositionality of monads via weak distributive laws). Ph.D.
  thesis, Universit{\'e} Paris-Saclay.

\bibitem[{Goy and Petri\c{s}an(2020)}]{goypetr-dp}
Goy, A., Petri\c{s}an, D., 2020. Combining probabilistic and non-deterministic
  choice via weak distributive laws. In: Proceedings of the 35th Annual
  ACM/IEEE Symposium on Logic in Computer Science. LICS '20. Association for
  Computing Machinery, New York, NY, USA, p. 454–464.
\newline\urlprefix\url{https://doi.org/10.1145/3373718.3394795}

\bibitem[{Hausdorff(1927)}]{Hausdorff:Mengen}
Hausdorff, F., 1927. {M}engenlehre, 3rd Edition. Springer, Berlin.

\bibitem[{Heckmann(1993)}]{heckmann:2ndorder}
Heckmann, R., 1993. Power domains and second-order predicates. Theoretical
  Computer Science 111, 59--88.

\bibitem[{Heckmann(1996)}]{heckmann96}
Heckmann, R., Dec 1996. Spaces of valuations. In: Andima, S., Flagg, R.~C.,
  Itzkowitz, G., Kong, Y., Kopperman, R., Misra, P. (Eds.), Papers on General
  Topology and its Applications. Vol. 806. Annals of the New York Academy of
  Science, pp. 174--200.

\bibitem[{Jones(1990)}]{Jones:proba}
Jones, C., 1990. Probabilistic non-determinism. Ph.D. thesis, University of
  Edinburgh, technical Report ECS-LFCS-90-105.

\bibitem[{Jones and Plotkin(1989)}]{jones89}
Jones, C., Plotkin, G., 1989. A probabilistic powerdomain of evaluations. In:
  Proceedings of the 4th Annual Symposium on Logic in Computer Science. IEEE
  Computer Society Press, pp. 186--195.

\bibitem[{Jung(2004)}]{Jung:scs:prob}
Jung, A., 2004. Stably compact spaces and the probabilistic powerspace
  construction. In: Desharnais, J., Panangaden, P. (Eds.), Domain-theoretic
  Methods in Probabilistic Processes. Vol.~87 of Electronic Lecture Notes in
  Computer Science. Elsevier, pp. 5--20, 15pp.

\bibitem[{Keimel(2008)}]{Keimel:topcones2}
Keimel, K., 2008. Topological cones: Functional analysis in a {$T_0$}-setting.
  Semigroup Forum 77~(1), 109--142.

\bibitem[{Keimel and Lawson(2005)}]{KL:measureext}
Keimel, K., Lawson, J., 2005. Measure extension theorems for ${T_0}$-spaces.
  Topology and its Applications 149~(1--3), 57--83.
\newline\urlprefix\url{https://www.sciencedirect.com/science/article/pii/S0166864104002755}

\bibitem[{Keimel and Plotkin(2009)}]{KP:predtrans:pow}
Keimel, K., Plotkin, G., 2009. Predicate transformers for convex powerdomains.
  Mathematical Structures in Computer Science 19~(3), 501--539.

\bibitem[{Kirch(1993)}]{kirch93}
Kirch, O., 1993. Bereiche und {B}ewertungen. Diplomarbeit, Technische
  Hochschule Darmstadt.

\bibitem[{Lawson(1982)}]{Lawson:valuation}
Lawson, J.~D., 1982. Valuations on continuous lattices. In: Hoffmann, R.-E.
  (Ed.), Mathematische Arbeitspapiere. Vol.~27. Universit{\"a}t Bremen, pp.
  204--225.

\bibitem[{Lawson(1987)}]{Lawson:versatile}
Lawson, J.~D., 1987. The versatile continuous order. In: Main, M.~G., Melton,
  A., Mislove, M.~W., Schmidt, D.~A. (Eds.), Proc.\ 3rd Workshop on
  Mathematical Foundations of Programming Language Semantics (MFPS'87).
  Springer Verlag LNCS 298, pp. 134--160.

\bibitem[{Mac~Lane(1971)}]{McLane:cat:math}
Mac~Lane, S., 1971. Categories for the Working Mathematician. Vol.~5 of
  Graduate Texts in Mathematics. Springer-Verlag.

\bibitem[{Michael(1951)}]{Michael:hyper}
Michael, E., 1951. Topologies on spaces of subsets. Transactions of the
  American Mathematical Society 71~(1), 152--182.

\bibitem[{Saheb-Djahromi(1980)}]{saheb-djahromi:meas}
Saheb-Djahromi, N., 1980. Cpo's of measures for nondeterminism. Theoretical
  Computer Science 12, 19--37.

\bibitem[{Schalk(1993)}]{schalk:diss}
Schalk, A., 1993. Algebras for generalized power constructions. Ph.D. thesis,
  Technische Hochschule Darmstadt, available from
  \url{http://www.cs.man.ac.uk/~schalk/publ/diss.ps.gz}.

\bibitem[{Smyth(1983)}]{Smyth:power:pred}
Smyth, M.~B., 1983. Powerdomains and predicate transformers: A topological
  view. In: Diaz, J. (Ed.), Automata, Languages and Programming. Springer
  Berlin Heidelberg, Berlin, Heidelberg, pp. 662--675.

\bibitem[{Steen and Seebach(1978)}]{SS:countexamples}
Steen, L.~A., Seebach, Jr., J.~A., 1978. Counterexamples in Topology, 2nd
  Edition. Springer-Verlag.

\bibitem[{Tix(1995)}]{Tix:bewertung}
Tix, R., 1995. {S}tetige {B}ewertungen auf topologischen {R\"a}umen.
  Diplomarbeit, Technische Hochschule Darmstadt.

\bibitem[{Varacca and Winskel(2006)}]{VW:distr}
Varacca, D., Winskel, G., 2006. Distributing probability over non-determinism.
  Mathematical Structures in Computer Science 16~(1), 87--113.

\bibitem[{Vietoris(1922)}]{Vietoris:hyper}
Vietoris, L., 1922. {B}ereiche zweiter {O}rdnung. Monatshefte f\"ur Mathematik
  32, 258--280.

\end{thebibliography}
\fi

\appendix

\section{Uniqueness of Radon measures}
\label{sec:KL}

There is a gap in the proof of \cite[Theorem~7.3]{KL:measureext}.  It
is claimed that, for a weakly Hausdorff coherent sober space $X$,
every tight, locally finite valuation $\nu$ on $X$ extends to a unique
Radon measure on a $\sigma$-algebra $\Sigma$ containing the patch
topology of $X$.  While existence---the difficult part---holds,
uniqueness is dealt rather briefly in the final sentence of the proof
(``Clearly a Radon measure is uniquely determined by its values on
$\mathcal C$''; the set $\mathcal C$ is what we call $\mathcal C X$).
But that only says that the extension is determined uniquely from its
values on sets in $\mathcal C X$, and no reason is given as to why the
latter values would be determined uniquely from $\nu$.

In trying to find a fix, we needed to change the definition of Radon
measures slightly.  We define Radon measures on coherent sober spaces
as measures on a $\sigma$-algebra $\Sigma$ containing the patch
topology, whose restrictions to $\Open {(X^\patch)}$ are locally
finite, and which are inner regular.  Keimel and Lawson's definition
\cite[Definition~7.2]{KL:measureext} is a measure $\mu$ on $\Sigma$
that is inner regular and satisfying the following property, instead
of local finiteness:
\begin{quote}
  (Fin) $\mu (K) < \infty$ for every $K \in \mathcal C X$.
\end{quote}
Every measure $\mu$ whose restriction to $\Open {(X^\patch)}$ is
locally finite satisfies (Fin): for each $x \in K$, find a patch-open
neighborhood $O_x$ of $x$ such that $\mu (O_x) < \infty$; finitely
many cover $K$, since $K$ is patch-compact
(Lemma~\ref{lemma:lens:compact}), and their union, which contains $K$,
has finite $\mu$-measure.  The converse fails, even for inner regular
measures.  Consider \emph{Appert space} \cite[Chapter~VIII, \S~II,
p.84]{Appert:abs}, see also
\cite[Counterexample~98]{SS:countexamples}.  This is
$\nat \diff \{0\}$ with the following topology: its open subsets are
all the subsets that do not contain $1$, plus all the subsets $U$
containing $1$ whose asymptotic density
$\liminf_{n \to \infty} \mathrm{card}\; (U \cap \{1, \cdots, n\})/n$
equals $1$.  This is a Hausdorff, hence certainly weakly Hausdorff,
coherent and sober space, whose compact subsets are exactly the finite
subsets of $\nat$.  Then the counting measure, which maps every subset
of $\nat$ to its cardinality, is inner regular and satisfies property
(Fin), but is not locally finite since every open neighborhood of $1$
is infinite.

Now, the extension of $\nu$ built by Keimel and Lawson---call it
$\mu$---does not just satisfy property (Fin), but satisfies the
stronger property of being locally finite on $\Open {(X^\patch)}$.
Indeed, for every $x \in X$, since $\nu$ is locally finite, there is
an open neighborhood $U$ of $x$ in $X$ such that $\nu (U) < \infty$;
then $U$ is also patch-open, and $\nu (U) = \mu (U)$.

% Conversely, every Radon measure $\mu$ in our sense restricts to a
% tight, locally finite valuation $\nu$ on $\Open X$.  That $\nu$ is a
% tight valuation is proved as at the beginning of the proof of
% Proposition~\ref{prop:V=Radon}.  The fact that $\nu$ is locally finite
% is slightly harder.  Let $x \in X$.  Since the restriction of $\mu$ to
% $\Open {(X^\patch)}$ is locally finite, there is a patch-open subset
% $O$ of $X$ containing $x$ such that $\mu (O) < \infty$.

We now claim that there can be only one Radon measure $\mu$ (in our
sense) on the patch space that extends $\nu$, provided that $X$ is
weakly Hausdorff, coherent, and well-filtered.  As Keimel and Lawson
argue, the value of $\mu$ on Borel sets is entirely determined by its
values on elements $K$ of $\mathcal C X$, by inner regularity.  In
proving that local finiteness implies (Fin), we have actually shown a
bit more: there is a patch-open neighborhood $O$ of $K$ such that
$\mu (O) < \infty$.  Then $\mu (K) = \mu (O) - \mu (O \diff K)$; the
difference makes sense because
$\mu (O \diff K) \leq \mu (O) < \infty$.  We use inner regularity a
second time, and we express $\mu (O \diff K)$ as
$\dsup_{K'} \mu (K')$, where $K'$ ranges over the elements of
$\mathcal C X$ included in $O \diff K$.  Then
$\mu (K) = \finf_{K'} (\mu (O) - \mu (K')) = \finf_{K'} \mu (O \diff
K')$.  Each set $K'$ is patch-closed, so $O \diff K'$ is patch-open.
Hence $\mu (K)$ is entirely determined by the values of $\mu$ on the
patch-open sets.

We argue that the restriction $\underline{\smash\mu}$ of $\mu$ to the
patch-open subsets of $X$ is a continuous valuation on $X^\patch$.  It
is clearly strict and modular, and Scott-continuity is proved as
follows.  Let ${(O_i)}_{i \in I}$ be a directed family of patch-open
subsets of $X$, and let $O$ be its union.  Then
$\underline{\smash\mu} (O) = \mu (O) = \dsup_{K \subseteq O} \mu (K)$,
where $K$ (implicitly) ranges over $\mathcal C X$.  Since $K$ is
patch-compact (Lemma~\ref{lemma:lens:compact}),
$K \subseteq O = \dcup_{i \in I} O_i$ is equivalent to the existence
of an $i \in I$ such that $K \subseteq O_i$.  Therefore
$\underline{\smash\mu} (O) = \dsup_{i \in I, K \subseteq O_i} \mu (K)
= \dsup_{i \in I} \mu (O_i) = \dsup_{i \in I} \underline{\smash\mu}
(O_i)$.  In the previous paragraph, we have argued that $\mu$ is
determined uniquely as a function of $\underline{\smash\mu}$, and we
will show that $\underline{\smash\mu}$ is determined uniquely from
$\nu$.

The family $\mathcal F_\nu$ of open subsets $U$ (in the original
topology on $X$) such that $\nu (U) < \infty$ is directed, because it
contains the empty set and is closed under binary unions, using
modularity.  The fact that $\nu$ is locally finite means that
$\bigcup \mathcal F_\nu = X$.

Every patch-open set is a union of sets of the form $U \diff Q$ with
$U$ open and $Q$ compact saturated in $X$ (with the original
topology).  Since $\nu$ is locally finite, we can write $U \diff Q$ as
the union of the sets $U \cap U_0 \diff Q$, $U_0 \in \mathcal F_\nu$.
We note that $U \cap U_0$ is itself in $\mathcal F_\nu$.  Hence every
patch-open set is also a union of sets from the collection
$\mathcal B$ of sets of the form $U \diff Q$ with
$U \in \mathcal F_\nu$ (not just $U \in \Open X$) and $Q$ compact
saturated in $X$.  Namely, $\mathcal B$ forms a subbase of the patch
topology.  Additionally, $\mathcal B$ is closed under finite
intersections, and in particular forms a base.

With these observations, we claim that $\underline{\smash\mu}$ is
uniquely determined by its values on elements of $\mathcal B$.  For
every patch-open set $O$, written as a union $\bigcup_{i \in I} B_i$
of elements of $\mathcal B$, hence as a directed union
$\dcup_{J \in \Pfin (I)} \bigcup_{j \in J} B_j$,
$\underline{\smash\mu} (O)$ is equal to
$\dsup_{J \in \Pfin (I)} \underline{\smash\mu} (\bigcup_{j \in J}
B_j)$, hence is uniquely determined from the values that
$\underline{\smash\mu}$ takes on finite unions $\bigcup_{j \in J} B_j$
of elements of $\mathcal B$.  If
$\underline{\smash\mu} (B_j) = \infty$ for some $j \in J$, then we
must have $\underline{\smash\mu} (\bigcup_{j \in J} B_j) = \infty$,
since $\underline{\smash\mu}$ is monotonic.  Otherwise, we can rewrite
$\underline{\smash\mu} (\bigcup_{j \in J} B_j)$ as
$\sum_{K \subseteq J, K \neq \emptyset} {(-1)}^{|K|+1}
\underline{\smash\mu} (\bigcap_{j \in K} B_j)$ (this is the
inclusion-exclusion formula, a consequence of modularity), and we
recall that $\bigcap_{j \in K} B_j \in \mathcal B$.  Hence in both
cases, the value of $\underline{\smash\mu} (\bigcup_{j \in J} B_j)$ is
uniquely determined by the values that $\underline{\smash\mu}$ takes
on elements of $\mathcal B$.

It remains to see that the value $\underline{\smash\mu} (U \diff Q)$ is entirely
determined by $\nu$ for every set $U \diff Q$ in $\mathcal B$.  We
will show that
$\underline{\smash\mu} (U \diff Q) = \dsup_{V \supseteq Q} (\nu (U) - \nu (U \cap
V))$, where $V$ ranges over $\Open X$.  We note that the difference
$\nu (U) - \nu (U \cap V)$ makes sense because
$\nu (U \cap V) \leq \nu (U) < \infty$.

To this end, it is enough to show that for every $r \in \Rp$ such that
$r < \underline{\smash\mu} (U \diff Q)$, there is an open neighborhood
$V$ of $Q$ such that $r \leq \nu (U) - \nu (U \cap V)$.  This will
show that
$\underline{\smash\mu} (U \diff Q) \leq \dsup_{V \supseteq Q} (\nu (U)
- \nu (U \cap V))$, and the reverse inequality is clear.

Since $\mu$ is inner regular and $\underline{\smash\mu}$ is a restriction of $\mu$
(hence, in particular, $r < \mu (U \diff Q)$) there is an element
$K \in \mathcal C X$ such that $K \subseteq U \diff Q$ and
$r \leq \mu (K)$.  Now $K$ is an intersection
$\bigcap_{i \in I} \bigcup_{j=1}^{n_i} L_{ij}$ where $I$ is non-empty
and each $L_{ij}$ is a lens, and we can therefore rewrite $K$ as a
filtered intersection $\fcap_{J \in \Pfin^* (I)} K_J$, where
$\Pfin^* (I)$ denotes the collection of non-empty finite subsets of
$I$, and $K_J \eqdef \bigcap_{i \in J} \bigcup_{j=1}^{n_i} L_{ij}$.
By distributing intersections of unions, and realizing that any finite
non-empty intersection of lenses is a lens (or the empty set), we see
that $K_J$ is a finite union of lenses.  From $K \subseteq U \diff Q$,
we deduce that $\fcap_{J \in \Pfin^* (I)} K_J \diff (U \diff Q)$ is
empty.  But $K_J \diff (U \diff Q) = (K_J \diff U) \cup (K_J \cap Q)$
is also a finite union of lenses, because $L \diff U$ and $L \cap Q$
are lenses (or empty) for every lens $L$.  (The second claim relies on
the coherence of $X$.)  By Lemma~\ref{lemma:lens:compact}, each set
$K_J \diff (U \diff Q)$ is then patch-compact, and is patch-closed.
Now any filtered family of compact closed sets whose intersection is
empty contains the empty set; this is an easy exercise using the
definition of compactness.  Therefore $K_J \diff (U \diff Q)$ is empty
for some $J \in \Pfin^* (I)$.  In other words,
$K_J \subseteq U \diff Q$.  We have $K \subseteq K_J$, so
$r \leq \mu (K_J)$.  Therefore, replacing $K$ by $K_J$ if necessary,
we may assume that $K$ itself is a finite union of lenses, say
$\bigcup_{i=1}^n (Q_i \cap C_i)$, where each $Q_i$ is compact
saturated and $C_i$ is closed in $X$, in such a way that
$r \leq \mu (K)$ and $K \subseteq U \diff Q$.

From $K \subseteq U \diff Q$, we obtain that
$Q_i \cap C_i \subseteq U \diff Q$ for every $i \in \{1, \cdots, n\}$,
namely that $Q_i \subseteq U \cup U_i$, where $U_i$ is the complement
of $C_i$, and $Q \cap Q_i \subseteq U$.  Since $X$ is weakly
Hausdorff, $Q$ has an open neighborhood $V_i$ and $Q_i$ has an open
neighborhood $W_i$ such that $V_i \cap W_i \subseteq U_i$, for every
$i$.  Then
$Q_i \cap C_i \subseteq W_i \cap C_i \subseteq X \diff V_i$, for every
$i \in \{1, \cdots, n\}$.  We let $V \eqdef \bigcap_{i=1}^n V_i$.
Then $K \subseteq X \diff V$.  We also have $K \subseteq U$, so
$K \subseteq U \diff V$.  From $r \leq \mu (K)$, we deduce that
$r \leq \mu (U \diff V)$.  Now
$\mu (U \diff V) = \mu (U) - \mu (U \cap V)$, an equality that makes
sense because all the sets involved have finite $\nu$-measure,
equivalently finitely $\mu$-measure, since $\mu$ extends $\nu$.  Then
$\mu (U \diff V) = \nu (U) - \nu (U \cap V)$, and this completes the
proof.

\end{document}

%%
%% End of file `elsarticle-template-num.tex'.